\documentclass[11pt,oneside]{amsart}

\usepackage{amsmath}

\usepackage{tikz-cd}

\usepackage{fix-cm}

\DeclareMathAlphabet{\mathcal}{OMS}{cmsy}{m}{n}
\usepackage[pagebackref,colorlinks,linkcolor=red,citecolor=blue,urlcolor=blue,hypertexnames=false]{hyperref}

\input xy
\xyoption{all}
\newdir{ >}{{}*!/-5pt/@{>}}

\usepackage[hmargin=2cm,vmargin=1.5cm]{geometry}

\usepackage{amsrefs}
\makeatletter
\newcommand{\makesynonym}[2]{%
  \AtEndDocument{\write\@auxout{\noexpand\@makesynonym{#1}{#2}}}%
}
\newcommand{\@makesynonym}[2]{%
  \global\expandafter\let\csname b@#1\expandafter\endcsname\csname b@#2\endcsname
}
\makeatother
\usepackage{anyfontsize}
\usepackage{mathrsfs}

\usepackage{bm}

\usepackage{bbm}

\usepackage{fancyhdr}
\usepackage{url}

\usepackage{tensor}

\usepackage{mathtools}
\usepackage{accents}

\usepackage{amscd}
\usepackage{amssymb}
\usepackage{latexsym}

\usepackage{graphicx}

\usepackage{hyperref}

\usepackage{leftindex}

\usepackage{enumitem}

\newcommand{\comment}[1]{}

\def\Z{\mathbb{Z}} 

\def\inc{\operatorname{inc}}

\def\cop{\operatorname{cop}}

\def\coco{\operatorname{co}}
\def\res{\operatorname{res}}
\def\ext{\operatorname{ext}}
\def\obj{\operatorname{obj}}

\def\bu{\bullet}

\def\topdf{\texorpdfstring}

\newcommand{\Alg}{\mathrm{Alg}}
\newcommand{\rings}{\mathrm{Rings}}
\newcommand{\aha}{{\Alg}}
\newcommand{\ahal}{\aha_\ell}
\newcommand{\lu}{\operatorname{lu}}

\def\tight{\operatorname{tight}}

\def\Grp{\mathfrak{Grp}}
\def\Add{\operatorname{Add}}
\def\triqui{\vartriangleleft}

\def\Mod{\operatorname{Mod}}

\numberwithin{equation}{section}

\theoremstyle{plain}
\newtheorem{thm}[equation]{Theorem}
\newtheorem{lem}[equation]{Lemma}
\newtheorem{coro}[equation]{Corollary} 
\newtheorem{prop}[equation]{Proposition}

\theoremstyle{definition}

\newtheorem{question}{Question}
\newtheorem{defi}[equation]{Definition} 

\newtheorem{ex}[equation]{Example}
\newtheorem{exas}[equation]{Examples}

\theoremstyle{remark} 

 \newtheorem{rem}[equation]{Remark}

\newtheorem{theorem}[equation]{Theorem}
\newtheorem*{theorem*}{Theorem}
\newtheorem{lemma}[equation]{Lemma}
\newtheorem*{lemma*}{Lemma}
\newtheorem{corollary}[equation]{Corollary}
\newtheorem{proposition}[equation]{Proposition}

\newtheorem{remark}[equation]{Remark}
\newtheorem{definition}[equation]{Definition}
\newtheorem*{definition*}{Definition}

\newtheorem*{question*}{Question}

\newtheorem{example}[equation]{Example}
\newtheorem{examples}[equation]{Examples}

\makeatletter
\def\revddots{\mathinner{\mkern1mu\raise\p@
\vbox{\kern7\p@\hbox{.}}\mkern2mu
\raise4\p@\hbox{.}\mkern2mu\raise7\p@\hbox{.}\mkern1mu}}
\makeatother 
\newcommand{\bgl}{\begin{equation}} 
\newcommand{\egl}{\end{equation}}
\newcommand{\bgloz}{\begin{equation*}} 
\newcommand{\egloz}{\end{equation*}}
\newcommand{\bgln}{\begin{eqnarray}} 
\newcommand{\egln}{\end{eqnarray}}
\newcommand{\bglnoz}{\begin{eqnarray*}} 
\newcommand{\eglnoz}{\end{eqnarray*}}
\newcommand{\btheo}{\begin{theorem}}
\newcommand{\etheo}{\end{theorem}}
\newcommand{\btheooz}{\begin{theorem*}}
\newcommand{\etheooz}{\end{theorem*}}

\newcommand{\blemma}{\begin{lemma}}
\newcommand{\elemma}{\end{lemma}}
\newcommand{\blemmaoz}{\begin{lemma*}}
\newcommand{\elemmaoz}{\end{lemma*}}
\newcommand{\bproof}{\begin{proof}}
\newcommand{\eproof}{\end{proof}}
\newcommand{\bbew}{\begin{beweis}}
\newcommand{\ebew}{\end{beweis}}
\newcommand{\bremark}{\begin{remark}\em}
\newcommand{\eremark}{\end{remark}}
\newcommand{\bdefin}{\begin{definition}}
\newcommand{\edefin}{\end{definition}}
\newcommand{\bdefinoz}{\begin{definition*}}
\newcommand{\edefinoz}{\end{definition*}}
\newcommand{\bex}{\begin{example}\em}
\newcommand{\eex}{\end{example}}
\newcommand{\bexs}{\begin{examples}}
\newcommand{\eexs}{\end{examples}}

\newcommand{\bprop}{\begin{proposition}}
\newcommand{\eprop}{\end{proposition}}
\newcommand{\bcor}{\begin{corollary}}
\newcommand{\ecor}{\end{corollary}}
\newcommand{\bfa}{\begin{cases}} 
\newcommand{\efa}{\end{cases}}

\newcommand{\bquestion}{\begin{question}}
\newcommand{\equestion}{\end{question}}
\newcommand{\bquestionoz}{\begin{question*}}
\newcommand{\equestionoz}{\end{question*}}
\newcommand{\cB}{\mathcal B}
\newcommand{\cC}{\mathcal C}
\newcommand{\cD}{\mathcal D}
\newcommand{\cE}{\mathcal E}
\newcommand{\cF}{\mathcal F}
\newcommand{\cG}{\mathcal G}
\newcommand{\cH}{\mathcal H}
\newcommand{\cI}{\mathcal I}

\newcommand{\cK}{\mathcal K}
\newcommand{\cL}{\mathcal L}

\newcommand{\cO}{\mathcal O}
\newcommand{\cP}{\mathcal P}

\newcommand{\cR}{\mathcal R}
\newcommand{\cS}{\mathcal S}
\newcommand{\cT}{\mathcal T}
\newcommand{\cU}{\mathcal U}

\newcommand{\cX}{\mathcal X}
\newcommand{\cY}{\mathcal Y}

\def\Iz{\mathbbm{I}}
\def\Kz{\mathbbm{K}}
\def\Mz{\mathbbm{M}}
\def\Nz{\mathbbm{N}}

\def\Rz{\mathbbm{R}}

\def\Zz{\mathbbm{Z}}
\def\Sz{\mathbbm{S}}

\newcommand{\fB}{\mathfrak B}
\newcommand{\fC}{\mathfrak C}

\newcommand{\fF}{\mathfrak F}

\newcommand{\fL}{\mathfrak L}

\newcommand{\fP}{\mathfrak P}

\newcommand{\mfc}{\mathfrak c}
\newcommand{\mfd}{\mathfrak d}

\newcommand{\mft}{\mathfrak t}

\newcommand{\mfw}{\mathfrak w}
\newcommand{\mfx}{\mathfrak x}
\newcommand{\mfy}{\mathfrak y}
\newcommand{\mfz}{\mathfrak z}
\newcommand{\an}[1]{``#1''} 
\newcommand{\ti}{\tilde}

\newcommand{\lori}{\longrightarrow}
\newcommand{\lra}{\lori}

\newcommand{\ma}{\mapsto} 
\newcommand{\onto}{\twoheadrightarrow} 
\newcommand{\into}{\hookrightarrow} 
\newcommand{\isom}{\xrightarrow{\raisebox{-1ex}[0ex][0ex]{$\sim$}}} 

\def\SEMI{\mbox{$\times\kern-2pt\vrule height5pt width.6pt \kern3pt $}}

\newcommand{\Hom}{{\rm Hom}\,}

\newcommand{\Sp}{{\rm Sp\,}}

\newcommand{\id}{{\rm id}}

\renewcommand{\ker}{{\rm ker}\,}

\newcommand{\reg}{^\times} 
\newcommand{\lspan}{{\rm span}} 
\newcommand{\defeq}{\mathrel{:=}} 

\newcommand{\dop}{\text{: }} 
\newcommand{\ilim}{\varinjlim} 
\newcommand{\plim}{\varprojlim} 
\newcommand{\supp}{{\rm supp}} 
\newcommand{\lge}{\left\{} 
\newcommand{\rge}{\right\}} 
\newcommand{\lru}{\left(} 
\newcommand{\rru}{\right)} 
\newcommand{\lsp}{\left\langle} 
\newcommand{\rsp}{\right\rangle} 
\newcommand{\rukl}[1]{\lru #1 \rru} 
\newcommand{\gekl}[1]{\lge #1 \rge} 
\newcommand{\spkl}[1]{\lsp #1 \rsp} 
\newcommand{\menge}[2]{\gekl{ #1 \dop #2 }} 
\def\bf1{\mathbf{1}}
\newcommand{\dom}{{\rm dom\,}}
\newcommand{\ran}{{\rm ran\,}}

\newcommand{\bmu}{\bm{u}}

\newcommand{\bmsigma}{\bm{\sigma}}
\newcommand{\bmtau}{\bm{\tau}}

\newcommand{\bmF}{\bm{F}}

\newcommand{\oset}[2]{%
  \mathop{#2}\limits^{\vbox to -1.66ex{%
  \kern -1.4ex\hbox{$#1$}\vss}}}

\newcommand{\C}{\mathscr{C}}

\newcommand{\crG}{\mathscr{G}}

\newcommand{\Gn}{{G^{(0)}}}
\newcommand{\Gnu}{{G^{(\nu)}}}
\newcommand{\Gnum}{{G^{(\nu - 1)}}}
\newcommand{\Gnup}{{G^{(\nu + 1)}}}
\newcommand{\Hn}{{H^{(0)}}}
\newcommand{\Hnu}{{H^{(\nu)}}}
\newcommand{\Hnum}{{H^{(\nu - 1)}}}
\newcommand{\Hnup}{{H^{(\nu + 1)}}}

\newcommand{\Phinu}{{\Phi^{(\nu)}}}
\newcommand{\Phinup}{{\Phi^{(\nu + 1)}}}

\newcommand{\rmr}{c} 
\newcommand{\rms}{d} 

\newcommand{\mor}{{\rm mor}\,}

\newcommand{\sfC}{\mathsf{C}}

\newcommand{\acts}{\curvearrowright}

\newcommand{\coli}{\operatorname{colim}}
\newcommand{\hocolim}{\operatorname{hocolim}}
\newcommand{\hocofi}{\operatorname{hocofiber}}

\newcommand{\Idem}{{\rm Idem}}

\newcommand{\Gu}{G^{(0)}}
\newcommand{\Hu}{H^{(0)}}

\newcommand{\CO}{\cC \cO}

\newcommand{\whL}{\widehat{L}}

\newcommand{\bbs}{\mathbbm{s}}
\newcommand{\bbt}{\mathbbm{t}}

\newcommand{\bbK}{\mathbbm{K}}
\newcommand{\bbKK}{\bbK\bbK}

\newcommand{\bmalpha}{\bm{\alpha}}

\newcommand{\bmfw}{\bm{\mfw}}
\newcommand{\bmfx}{\bm{\mfx}}
\newcommand{\bmfy}{\bm{\mfy}}
\newcommand{\bmfz}{\bm{\mfz}}

\DeclareFontFamily{U}{mathb}{\hyphenchar\font45}
\DeclareFontShape{U}{mathb}{m}{n}{
      <5> <6> <7> <8> <9> <10>
      <10.95> <12> <14.4> <17.28> <20.74> <24.88>
      mathb10
      }{}
\DeclareSymbolFont{mathb}{U}{mathb}{m}{n}

\DeclareMathSymbol{\sqbullet}{1}{mathb}{"0D}

\newcommand{\bigast}{\mathop{\scalebox{1.5}{\raisebox{-0.2ex}{$\ast$}}}}

\usepackage{newtxtext}
\usepackage{newtxmath}

\numberwithin{equation}{section}
\bigskip

\begin{document}

\title{Discretisation in algebraic $K$-theory of independent ample groupoids and their Steinberg algebras}

\thispagestyle{fancy}

\author{Guillermo Corti{\~n}as}

\author{Xin Li}

\address{Guillermo Corti{\~n}as, IMaS, Departamento de Matem\'atica, Facultad de Ciencias Exactas y Naturales, Universidad de Buenos Aires and CONICET. Ciudad Universitaria, Pabell\'on I. C1428EGA Buenos Aires, Argentina}
\email{gcorti@dm.uba.ar}

\address{Xin Li, School of Mathematics and Statistics, University of Glasgow, University Place, Glasgow G12 8QQ, United Kingdom}
\email{Xin.Li@glasgow.ac.uk}

\subjclass[2020]{22A22, 19D55, 16S99}

\thanks{The first-named author was partially supported by CONICET through PIP 11220250100969CO, and by UBA through UBACyT 20020220300206BA. The second-named author has received funding from the European Research Council (ERC) under the European Union's Horizon 2020 research and innovation programme (grant agreement No. 817597).}

\begin{abstract}
 An independent ample groupoid $G$ comes equipped with a discrete groupoid $G_d$ and a proper \'etale correspondence $\Omega:G_d\to G$. Discretisation with respect to a homological invariant $\cH$ holds when $\cH(\Omega)$ is an equivalence. We show for example that discretisation with respect to the $K$-theory spectrum $\bbK \fB_G$ of the permutative category of bisections always holds, establish excision and continuity for this invariant, and use all this and explicit resolutions by independent groupoids to compute it for several families of ample groupoids. We also prove discretisation with respect to invariants of the Steinberg algebra $\ell[G]$ such as (homotopy) algebraic $K$-theory in several particular cases, and carry out computations. 
\end{abstract}

\maketitle

\tableofcontents

\setlength{\parindent}{0cm} \setlength{\parskip}{0.5cm}

\section{Introduction}

This paper is concerned with computing homological invariants of ample groupoids. Ample groupoids are groupoids (i.e., small categories with invertible morphisms) equipped with a topology such that all structure maps become local homeomorphisms and its space of objects, also called the unit space, is a totally disconnected, locally compact Hausdorff space. For example, discrete groupoids are ample. They are also Morita equivalent to disjoint unions of groups, and thus homological invariants of a discrete groupoid decompose into invariants of groups. Next in the level of complexity are the groupoids that we call independent; for example, the universal groupoid of an inverse semigroup is independent. If $G$ is independent then there are a discrete groupoid $G_d$ and a proper \'etale correspondence $\Omega:G_d\to G$. Roughly speaking, we say that $G$ satisfies discretisation with respect to a homological invariant $\cH$ if 
$\Omega$ induces an equivalence $\cH(G_d)\isom \cH(G)$; this implies that $\cH(G)$ decomposes into copies of $\cH$ applied to a family of groups. Moreover, any ample groupoid $G$ admits a resolution by independent groupoids; so establishing discretisation with respect to $\cH$ for all independent groupoids in a resolution of $G$ helps one compute $\cH(G)$. 

Discretisation has been first developed in the context of topological $K$-theory of $C^*$-algebras attached to independent groupoids (see \cites{CEL1,CEL2,Nor,Li_CMP,Mil25b,LM}). In that setting, discretisation is derived as a consequence of the Baum-Connes conjecture with coefficients. In this paper, we establish discretisation results without relying on isomorphism conjectures such as the Baum-Connes conjecture or its algebraic analogues.

We establish discretisation for an independent ample groupoid $G$ and an invariant $\cH$ in several examples. Below is an incomplete list of such examples; for details, see the indicated cross-references. We write $K$, $KH$, $HH$, $HC$ and $HP$ for the algebraic and homotopy algebraic $K$-theory of a ring and the Hochschild, cyclic and periodic cyclic homology of an algebra. 

\begin{itemize}
\item[i)] $G$ is any independent groupoid and $\cH(G)=\bbK \fB_G$ is the algebraic $K$-theory spectrum of the permutative category $\fB_G$ introduced in \cite{Li22} (see Corollary \ref{cor:Disc}). 
\item[ii)] $G$ is proper and is the universal groupoid of an inverse semigroup and $\cH(G)$ is either $K$, $KH$, $HH$ or $HC$ of the Steinberg algebra $\ell[G]$ over a commutative unital ring $\ell$ (Corollary \ref{coro:discprop} and Examples \ref{exas:discprop}).
\item[iii)]  $G$ is the independent groupoid associated to a self-similar action of a discrete groupoid on a higher directed graph and $\cH$ is either $KH(\ell[G])$ with $\ell$ as above, or $HP(\ell[G])$ with $\ell$ a field of characteristic zero (Theorem \ref{thm:discg} and Examples \ref{exas:discg}). 
\item[iv)] $G$ is the universal groupoid of the Artin inverse semigroup $S(\Gamma)$ associated to a finite undirected graph $\Gamma$ whose complementary graph is connected and $\cH=KH(\ell[G])$ (Corollary \ref{coro:discartin} and Example \ref{ex:discartin}).
\end{itemize}
For $\cH$ as in i) above we also establish excision (Corollaries \ref{cor:Excision} and \ref{cor:Exci2}) and continuity (Corollary \ref{cor:continuity}), and use all these properties to compute $\cH(G)$  for graph groupoids (see \S~\ref{sss:Graphs}), groupoids arising from self-similar groups (see \S~\ref{sss:SelfSimilarGps}), groupoids arising from piecewise affine transformations (see \S~\ref{sss:AffineTrafos}), and AF groupoids (see \S~\ref{sss:AF}).  
In case iv) above we also show that discretisation with respect to $K$ is obstructed by the possible nonvanishing of Bass' twisted nil $K$-groups (Corollary \ref{coro:kacegamma}). We also compute $K$ and $KH$ of the Steinberg algebra of the tight groupoid of $S(\Gamma)$ (Theorem \ref{thm:bquotient}, Proposition \ref{prop:kaceqgamma} and Corollary \ref{coro:kaceqgamma}).

The particular case of iii) for self-similar group actions on $1$-graphs was done in \cite{acm}; its analogue for self-similar actions of discrete groupoids on $1$-graphs and topological $K$-theory of the $C^*$-algebra and groupoid homology of the Toeplitz groupoid was obtained by A. Miller and B. Steinberg in \cite{steimil}. In his article \cite{fletcher}, J. Fletcher showed that the Toeplitz $C^*$-algebra of a higher graph is an iterated Cuntz-Pimsner $C^*$-algebra. Inspired by this, we show that for $G$ as in iii) above, the Steinberg algebra
$\ell[G]$ is an iterated Cuntz-Pimsner $\ell$-algebra in the sense of \cite{toke}. To prove iii) we then use results of T. Lescure, reviewed in Appendix \ref{app2:lescure}, on a Pimsner-Voiculescu sequence for Cuntz-Pimsner algebras. For $\Gamma$ as in iv), the calculation of the topological $K$-theory of the $C^*$-algebra of both the universal and tight groupoids of $S(\Gamma)$ is due to N. Ivanov \cite{iva}.

The rest of this paper is organized as follows. In Section \S~\ref{sec:prelis} we give some preliminaries on independent groupoids. Subsection 
\S~\ref{subsec:lfws} studies locally finite weak semilattices and the topology of their character spaces. Subsection \S~\ref{subsec:invact} considers 
inverse semigroup actions $S\acts L$ via poset isomorphisms between downsets, and their dual actions $S\acts \whL$ on their character spaces. A groupoid is independent if it is isomorphic to the transformation groupoid $G=S\ltimes \whL$ of such an action (see \ref{defi:indep}).  One may also consider the 
discrete groupoid $G_d=S\ltimes L^\times$ associated to the action of $S$ on the nonzero elements of $L$. Discretisation is introduced in Section 
\ref{sec:disc}; it is expressed first in terms of a canonical correspondence $\Omega:G_d\to G$
\S~\ref{ss:DiscFormul} and then in terms of a canonical homomorphism of algebras $\theta:\ell[G_d]\to M_{L^\times}(\ell[G])$ \S~\ref{subsec:detour}. The results of Section \S~\ref{sec:discalg} establish discretisation for the cases ii), iii) and iv) of the list above. Subsection \S~\ref{ss:ProperGPD} establishes discretisation for a class of proper independent 
groupoids, including proper universal groupoids of inverse semigroups (Theorem \ref{thm:discprop} and Corollary \ref{coro:discprop}). In Subsection \S~\ref{ss:sshrg} we 
consider the case of inverse semigroups attached to self-similar actions of discrete groupoids on higher rank graphs, and establish discretisation for the 
Steinberg algebras of their universal groupoids in bivariant algebraic $K$-theory, and as a consequence also in $KH$ and $HP$ (Theorem \ref{thm:discg} and Examples \ref{exas:discg}). In Subsection \S~\ref{ss:Artin} we consider the 
algebra $C(\Gamma)$ of the Artin inverse semigroup $S(\Gamma)$ constructed from a graph $\Gamma$, which is closely related to the right-angled Artin group of $\Gamma$. Assuming that the complementary graph $\Gamma^{\cop}$ is connected, we establish discretisation for $C(\Gamma)$ with respect to a class of homology theories of algebras which includes homotopy algebraic $K$-theory (see Corollary \ref{coro:discartin}). 
We also show how discretisation for algebraic $K$-theory of $C(\Gamma)$ depends on the vanishing of twisted Bass' nil-$K$-theory groups (Corollary \ref{coro:kacegamma}). We use these discretisation results and a canonical independent resolution to compute the algebraic and homotopy algebraic $K$-theory of the Steinberg algebra of the tight groupoid of $S(\Gamma)$, $C_Q(\Gamma)$  (Theorem \ref{thm:bquotient} and Proposition \ref{prop:kaceqgamma}).  In Section \ref{s:Disc_KBG} we establish discretisation for algebraic $K$-theory spectra of ample groupoids as constructed in \cite{Li22} (see Corollary \ref{cor:Disc}), obtaining result i) of the list above. Next we establish excision (see Corollary \ref{cor:Excision}) and continuity (Corollary \ref{cor:continuity}) for such algebraic $K$-theory spectra. Combined with the idea of independent resolutions (see \cite{LM}), this leads to descriptions of algebraic $K$-theory spectra of ample groupoids in terms of simpler building blocks. We demonstrate this in the case of several examples, for graph groupoids (see \S~\ref{sss:Graphs}), groupoids arising from self-similar groups (see \S~\ref{sss:SelfSimilarGps}), groupoids arising from piecewise affine transformations (see \S~\ref{sss:AffineTrafos}), and AF groupoids (see \S~\ref{sss:AF}). 

In Appendix \ref{app1:proper} we prove some results on the local structure of proper ample groupoids, including the local finiteness of their inverse semigroups of compact open bisections (Proposition \ref{prop:semilocafin}). These are used in Section \ref{sec:disc} to prove discretisation for a wide class of proper independent groupoids.

In Appendix \ref{app2:lescure} we review and reprove T. Lescure's results from \cite{lescure} about the $K$-theory of the Pimsner-Toeplitz rings, as these results are used in \S~\ref{ss:sshrg}.

Throughout this paper, $\ell$ denotes a nonzero commutative ring with unit.

\section{Preliminaries}\label{sec:prelis}

\subsection{Locally finite weak semilattices}\label{subsec:lfws}
Let $P$ be a poset and $x\in P$. Write 
$$
(x\downarrow)=\{y\in P\colon y\le x\},\, (x\uparrow)=\{y\in P\colon y\ge x\}.
$$
In case $P$ happens to have a (necessarily unique) minimum element, we shall denote it as $0$ and call it the zero element of $P$. 
\bdefin
A \emph{weak semilattice} $L$ is a poset with a zero element $0$,
with the property that for all $l_1, l_2 \in L$ the set $(l_1\downarrow l_2)$ of maximal elements of $(l_1\downarrow)\cap (l_2\downarrow)$ is finite and nonempty. 
A \emph{semilattice} is a weak semilattice such that $(l_1\downarrow l_2)$ has a unique element for all $l_1,l_2\in L$; in this case we write $l_1l_2$ for that unique element.

\setlength{\parindent}{0.5cm} \setlength{\parskip}{0cm}

A subset $\bar{F}$ of a weak semilattice is called $\downarrow$-closed if for all $x, y \in \bar{F}$, $x \downarrow y \subseteq \bar{F}$. We call a weak semilattice $L$ \emph{locally finite} if for any finite subset $F \subseteq L$ there exists a finite subset $\bar{F} \subseteq L$ which is $\downarrow$-closed and such that $F \subseteq \bar{F}$.
\edefin
\setlength{\parindent}{0cm} \setlength{\parskip}{0.5cm}

In the following, we will use the following notation: Given a poset $L$ with zero element $0$ and a subset $F \subseteq L$, we write $F\reg \coloneq F \setminus \gekl{0}$. A subset of $L$ is called a down-set if whenever $l$ is in the subset and $k \in L$ satisfies $k \leq l$, then $k$ has to be in the subset. 

\begin{ex}
Any semilattice $E$ is a locally finite weak semilattice. 
\end{ex}

\bdefin\label{def:chi}
Let $L$ be a locally finite weak semilattice. A character on $L$ is a map $\chi: \: L \to \gekl{0,1}$ with the properties that $\chi(0) = 0$, that $
\chi$ is not identically $0$, and that for all $l_1, l_2 \in L$, $\chi(l_1) = 1$ and $\chi(l_2) = 1$ hold simultaneously if and only if there exists $l \in l_1 \downarrow l_2$ with $\chi(l) = 1$. Let $\widehat{L}$ be the set of all characters on $L$. We equip $\widehat{L}$ with the topology of pointwise convergence. For $0 \neq l, l_1,\dots,l_n \in L$, we write 
$$\widehat{L}(l) \coloneq \menge{\chi \in \widehat{L}}{\chi(l) = 1},$$ and 
\begin{equation}\label{eq:Ll1ln}
    \widehat{L}(l;l_1,\dots,l_n)=\widehat{L}(l)\cap\bigcap_{i=1}^n\widehat{L}(l_i)^c.
\end{equation}
Here ${X}^c=\widehat{L}\setminus X$. 
\edefin
\begin{ex}\label{ex:ppal}
 Let $x\in L^\times=L\setminus\{0\}$. Consider the map 
 \[
 \cP_x=\chi_{(x\uparrow)}:L\to \{0,1\},\,\, \chi_{(x\uparrow)}(l) = 1  \iff l \in (x\uparrow).
 \]
 $\cP_x$ is a character, the \emph{principal character} associated to $x$. 
\end{ex}
\begin{lem}\label{lem:basieljá}
\item[i)] If $L$ is a weak semilattice, then $\cB=\{\widehat{L}(l;l_1,\dots,l_n)\colon n\ge 0,\, l,l_1,\dots,l_n\in L^\times\}$ is a family of compact open subsets of $\widehat{L}$ and basis for its topology.
\item[ii)] If in addition $L$ is locally finite, then every compact open subset of $\widehat{L}$ is a finite disjoint union of elements of 
$\cB$.
\end{lem}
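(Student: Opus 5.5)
The plan is to realise $\widehat{L}$ as a subspace of the product space $\{0,1\}^L$. The pointwise-convergence topology is the subspace topology, so every set $\widehat{L}(l)$ is clopen in $\widehat{L}$, being the trace of a cylinder set. Two small observations will be used repeatedly. First, characters are upward closed: if $\chi(l)=1$ and $l\le m$, then $(l\downarrow m)=\{l\}$, so the defining condition with $l_1=l$, $l_2=m$ forces $\chi(m)=1$. Second, every defining condition of a character other than ``$\chi\not\equiv 0$'' involves only finitely many coordinates, because each $(l_1\downarrow l_2)$ is finite. Hence these conditions cut out a closed subset of $\{0,1\}^L$.

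For i), compactness comes first. The set $\widehat{L}(l)$ equals the set of maps satisfying these closed conditions together with $\chi(l)=1$; on this set nonvanishing is automatic. It is therefore closed in the compact space $\{0,1\}^L$, hence compact. Intersecting with the clopen sets $\widehat{L}(l_i)^c$ shows that every $\widehat{L}(l;l_1,\dots,l_n)$ is compact open.

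Next, $\cB$ is a basis. Let $W=\{\chi\colon \chi(a_i)=1\ (i\le p),\ \chi(b_j)=0\ (j\le q)\}$ be a product-basic neighbourhood of a character $\chi$; we may drop any $b_j=0$. Since $\chi\not\equiv0$, we may assume $p\ge1$. Iterating the character condition, there are $c\in(a_1\downarrow a_2)$ with $\chi(c)=1$, then $c'\in(c\downarrow a_3)$ with $\chi(c')=1$, and so on. This produces $l\in L^\times$ with $l\le a_i$ for all $i$ and $\chi(l)=1$. By upward closure, $\chi\in\widehat{L}(l;b_1,\dots,b_q)\subseteq W$.

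For ii), a compact open $U$ is a finite union of members of $\cB$, by i). Let $F$ be the finite set of all elements $l,l_i$ occurring in these members. Local finiteness gives a finite $\downarrow$-closed $\bar F\supseteq F$. The key step, and the one I expect to need the most care, is this: for each $\chi$ with $\chi|_{\bar F}\not\equiv0$, the set $\{x\in\bar F\colon\chi(x)=1\}$ has a unique minimum. To see it, take $x,y$ in this set. The character condition gives $z\in(x\downarrow y)\subseteq\bar F$ with $\chi(z)=1$. So the set is a finite downward-directed poset and has a least element $m_\chi$. Upward closure then shows $\chi|_{\bar F}=\chi_{(m_\chi\uparrow)}|_{\bar F}$.

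For $x\in\bar F^\times$, put
\[
V_x=\widehat{L}\bigl(x;\{y\in\bar F^\times\colon y<x\}\bigr)\in\cB,
\]
which is exactly the set of $\chi$ with $m_\chi=x$. The sets $V_x$ are therefore pairwise disjoint. Moreover, membership of $\chi$ in any set $\widehat{L}(l;l_1,\dots,l_n)$ with all letters in $\bar F$ depends only on $m_\chi$: we have $\chi\in\widehat{L}(l;l_1,\dots,l_n)$ iff $m_\chi\le l$ and $m_\chi\not\le l_i$ for all $i$. Hence each constituent of $U$, and thus $U$ itself, is the disjoint union of the finitely many $V_x$ it contains.
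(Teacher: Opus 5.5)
Your proof is correct, and in both parts it follows a different route from the paper's.

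\textbf{Part i).} For compactness, the paper maps $\widehat{L}(l)$ by restriction homeomorphically onto $\widehat{(l\downarrow)}$, and then shows that the latter is closed in $\{0,1\}^{(l\downarrow)}$. You instead observe that $\widehat{L}(l)$ is already closed in $\{0,1\}^L$: each character axiom involves only the finitely many coordinates $l_1,l_2$ and $(l_1\downarrow l_2)$, and the condition $\chi(l)=1$ makes nonvanishing automatic. This is shorter. The paper's restriction homeomorphism has the advantage that it is reused right afterwards, in the remark following the lemma and in identifying $\widehat{L}(l;M)$ with $\widehat{L(l)/M}$. For the basis property, the paper simply asserts that the cylinders $\{x\colon x_l=1,\ x_{l_i}=0\}$ form a basis of $\{0,1\}^L$. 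That is false in $\{0,1\}^L$ itself; their traces form a basis of $\widehat{L}$ only because of the character axioms. Your descent through $(a_1\downarrow a_2)$, $(c\downarrow a_3),\dots$ supplies exactly that missing step.

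\textbf{Part ii).} The paper forms the Venn pieces $V_I$ of the finite union and rewrites them using \eqref{eq:intL} and \eqref{eq:diffL}. It then disjointifies repeatedly, arguing termination by strict descent inside the finite set $\bar F$. You replace this iteration by a single canonical partition. Your atoms $\widehat{L}(x;\{y\in\bar F^\times\colon y<x\})$, $x\in\bar F^\times$, are the fibres of $\chi\mapsto\min(\supp(\chi)\cap\bar F)$, and every element of $\cB$ with letters in $\bar F$ is a union of atoms. This removes the termination bookkeeping and gives an explicit common refinement of everything in $\cB_{\bar F}$. It is also exactly the decomposition $\widehat{L}(x)=\bigsqcup_{y\in L_F(x)}\widehat{L}(y;L_F^\bullet(y))$ that the paper uses later, in the proof of Lemma \ref{lem:goodF}. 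The paper's route, for its part, yields the explicit formulas \eqref{eq:intL} and \eqref{eq:diffL} and a disjointification procedure, which it invokes again in the proof of Lemma \ref{lem:spanl}.
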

\begin{proof}
\begin{itemize}
    \item[i)] For $l;l_1,\dots,l_n\in L$ put $X=\{0,1\}^L\supset U_{l;l_1,\dots,l_n}=\{x \in X \colon x_l=1,\, x_{l_i}=0\,\forall i\}$. We have $\widehat{L}(l;l_1,\dots,l_n)=\widehat{L}\cap U_{l;l_1,\dots,l_n}$; since the $U_{l;l_1,\dots,l_n}$ are a basis of $X$, $\cB$ is a basis of its subspace $\widehat{L}$. In particular $\widehat{L}(l_i)$ is open and thus $\widehat{L}(l;l_1,\dots,l_n)=\widehat{L}(l)\cap\bigcap_{i=1}^n \widehat{L}(l_i)^c$ is closed in $L(l)$, so to prove that $\widehat{L}(l;l_1,\dots,l_n)$ is compact, it suffices to show that $L(l)$ is. One checks that the restriction map
\[
\res:\widehat{L}(l)\to \widehat{(l\downarrow)},\, \chi\mapsto \chi \vert_{(l\downarrow)}
\]
is a homeomorphism with inverse 
\[
\res^{-1}(\psi)(x)=\left\{\begin{matrix}
 1 & \text{ if } (l\downarrow x)\cap\supp(\psi)\ne \emptyset\\
 0 & \text{ else.}
\end{matrix}\right.
\]
Hence we are reduced to showing that $K:=\widehat{(l\downarrow)}$ is compact, which will follow if we prove it is closed in $Y:=\{0,1\}^{(l\downarrow)}$, since the latter space is compact. Take a net $(\psi_{\lambda})$ of elements of $K$ converging to $\psi\in Y$. Then $\psi(l)=\psi_{\lambda}(l)=1$. It is also straightforward to show that $\psi$ satisfies the remaining conditions of Definition \ref{def:chi} that make it an element of $K$.

\item[ii)] By part i), every compact open of $\widehat{L}$ is a finite union $V=\bigcup_{i=1}^nV_i$ of elements of $\cB$. Let $[[n]]=\{1,\dots,n\}$ and $\cP=\{I\subset [[n]]\colon I\ne\emptyset\}$. For each $I\in\cP$, put $I'=[[n]]\setminus I$ and set $$V_I=\bigcap_{i\in I}V_i\cap\bigcap_{j\in I'}V^c_j.$$ Then $V=\sqcup_{I\in\cP}V_I$, so it is enough to check that each $V_I$ is a disjoint union of elements of $\cB$. Because $L$ is assumed to be locally finite, and the $V_i$ are finitely many and belong to $\cB$, there is a $\downarrow$-closed finite set $F\subset L$ such that for each $i$ there are $l,l_1,\dots,l_p\in F$, all of them depending on $i$, such that $V_i=\widehat{L}(l;l_1,\dots,l_p).$ Let 
$$
\cB_F=\{\widehat{L}(l;l_1,\dots,l_p)\in \cB\colon p\ge 0,\, l,l_1,\dots,l_p\in F\}
$$
Using the definition of character \ref{def:chi}, we see that the $V_I$ are finite unions of elements of $\cB_F$. For example
\begin{gather}
\widehat{L}(l;l_1,\dots,l_p)\cap \widehat{L}(m;m_1,\dots,m_q)=\bigcup_{x\in (l\downarrow m)}\widehat{L}(x;l_1,\dots,l_p,m_1,\dots,m_q)\label{eq:intL}\\
\widehat{L}(l;l_1,\dots,l_p)\cap \widehat{L}(m;m_1,\dots,m_q)^c=\bigcup_{i=1}^q\bigcup_{x\in (l\downarrow m_i)}\widehat{L}(x;l_1,\dots,l_p,m)\label{eq:diffL}
\end{gather}
and since $F$ is $\downarrow$-closed, all $x$ appearing above are in $F$.  Note, furthermore, that if $l$ and $m$ are comparable, then the union of \eqref{eq:intL} has a single term, and if $l$ and $m_i$ are comparable, then the $i$-th union of \eqref{eq:diffL} does. Otherwise the $x$ appearing in \eqref{eq:intL} are strictly smaller than both $l$ and $m$ and those in \eqref{eq:diffL} are strictly smaller than both $l$ and some $m_i$. So we can write each $V_I$ as finite union, disjointify it, write each of the new sets as finite unions of elements of $\cB_F$ and if some of the unions have more than one term, we disjointify again; the process ends because $\cB_F$ is finite and because in each step, the finite unions we obtain involve elements of the form \eqref{eq:Ll1ln} with at least one of the $l, l_i$ strictly smaller than in the previous finite union we had disjointified.\qedhere
\end{itemize}
\end{proof}

\begin{rem}\label{rem:basilejá}
    The same argument as in the proof of part i) of Lemma \ref{lem:basieljá} shows that if $l_1,\dots,l_n\in L^\times$, then restriction of functions defines a homeomorphism of compact spaces $\bigcup_{i=1}^n\widehat{L}(l_i)\cong \widehat{\bigcup_{i=1}^nL(l_i)}$.
\end{rem}

We will show in Proposition \ref{prop:coeljá} that if $L$ is a locally finite weak semilattice, then every compact open subset $U\subset \whL$ is itself the character space of some locally finite weak semilattice. In view of Lemma \ref{lem:basieljá}, every such $U$ is a disjoint union of subsets of the form $\whL(l; M)$. We shall presently introduce the quotient $L/M$, and show in Lemma \ref{lem:morebasis} that $\whL(l; M)$ is the character space of $L/M$. 

Let $L$ be a weak semilattice and $\emptyset\ne M\subset L$ a downset. Define an equivalence relation $l\sim_Ml'$ iff $l=l'$ or $l,l'\in M$. The \emph{quotient} $L/M$ is the set $L/\sim_M$ equipped with the partial order $[l_1]\le [l_2]$ if either $l_1\le l_2$ or $l_1\in M$. 

\begin{lem}\label{lem:quot}
Let $L$ be a weak semilattice and $\emptyset\ne M\subset L$ a downset; write $[l]$ for the class in $L/M$ of an element $l\in L$. 
\item[i)]
\[
([l_1]\downarrow [l_2])=\left\{\begin{matrix}
[0] & \text{ if } (l_1\downarrow l_2)\subset M\\
[(l_1\downarrow l_2)\setminus M]&\text{ otherwise.}
\end{matrix}\right.
\]
\item[ii)] $L/M$ is a weak semilattice, and is locally finite if $L$ is.
\item[iii)] Consider the closed subspace $\hat{L}\supset C=\{\phi\colon \phi(M)=\{0\}\}$. The projection $\pi:L\to L/M$ induces a homeomorphism $\widehat{L/M}\cong C$.
\end{lem}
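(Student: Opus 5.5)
The proof is a direct verification in three parts, taken in the order of the statement.

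\emph{Part i).} First I will describe the down-set of a class in $L/M$. If $l\notin M$, then $([l]\downarrow)=\{[k]\colon k\le l\}\cup\{[0]\}$. If $l\in M$, the class $[l]=[0]=M$ is the minimum. Since $M$ is a down-set, if $l_1\le l_2$ with $l_2\in M$ then $l_1\in M$; this is what makes the relation $\le$ on $L/M$ antisymmetric and transitive, so that $L/M$ is a poset with zero $[0]=M$.

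Now take $l_1,l_2\notin M$ (otherwise one class is $[0]$ and the claim is immediate). A common lower bound of $[l_1]$ and $[l_2]$ is either $[0]$ or $[k]$ with $k\notin M$ and $k\le l_1,l_2$. Every such $k$ lies below some element of $(l_1\downarrow l_2)$, and that element is also not in $M$, again because $M$ is a down-set. Hence the maximal common lower bounds are exactly the classes of the elements of $(l_1\downarrow l_2)\setminus M$ when this set is nonempty. Distinct elements of $L\setminus M$ are incomparable in $L$ if and only if their classes are incomparable, so maximality is preserved. If the set is empty, the only common lower bound is $[0]$.

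\emph{Part ii).} Finiteness and nonemptiness of $([l_1]\downarrow[l_2])$ follow at once from i). For local finiteness, take a finite $F\subset L/M$, lift it to a finite subset of $L$, and enlarge it to a finite $\downarrow$-closed $\bar F\subset L$. The image $\pi(\bar F)\cup\{[0]\}$ is finite, and by the formula in i) it is $\downarrow$-closed.

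\emph{Part iii).} Define $\pi^*\colon\widehat{L/M}\to\{0,1\}^L$ by $\psi\mapsto\psi\circ\pi$. It is injective because $\pi$ is surjective, and it is continuous for pointwise convergence. Its image lies in the set of maps vanishing on $M$, since $\pi(M)=[0]$.

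The main step is to show that $\pi^*$ maps characters to characters and that its image is all of $C$. Given $\phi\in C$, define $\psi([l])=\phi(l)$; this is well defined because $\phi$ vanishes on $M$. The character axiom for $\psi$ at a pair $[l_1],[l_2]$ is then checked using i).
\begin{itemize}
\item If $(l_1\downarrow l_2)\subset M$, then $\phi(l_1)=\phi(l_2)=1$ would force $\phi(x)=1$ for some $x\in(l_1\downarrow l_2)\subset M$, which is impossible. So both sides of the axiom are false.
\item Otherwise, the elements $x\in(l_1\downarrow l_2)$ with $\phi(x)=1$ all lie outside $M$, so the axiom for $\phi$ over $(l_1\downarrow l_2)$ is the same as the axiom for $\psi$ over $([l_1]\downarrow[l_2])$.
\end{itemize}
Conversely, $\psi\circ\pi$ satisfies the character axiom for $L$ by the same case split. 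Also $\psi\circ\pi$ is not identically zero and vanishes at $0$.

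$C$ is closed in $\widehat{L}$, being an intersection of the closed sets $\widehat{L}(m)^c$ for $m\in M$. The map $\pi^*$ is a bijection onto $C$, and it is a homeomorphism because both topologies are pointwise convergence and $\psi([l])=\phi(l)$ is a coordinate of $\phi$. No compactness argument is needed for this step.
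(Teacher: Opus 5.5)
Your argument is correct and is exactly the direct verification that the paper compresses into ``Straightforward'': the case split on whether $(l_1\downarrow l_2)\subset M$ gives i), i) gives ii), and pulling characters back along $\pi$ gives iii). One point should be made explicit. In i) you use that every common lower bound $k$ of $l_1,l_2$ lies below some element of $(l_1\downarrow l_2)$. This is not literally part of the paper's definition of a weak semilattice, which only requires $(l_1\downarrow l_2)$ to be finite and nonempty. It is, however, implicitly assumed throughout the paper; for example, it is exactly what makes the principal characters $\cP_x$ of Example~\ref{ex:ppal} characters. Without it the lemma fails: take $0<a_1<a_2<\cdots$ and $0<b$, all below two incomparable elements $l_1,l_2$, and let $M=\{0,b\}$; then $([l_1]\downarrow[l_2])=\emptyset$ in $L/M$. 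So state this covering property as part of the definition, and note that your argument in i) shows $L/M$ inherits it.
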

\begin{proof}
  Straightforward.   
\end{proof}

Let $L$ be a locally finite weak semilattice, $l\in L$ and $M\subset L$ a downset. 
Put
\[
\widehat{L}(l;M)=\widehat{L}(l)\setminus (\bigcup_{m\in M}\widehat{L}(m)),\,\, L(l;M)=(l\downarrow)\setminus M.
\]
We abbreviate $L(l):=L(l;\emptyset)=(l\downarrow)$. We say that $M$ is \emph{finitely generated} if it can be expressed as a finite union $M=\bigcup_{i=1}^nL(l_i)$. 

\begin{lem}\label{lem:morebasis}
Let $L$ be a locally finite weak semilattice, $p\ge 1$, $l,m_1,\dots,m_p\in L$, and set $U \coloneq \widehat{L}(l;m_1,\dotsc,m_p)$.

    \item[i)] There exist $q\ge 0$ and $l_1,\dots, l_q\in L(l)$ such that
    $U=\widehat{L}(l;l_1,\dots,l_q)$.
    \item[ii)] Let $l,l_1,\dots,l_q$ be as in i) and let $M=\bigcup_{i=1}^qL(l_i)$. Then $U=\widehat{L}(l;M)$.
    \item[iii)] There is a natural homeomorphism $U\cong \widehat{L(l)/M}$.
\end{lem}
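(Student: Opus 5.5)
For part i), I will replace each $m_j$ by the elements of $(l\downarrow m_j)$. The key identity, which follows directly from Definition \ref{def:chi}, is that for $\chi\in\widehat{L}(l)$ we have $\chi(m_j)=1$ if and only if $\chi(x)=1$ for some $x\in(l\downarrow m_j)$. Indeed, $\chi(l)=\chi(m_j)=1$ holds exactly when some $x\in(l\downarrow m_j)$ satisfies $\chi(x)=1$. Moreover, if $\chi(x)=1$ for such an $x$, then $\chi(m_j)=1$, because characters are monotone: if $x\le y$ then $(x\downarrow y)=\{x\}$, so $\chi(x)=1$ forces $\chi(y)=1$. It follows that
\[
\widehat{L}(l)\cap\widehat{L}(m_j)^c=\widehat{L}(l)\cap\bigcap_{x\in(l\downarrow m_j)}\widehat{L}(x)^c .
\]
Each set $(l\downarrow m_j)$ is finite, and its elements lie in $L(l)$. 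So I take $l_1,\dots,l_q$ to be the nonzero elements of $\bigcup_j (l\downarrow m_j)$; the zero element can be discarded because $\widehat{L}(0)=\emptyset$. This proves i), and local finiteness is not even needed here.

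For part ii), the goal is $\widehat{L}(l;l_1,\dots,l_q)=\widehat{L}(l)\setminus\bigcup_{m\in M}\widehat{L}(m)$. The inclusion $\supseteq$ is clear, since each $l_i\in M$. For $\subseteq$, suppose $\chi(m)=1$ for some $m\in M$. Then $m\le l_i$ for some $i$, and monotonicity gives $\chi(l_i)=1$. This gives the reverse inclusion.

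For part iii), I will compose two homeomorphisms. First, the proof of Lemma \ref{lem:basieljá}\,i) shows that restriction $\res:\widehat{L}(l)\to\widehat{L(l)}$ is a homeomorphism; here $L(l)=(l\downarrow)$ is a weak semilattice, since $(x\downarrow y)$ computed in $L(l)$ agrees with the one computed in $L$ for $x,y\le l$. Under $\res$, the subset $U=\widehat{L}(l;M)$ corresponds to the characters $\psi$ of $L(l)$ that vanish on $M$. This uses that $M\subseteq L(l)$ is a downset, which holds because each $l_i\le l$. Second, Lemma \ref{lem:quot}\,iii) applied to the weak semilattice $L(l)$ and the nonempty downset $M$ identifies exactly this closed subspace $C$ with $\widehat{L(l)/M}$. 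If $q=0$, then $M$ is empty and I use $M=\{0\}$ instead, which changes nothing.

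The only mildly delicate point is to check that restriction carries $U$ onto $C$ precisely, rather than merely into it. This follows from the explicit inverse formula in the proof of Lemma \ref{lem:basieljá}, since for $m\le l$ one has $(l\downarrow m)=\{m\}$.
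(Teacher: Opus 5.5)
Your proposal is correct and follows the paper's proof. Part i) rests on the same observation that for $\chi\in\widehat{L}(l)$ one has $\chi(m)=0$ if and only if $\chi$ vanishes on $(l\downarrow m)$, part ii) on monotonicity of characters, and part iii) on combining ii) with Lemma~\ref{lem:quot}~iii). You are slightly more careful than the paper: you make explicit the restriction homeomorphism $\widehat{L}(l)\cong\widehat{L(l)}$ from the proof of the basis lemma, and you handle the case $q=0$, where $M=\emptyset$ is excluded by the hypotheses of Lemma~\ref{lem:quot}.
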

\begin{proof} If $\phi\in\widehat{L}(l)$ and $m\in L$, then it follows from the definition of character that $m\notin\supp(\phi)$ if and only if $\phi(l\downarrow m)=\{0\}$. This proves part i). If $x\le y\in L$, then $(x\downarrow y)=\{x\}$, so $\phi(y)=0\Rightarrow \phi(x)=0$. Hence $\phi(l_1)=\dots=\phi(l_q)=0\iff \phi(M)=0$, which proves ii). Part iii) follows by combining ii) of the current lemma with iii) of Lemma \ref{lem:quot}.
\end{proof}

To complete the preliminaries of Proposition \ref{prop:coeljá}, next we introduce coproducts of locally finite weak semilattices and show in Lemma \ref{lem:coprolfws} that a finite disjoint union of character spaces corresponds to the character space of the coproduct. 

The \emph{coproduct} of two locally finite weak semilattices $L_1,L_2$ is their coproduct as pointed sets, equipped with the partial order that makes $L_1$ and $L_2$ sub-semilattices, and any two nonzero elements $l_1\in L_1$ and $l_2\in L_2$ incomparable. 
\begin{lem}\label{lem:coprolfws}
 Let $n\ge 1$ and let $L_1,\dots,L_n$ be locally finite weak semilattices and let 
 $\bigvee_{i=1}^nL_i$ be their coproduct. Then the maps
 \[
 \ext_i:\widehat{L_i}\to\widehat{\bigvee_{1\le i\le n}L_i},\, \ext_i(\phi)(x)=\left\{\begin{matrix}
  \phi(x) & \text{  if } x\in L_i\\
  0& \text{ otherwise }
 \end{matrix}\right.
 \]
 induce a homeomorphism
 \[
 \ext:\bigsqcup_{i=1}^n\widehat{L}_i\to\widehat{\bigvee_{1\le i\le n}L_i}.
 \]
\end{lem}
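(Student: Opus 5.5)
The plan is to verify in turn that each $\ext_i$ lands in characters, that the $\ext_i$ have pairwise disjoint images covering $\widehat{\bigvee L_i}$, and that $\ext$ is continuous. Since the domain is compact when each $\widehat{L_i}$ is, one could then conclude with compact-to-Hausdorff. However, $\widehat{L_i}$ need not be compact, so instead I will write down the inverse explicitly and check that it is continuous as well.

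\emph{Step 1: well-definedness.} Write $L=\bigvee_{i=1}^n L_i$. For $x,y\in L$ nonzero, $(x\downarrow y)$ is computed inside $L_i$ if both lie in $L_i$; otherwise $x,y$ lie in different summands and $(x\downarrow y)=\{0\}$, since the only common lower bound of incomparable elements from distinct summands is $0$. Hence $L$ is a weak semilattice, and it is locally finite: take a union of $\downarrow$-closed finite sets in each $L_i$, together with $0$.

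Now fix $\phi\in\widehat{L_i}$ and check the character condition for $\ext_i(\phi)$. Clearly $\ext_i(\phi)(0)=0$ and $\ext_i(\phi)\neq 0$. For $x,y\in L$:
\begin{itemize}
\item if both lie in $L_i$, the condition is exactly that for $\phi$;
\item if one of them lies outside $L_i\setminus\{0\}$, then the left-hand side fails. On the right-hand side, $(x\downarrow y)$ is either $\{0\}$ (different summands) or computed in some $L_j$ with $j\ne i$, where $\ext_i(\phi)$ vanishes.
\end{itemize}
So $\ext_i(\phi)$ is a character.

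\emph{Step 2: bijectivity.} Let $\chi\in\widehat L$. The key claim is that $\supp(\chi)$ meets exactly one summand. It meets at least one because $\chi\ne0$. If $\chi(x)=\chi(y)=1$ with $x\in L_i^\times$, $y\in L_j^\times$ and $i\neq j$, then some element of $(x\downarrow y)=\{0\}$ would have value $1$, contradicting $\chi(0)=0$.

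So $\chi$ is supported in a unique $L_i$, and $\chi|_{L_i}$ is a character of $L_i$: it is nonzero, and down-intersections within $L_i$ agree with those in $L$. Moreover $\chi=\ext_i(\chi|_{L_i})$. Injectivity and disjointness of the images are immediate, since restriction recovers $\phi$ from $\ext_i(\phi)$ and the supports determine $i$.

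\emph{Step 3: topology.} Both spaces carry the topology of pointwise convergence. Continuity of $\ext_i$ holds because each coordinate of $\ext_i(\phi)$ is either a coordinate of $\phi$ or the constant $0$. Each image $\ext_i(\widehat{L_i})=\bigcup_{x\in L_i^\times}\widehat L(x)$ is open, being a union of basic open sets by Lemma \ref{lem:basieljá}. These images partition $\widehat L$ by Step 2, so each is also closed. On each image, the inverse is $\chi\mapsto\chi|_{L_i}$, which is continuous for pointwise convergence. Hence $\ext$ is a homeomorphism onto a disjoint union of clopen pieces.

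The only mildly delicate point is Step 2, the exclusivity of the support, and it rests entirely on the computation $(x\downarrow y)=\{0\}$ for elements of different summands. The rest is routine.
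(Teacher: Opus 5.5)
Your proof is correct and is essentially the paper's argument. Incomparability of nonzero elements in distinct summands gives $(x\downarrow y)=\{0\}$; this makes each $\ext_i(\phi)$ a character and forces every character of $\bigvee_i L_i$ to be supported in a single summand, whence bijectivity. The differences are only in the topological bookkeeping: you check continuity coordinatewise and continuity of the inverse via restriction on the clopen pieces $\bigcup_{x\in L_i^\times}\widehat{L}(x)$, while the paper shows that preimages and images of basic sets $\widehat{L}(l;x_1,\dots,x_q)$ (with all $x_k$ in the summand of $l$) are again basic. Your remark that compactness of $\widehat{L_i}$ cannot be assumed is well taken.
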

\begin{proof}
Write $L=\bigvee_{i=1}^nL_i$. Because elements in distinct $L_i$ are incomparable,   
$\ext_i$ is well-defined for each $i$. If $l\in L_j$ and $m_1,\dots m_p\in L$ then there are $q\ge 0$ and $x_1,\dots,x_q\in L_j$ such that $U=\widehat{L}(l;m_1,\dots, m_p)=\widehat{L}(l; x_1,\dots,x_q)$, and we have $\ext_i^{-1}(U)=\widehat{L_j}(l; x_1,\dots,x_q)$. By Lemma \ref{lem:basieljá} i), this implies that $\ext_i$ is continuous. Thus $\ext$ is well-defined and continuous. Further, it is surjective, again by the incomparability between elements of distinct $L_i$, and open, since it maps $\hat{L_i}(l;x_1,\dots, x_q)$ to $\widehat{L}(l;x_1,\dots,x_q)$ for all $i$ and $l,x_1,\dots, x_q\in L_i$. Injectivity of $\ext$ is also clear.
\end{proof}

\begin{prop}\label{prop:coeljá}
Let $L$ be a locally finite weak semilattice and $\emptyset\ne U\subset \widehat{L}$ a compact open subset. Then there exists a locally finite weak semilattice  $L'$, isomorphic to a wedge of subquotients of $L$, such that $U\cong\widehat{L'}$ and such that $L'$ has a finite, nonzero number of maximal elements. 
\end{prop}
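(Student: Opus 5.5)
We assemble the earlier lemmas. By Lemma \ref{lem:basieljá} ii), write $U=\bigsqcup_{j=1}^k V_j$, a finite disjoint union of nonempty basic sets $V_j=\widehat{L}(l^{(j)};m^{(j)}_1,\dots,m^{(j)}_{p_j})$ with $l^{(j)},m^{(j)}_i\in L^\times$. Discard empty pieces; since $U\neq\emptyset$, $k\ge 1$. If $p_j=0$ we have $V_j=\widehat{L}(l^{(j)})$, which by the proof of Lemma \ref{lem:basieljá} i) is homeomorphic to $\widehat{L(l^{(j)})}$. In that case put $L_j=L(l^{(j)})$; alternatively, view it as $L(l^{(j)})/\{0\}$, taking the downset $M=\{0\}$. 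If $p_j\ge 1$, Lemma \ref{lem:morebasis} i)--iii) gives elements $l^{(j)}_1,\dots,l^{(j)}_{q_j}\in L(l^{(j)})$ and the finitely generated downset $M_j=\bigcup_i L(l^{(j)}_i)\cup\{0\}$ with $V_j\cong\widehat{L(l^{(j)})/M_j}$. In both cases put $L_j=L(l^{(j)})/M_j$, a subquotient of $L$.

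Next we check that each $L_j$ is a locally finite weak semilattice. The downset $L(l)$ of $L$ is a weak semilattice: for $x,y\le l$, all elements of $(x\downarrow y)$ lie below $l$. It is also locally finite, since given a finite $F\subset L(l)$ we can take a $\downarrow$-closed finite $\bar F\supset F\cup\{l\}$ in $L$ and intersect it with $L(l)$. The intersection stays $\downarrow$-closed because the meets of elements below $l$ are below $l$. Lemma \ref{lem:quot} ii) then makes $L_j$ a locally finite weak semilattice.

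Now set $L'=\bigvee_{j=1}^k L_j$. The coproduct of locally finite weak semilattices is again one: a finite $F$ splits into finite pieces $F\cap L_j$, we close each piece, and take the union, since $x\downarrow y=\{0\}$ across distinct summands. Lemma \ref{lem:coprolfws} then gives $\widehat{L'}\cong\bigsqcup_j\widehat{L_j}\cong\bigsqcup_j V_j=U$.

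Finally we count the maximal elements. In $L_j$ the class $[l^{(j)}]$ is the maximum, since every $x\in L(l^{(j)})$ satisfies $x\le l^{(j)}$, which gives $[x]\le[l^{(j)}]$. It is nonzero: $l^{(j)}\notin M_j$, because otherwise $V_j$ would be empty, as characters vanishing on $M_j$ vanish at $l^{(j)}$. So the maximal elements of $L'$ are exactly the $k$ elements $[l^{(j)}]$, one per summand, and $1\le k<\infty$. The main care needed is in this last point and in applying Lemma \ref{lem:morebasis} uniformly, including the case where $M_j$ is just $\{0\}$. Everything else is bookkeeping from the cited lemmas.
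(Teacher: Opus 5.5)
Your proof is correct and follows the paper's argument: split $U$ into finitely many disjoint basic sets, identify each piece with $\widehat{L(l_j)/M_j}$ via Lemma~\ref{lem:morebasis}, and glue the pieces with Lemma~\ref{lem:coprolfws}, so that the maximal elements of $L'$ are the classes $[l_j]$. You also verify several details the paper leaves implicit: the case $p_j=0$ (taking $M_j=\{0\}$), local finiteness of $L(l)$ and of the wedge, and $[l_j]\neq[0]$. One small correction in the wedge step: when you take the union of the closed pieces you must also add $0$, since an element $x$ on its own is already $\downarrow$-closed and $x\downarrow y=\{0\}$ across distinct summands.
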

\begin{proof}
By Lemma \ref{lem:basieljá} ii), $U=\bigsqcup_{i=1}^nU_i$ with each $U_i$ a member of the basis $\cB$ of part i) of the same lemma. Thus by Lemma \ref{lem:morebasis}, $U_i=\widehat{L(l_i)/M_i)}$ for some $l_1,\dots,l_n\in L$ and some $M_i\subset (l_i\downarrow)$. By Lemma \ref{lem:coprolfws}, for  $L'=\bigvee_{i=1}^n(L(l_i)/M_i)$ we have $U\cong \widehat{L'}$. The proof is concluded by observing that the maximal elements of $L'$ are $[l_1],\dots,[l_n]$.
\end{proof}    
Put
 \begin{gather*}
\CO_d(L)=\{V\subset L^\times\colon \exists n\ge 1,\, l_1,\dots, l_n\in L^\times, 0\in M_i\subsetneq L(l_i)\\ \text{ finitely generated such that } V=\bigsqcup_{i=1}^nL(l_i;M_i)\}. 
\end{gather*}
\begin{lem}\label{lem:principal}
    Let $L$ be a locally finite weak semilattice, and let $\cP:L^\times\to \widehat{L}$, $x\mapsto\cP_x$ be as in Example \ref{ex:ppal}. 
    \item[i)] $\cP$ is injective and its image is dense.
    
    \item[ii)] The map $\CO(\widehat{L})\to \CO_d(L)$, $U\mapsto U_d:=\cP^{-1}(U)$ is an isomorphism of lattices. 
\end{lem}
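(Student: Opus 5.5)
The plan is to compute the preimages of the basic sets under $\cP$ and then pass to finite disjoint unions. The basic identity is
\[
\cP_x(l)=1\iff x\le l .
\]
Hence $\cP^{-1}(\whL(l))=L(l)^\times$, and more generally
\[
\cP^{-1}(\whL(l;l_1,\dots,l_n))=(l\downarrow)\setminus\bigcup_i (l_i\downarrow)=L(l;M)\cap L^\times,
\qquad M=\{0\}\cup\bigcup_i L(l_i).
\]

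For part i), injectivity is immediate. If $\cP_x=\cP_y$, then evaluating at $x$ gives $\cP_y(x)=\cP_x(x)=1$, so $y\le x$; symmetrically $x\le y$, and $x=y$. For density it suffices, by Lemma \ref{lem:basieljá} i), to show that every nonempty basic set $\whL(l;l_1,\dots,l_n)$ contains some $\cP_x$. By the identity above, the preimage of this set is $L(l)^\times\setminus\bigcup_i L(l_i)$, and this contains $l$ unless $l\le l_i$ for some $i$. In that case $\whL(l)\subseteq\whL(l_i)$, because characters are monotone: $\phi(l)=1$ and $l\le l_i$ force $\phi(l_i)=1$, using $(l\downarrow l_i)=\{l\}$ and the character axiom. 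So the basic set is empty whenever $l\le l_i$ for some $i$, and otherwise $\cP_l$ lies in it.

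For part ii), I first show that $U\mapsto U_d$ lands in $\CO_d(L)$. By Lemma \ref{lem:basieljá} ii) and Lemma \ref{lem:morebasis}, any compact open $U$ can be written as $U=\bigsqcup_{i=1}^n \whL(l_i;M_i)$, where $M_i\subset L(l_i)$ is finitely generated and contains $0$. We may take $M_i\ne L(l_i)$, since otherwise that piece is empty and can be dropped. Preimages preserve disjoint unions, so $U_d=\bigsqcup_i L(l_i;M_i)$, which lies in $\CO_d(L)$. One caveat: the empty set must also be allowed, either as the bottom element of $\CO_d(L)$ or by restricting to nonempty sets.

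Next, $U\mapsto U_d$ clearly preserves unions, intersections and inclusions, since $\cP^{-1}$ does. For surjectivity, given $V=\bigsqcup_i L(l_i;M_i)$ with $M_i=\bigcup_j L(m_{ij})$, take $U=\bigcup_i \whL(l_i;m_{i1},\dots)$; the computation above gives $U_d=V$. Injectivity, and the fact that the map reflects order, follow from density: for compact open $U,U'$, the set $U\setminus U'$ is open, because $U'$ is compact in a Hausdorff space and therefore closed. So if $U\not\subseteq U'$, then $U\setminus U'$ is a nonempty open set, it contains some $\cP_x$ by part i), and thus $U_d\not\subseteq U'_d$. An order isomorphism between lattices is a lattice isomorphism.

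The main obstacle I expect is bookkeeping rather than depth. One point is ensuring that $\widehat L$ is Hausdorff; this holds because it is a subspace of $\{0,1\}^L$. The other is reconciling the convention $0\in M_i\subsetneq L(l_i)$ in $\CO_d(L)$ with the empty set and with degenerate pieces. The image in $\CO_d(L)$ must also be closed under the lattice operations, but this is inherited through the bijection.
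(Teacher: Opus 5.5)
Your proof is correct and follows essentially the same route as the paper: injectivity by evaluating $\cP_x$ at $x$, density because a nonempty basic set contains $\cP_l$, and part ii) via the identity $(\whL(l;M))_d=L(l;M)$ together with the decomposition of compact opens into finitely many disjoint basic sets. The paper only asserts that $V\mapsto\overline{V}$ is the inverse; you spell out the same density-plus-closedness argument through $U\setminus U'$. Your caveat is also well taken: $\emptyset\in\CO(\whL)$ does not have the form required of elements of $\CO_d(L)$, so the statement as written needs $\emptyset$ added to $\CO_d(L)$ or must be restricted to nonempty sets.
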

\begin{proof}
  If $x_1,x_2\in L^\times$ and $\cP_{x_1}=\cP_{x_2}$, then $\cP_{x_i}(x_j)=\cP_{x_j}(x_j)$ imply that $x_1\le x_2\le x_1$, so $x_1=x_2$. Moreover, if  $U\subset \hat{L}$ is open and $\chi\in U$, then by Lemma \ref{lem:basieljá}, there are $l\in L^\times$ and $M\subset L(l)$ such $\chi\in \widehat{L}(l;M)$. Density follows from this since $\cP_l\in \widehat{L}(l,M)$. This proves i). Next observe that, if $M\subset L(l)$ is a downset,
  \[
  (\widehat{L}(l,M))_d=\{x\in L^\times\colon \chi_{x\uparrow}(l)=1,\chi_{x\uparrow}(M)=0\}=\{x\in L(l)\colon x\notin M\}=L(l;M). 
  \]
  It follows from this, part ii) of Lemma \ref{lem:basieljá}, and the fact that the inverse image map $\cP^{-1}$ is order and disjointness-preserving, that $\cP^{-1}$ maps $\CO(\widehat{L})\onto \CO_d(L)$. Moreover one checks, using i) of the present lemma, that $\cP^{-1}$ is a bijection with inverse $V\mapsto \widebar{V}$. 
\end{proof}
\begin{rem}\label{rem:Ud=Ltimes}
Let $U=\bigsqcup_{i=1}^n\widehat{L}(l_i;M_i)\in\CO(\widehat{L})$ and let $L_U=\bigvee_{i=1}^nL(l_i)/M_i$. Then the projection $U_d=\bigsqcup_{i=1}^nL(l_i;M_i)\onto L_U$ restricts to a bijection $U_d\isom L_U^\times$. Moreover, by Lemmas \ref{lem:morebasis} and \ref{lem:coprolfws} we have a homeomorphism $\widehat{L_U}\cong U$.      
\end{rem}

\subsection{Inverse semigroup actions}\label{subsec:invact}
\bdefin
Let $X$ be a topological space and let 
\[
\cI(X)=\{f:A\isom B\colon A,B\subset L \text{ open }\} 
\]
be the set of all homeomorphisms between open subsets of $X$. Remark that composition of functions makes $\cI(X)$ into an inverse semigroup, pointed by the empty function $0$. An \emph{action} of a pointed inverse semigroup $S$ on $X$ is a homomorphism of pointed semigroups 
$\rho:S\to\cI(X)$, such that for $\dom_X(s):=\dom(\rho(s))$ $(s\in S)$ we have $X=\bigcup_{s\in S}\dom_X(s)$. If $s\in S$ and $x\in\dom_X(s)$, we write
$s\cdot x=\rho(s)(x)$.
\edefin
\begin{ex}\label{ex:iposet}
Let $X$ be a poset; equip it with the downward-oriented Alexandroff topology, where the open sets are the downsets. Then $\cI(X)$ is the set of all poset isomorphisms between downsets of $X$ and an action of an inverse semigroup on $X$ in the sense above is what we call a \emph{poset action} of $S$ on $X$. If $X$ is just a set, then by a (set-theoretic) action of $S$ on $X$ we understand a poset action where $X$ is equipped with the discrete topology, or equivalently with the discrete preorder, so that any two distinct elements are incomparable.
\end{ex}
A poset action $S \acts L$ on a locally finite weak semilattice $L$ induces an action $S \acts \widehat{L}$ by homeomorphisms between open subsets, called the  \emph{dual action}. The latter is given by the partial homeomorphisms $\dom_{\widehat L}(s) \to \ran_{\widehat L}(s), \, \chi \ma s\cdot \chi$, where 
\[
 \dom_{\widehat L}(s) \coloneq \menge{\chi \in \widehat{L}}{\chi(l) = 1 \text{ for some } l \in \dom_L(s)},
\]
$\ran_{\widehat L}(s)$ is defined analogously, and $s\cdot \chi(l) = 1$ if and only if there exists $k \in \ran_L(s)$ with $k \leq l$ and $\chi(s^{-1}(k)) = 1$.

The \emph{transformation groupoid} or \emph{groupoid of germs} $S\ltimes X$ of an action  of an inverse semigroup $S$ by partial homeomorphisms on a locally compact, Hausdorff topological space $X$ is defined in \cite{exel}*{Section 4}. We have 
\begin{equation}\label{eq:gpdgrm}
 S \ltimes X \coloneq \{(s,x) \in S \times X\colon x \in \dom_{X}(s) \}/ { }_{\sim},    
\end{equation}
where $(s_1,x_1) \sim (s_2,x_2)$ if and only if $x_1 = x_2$ and there exists $s \leq s_1, s_2$ with $x_1 \in \supp_X(s)$. We write  $[s,x]$ for the class of $(s,x)$. 
The domain and codomain functions are given by $d[s,x]=x$, $c[s,x]=s\cdot x$ and composition by $[t,s.x][s,x] = [ts,x]$. The groupoid $S\ltimes X$ is equipped with the topology of \cite{exel}*{Proposition 4.14}, of which a basis is given by the open bisections 
\[
[s,U]=\{[s,x]\colon x\in U\}
\]
where $s\in S$ and $U\subset\dom_{X}(s)$ is open. The groupoid $S\ltimes X$ is always \'etale and is ample if and only if $X$ is totally disconnected, in which case the bisections $[s,U]$ as above with $U\in\CO(X)$ also form a basis. 
\begin{defi}\label{defi:indep}
An ample groupoid $G$ is called \emph{independent} if there exist an inverse semigroup $S$, 
a locally finite weak semilattice and a poset action $S\acts L$ such that $G\cong S\ltimes\whL$.
\end{defi}
\bremark\label{rem:actpat}
The most canonical action of an inverse semigroup $S$ on a locally finite weak semilattice is the action $S \acts E$ on the semilattice $E$ of idempotents in $S$. The action is given at $s \in S$ by $\dom_E(s) = \menge{e \in E}{e \leq s^{-1}s}$ and $s\cdot e \coloneq s e s^{-1}$. In this case, $S \ltimes \widehat{E}$ is (up to a small modification, see \cite{CELY}*{\S~5.5.4}) Paterson's universal groupoid of $S$ as in \cite{Pat}*{\S~4.3}.
\eremark

\begin{lem}\label{lem:doms}
Let $L$ be a locally finite weak semilattice and $S\acts L$ a poset inverse semigroup action. Let $s\in S$, $l\in L^\times$ and $M\subsetneq L(l)$ a finitely generated downset. Then the following are equivalent
\item[i)] $\dom_{\widehat{L}}(s)\supset \widehat{L}(l;M)$.
\item[ii)] $\dom_{L}(s)\supset L(l;M)$.
\item[iii)] $\dom_{L}(s)\supset L(l)$.
\item[iv)] $\dom_{\widehat{L}}(s)\supset \widehat{L}(l)$.
\end{lem}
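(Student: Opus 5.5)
We prove the cycle of implications iii)$\Rightarrow$ii)$\Rightarrow$i)$\Rightarrow$iv)$\Rightarrow$iii). Two facts about a poset action are used throughout: $\dom_L(s)$ is a downset of $L$, and by the definition of the dual action, $\chi\in\dom_{\widehat L}(s)$ if and only if $\chi(k)=1$ for some $k\in\dom_L(s)$.

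The implication iii)$\Rightarrow$ii) is trivial, since $L(l;M)\subset L(l)$.

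For ii)$\Rightarrow$i), take $\chi\in\widehat{L}(l;M)$. Then $\chi(l)=1$. Because $M\neq L(l)$ is a downset and $l$ is the maximum of $L(l)$, we have $l\notin M$. Hence $l\in L(l;M)\subset\dom_L(s)$, and therefore $\chi\in\dom_{\widehat L}(s)$. (For this step we only need to test $k=l$.)

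For i)$\Rightarrow$iv), we test i) on the principal character $\cP_l$ of Example \ref{ex:ppal}. We have $\cP_l(l)=1$, and $\cP_l(m)=0$ for all $m\in M$, since $m\ge l$ would force $l\in M$. So $\cP_l\in\widehat{L}(l;M)\subset\dom_{\widehat L}(s)$. This gives some $k\in\dom_L(s)$ with $\cP_l(k)=1$, that is, $k\ge l$. As $\dom_L(s)$ is a downset, $l\in\dom_L(s)$, so $L(l)\subset\dom_L(s)$. Every $\chi\in\widehat L(l)$ then satisfies $\chi(l)=1$ with $l\in\dom_L(s)$, hence $\chi\in\dom_{\widehat L}(s)$, which is iv). Note that we have in fact proved iii) along the way.

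For iv)$\Rightarrow$iii), apply the same argument to $\cP_l\in\widehat L(l)$. It yields $k\ge l$ with $k\in\dom_L(s)$, hence $l\in\dom_L(s)$ and $L(l)\subset\dom_L(s)$.

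There is no serious obstacle. The only point needing care is that $M\subsetneq L(l)$ together with $M$ being a downset forces $l\notin M$. This is what makes $\cP_l$ lie in $\widehat L(l;M)$ and $l$ lie in $L(l;M)$. Finite generation of $M$ and local finiteness of $L$ are not needed for this argument.
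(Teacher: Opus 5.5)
Your proof is correct and is essentially the paper's argument. It uses the same ingredients: $l\notin M$ because $M\subsetneq L(l)$ is a downset, principal characters, and $\dom_L(s)$ being a downset. The only differences are that you arrange the implications as a single cycle and test only on $\cP_l$, whereas the paper tests on every $\cP_x$ with $x\in L(l;M)$, which is a harmless streamlining.
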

\begin{proof}
If $x\in L(l;M)$ then $\cP_x\in\whL(l;M)$, from which it follows that i) implies ii). If ii) holds and $\chi\in \widehat{L}(l;M)$, then $l\in\supp(\chi)\cap \dom_L(s)$, so $\chi\in\dom_{\widehat{L}}(s)$. Thus i) holds $\iff$ ii) does. The particular case $M=0$ gives the equivalence of iii) and iv). Clearly iii)$\Rightarrow$ ii). Assume conversely that ii) holds. Then $l\in L(l;M)\subset\dom_L(s)$, which, since $\dom_L(s)$ is a downset, implies that $L(l)\subset\dom_L(s)$. 
\end{proof}

\smallskip

\begin{lem}\label{lem:tecombino}
 Let $X$ be a set and $S\acts X$ an inverse semigroup action. Also let $s\in S$, $Y, Y_1,\dots,Y_n\subset\dom_X(s)$ be subsets and $a_1,\dots,a_n\in\ell$. Then the following are equivalent.
 \item[i)] $\chi_{[s,Y]}=\sum_{i=1}^na_i\chi_{[s,Y_i]}$.
 \item[ii)] $\chi_Y=\sum_{i=1}^na_i\chi_{Y_i}$.
\end{lem}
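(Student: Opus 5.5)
The plan is to view both sides as $\ell$-valued functions and compare them pointwise, using the map
\[
\phi_s:\dom_X(s)\to S\ltimes X,\qquad x\mapsto [s,x].
\]
Here $X$ carries the discrete topology, so $S\ltimes X$ is a discrete groupoid. The key observation is that $\phi_s$ is injective, because $d[s,x]=x$. Moreover, for any $Z\subset\dom_X(s)$ the bisection $[s,Z]$ is exactly $\phi_s(Z)$. Hence the functions in i) are all supported on the image $\phi_s(\dom_X(s))=[s,\dom_X(s)]$.

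Both sides of i) vanish off $[s,\dom_X(s)]$: each $[s,Y_i]$ and $[s,Y]$ is contained in this set. So i) is equivalent to the equality of the two sides after composing with $\phi_s$. Next I check that $\chi_{[s,Z]}\circ\phi_s=\chi_Z$ on $\dom_X(s)$ for every $Z\subset\dom_X(s)$. If $[s,x]\in[s,Z]$, then $[s,x]=[s,z]$ for some $z\in Z$, so $x=d[s,x]=z\in Z$. The converse is trivial.

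Pulling i) back along $\phi_s$ then gives exactly ii), viewed as an identity of functions on $\dom_X(s)$. Since $Y$ and the $Y_i$ lie in $\dom_X(s)$, both sides of ii) vanish outside $\dom_X(s)$, so this is the same as ii) on all of $X$. For the converse, ii) restricted to $\dom_X(s)$ and pushed forward along the injective map $\phi_s$ gives equality on $[s,\dom_X(s)]$, while both sides of i) vanish elsewhere. This proves i).

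The only point needing care is the injectivity of $\phi_s$, equivalently the fact that $[s,x]=[s,x']$ forces $x=x'$. This follows directly from the definition of the equivalence relation in \eqref{eq:gpdgrm}, which requires $x_1=x_2$, or simply from applying $d$. Everything else is bookkeeping of supports.
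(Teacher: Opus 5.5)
Your proof is correct and is essentially the paper's argument: both compare the two sides pointwise at germs $g=[s,x]$, using that $[s,x]$ determines $x=d[s,x]$, so that $\chi_{[s,Z]}([s,x])=\chi_Z(x)$ for $Z\subset\dom_X(s)$ and all the functions in i) vanish off $[s,\dom_X(s)]$. Your version simply spells out the bookkeeping more explicitly than the paper does, namely the injectivity of $x\mapsto[s,x]$ and the points $x\in Y\setminus\bigcup_i Y_i$.
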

\begin{proof}
  Put $G=S\ltimes X$. Let $\phi=\sum_{i=1}^na_i\chi_{[s,Y_i]}$, $\psi=\sum_{i=1}^na_i\chi_{Y_i}$ and $g\in G$. We have
  \[
  \phi(g)=\left\{\begin{matrix}\sum_{\{i\colon x\in Y_i\}}a_i=\psi(x)& \text{\, if } g=[s,x],\, x\in\bigcup_{i=1}^nY_i\\
                 0& \text{ otherwise}\end{matrix}\right.
  \]
  Hence $\chi_{[s,Y]}=\phi\iff \chi_Y=\psi$.
\end{proof}

\smallskip

\begin{lem}\label{lem:spanl} Let $S\acts L$ be an inverse semigroup action on a locally finite weak semilattice. Then 
\[
\ell[S\ltimes \whL]=\lspan_\ell\{\chi_{[s,\whL(l)]}\colon l\in L^\times\}.
\]    
\end{lem}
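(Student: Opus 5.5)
The inclusion $\supseteq$ is immediate: for $l\in\dom_L(s)$, the set $[s,\whL(l)]$ is a compact open bisection. Indeed, $\whL(l)$ is compact open by Lemma \ref{lem:basieljá} i), and it is contained in $\dom_{\whL}(s)$ by Lemma \ref{lem:doms}. We only consider pairs $(s,l)$ with $l\in\dom_L(s)$; for other $l$ the symbol $[s,\whL(l)]$ is not defined. So the work is in the reverse inclusion.

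The Steinberg algebra $\ell[G]$, $G=S\ltimes\whL$, is spanned by the characteristic functions $\chi_B$ of compact open bisections $B$. Since $G$ is ample, the bisections $[s,U]$ with $s\in S$ and $U\in\CO(\whL)$, $U\subset\dom_{\whL}(s)$, form a basis of the topology. A compact open bisection $B$ is therefore a finite union of such basic sets. Standard disjointification then writes $\chi_B$ as an integer combination of characteristic functions of sets of the form $[s_i,U_i]$. Here one uses that differences and intersections $[s,U]\cap[t,V]$ are again compact open, and can be refined into finite disjoint unions of basic bisections; alternatively one uses inclusion–exclusion on the $[s_i,U_i]$. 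It therefore suffices to show that each $\chi_{[s,U]}$ with $U\subset\dom_{\whL}(s)$ compact open lies in the span on the right-hand side.

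For this step we apply Lemma \ref{lem:basieljá} ii) to write $U=\bigsqcup_j\whL(l_j;m_{j1},\dots,m_{jp_j})$. Since the union is disjoint, we get $\chi_{[s,U]}=\sum_j\chi_{[s,U_j]}$, so we reduce to a single $U=\whL(l;m_1,\dots,m_p)$. By Lemma \ref{lem:morebasis} i) we may take $m_1,\dots,m_p\in L(l)$. Each $\whL(m_i)$ is then contained in $\whL(l)\subset\dom_{\whL}(s)$, the latter inclusion by Lemma \ref{lem:doms}. Inclusion–exclusion gives
\[
\chi_{U}=\chi_{\whL(l)}+\sum_{\emptyset\ne I\subset[[p]]}(-1)^{|I|}\chi_{\bigcap_{i\in I}\whL(m_i)}.
\]
By \eqref{eq:intL}, and by induction on $|I|$ using the same formula, each $\bigcap_{i\in I}\whL(m_i)$ is a union of sets $\whL(x)$ with $x\le l$. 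Applying inclusion–exclusion again, its characteristic function is an integer combination of functions $\chi_{\whL(x)}$ with $x\in L(l)\subset\dom_L(s)$. This requires iterating intersections, which \eqref{eq:intL} handles since $\whL(x)\cap\whL(y)=\bigcup_{z\in(x\downarrow y)}\whL(z)$, and it terminates after finitely many steps because all sets involved are finite.

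Finally, we transport this identity from $\whL$ to $G$ via the analogue of Lemma \ref{lem:tecombino}, whose proof applies verbatim to subsets of $\dom_{\whL}(s)$ since it only evaluates functions on germs $[s,x]$. This yields $\chi_{[s,U]}=\sum a_x\chi_{[s,\whL(x)]}$, as required. The main point needing care is the first reduction, namely expressing an arbitrary compact open bisection through basic bisections $[s,U]$ in such a way that the disjointification stays within sets of this form. I expect this to follow from compactness together with the fact that $[s,U]\cap[t,V]$ is open, so that it is itself a union of basic bisections, combined with an inclusion–exclusion argument rather than a literal disjointification.
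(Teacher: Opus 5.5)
Your overall route is the paper's: reduce to bisections $[s,\whL(l;m_1,\dots,m_p)]$ with $m_i\in L(l)\subset\dom_L(s)$, expand $\chi_{\whL(l;m_1,\dots,m_p)}$ by inclusion--exclusion, and transfer the identity to $G$ with Lemma \ref{lem:tecombino}.

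\textbf{The gap: termination.} The iterated inclusion--exclusion is exactly where local finiteness of $L$ must be used, and you do not use it there. Your stated reason, ``all sets involved are finite'', does not suffice. Each $(x\downarrow y)$ is finite by the definition of a weak semilattice. But rewriting $\whL(x)\cap\whL(y)=\bigcup_{z\in(x\downarrow y)}\whL(z)$ and expanding again produces intersections $\whL(z)\cap\whL(z')$ one level further down, and nothing in your argument stops this regress.

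In fact the lemma is false without local finiteness. Take the weak semilattice $L=\{0\}\cup\{e^1_n,e^2_n\colon n\ge 0\}$ with $e^i_m<e^j_n$ iff $m>n$. Then $(e^1_n\downarrow e^2_n)=\{e^1_{n+1},e^2_{n+1}\}$, so $L$ is not locally finite. Let $S=\{0,1\}$ act with $1$ acting as the identity, so that $\ell[G]$ is the algebra of locally constant compactly supported functions on $\whL$. Suppose $\chi_{\whL(e^1_0)\cap\whL(e^2_0)}=\sum_z c_z\chi_{\whL(z)}$ with finitely many $z$.
\begin{itemize}
\item Evaluating at $\cP_{e^1_N}$ for $N$ large gives $\sum_z c_z=1$.
\item Evaluating at $\cP_{e^1_m}$ and $\cP_{e^2_m}$, for the deepest level $m\ge 1$ that occurs, gives $c_{e^1_m}=c_{e^2_m}=0$.
\item Descending in this way leaves $c_{e^1_0}+c_{e^2_0}=1$, while evaluation at $\cP_{e^1_0}$ and $\cP_{e^2_0}$ gives $c_{e^1_0}=c_{e^2_0}=0$, a contradiction.
\end{itemize}

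\textbf{The repair} is what the paper does. Let $\bar F$ be the $\downarrow$-closure of $\{l,m_1,\dots,m_p\}$; it is finite by local finiteness and contained in $L(l)\subset\dom_L(s)$. Every element produced by the expansion lies in $\bar F$. In each round, consider the family $J$ being intersected:
\begin{itemize}
\item either $J$ has a minimum $y_0$, and then $\bigcap_{y\in J}\whL(y)=\whL(y_0)$ needs no further expansion;
\item or $\bigcap_{y\in J}\whL(y)=\bigcup_{w\in W}\whL(w)$ with every $w\in W$ strictly below every member of $J$ (if some $w$ equalled a member of $J$, that member would be the minimum).
\end{itemize}
Hence the recursion has depth at most the length of a longest chain in $\bar F$, and it terminates.

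\textbf{A smaller point} concerns your first reduction. When $G$ is not Hausdorff, which can happen for $S\ltimes\whL$, the set $[s,U]\cap[t,V]$ need not be compact, so the justification you sketch is not correct as stated. It is easily fixed. The finitely many $[s_i,U_i]$ covering a compact open bisection $B$ all lie inside $B$, on which $d$ is injective. Setting $U_i'=U_i\setminus\bigcup_{j<i}U_j$, which is compact open since $\whL$ is Hausdorff, gives $B=\bigsqcup_i[s_i,U_i']$ and $\chi_B=\sum_i\chi_{[s_i,U_i']}$.
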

\begin{proof}
    Put $G=S\ltimes\whL$. By Lemma \ref{lem:basieljá}, every nonempty compact open subset of $\whL$ is a finite disjoint union of subsets of the form $\whL(l;l_1,\dots,l_n)$ for some $l\in L^\times $ and some $l_1,\dots,l_n\in L(l)$. It follows that $\ell[G]$ is spanned by the characteristic functions of all sets of the form $[s,\whL(l;l_1,\dots,l_n)]$ with 
    $$
    \whL(l;l_1,\dots,l_n)\subset\dom_{\whL}(s).
    $$ 
    By Lemma \ref{lem:doms} the latter condition is equivalent to 
    $$
    L(l)\subset\dom_{L}(s).
    $$
    Fix $s,l,l_1,\dots,l_n$ satisfying the above conditions. Let $F=\{l,l_1,\dots,l_n\}$ and let $\bar{F}$ be its $\downarrow$-closure. Remark that $\bar{F}\subset L(l)\subset\dom_L(s)$. Hence by Lemma \ref{lem:tecombino} it suffices to show that 
    \begin{equation}\label{eq:inspan}
    \chi_{\whL(l;l_1,\dots,l_n)}\in A:=\lspan_\ell\{\chi_{\whL(x)}\colon x\in \bar{F}\}.
    \end{equation} 
    Put $[n]=\{1,\dots,n\}$. We have
    \[
    \chi_{\whL(l;l_1,\dots,l_n)}=\prod_{i=1}^n(\chi_{\whL(l)}-\chi_{\whL(l_i)})=\sum_{I\subset [[n]]}(-1)^{|I|}\chi_{\bigcap_{i\in I}\whL(l_i)}
    \]
    Remark that if $n=1$, then this is just $\chi_{\whL(l)}-\chi_{\whL(l_1)}$ which is in the right hand side of \eqref{eq:inspan}. If $n>1$, proceed as in the proof of Lemma \ref{lem:basieljá} ii) to write each $\bigcap_{i\in I}\whL(l_i)$ as a disjoint union of sets of the form $\whL(x;x_1,\dots,x_m)$ for some $m$ and some $x,x_1,\dots,x_m\in \bar{F}\cap \bigcap_{i\in I}L(l_i)$. Iterating this, and using the fact that $\bar{F}$ is finite, we write 
    $\chi_{\whL(l;l_1,\dots,l_n)}$ as a linear combination of characteristic functions of sets of the form $\whL(x)$ and $\whL(x;y)$ with $x,y\in\bar{F}$ and $y<x$, and these functions are clearly in the $\ell$-module $A$ of \eqref{eq:inspan}.
\end{proof}

\section{Discretisation}\label{sec:disc}

As explained in the introduction, discretisation aims to compare invariants of independent groupoids with those of their associated discrete groupoids. Our motivation is that discrete groupoids reduce, up to Morita equivalence, to disjoint unions of groups, as shown by the following well-known and straightforward lemma. 

\begin{lem}\label{lem:gpd101}
 Let $\cG$ be a discrete groupoid and let
 $Z\subset \cG^{(0)}$ such that the projection induces a 
 bijection $Z\isom\cG^{(0)}/\cG$. For each $x\in \cG^{(0)}$ let $\cO_x=\cG^x\cdot x$ be its left orbit, $\{\xi_x\}=\cO_x\cap Z$ and choose $s_x\in\cG_{\xi_x}^x$ (if $x\in Z$, choose $s_z=z$). Then we have an isomorphism of groupoids 
 \[
 \cG\isom \coprod_{z\in Z}\cO_z\times\cO_z \times\cG_{z}^z,\, g\mapsto (c(g),d(g), s_{c(g)}gs_{d(g)}^{-1}).
 \]
 and an isomorphism of $\ell$-algebras
 \[
 \ell[\cG]\isom \bigoplus_{z\in Z}M_{\cO_z} \ell[\cG_{z}^z],\,\,\chi_g\mapsto \epsilon_{c(g),d(g)}s_{c(g)}gs_{d(g)}^{-1}.
 \]
\end{lem}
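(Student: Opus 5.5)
The plan is to verify directly that the displayed map $\Phi(g)=(c(g),d(g),s_{c(g)}gs_{d(g)}^{-1})$ is a well-defined bijective functor, and then obtain the algebra isomorphism by linearising $\Phi$ and identifying the groupoid algebra of each block $\cO_z\times\cO_z\times\cG_z^z$ with $M_{\cO_z}\ell[\cG_z^z]$.

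First I will fix the conventions. Here $s_x\in\cG^x_{\xi_x}$ is an arrow from $\xi_x$ to $x$, so $s_x^{-1}$ goes from $x$ to $\xi_x$. Read with these conventions, the displayed element should be $s_{c(g)}^{-1}gs_{d(g)}$: its domain is $\xi_{d(g)}$ and its codomain is $\xi_{c(g)}$. Since $c(g)$ and $d(g)$ lie in the same orbit, $\xi_{c(g)}=\xi_{d(g)}=:z$, so this element lies in $\cG_z^z$. (If one instead orients $s_x\colon x\to\xi_x$, the formula as printed is correct; either way it is only a matter of notation.) The product $\cO_z\times\cO_z\times\cG_z^z$ is the groupoid with units $\cO_z$, with $(x,y,h)$ an arrow $y\to x$, and with composition $(x,y,h)(y,w,k)=(x,w,hk)$.

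Next I will check functoriality: $\Phi(g)\Phi(g')=\Phi(gg')$ whenever $d(g)=c(g')$, because the middle factors $s_{d(g)}s_{d(g)}^{-1}$ cancel. The choice $s_z=z$ for $z\in Z$ ensures that units are sent to units, namely $x\mapsto(x,x,s_x^{-1}s_x)=(x,x,z)$. The inverse map is $(x,y,h)\mapsto s_xhs_y^{-1}$, and a direct computation shows the two maps are mutually inverse. This establishes the groupoid isomorphism, and it uses nothing beyond the fact that $Z$ meets each orbit exactly once.

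For the algebras, a groupoid isomorphism induces an isomorphism of groupoid algebras, and $\ell[\coprod_z\cG_z]=\bigoplus_z\ell[\cG_z]$. It remains to identify $\ell[\cO\times\cO\times H]$ with $M_{\cO}\ell[H]$, where $M_\cO$ means finitely supported matrices. The assignment $\chi_{(x,y,h)}\mapsto\epsilon_{x,y}h$ sends an $\ell$-basis to an $\ell$-basis, so it is a linear bijection. Multiplicativity holds because $\epsilon_{x,y}\epsilon_{y',w}=\delta_{y,y'}\epsilon_{x,w}$ matches the convolution rule $\chi_{(x,y,h)}*\chi_{(y',w,k)}=\delta_{y,y'}\chi_{(x,w,hk)}$.

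None of these steps is a genuine obstacle; the only care needed is keeping the orientation conventions for $s_x$ consistent throughout.
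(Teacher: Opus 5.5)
Your proposal is correct and is exactly the routine verification the paper leaves to the reader (it calls the lemma well known and straightforward and gives no proof). Two small remarks. First, the paper's convention is $\cG^v=d^{-1}(v)$, so $s_x\colon x\to\xi_x$ and the formula $s_{c(g)}gs_{d(g)}^{-1}$ is correct exactly as printed; this is your parenthetical alternative. Second, units map to units for any choice of the $s_x$, as your own computation giving $\xi_x$ shows, so that is not what the normalisation $s_z=z$ does. Its role is to make the inclusion $\ell[\cG_z^z]\subset\ell[\cG]$ correspond to the corner embedding $h\mapsto\epsilon_{z,z}h$.
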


Let us now formulate discretisation. We offer two perspectives, one based on a canonical groupoid correspondence and another on a canonical homomorphism of algebras.

\subsection{Formulating discretisation using a canonical groupoid correspondence}
\label{ss:DiscFormul}

Given a locally finite weak semilattice $L$ equipped with an action of an inverse semigroup $S$, consider the restriction $S \acts L\reg$ of our action to $L\reg = L \setminus \gekl{0}$. Viewing $L\reg$ as a discrete topological space, we form the partial transformation groupoid  $S\ltimes L^\times$ as in \eqref{eq:gpdgrm}.

The idea of discretisation is to compare $S \ltimes L\reg$ with $S \ltimes \widehat{L}$ at the level of an invariant of interest. Invariants we have in mind are algebraic $K$-theory spectra of ample groupoids as in \cite{Li22} or (homotopy) algebraic $K$-theory for Steinberg algebras of $S \ltimes L\reg$ and $S \ltimes \widehat{L}$. To formulate discretisation more precisely, let us introduce a proper étale $S \ltimes L\reg$-$S \ltimes \widehat{L}$-correspondence $\Omega$. Recall (e.g. from \cite{ankom}) that, given \'etale groupoids $G$ and $H$, a $G$-$H$-bispace is a topological space $\Omega$ equipped with a left action of $G$ and a right action of $H$ that commute; that is, the left anchor $\rho \colon \Omega \to G^{(0)}$ is $H$-invariant, the right anchor $\sigma \colon \Omega \to H^{(0)}$ is $G$-invariant and $g . (\omega . h) = (g . \omega) . h$ for compatible $g \in G$, $\omega \in \Omega$ and $h \in H$. An \'etale $G$-$H$-correspondence $\Omega$ is a $G$-$H$-bispace $\Omega$ such that the right action $\Omega \curvearrowleft H$ is free, proper and \'etale.
The correspondence $\Omega$ is proper if the map $\Omega/H \to G^{(0)}$ induced by the left anchor is proper.

\bdefin
\label{def:DiscCorr}
Let $L$ be a locally finite weak semilattice equipped with the action of an inverse semigroup $S$. The \emph{discretisation correspondence} $\Omega=\Omega(S\acts L)\colon S \ltimes L\reg \to S \ltimes \widehat{L}$ is 
\begin{equation*}
\Omega = \coprod_{l \in L\reg} \menge{[s,\chi] \in S \ltimes \widehat{L}}{\chi \in \supp(s), \, s\cdot\chi \in \widehat{L}(l)}.    
\end{equation*}
We write $[l,s,\chi]$ for the element $[s,\chi]$, viewed as an element belonging to the part of $\Omega$ which is indexed by $l \in L\reg$. The left $S \ltimes L\reg$-action is given by $[s',l].[l,s,\chi] = [s'.l,s's,\chi]$, and the right $S \ltimes \widehat{L}$-action by $[l,s,\chi].[s',\chi'] = [l,ss',\chi']$.
\edefin
It is straightforward to see that $\Omega$ is {\'e}tale and proper.

Now discretisation can be formulated as the statement that, at the level of the invariant of interest, the correspondence $\Omega$ induces an equivalence between $S \ltimes L\reg$ and $S \ltimes \widehat{L}$. 

\begin{defi}
Let $\fC\subset\Grp$ be a subcategory of the category of ample groupoids and proper \'etale correspondences, $L$ a locally finite weak semilattice, $S$ an inverse semigroup, $S\acts L$ a poset action, $G=S\times \whL$ and $G_d=S\ltimes L^\times$. Assume that  $\Omega(S\acts L)\in\fC(G_d,G)$. We say that \emph{discretisation holds for $S\acts L$ with respect to $\cH$} if $\cH(\Omega(S\acts L))$ is an equivalence. 
\end{defi}

\begin{exas} As mentioned in  Remark~\ref{rem:Phi-ind-by-Omega}, the $K$-theory spectrum of the small permutative category $\fB G$ of an ample 
groupoid, introduced in \cite{Li22}, is defined on the discretisation correspondence (or rather on a modified correspondence which agrees with it up to Morita equivalence), so the 
discretisation statement makes sense for $G\mapsto \bbK \fB_G$ (see Section \S~\ref{s:Disc_KBG}). Other important invariants that 
factor through the functor from $\Grp$ to the category of small (nonunital) $\ell$-linear categories that 
sends $G\mapsto \ell[G]$ and a $(G,H)$ correspondence to the $\ell[G]-\ell[H]$ bimodule $\ell[Z]$ 
of locally constant, compactly supported functions \cite{ankom}. Composing the latter functor with 
the idempotent completion of the additivisation, we obtain a functor $G\mapsto 
\Idem(\ell[G]^{\oplus})$ landing in the category $\Idem\Add_\ell$ of idempotent complete $\ell$-
linear additive categories. If $\cH:\Idem{\Add}_\ell\to \Sp$ is a functor, discretisation makes 
sense for the composite $G\mapsto \cH(\Idem(\ell[G]^{\oplus}))$. 
This applies, for example, when $\cH$ is (homotopy) algebraic $K$-theory, Hochschild and cyclic homology or any of their variants.  
\end{exas}

\subsection{Formulating discretisation using a canonical algebra homomorphism}\label{subsec:detour}

As explained at the end of the previous subsection, oftentimes the functors from ample groupoids to spectra or other stable $\infty$-categories we are interested in factor through the functor $G\mapsto \ell[G]$ that sends an ample groupoid to its Steinberg algebra. It is thus useful to formulate discretisation in terms of such algebras, and this is precisely the subject of this subsection.

Let $S\acts L^\times $ and $S\acts \widehat{L}$ be as in the previous subsection, and set $G=G(S\ltimes \widehat{L})$, $G_d=G(S\ltimes L^\times)$, $A=\ell[G]$, $A_d=\ell[G_d]$. Remark that if $p\in S$ is idempotent and $l\in\dom_L(p)$ then 
$\widehat{L}(l)\subset\supp_{\whL} (p)$. It follows that we have a well-defined $\ell$-linear map
\begin{equation}\label{map:elteta}
\theta:A_d\to M_{L^\times} A,\,\, \theta(\chi_{[s,l]})=\epsilon_{s\cdot l,l}\chi_{[s,\widehat{L}(l)]}.
\end{equation}
Here $\epsilon_{s\cdot l,l}\chi_{[s,\widehat{L}(l)]}$ is the matrix whose $(s\cdot l,l)$-entry is the only non-zero entry, which equals $\chi_{[s,\widehat{L}(l)]}$. Furthermore, one checks that $\theta$ is a ring homomorphism. Hence any left $M_{L^\times}A$-module $M$ may be regarded as a left $A_d$-module using $\theta$; we write ${}_\theta M$ for this $A$-module. In particular this applies to $A^{(L^\times)}=\bigoplus_{l\in L^\times}\epsilon_lA$, and 
\begin{equation}\label{map:inclu}
\bar{\Omega}:=\bigoplus_{l\in L^\times}\chi_{[\widehat{L}(l)]}A\subset A^{(L^\times)}
\end{equation}
is an $(A_d,A)$ sub-bimodule. Moreover one checks that 
\begin{equation}\label{map:retra}
{}_\theta A^{(L^\times)}\to \bar{\Omega},\, \epsilon_l a\mapsto \epsilon_l\chi_{\widehat{L}(l)}
\end{equation}
is a homomorphism of $(A_d,A)$-bimodules, left inverse to \eqref{map:inclu}.

Next consider the $\ell$-submodule $\ell[\Omega]$ of the module of functions $\Omega\to \ell$ generated by the characteristic functions of the open compact Hausdorff subsets. The $(\cG_d,\cG)$-bispace structure of $\Omega$ induces an $(A_d,A)$-bimodule structure on $\ell[\Omega]$ (see \cite{ankom}*{Section 7}). 

\begin{lem}\label{lem:omegatheta} Let $A$, $A_d$, $\theta$, $\Omega$ and $\bar{\Omega}$ be as above.
\begin{itemize}
\item[i)] $\ell[\Omega]\cong \bar{\Omega}$ as $(A_d,A)$-bimodules.
\item[ii)] Let $N$ be a (unital) right $A_d$-module. Then tensoring with \eqref{map:inclu} induces a right $A$-bimodule isomorphism
$N\otimes_{A_d}\bar{\Omega}\isom N\otimes_{\theta}A^{(L^\times)}$.
\end{itemize}
\end{lem}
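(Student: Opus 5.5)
For part i), I would build an explicit isomorphism $\Phi:\ell[\Omega]\to\bar{\Omega}$ using the decomposition $\Omega=\coprod_{l\in L^\times}\Omega_l$, where
\[
\Omega_l=\{[s,\chi]\colon s\cdot\chi\in\whL(l)\}=c^{-1}(\whL(l))\subset G .
\]
Each $\Omega_l$ is a compact-open-saturated open subset of $G$. A locally constant, compactly supported function on $\Omega$ is therefore a finite sum $\sum_l f_l$ with $f_l\in\ell[\Omega_l]$, and $\ell[\Omega_l]$ is exactly the set of $f\in\ell[G]$ supported in $c^{-1}(\whL(l))$. Convolution gives
\[
\chi_{\whL(l)}*f=f\cdot(\chi_{\whL(l)}\circ c),
\]
so $\ell[\Omega_l]=\chi_{\whL(l)}A$. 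I would then set $\Phi(f)=\sum_l\epsilon_l f_l$, which is a bijection $\ell[\Omega]\to\bigoplus_l\epsilon_l\chi_{\whL(l)}A=\bar\Omega$.

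Next, compatibility with the right action. The right $G$-action on $\Omega$ is by right multiplication in $G$, so the $A$-module structure from \cite{ankom} is right convolution, which preserves each summand $\ell[\Omega_l]$. For the left action, by linearity it suffices to check generators $\chi_{[s,l']}\in A_d$. The left action of $[s,l']$ sends $[l',t,\chi]\mapsto[s\cdot l',st,\chi]$. On functions, this takes $f\in\ell[\Omega_{l'}]$ to the function on $\Omega_{s\cdot l'}$ given by $\chi_{[s,\whL(l')]}*f$, and kills the other summands. On the other side, $\theta(\chi_{[s,l']})\epsilon_{l'}f=\epsilon_{s\cdot l'}\chi_{[s,\whL(l')]}f$. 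Checking that the two agree is a direct convolution computation on germs $[s,t\cdot\chi]$, using $\whL(l')\subset\dom(s)$. The main point needing care is matching the bimodule conventions of \cite{ankom}*{Section 7}, namely the left action by the groupoid $G_d$ on a discrete index versus convolution in $G$. I expect this bookkeeping, rather than any real difficulty, to be the obstacle.

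For part ii), I would use the retraction \eqref{map:retra}, $r\colon{}_\theta A^{(L^\times)}\to\bar\Omega$, together with the inclusion $\iota$ \eqref{map:inclu}, where $r\iota=\id$. Then $1\otimes\iota$ is split injective, so it remains to show surjectivity. Unitality of $N$ gives $N=NA_d$, and $A_d$ has local units $\sum\chi_{[p,l]}$ for idempotents $p$, where $\theta(\chi_{[p,l]})=\epsilon_{l,l}\chi_{\whL(l)}$. So for $n\in N$ I would write $n=n'\chi_{[p,l]}$ with $p$ idempotent and $l\in\dom_L(p)$. This gives
\[
n\otimes\epsilon_la=n'\otimes\theta(\chi_{[p,l]})\epsilon_la=n'\otimes\epsilon_l\chi_{\whL(l)}a,
\]
which lies in the image. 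For the general case $n\otimes\epsilon_{l}a$ with $n=\sum n_i\chi_{[p_i,l_i]}$, I would note that $\theta(\chi_{[p_i,l_i]})\epsilon_l$ vanishes unless $l_i=l$. Hence $n\otimes\epsilon_la=\sum_{l_i=l}n_i\otimes\epsilon_l\chi_{\whL(l)}a$, and $1\otimes\iota$ is surjective, as required. The maps are right $A$-linear since everything is $A$-linear on the right.
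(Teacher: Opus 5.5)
Your proposal is correct and follows essentially the same route as the paper. For i) you decompose $\Omega=\coprod_l\Omega_l$ and identify $\ell[\Omega_l]=\chi_{\whL(l)}A$; you also carry out explicitly the bimodule-compatibility check that the paper leaves as ``one checks''. For ii) you get injectivity from the bimodule retraction \eqref{map:retra} and surjectivity from writing $n=\sum_i n_i\chi_{l_i}$ via local units and using $\theta(\chi_{l_i})\epsilon_l=\delta_{l_i,l}\,\epsilon_l\chi_{\whL(l)}$.
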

\begin{proof}
For $l\in L^\times$ let $\Omega_l=\rho^{-1}(\{l\})$. Then 
\[
\ell[\Omega_l]=\lspan_\ell\{\chi_{[s,U]}\colon U\subset\supp(s),\, s(U)\subset \widehat{L}(l)\}=\chi_{\widehat{L}(l)}A. 
\]
Hence $\ell[\Omega]=\bigoplus_{l\in L^\times}\chi_{\widehat{L}(l)}A=\bar{\Omega}$, and one checks that this identification is compatible with the respective bimodule structures, proving i). Because \eqref{map:inclu} has a left inverse (namely \eqref{map:retra}) which is a bimodule homomorphism, the map of ii) is injective. Next remark that any element of $N$ is a sum $x=\sum_lx_l$ with $x_l=x_l\chi_l$. Hence
$N\otimes_\theta A^{(L^\times)}$ is $\ell$-linearly spanned by elements of the form $\xi=x\otimes \epsilon_{m}a$ with $x=x\chi_l$, and
$\xi=x\otimes\theta(\chi_l)\epsilon_m a=\delta_{l,m}x\otimes\epsilon_l\chi_{\widehat{L}(l)}a$ is in the image of \eqref{map:inclu}. 
\end{proof}

Let $R$ be a ring with local units. A (right) $R$-module $M$ is called \emph{unital} if $M\cdot R=M$. In the following corollary and elsewhere, we write $\Mod R$ (resp. $R$-$\Mod$) for the category of unital right (resp. left) $R$-modules. 
\begin{coro}\label{coro:omegatheta}
There is a commutative diagram
\begin{equation}\label{diag:discrecompa}
\xymatrix{\Mod A_d\ar[rr]^{\otimes \ell[\Omega]}\ar[dr]_{\otimes_{\theta}M_{L^\times}A}&&\Mod A\\
&\Mod M_{L^{\times}}A\ar[ur]_{\otimes A^{(L^{\times})}}&}
\end{equation}
\end{coro}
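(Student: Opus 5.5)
The plan is to reduce the commutativity of \eqref{diag:discrecompa} to Lemma \ref{lem:omegatheta}, after making the three functors explicit. The top arrow sends a unital right $A_d$-module $N$ to $N\otimes_{A_d}\ell[\Omega]$. The left diagonal arrow sends $N$ to $N\otimes_{A_d}{}_\theta(M_{L^\times}A)$, where $M_{L^\times}A$ is a left $A_d$-module through $\theta$; we then take the unital part, i.e.\ multiply by $M_{L^\times}A$, to land in $\Mod M_{L^\times}A$. The right diagonal arrow sends a unital right $M_{L^\times}A$-module $P$ to $P\otimes_{M_{L^\times}A}A^{(L^\times)}$. Here $A^{(L^\times)}$ is viewed as column vectors, which is an $(M_{L^\times}A,A)$-bimodule.

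The composite of the two diagonal arrows is, by associativity of tensor products,
\[
N\otimes_{A_d}{}_\theta M_{L^\times}A\otimes_{M_{L^\times}A}A^{(L^\times)}\cong N\otimes_{A_d}{}_\theta\bigl(M_{L^\times}A\otimes_{M_{L^\times}A}A^{(L^\times)}\bigr).
\]
Since $M_{L^\times}A$ has local units (its finite diagonal matrices with local units of $A$ as entries), multiplication gives an isomorphism $M_{L^\times}A\otimes_{M_{L^\times}A}A^{(L^\times)}\cong A^{(L^\times)}$. This is an isomorphism of $(M_{L^\times}A,A)$-bimodules, and hence of $(A_d,A)$-bimodules via $\theta$. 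The composite is therefore naturally isomorphic to $N\mapsto N\otimes_\theta A^{(L^\times)}$.

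It remains to identify this with the top arrow. Lemma \ref{lem:omegatheta} i) gives $\ell[\Omega]\cong\bar\Omega$ as $(A_d,A)$-bimodules, so $N\otimes_{A_d}\ell[\Omega]\cong N\otimes_{A_d}\bar\Omega$. Lemma \ref{lem:omegatheta} ii) says that the map induced by the inclusion \eqref{map:inclu}, $N\otimes_{A_d}\bar\Omega\to N\otimes_\theta A^{(L^\times)}$, is an isomorphism of right $A$-modules. All these isomorphisms are natural in $N$, since they come from bimodule maps. So the diagram commutes up to natural isomorphism, which is the intended meaning.

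The main point needing care is unitality. The modules in the diagram must actually be unital, and the associativity and local-unit isomorphisms hold in the nonunital setting. Concretely, $N\otimes_\theta A^{(L^\times)}$ is unital over $A$ because $A$ has local units. For a unital $N$, we also need $N\otimes_{A_d}M_{L^\times}A\otimes_{M_{L^\times}A}(-)$ to agree with $N\otimes_{A_d}(-)$ on the relevant bimodules. I would handle this as in the proof of Lemma \ref{lem:omegatheta} ii). Every $x\in N$ can be written as $\sum_l x\chi_l$, and $\theta(\chi_l)=\epsilon_{l,l}\chi_{\widehat L(l)}$. This shows that the relevant tensor products are spanned by elementary tensors supported on the diagonal idempotents, so the canonical maps are surjective; they are injective because they admit the retraction \eqref{map:retra}.
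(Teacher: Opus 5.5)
Your proposal is correct and follows essentially the same route as the paper, which states the corollary without separate proof as an immediate consequence of Lemma \ref{lem:omegatheta} i) and ii) together with the identification $(N\otimes_{\theta}M_{L^\times}A)\otimes_{M_{L^\times}A}A^{(L^\times)}\cong N\otimes_\theta A^{(L^\times)}$ (associativity plus local units of $M_{L^\times}A$) that you make explicit. Your final paragraph on unitality is redundant: surjectivity via the diagonal idempotents and injectivity via the retraction \eqref{map:retra} are exactly the content of Lemma \ref{lem:omegatheta} ii), which you may cite directly.
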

\begin{lem}\label{lem:matrimori}
Let $X$ be a set, $x\in X$ and $R$ a ring with local units. Then the functor
$-\otimes_{M_XR}R^{(X)}:\Mod M_XR\to \Mod R$ is an equivalence, inverse to the scalar extension along the inclusion $\iota_x:R\to M_XR$, $a\mapsto \epsilon_{x,x}$ 
\end{lem}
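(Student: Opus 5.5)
This is a Morita equivalence for matrix rings over rings with local units, and I would prove it by writing down explicit mutually inverse natural isomorphisms. Write $F=-\otimes_{M_XR}R^{(X)}$ and $E=-\otimes_R\,\epsilon_{x,x}M_XR$, which is scalar extension along $\iota_x$ restricted to unital modules. Here $R^{(X)}$ is viewed as column vectors, an $(M_XR,R)$-bimodule, and $\epsilon_{x,x}M_XR$ is the $x$-th row, an $(R,M_XR)$-bimodule. Unital modules over $R$ correspond to modules $N$ with $N\cdot\epsilon_{x,x}$ taken in the appropriate sense. The cleanest formulation uses the bimodules $P=R^{(X)}$ (columns) and $Q=R^{(X)}$ viewed as rows. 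Note that $\epsilon_{x,x}M_XR\cong$ rows, and as an $(R,M_XR)$-bimodule this is exactly $Q$.

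The key steps, in order, are as follows.

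\textbf{(a)} I would check that $P$ and $Q$ are bimodules that are unital on both sides. For $Q$ this means $R\cdot Q=Q$, which holds because $R$ has local units. For $P$ it means $P\cdot R=P$, which holds for the same reason. I would also check that $P$ is unital as a left $M_XR$-module, since $M_XR$ has local units given by finite diagonal matrices of local units.

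\textbf{(b)} I would construct the multiplication maps
\[
\mu:Q\otimes_{M_XR}P\to R,\ (r_y)\otimes(c_y)\mapsto \textstyle\sum_y r_yc_y,\qquad \nu:P\otimes_RQ\to M_XR,\ c\otimes r\mapsto cr,
\]
and prove that they are bimodule isomorphisms. Surjectivity of both uses local units: given $a\in R$, pick $u$ with $ua=a$ and write $a=\mu(u\epsilon_x\otimes \epsilon_xa)$. Similarly each matrix unit $\epsilon_{y,z}a$ equals $\nu(\epsilon_y u\otimes \epsilon_z a)$. For injectivity I would use the standard Morita-context argument. If $\sum q_i\otimes p_i$ maps to $0$, choose a local unit $e$ in $M_XR$ (or in $R$) fixing all the finitely many entries involved. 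Then rewrite $q_i=q_i e$ with $e=\sum \nu(p'_j\otimes q'_j)$, which is possible since $\nu$ is onto. Moving the factor across the tensor gives $\sum_{i,j} q_i\otimes p'_j\mu(q'_j\otimes p_i)$, and collecting terms this is $\sum_j(\ldots)\otimes p'_j\cdot\mu(\sum q_i\otimes p_i)$-type terms, which vanish. The argument for $\nu$ is symmetric.

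\textbf{(c)} I would conclude, for unital $N\in\Mod M_XR$ and $V\in\Mod R$, that
\[
N\otimes_{M_XR}P\otimes_RQ\cong N\otimes_{M_XR}M_XR\cong N,\qquad V\otimes_RQ\otimes_{M_XR}P\cong V\otimes_RR\cong V.
\]
The last isomorphisms $N\otimes_{M_XR}M_XR\cong N$ and $V\otimes_RR\cong V$ hold precisely for unital modules over rings with local units, by the same local-unit argument. Finally, $V\otimes_RQ$ is identified with $V\otimes_R\epsilon_{x,x}M_XR$, the scalar extension along $\iota_x$, since a unital $V$ satisfies $V\otimes_R R=V$, and $\iota_x(R)\,M_XR=\epsilon_{x,x}M_XR$ because $R$ has local units.

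I expect the main obstacle to be the injectivity in (b), and more generally the bookkeeping around the nonunital setting, namely ensuring that all the tensor products of unital modules stay unital. I would handle this uniformly with the finite local units trick described above rather than by computing with elements directly.
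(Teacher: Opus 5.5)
Your overall route is right and is essentially the paper's. The paper identifies scalar extension along $\iota_x$ with $N\mapsto N^{(X)}$, which is your $N\otimes_R Q$ (rows with entries in $N$), and leaves to the reader the check that this is inverse to $-\otimes_{M_XR}R^{(X)}$. Your Morita context $(P,Q,\mu,\nu)$ is a clean way to organise that check.

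However, the injectivity computation you sketch in (b) does not work as written. You insert $e\in M_XR$ between $q_i$ and $p_i$, i.e.\ on the side over which the tensor product is taken. Since $q_ie\otimes p_i=q_i\otimes ep_i$ and $ep_i=p_i$, the expression $\sum_{i,j}q_i\otimes p'_j\,\mu(q'_j\otimes p_i)$ is just $\sum_i q_i\otimes p_i$ rewritten. The index $i$ still occurs on both sides of $\otimes$, so nothing lets you factor out $\mu(\sum_i q_i\otimes p_i)$; the ``collecting terms'' step is false.

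The fix is to insert the unit on the outer side and express it through the surjectivity of $\mu$ itself, not of $\nu$. Choose $u\in R$ with $uq_i=q_i$ for all $i$, write $u=\sum_j\mu(q'_j\otimes p'_j)$, and use the associativity identities $\mu(q'\otimes p')q=q'\,\nu(p'\otimes q)$ and $\nu(p'\otimes q)p=p'\,\mu(q\otimes p)$:
\[
\sum_i q_i\otimes p_i=\sum_{i,j}q'_j\,\nu(p'_j\otimes q_i)\otimes p_i=\sum_{i,j}q'_j\otimes p'_j\,\mu(q_i\otimes p_i)=\sum_j q'_j\otimes p'_j\,\mu\left(\sum_i q_i\otimes p_i\right)=0 .
\]
For $\nu$, do the same with a local unit $e\in M_XR$ satisfying $ep_i=p_i$, written as $e=\sum_j\nu(p'_j\otimes q'_j)$.

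Alternatively, you can avoid the trick entirely in the matrix case. Since $P\cong\bigoplus_X R$ as right $R$-modules,
\[
P\otimes_RQ\cong\bigoplus_X R\otimes_R Q\cong\bigoplus_X Q=M_XR .
\]
The row decomposition $M_XR=\bigoplus_{y}\epsilon_{y,y}M_XR$ exhibits $Q\otimes_{M_XR}P$ as the $x$-th summand of $M_XR\otimes_{M_XR}P\cong P$, namely the columns supported at $x$, which is $\cong R$. Both steps use only that the canonical maps $A\otimes_A N\to N$ and $M\otimes_A A\to M$ are bijective for unital modules over a ring $A$ with local units, applied to $A=R$ and $A=M_XR$. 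With either repair, the rest of your argument in (c) goes through.
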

\begin{proof}
One checks that the scalar extension along $\iota_x$ is the functor $\Mod R\to \Mod M_XR$ $N\mapsto N^{(X)}$, and that the latter is inverse to the functor of the lemma.
\end{proof}

\begin{defi}
Let $S$ be an inverse semigroup, $L$ a locally finite weak semilattice, $S\acts L$ a poset action, and $\theta:\ell[G_d]\to M_{L^\times}\ell[G]$ as in \eqref{map:elteta}. Let $\cH\colon\ahal\to\Sp$ be a functor. We say that $S\acts L$ satisfies \emph{discretisation} with respect to $\cH$ if $\cH(\theta)$ is an equivalence.     
\end{defi}
In the following lemma and elsewhere, we write 
$$\ahal\supset\ahal^{\lu}$$
for the category of $\ell$-algebras and algebra homomorphisms and the full subcategory of algebras with local units. 
\begin{lem}\label{rem:conjequi}
Let $\cH:\Idem\Add_\ell\to\Sp$ be a functor; put $\cH':\ahal^{\lu}\to \Sp$, $\cH'(R)=\cH'(\Idem(R^\oplus))$, $\cH":\Grp\to\Sp$, $G\mapsto \cH'(\ell[G])$.  Let $S\acts L$ be an inverse semigroup action on a locally finite weak semilattice. Then $S\acts L$ satisfies discretisation with respect to $\cH"$ if and only if it satisfies discretisation with respect to $\cH'$.
\end{lem}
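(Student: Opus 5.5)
The plan is to compare the two maps whose images under $\cH$ we must show are simultaneously equivalences, using Corollary \ref{coro:omegatheta} and Lemma \ref{lem:matrimori}. Here $\cH'$ is read as $\cH'(R)=\cH(\Idem(R^\oplus))$. By definition, $\cH''(\Omega)$ is $\cH$ applied to the functor
\[
\Idem(\ell[G_d]^\oplus)\to\Idem(\ell[G]^\oplus)
\]
induced by tensoring with the bimodule $\ell[\Omega]$. On the other hand, $\cH'(\theta)$ is $\cH$ applied to the functor
\[
\Idem(\ell[G_d]^\oplus)\to\Idem((M_{L^\times}\ell[G])^\oplus)
\]
induced by scalar extension along $\theta$, that is, by $-\otimes_\theta M_{L^\times}A$.

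\textbf{Step 1.} Identify $\Idem(R^\oplus)$, for $R$ with local units, with the full subcategory of $\Mod R$ of finitely generated projective unital modules. These are the direct summands of finite sums of modules $eR$ with $e$ an idempotent. Under this identification, the functors induced by a bimodule or by a homomorphism are the restrictions of the corresponding tensor functors. Note that $\theta$ sends local units to idempotents, so $-\otimes_\theta M_{L^\times}A$ preserves these subcategories. Likewise, $\ell[\Omega]\cong\bar\Omega$ (Lemma \ref{lem:omegatheta} i)) is a sum of modules $\chi_{\whL(l)}A$, so $-\otimes\ell[\Omega]$ sends each $\chi_{[l]}A_d$ to $\chi_{\whL(l)}A$, which is finitely generated projective.

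\textbf{Step 2.} Restrict the commutative triangle \eqref{diag:discrecompa} to finitely generated projectives. This yields
\[
\cH''(\Omega)\simeq \cH(F)\circ\cH'(\theta),
\]
where $F=-\otimes_{M_{L^\times}A}A^{(L^\times)}$ is restricted to $\Idem((M_{L^\times}A)^\oplus)\to\Idem(A^\oplus)$.

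\textbf{Step 3.} Show that $F$ is an equivalence of categories. By Lemma \ref{lem:matrimori}, the full functor on $\Mod$ is an equivalence whose inverse is scalar extension along $\iota_x$. Both functors preserve finitely generated projectives: $\iota_x$ sends idempotents to idempotents, and $F$ sends $e\,M_{L^\times}A$ to a summand of a finite sum of modules $fA$. Hence $F$ restricts to an equivalence $\Idem((M_{L^\times}A)^\oplus)\simeq\Idem(A^\oplus)$, and so $\cH(F)$ is an equivalence. By two-out-of-three, $\cH''(\Omega)$ is an equivalence if and only if $\cH'(\theta)$ is, which is the claim.

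The main obstacle is Step 1 together with the bookkeeping in Step 3. One must check carefully that, for nonunital algebras with local units, $\Idem(R^\oplus)$ really matches finitely generated projective unital modules compatibly with tensor functors, and that the equivalence of Lemma \ref{lem:matrimori} preserves finite generation: $M_{L^\times}A$ has local units given by finite diagonal matrices of local units. Functoriality "up to natural isomorphism" should suffice, since $\cH$ sends naturally isomorphic functors to homotopic maps.
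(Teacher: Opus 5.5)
Your proposal is correct and follows essentially the same route as the paper: it combines the commutative triangle of Corollary~\ref{coro:omegatheta} with the matrix Morita equivalence of Lemma~\ref{lem:matrimori}, and concludes by two-out-of-three. The only differences are cosmetic: the paper states the key input as the corner inclusion $\cH'(\iota_l)$ being an equivalence rather than $\cH(F)$, and it leaves implicit the restriction to finitely generated projective modules that you spell out.
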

\begin{proof}
 Recall that, by definition, $\cH"(\Omega)=\cH(-\otimes_{A_d}\ell[\Omega])$. It follows from Lemma \ref{lem:matrimori} that  $\cH'(\iota_l):H(A)\to H(M_{L^\times}(A))$ is an equivalence, so by \eqref{diag:discrecompa}, $\cH'(\theta)$ is an equivalence if and only if $\cH(-\otimes_{A_d}\ell[\Omega])$ is an equivalence. This concludes the proof.    
\end{proof}

\begin{ex}\label{ex:discfull}
Let $S$ be a pointed inverse semigroup and $E\subset S$ the lattice of idempotent elements. As mentioned in Remark \ref{rem:actpat}, $G=S\ltimes \widehat{E}$ is the full groupoid of $S$. Now we point out that $G_d=S\ltimes E^\times$ is the universal groupoid of the inverse semigroup $S_d$ with the same elements as $S$, and with product defined by
\[
s\cdot_d t=\left\{\begin{matrix}
st & \text{ if } s^*s=tt^*\\
0 & \text{ otherwise}
\end{matrix}\right.
\]
Therefore by \cite{steinappr}*{Theorem 6.3}, the maps $s\mapsto \chi_{[s,D_{ss^*}]}$ and $s\mapsto \chi_{[s,s^*s]}$ induce algebra isomorphisms
$\ell[S]\isom \ell[G]$ and $\ell[S_d]\isom \ell[G_d]$. Under these identifications, $\theta$ becomes the map
\[
\theta:\ell[S_d]\mapsto M_{L^\times}\ell[S],\, \theta(s)=\epsilon_{ss^*,s^*s}s.
\]
This is the map called $\rho_d$ \cite{acm}*{Section 7}; hence discretisation for inverse semigroups as formulated in loc. cit. is equivalent to discretisation of universal groupoids in our sense.
\end{ex}

\section{\texorpdfstring{Discretisation and Steinberg algebras}{Discretisation and Steinberg algebras}}\label{sec:discalg}

\subsection{Example 1: Proper groupoids}
\label{ss:ProperGPD}
\numberwithin{equation}{subsection}

Let $L$ be a locally finite weak semilattice and $x\in L$. In what follows we write
$$L^{\bullet}(x)=L(x)\setminus\{x\}.$$

\begin{lem}\label{lem:sunion}
Let $S\acts L$ be an inverse semigroup action on a locally finite weak semilattice, $s\in S$, $F\subset \dom_L(s)$ finite and $\bar{F}$ its $\downarrow$-closure. Then
\[
\chi_{[s,\bigcup_{x\in F}\whL(x)]}\in\lspan_\ell\{\chi_{[s,y]}\colon y\in\bar{F}\}.
\]
\end{lem}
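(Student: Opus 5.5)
We read $\chi_{[s,y]}$ in the statement as $\chi_{[s,\whL(y)]}$, where $\whL(0)=\emptyset$. The plan is to reduce the statement to one about characteristic functions on $\whL$, and then argue by induction using inclusion–exclusion together with formula \eqref{eq:intL}.

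\textbf{Reduction to $\whL$.} Every $y\in\bar F$ lies below some element of $F$. Since $\dom_L(s)$ is a downset, this gives $\bar F\subset\dom_L(s)$, and by Lemma \ref{lem:doms} each $\whL(y)$ with $y\in\bar F$ lies in $\dom_{\whL}(s)$. We apply Lemma \ref{lem:tecombino} to the set $X=\whL$ with the dual action (the lemma is purely set-theoretic, and the germ relation defining $[s,\cdot]$ is the same). This reduces the claim to
\[
\chi_{\bigcup_{x\in F}\whL(x)}\in B:=\lspan_\ell\{\chi_{\whL(y)}\colon y\in\bar F\}.
\]

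\textbf{Induction.} We prove the stronger statement that $\chi_{\bigcup_{x\in F'}\whL(x)}\in B$ for every subset $F'\subset\bar F$. The induction is on $|\bar{F'}|$, where $\bar{F'}$ is the $\downarrow$-closure of $F'$ taken inside the finite $\downarrow$-closed set $\bar F$. Every element of $\bar{F'}$ lies below some element of $F'$, so $\bar{F'}\subset\bar F$.

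First, if $x\le y$ then $\whL(x)\subset\whL(y)$. So we may discard non-maximal elements of $F'$ without changing the union, and assume $F'=\{x_1,\dots,x_n\}$ is an antichain; we may also drop $0$. The cases $n\le1$ are trivial. For $n\ge2$, inclusion–exclusion gives
\[
\chi_{\bigcup_i\whL(x_i)}=\chi_{\whL(x_1)}+\chi_{\bigcup_{i\ge2}\whL(x_i)}-\chi_{\whL(x_1)\cap\bigcup_{i\ge2}\whL(x_i)}.
\]
By \eqref{eq:intL}, the last set equals $\bigcup_{i\ge2}\bigcup_{z\in(x_1\downarrow x_i)}\whL(z)$. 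So both the second and third terms are characteristic functions of unions indexed by subsets of $\bar F$, namely $F_2=\{x_2,\dots,x_n\}$ and $F_3=\bigcup_{i\ge2}(x_1\downarrow x_i)$.

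\textbf{Main step: the closures shrink.} It remains to check that $\bar{F_2}$ and $\bar{F_3}$ are both strictly contained in $\bar{F'}$. Both are contained in $\bar{F'}$, because $\bar{F'}$ is $\downarrow$-closed and contains $F_2$ and $F_3$. Neither contains $x_1$:
\begin{itemize}
\item every element of $\bar{F_2}$ lies below some $x_i$ with $i\ge2$, while $x_1\not\le x_i$ by the antichain assumption;
\item every element of $F_3$, hence of $\bar{F_3}$, lies strictly below $x_1$, since $x_1\downarrow x_i\ne\{x_1\}$ when $x_1\not\le x_i$.
\end{itemize}
The induction hypothesis therefore applies to both terms, completing the induction. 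The only delicate point is this strict decrease of the $\downarrow$-closures, which is exactly why we first pass to an antichain.
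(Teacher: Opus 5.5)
Your proof is correct and follows essentially the paper's route. Both arguments use inclusion--exclusion and the identity $\whL(x)\cap\whL(y)=\bigcup_{z\in(x\downarrow y)}\whL(z)$ from \eqref{eq:intL}, with termination coming from the finiteness of $\bar F$ and the strict descent of the indices. The differences are only in bookkeeping. You split off one term at a time, so only binary sets $(x_1\downarrow x_i)$ appear, and these lie in $\bar F$ directly by $\downarrow$-closedness; the paper's full expansion over all $I\subset F$ instead uses maximal common lower bounds of arbitrary subsets and has to know that these lie in $\bar F$. Your strong induction on $|\bar{F'}|$ after passing to an antichain also makes rigorous the paper's informal ``keep rewriting until it stops''.
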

\begin{proof}
Put $\sigma=[s,\bigcup_{x\in F}\whL(x)]$. Remark that $\sigma =\bigcup_{s\in F}[s,\whL(x)]$; thus its characteristic function is the supremum of $\{\chi_{[s,\whL(x)]}\colon x\in F\}$ in the idempotent order:
\begin{align*}
\chi_\sigma=&\bigvee_{x\in F}\chi_{[s,\whL(x)]}\\
=&\sum_{\emptyset\ne I\subset F}(-1)^{|I|+1}\chi_{\cap_{x\in I}\whL(x)}
\end{align*}
Put
\[
(\big\downarrow)_{x\in I} x=\max\{y\in L\colon y\le x\forall x\in I\}.
\]
We have
\[
\cap_{x\in I}\whL(x)=\bigcup_{y\in (\big\downarrow)_{x\in I} x}\whL(y).
\]
The above union is either equal to a single $\whL(x)$ with $x\in I$ or a union of $\whL(y)$ with $y\in \bar{F}$ and $y<x$ for all $x\in I$. Hence because $\tilde{F}$ is finite, if we keep writing every union that appears as a supremum, we eventually get to a linear combination of $\chi_{\whL(z)}$ with $z\in \bar{F}$.
\end{proof}
\begin{lem}\label{lem:goodF}
Let $S\acts L$ be an inverse semigroup action on a locally finite weak semilattice, $G=S\ltimes \whL$, and $G_d=S\ltimes L^\times$. Let $F\subset G_d$ be such that
\item[i)] $[s,x]\in F\Rightarrow [s^*,sx]\in F$.
\item[ii)] If $[s,x],[t,y]\in F$ then $\forall z\in t^*(x\downarrow t(y))$, $[st,z]\in F$.
Consider the subset 
\[
S^{\cop}(G)\supset F_c:=\{[s,\whL(x)]\colon [s,x]\in F\}.
\]
Let
\begin{gather*}A=\lspan_\ell\{\chi_\sigma\colon \sigma\in F_c\}\subset \ell[G],\\ 
A_d=\lspan_\ell \{\chi_x\colon x\in F\}\subset \ell[G_d]. \end{gather*}
Assume that $F$ is finite. Then $F\subset G_d$ is a subgroupoid and there is an isomorphism of algebras
$\psi:A\to A_d$ such that for all $f\in A$ and $[s,x]\in F$, we have
\begin{equation}\label{eq:premaphi}
\psi(f)([s,x])=f([s,\cP_x]).
\end{equation}
\end{lem}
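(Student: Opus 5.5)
The plan is to show that $A$ is a subalgebra of $\ell[G]$ with a basis indexed by $F$, that $A_d$ is a subalgebra of $\ell[G_d]$ with basis $\{\chi_{[s,x]}\colon [s,x]\in F\}$, and that the evaluation map \eqref{eq:premaphi} is a multiplicative bijection between them.

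\textbf{Step 1: $F$ is a subgroupoid and $A_d$ is a subalgebra.} By i), $F$ is closed under inversion. For composable $[s,x],[t,y]\in F$ with $x=t\cdot y$, condition ii) applies because $t(y)=x\le x$, so $(x\downarrow t(y))=\{x\}$ and $t^*(x\downarrow t(y))=\{y\}$. Hence $[st,y]=[s,x][t,y]\in F$. Products in $\ell[G_d]$ of characteristic functions of singletons are either $0$ or the characteristic function of the composite, so $A_d=\ell[F]$ is a finite-dimensional subalgebra with basis $\{\chi_g\}_{g\in F}$.

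\textbf{Step 2: $A$ is a subalgebra.} Compute
\[
\chi_{[s,\whL(x)]}*\chi_{[t,\whL(y)]}=\chi_{[st,\,t^{-1}(\whL(x)\cap t\whL(y))]}.
\]
Since $t\cdot\whL(y)=\whL(t\cdot y)$ and $\whL(x)\cap\whL(ty)=\bigcup_{w\in(x\downarrow ty)}\whL(w)$, the product equals $\chi_{[st,\bigcup_{z\in t^*(x\downarrow ty)}\whL(z)]}$. The argument of Lemma \ref{lem:sunion} writes this as an inclusion--exclusion combination of $\chi_{[st,\whL(z')]}$, where the $z'$ range over iterated $\downarrow$-meets of the $z$'s. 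I need these $[st,z']$ to lie in $F$. I would derive this from ii) together with i), by noting that meets $z'\in(z_1\downarrow z_2)$ with $[st,z_i]\in F$ arise as $(st)^*(\ldots)$ of a product of $[st,z_1]$ and $[(st)^*st,z_2]$ (an idempotent element of $F$ obtained via i) and ii)). This closure check is the main obstacle. If it fails as stated, I would instead prove the weaker claim that the span is closed under multiplication by passing to the $\downarrow$-closure inside $\dom(st)$.

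\textbf{Step 3: define $\psi$ and show it is a bijection.} Note that $[s,\cP_x]\in G$ lies in $[s,\whL(y)]$ iff $[s,y]$ and $[s,x]$ have the same germ and $x\le y$. So $\psi(\chi_{[s,\whL(y)]})=\sum_{[s,x]\in F,\,x\le y}\chi_{[s,x]}$, which lies in $A_d$. For each fixed germ-class of $s$ this is a unitriangular matrix with respect to the partial order on $x$; by Lemma \ref{lem:tecombino} one may reduce to idempotent supports. It is therefore invertible, by Möbius inversion on the finite poset. This gives a bijection between spanning sets and shows that the $\chi_{[s,\whL(x)]}$, $[s,x]\in F$, are linearly independent: a relation evaluated at the principal characters $\cP_x$, which are dense by Lemma \ref{lem:principal}, would yield a relation in $A_d$.

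\textbf{Step 4: multiplicativity.} Since $\cP_x=\chi_{(x\uparrow)}$, and since $\psi$ is evaluation at the points $[s,\cP_x]$, I would check $\psi(f*g)=\psi(f)*\psi(g)$ on basis elements. For $[u,z]\in F$, the convolution $(f*g)([u,\cP_z])$ is a sum over factorizations in $G$ through $[t,\cP_z]$ with $t$ ranging over the basis; here $t\cdot\cP_z=\cP_{t\cdot z}$. Those factorizations correspond exactly to factorizations of $[u,z]$ in $F$, using Step 1 and the fact that $\theta$ in \eqref{map:elteta} is a homomorphism. So both sides agree.
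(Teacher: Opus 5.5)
Your proposal is correct. Its skeleton matches the paper's proof:
\begin{itemize}
\item $F$ is a subgroupoid whose unit space $F^{(0)}$ is $\downarrow$-closed;
\item $A$ is closed under products, by the product formula together with the inclusion--exclusion argument of Lemma \ref{lem:sunion};
\item $\psi$ sends $\chi_{[s,\whL(x)]}$ to $\chi_{[s,L_F(x)]}$, where $L_F(x)=L(x)\cap F^{(0)}$;
\item multiplicativity is checked on these generators. You check it pointwise at the $[u,z]\in F$, which is the same computation as the paper's.
\end{itemize}

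The genuine difference is how you get bijectivity. The paper writes down the inverse explicitly, $\phi(\chi_{[s,x]})=\chi_{[s,\whL(x;L^\bullet_F(x))]}$. It then verifies both $\psi\phi=\id$ and $\phi\psi=\id$, the latter via the disjoint decomposition $\whL(x)=\bigsqcup_{y\in L_F(x)}\whL(y;L^\bullet_F(y))$. You instead observe that the matrix of $\psi$, from the spanning set $\{\chi_{[s,\whL(x)]}\}_{[s,x]\in F}$ to the basis $\{\chi_g\}_{g\in F}$ of $A_d$, is the zeta matrix of the restriction order on $F$. That order is $[t,w]\preceq[t',y]$ iff $w\le y$ and $[t,w]=[t',w]$. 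The zeta matrix is unitriangular, hence invertible over any $\ell$. This gives linear independence of the spanning set and bijectivity in one stroke, and avoids the disjointness bookkeeping. The paper's route instead yields an explicit formula for $\psi^{-1}$, which is your Möbius inversion made concrete. Density of principal characters and Lemma \ref{lem:tecombino} are not needed anywhere in your argument.

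Two points should be tightened.

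First, the closure step you call the main obstacle is not one, and your own sketch settles it. Applying ii) to two units $[e,x],[f,y]\in F$ shows that $F^{(0)}$ is $\downarrow$-closed. Applying ii) to $[u,z]\in F$ and the unit at some $z'\in L_F(z)$ gives $[ue,z']=[u,z']\in F$. So drop the fallback about passing to a $\downarrow$-closure; it would change $A$ anyway.

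Second, in Step 4 the matching of terms needs more than Step 1 gives you. The terms $[t,z]$ and $[ut^*,tz]$ on the $G_d$ side must actually lie in $F$ to match the $G$-side terms, and that is guaranteed by this restriction-closure. It does not follow from groupoid closure alone, nor from $\theta$ being a homomorphism. Likewise, multiplicativity of the evaluation map yields an algebra isomorphism only because Step 2 has already shown that $A$ is a subalgebra, so Step 2 cannot be weakened.
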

\begin{proof}
Remark that conditions i) and ii) imply that $[s^*s,x]$, which we shall henceforth identify with $x$, is in $F$ for all 
$[s,x]\in F$, and it follows that $F\subset G_d$ is a subgroupoid with unit space $F^{(0)}=F\cap L$, and that $F^{(0)}$ is $\downarrow$-closed. We use this to show that the $\ell$-linear map $\psi$ is multiplicative. It follows from Lemma \ref{lem:sunion} and condition ii) that the $\chi_{[s,\whL(x)]}$ with $[s,x]\in F$ generate $A$ as an $\ell$-module. Hence it suffices to check multiplicativity on generators of the form $\chi_{[s,\whL(x)]}$ with $[s,x]\in F$. For $x\in F^{(0)}$ put $L_F(x)=L(x)\cap F^{(0)}$. Let $[s,x],[t,y]\in F$; using i) and ii) in the third line, we have
\begin{gather*}
\psi(\chi_{[s,\whL(x)]})\psi(\chi_{[t,\whL(y)]})=\chi_{[s,L_F(x)]}\chi_{[t,L_F(y)]}=\\
\sum_{z\in L_F(x),\, w\in L_F(y)}\chi_{[s,x]}\chi_{[t,w]}=\sum_{\{w\in L_F(y)\colon t(w)\in L_F(x)\}}\chi_{[st,w]}\\
=\chi_{[st,\bigcup_{u\in t^*(x\downarrow t(y))}L_F(u)]}\\
=\psi(\chi_{[st,\bigcup_{u\in t^*(x\downarrow t(y))}\whL(u)]})=\psi(\chi_{[s,\whL(x)]}\chi_{[t,\whL(y)]}).
\end{gather*}

Again using i) and ii), we obtain that if $[s,x]\in F$ and $y\in L_F^\bu(x):=L(x)^\bu\cap F$, then $[s,\whL(y)]=[s,\whL(x)]\whL(y)\in S(F)$, which by Lemma \ref{lem:sunion} implies that $\chi_{[s,\bigcup_{y\in L^{\bullet}_F(x)}\whL(y)]}\in A$. It follows that there is an 
$\ell$-linear map  $\phi:A_d\to A$ such that
\[
\phi(\chi_{[s,x]})=\chi_{[s,\whL(x)]}-\chi_{[s,\bigcup_{y\in L^{\bullet}_F(x)}\whL(y)]}=\chi_{[s,\whL(x;L^{\bullet}_F(x))]}. 
\]
Note that here we are using finiteness of $F$. 

Let us check that $\psi$ and $\phi$ are inverse maps. Let $[s,x],[t,z]\in F$; we have 
\begin{gather*}
\psi(\phi(\chi_{[s,x]}))=\psi(\chi_{[s,\whL(x;L_F^\bu(x))]})([t,z])=\chi_{[s,\whL(x)]}([t,\cP_z])-
\chi_{[s,\bigcup_{y\in L^{\bullet}_F(x)}\whL(y)]}([t,\cP_z])\\
=\chi_{[s,L_F(x)]}([t,z])-\chi_{[s,\bigcup_{y\in L^{\bullet}_F(x)}L(y)])}([t,z])=\chi_{[s,x]}([t,z]).    
\end{gather*}

Thus $\psi\circ\phi=\id_{A_d}$. To prove that $\phi\circ\psi=\id_A$, it suffices to show that $\phi(\psi(\chi_{[s,\whL(x)]}))=\chi_{[s,\whL(x)]}$ for all $[s,x]\in F$. Using ii) at the last step, we obtain
\begin{gather*}
\phi(\psi(\chi_{[s,\whL(x)]}))=\phi(\chi_{[s,L_F(x)]})=\\
=\sum_{y\in L_F(x)}\chi_{[s,\whL(y)]}-\chi_{[s,\bigcup_{z\in L_F^\bu(y)}\whL(z)]}\\
=\sum_{y\in L_F(x)}\chi_{[s,\whL(y;L_F^\bu(y))]}\\
=\chi_{[s,\sqcup_{y\in L_F(x)}\whL(y;L_F^\bu(y))]}=\chi_{[s,\whL(x)]}.
\end{gather*}
\end{proof}

 Let $S\acts L$, $G$ and $G_d$ be as in Lemma \ref{lem:goodF}. We say that the action of $S$ on $L$ is \emph{locally finite} if for every finite subset $\Lambda\subset G_d$ there exists a finite subset $\Lambda\subset F\subset G_d$ such that $F$ satisfies conditions  i) and ii) of the lemma. 

Let $A\in\ahal$, $X$ a set, $x\in X$ and $\iota_x:A\to M_XA$ the corner inclusion. Let $\fC$ be an $\infty$-category and $\cH:\ahal\to\fC$ a functor.
The functor $\cH$ is \emph{$M_X$-stable} if the natural transformation $\cH(\iota_x):\cH\to \cH M_X$ is an equivalence. If $\fC$ is additive, $\cH$
is called \emph{finitely additive} if for every finite collection $\{A_i\colon i\in F\}$, the map $\cH(\bigoplus_{i\in F}A_i)\to \bigoplus_{i\in F}\cH(A_i)$ induced by the projections $\pi_j:\bigoplus_{i\in F}A_i\to A_j$ is an equivalence. 

\begin{thm}\label{thm:discprop}
 Let $L$ be a locally finite weak semilattice and $S\acts L$ a locally finite inverse semigroup action and $G=S\ltimes \whL$. 
 Let $\cH:\ahal\to\Sp$ be an $M_{L^\times}$-stable, finitely additive functor commuting with filtering colimits. 
 If $G$ is proper, 
 then $S\acts L$ satisfies discretisation with respect to $\cH$.
\end{thm}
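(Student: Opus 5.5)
We write $A_d=\ell[G_d]$ and $A=\ell[G]$ and work with the algebra map $\theta\colon A_d\to M_{L^\times}A$ of \eqref{map:elteta}. By local finiteness of the action, $G_d$ is the filtered union of finite subgroupoids $F$ satisfying conditions i) and ii) of Lemma \ref{lem:goodF}. Correspondingly, $A_d=\ilim_F A_d(F)$ with $A_d(F)=\lspan_\ell\{\chi_x\colon x\in F\}$. Let $A(F)=\lspan_\ell\{\chi_\sigma\colon\sigma\in F_c\}\subset A$ be the algebra of Lemma \ref{lem:goodF}. Lemma \ref{lem:goodF} gives an isomorphism $\psi_F\colon A(F)\isom A_d(F)$, compatible with enlarging $F$ because \eqref{eq:premaphi} does not depend on $F$. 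By Lemma \ref{lem:spanl}, $A=\ilim_F A(F)$. Since $\cH$ commutes with filtering colimits, it suffices to show that $\cH(\theta)$ is the colimit of maps $\cH(A_d(F))\to\cH(M_{L^\times}A(F))$ and that these are equivalences compatibly in $F$.

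For fixed $F$, consider the restriction $\theta_F\colon A_d(F)\to M_{F^{(0)}}A(F)\subset M_{L^\times}A(F)$, which sends $\chi_{[s,x]}\mapsto\epsilon_{sx,x}\chi_{[s,\whL(x)]}$. Its values lie in $A(F)$ because $[s,x]\in F$. Precompose with $\psi_F^{-1}$. Since $F$ is finite, Lemma \ref{lem:gpd101} decomposes $A_d(F)\cong\bigoplus_{z\in Z}M_{\cO_z}\ell[F_z^z]$ into finitely many matrix algebras over finite group algebras. Using finite additivity and $M_X$-stability, it then suffices to check that for every $x\in F^{(0)}$ the composite
\[
A(F)\xrightarrow{\psi_F}A_d(F)\xrightarrow{\theta_F}M_{F^{(0)}}A(F)
\]
induces the same map under $\cH$ as the corner inclusion $\iota_x$. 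Equivalently, it suffices to show that $\theta_F\psi_F$ differs from a corner inclusion by conjugation and adding orthogonal summands. Concretely, the composite acts as
\[
f\mapsto\sum_{[s,y]}\psi_F(f)([s,y])\,\epsilon_{sy,y}\chi_{[s,\whL(y)]},
\]
and by Lemma \ref{lem:goodF} we can rewrite $\chi_{[s,\whL(y)]}=\sum_{w\in L_F(y)}\chi_{[s,\whL(w;L_F^\bullet(w))]}$. This suggests splitting the image along the orthogonal idempotents $p_w=\chi_{\whL(w;L_F^\bullet(w))}$, $w\in F^{(0)}$, which sum to the units of $A(F)$ on the relevant blocks. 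One can then compare the composite with the diagonal map $f\mapsto\mathrm{diag}(f,\dots)$ restricted to corners. A standard matrix argument, namely that $\cH$ applied to a sum of orthogonal homomorphisms is the sum and that conjugate homomorphisms agree by $M_X$-stability, should then identify $\cH(\theta_F)$ as an equivalence: the identity after applying $\psi$ and matrix stability.

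The properness of $G$ is used to guarantee that the local-finiteness setup fits. Via Appendix \ref{app1:proper} (Proposition \ref{prop:semilocafin}), properness gives local finiteness of the bisection semigroup, so each compact open bisection involved is controlled by a finite $F$ with the $A(F)$ exhausting $A$. It also ensures that the isotropy groups $F_z^z$ are finite and that the decomposition is uniform. I expect the main obstacle to be the middle step: showing precisely that $\theta_F\circ\psi_F$ is, up to inner conjugation and orthogonal sums, a corner embedding. The bookkeeping of the idempotents $p_w$ versus $\chi_{\whL(y)}$ and the orbit structure needs care. I would first try to verify it on a single orbit block $M_{\cO_z}\ell[F_z^z]$, where both sides become explicit matrix algebras, and then assemble the blocks using finite additivity.
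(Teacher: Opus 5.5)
\paragraph{Gap: the fixed-$F$ step aims at a false statement.}
Your reduction to a single finite $F$ matches the paper's. The step you then plan for fixed $F$ is false. $\cH(\theta_F\psi_F)$ is \emph{not} $\cH$ of a corner inclusion, and $\cH(\theta_F)$ is not ``the identity after applying $\psi$''.

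This already fails for $K_0$. Take $L=\{0<b<a\}$ and $S=\{0,1\}$ acting trivially; all hypotheses hold, with $F=\{a,b\}$ and $G=\whL$ a two-point space.
\begin{itemize}
\item We have $\whL(a)=\{\cP_a,\cP_b\}$ and $\whL(b)=\{\cP_b\}$, and $\psi$ sends $\chi_{\{\cP_x\}}\mapsto\chi_x$.
\item The element $\theta(\chi_a)=\epsilon_{a,a}\chi_{\whL(a)}$ has class $[\chi_{\{\cP_a\}}]+[\chi_{\{\cP_b\}}]$.
\item So in these bases $K_0(\theta)$ is $\left(\begin{smallmatrix}1&0\\1&1\end{smallmatrix}\right)$. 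This is invertible but not the identity.
\end{itemize}
Hence no conjugation argument can identify $\theta_F\psi_F$ with a corner embedding.

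In your splitting along the $p_w$, only the summand with $w=y$, namely $\epsilon_{sy,y}\chi_{[s,\whL(y;L_F^\bullet(y))]}$, is conjugate to the diagonal (corner) map. The summands with $w<y$ form an extra orthogonal homomorphism whose image under $\cH$ is in general nonzero. So ``adding orthogonal summands'' does change $\cH$, and your two formulations of the sufficient condition are not equivalent.

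\paragraph{Missing idea: nilpotence of the extra summand.}
What makes the extra summand harmless is nilpotence. Write
$\iota=M_{F^{(0)}}(\psi)\circ\theta_F\colon A_d(F)\to M_{F^{(0)}}A_d(F)$. Then $\iota=\iota_0+\iota_1$, where $\iota_0(\chi_{[s,x]})=\epsilon_{sx,x}\chi_{[s,x]}$ and $\iota_1(\chi_{[s,x]})=\epsilon_{sx,x}\chi_{[s,L_F^\bullet(x)]}$ are orthogonal homomorphisms. Therefore $\cH(\iota)=\cH(\iota_0)+\cH(\iota_1)$.
\begin{itemize}
\item The map $\iota_0$ is conjugate to a corner embedding; this is where your diagonal comparison and Lemma \ref{lem:gpd101} belong. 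So under $\cH$ it is identified with the matricial-stability equivalence $h$.
\item The map $\iota_1$ strictly lowers $x$ in the finite poset $F^{(0)}$. So the $|F|$-fold composite of its matrix amplifications is zero, and $h^{-1}\cH(\iota_1)$ is nilpotent.
\end{itemize}
Hence $h^{-1}\cH(\iota)$ is the identity plus a nilpotent, and is therefore invertible. In short, $\cH(\theta)$ is unipotent with respect to the order on $F^{(0)}$, not the identity, and your proof needs this triangular argument to close.

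\paragraph{Minor point on properness.}
Properness plays no role inside this proof; the hypothesis doing the work is local finiteness of the action. Properness enters only in Corollary \ref{coro:discprop}, via Proposition \ref{prop:semilocafin}, to produce local finiteness when $L$ is a semilattice.
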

\begin{proof}
Let $A=\ell[G]$, $A_d=\ell[G_d]$. Let $\cF$ be the collection of all finite subsets of $G_d$ satisfying conditions i) and ii) of Lemma \ref{lem:goodF}. For $F\in\cF$, let
\begin{gather*}
A_d(F)=\lspan_\ell\{\chi_{[s,x]}:[s,x]\in F\},\\
A(F)=\lspan_\ell\{\chi_{[s,\whL(x)]}\colon [s,x]\in F\}.    
\end{gather*}
Put $F^{(0)}=d(F)$. By Lemma \ref{lem:spanl} and because we are assuming that $S\acts L$ is locally finite, we have $A=\bigcup_{F\in \cF}A(F)$ and $A_d=\bigcup_{F\in\cF}A_d(F)$, where both unions are filtering. Moreover, since we are assuming that $\cH$ commutes with filtering colimits and is $M_{L^\times}$-stable, and since we have 
$$
\theta(A_d(F))\subset M_{F^{0}}A(F)\subset M_{L^\times}A(F)
$$ 
for all $F\in\cF$, it suffices to show that $\cH(\theta):\cH(A_d(F))\to \cH(M_{F^{(0)}}A(F))$ is an equivalence.
Let $\phi:A_d(F)\leftrightarrow A(F):\psi$ be the inverse algebra isomorphisms of Lemma \ref{lem:goodF}. With notations as in the proof of that lemma, let 
\[
\iota:A_d(F)\to M_{F^{(0)}}A(F),\, \iota(\chi_{[s,x]})=\epsilon_{s(x),x}\chi_{[s,L_F(x)]}.
\]
We have a commutative diagram
\[
\xymatrix{A_d(F)\ar[r]^\theta\ar[dr]_{\iota}&M_{L^\times}A(F)\ar[d]_{M_{F^{(0)}}\psi}\\
& M_{L^\times}A_d(F)}
\]
Let $\iota_0,\iota_1:A_d(F)\to M_FA_d(F)$,
\begin{gather*}
\iota_0(\chi_{[s,x]})=\epsilon_{s(x),x}\chi_{[s,x]},\,\,
\iota_1(\chi_{[s,x]})=\epsilon_{s(x),x}\chi_{[s,L_F^\bu(x)]}.
\end{gather*}
One checks that $\iota_0$ and $\iota_1$ are orthogonal algebra homomorphisms with $\iota=\iota_0+\iota_1$.
Hence, because $\cH$ is finitely additive and $M_{L^\times}$-stable, we have 
$\cH(\iota)\sim\cH(\iota_0)+\cH(\iota_1)$. One checks that for $N=|F|$, the composite
\begin{equation}\label{eq:nilpo}
M_{F^{N-1}}(\iota_1)\circ\dots\circ M_F(\iota_1)\circ\iota_1:A_d(F)\to M_{F^N}A_d(F)
\end{equation}
is identically zero. Recall that $M_{F^{(0)}}$-stability of $\cH$ implies that for $x\in F^{(0)}$, the natural map $h=\cH(\epsilon_{x,x})$ is an equivalence and is independent of $x$ up to equivalence. It follows from \eqref{eq:nilpo} that the composite $\cH(\iota_1)\circ h$ is nilpotent. Hence to show that $\cH(\iota)$ is an equivalence, it suffices to show that $\cH(\iota_0)$ is one. Let $\cR\subset F^{(0)}$ be such that the projection $F^{(0)}\to F^{(0)}/F$ restricts to a bijection on $\cR$. For $x\in\cR$ and $[s,x]\in F_x^x$, we have
\[
\iota_0(\chi_{[s,x]})=\epsilon_{x,x}\chi_{[s,x]}.
\]
Hence we have a commutative diagram
\[
\xymatrix{A_d(F)\ar[r]^{\iota_0}&M_FA_d(F)\\
\bigoplus_{x\in\cR}\ell[F_x^x]\ar[u]_{\inc}\ar[r]^{\oplus_x\epsilon_{x,x}}&\bigoplus_{x\in\cR}M_F\ell[F_x^x]\ar[u]^{\inc}}
\]
It follows from Lemma \ref{lem:gpd101} and matricial stability that applying $\cH$ to either any of the vertical maps or the bottom horizontal map is an equivalence. Hence $\cH(\iota_0)$ is an equivalence. This finishes the proof. 
\end{proof}

\begin{coro}\label{coro:discprop} Let $S\acts L$ be an inverse semigroup action on a semilattice and as in Theorem \ref{thm:discprop} (in particular, we assume that $G = S \ltimes \whL$ is proper). Let $\cH$ be as in Theorem \ref{thm:discprop} as well. Then $S\acts L$ satisfies discretisation with respect to $\cH$. 
\end{coro}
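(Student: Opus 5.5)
The plan is to reduce the corollary to Theorem \ref{thm:discprop}. The only hypothesis of that theorem still to be checked is that the action $S\acts L$ is locally finite, in the sense defined just before it. So the goal is: when $L$ is a semilattice and $G=S\ltimes\whL$ is proper, every finite $\Lambda\subset G_d$ is contained in a finite $F\subset G_d$ satisfying conditions i) and ii) of Lemma \ref{lem:goodF}. The source of finiteness will be Proposition \ref{prop:semilocafin} of Appendix \ref{app1:proper}: the inverse semigroup $S^{\cop}(G)$ of compact open bisections of a proper ample groupoid is locally finite.

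\emph{Step 1: encode $G_d$ by bisections.} Consider the map
\[
\beta:G_d\to S^{\cop}(G),\qquad [s,x]\mapsto[s,\whL(x)].
\]
It is well defined: $x\in\dom_L(s)$ gives $\whL(x)\subset\dom_{\whL}(s)$ by Lemma \ref{lem:doms}, and $[s,\whL(x)]$ does not depend on the representative. Next I show that $\beta$ is injective.
\begin{itemize}
\item The source of $\beta([s,x])$ is $\whL(x)$, and by Lemma \ref{lem:principal} this determines $x$.
\item Suppose $[s,\whL(x)]=[t,\whL(x)]$. Then the germs agree at $\cP_x$, so there is $u\le s,t$ with $\cP_x\in\dom_{\whL}(u)$.
\item That membership means some $l\ge x$ lies in $\dom_L(u)$. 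Since $\dom_L(u)$ is a downset, $x\in\dom_L(u)$, hence $[s,x]=[t,x]$ in $G_d$.
\end{itemize}

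\emph{Step 2: compute inverses and products.} In a semilattice, $(x\downarrow y)=\{xy\}$. The inverse is
\[
\beta([s,x])^{-1}=[s^*,\whL(s\cdot x)]=\beta([s^*,s\cdot x]).
\]
The product is
\[
[s,\whL(x)]\,[t,\whL(y)]=[st,\whL(y)\cap t^{-1}(\whL(x))]=\bigl[st,\whL\bigl(t^*(x\cdot t(y))\bigr)\bigr],
\]
using the dual action formula and \eqref{eq:intL}; when $x\cdot t(y)=0$ the product is the empty bisection. So the image of $\beta$, together with $\emptyset$, is closed under inverses and products. Moreover, the product image corresponds exactly to the element $[st,z]$ with $z=t^*(x\,t(y))$ required in condition ii).

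\emph{Step 3: build $F$.} Given finite $\Lambda$, let $T\subset S^{\cop}(G)$ be the inverse subsemigroup generated by $\beta(\Lambda)$. It is finite by Proposition \ref{prop:semilocafin}, and by Step 2 it lies in $\beta(G_d)\cup\{\emptyset\}$. Put $F=\beta^{-1}(T)$. It is finite by injectivity, contains $\Lambda$, and satisfies i) and ii) because $T$ is closed under inverses and products. Theorem \ref{thm:discprop} then applies.

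The main obstacle I expect is the bookkeeping in Step 2. One must verify that the product of two elements of $\beta(G_d)$ really equals $\beta$ of the element demanded in ii), in particular that $t^{-1}(\whL(x))\cap\whL(y)=\whL(t^*(x\,t(y)))$ inside $\dom_{\whL}(t)$. I would check this directly on characters via the definition of the dual action. If that identity has a subtlety, the fallback is to take $F=\beta^{-1}(T')$ for the finite inverse semigroup $T'$ generated by $\beta(\Lambda)$ together with all bisections $[s,\whL(y)]$, $y\le x$, $[s,x]\in\beta^{-1}(T)$; local finiteness of $L$ keeps this set finite.
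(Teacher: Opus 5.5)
Your proposal is correct and is essentially the paper's argument. The paper likewise notes that $\{[s,\whL(x)]\colon x\in\dom_L(s)\}\cup\{\emptyset\}$ is an inverse subsemigroup of $S^{\cop}(G)$, applies Proposition \ref{prop:semilocafin} to it, and observes that $F\subset G_d$ satisfies the conditions of Lemma \ref{lem:goodF} exactly when $F_c\cup\{\emptyset\}$ is an inverse subsemigroup. Your Steps 1--2 (injectivity of $\beta$ and the product formula $\beta([s,x])\beta([t,y])=\beta([st,t^*(x\,t(y))])$) simply make explicit what the paper leaves implicit, so the fallback is not needed.
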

\bproof
Since $L$ is a semilattice, the set $S^{\cop}_p(G):=\{[s,\whL(x)]\colon x\in\dom_L(s)\}\cup\{\emptyset\}\subset  S^{\cop}(G)$ is an inverse sub-semigroup. Hence, since $G$ is proper, $S^{\cop}_p(G)$ is locally finite by Proposition \ref{prop:semilocafin}. Moreover, note that a subset $F\subset G_d$ satisfies the conditions of Lemma~\ref{lem:goodF} if and only if the subset $F_c\cup\{\emptyset\}\subset S^{\cop}(G)$ is an inverse subsemigroup. Thus $S\acts L$ is locally finite. Now the corollary follows from Theorem~\ref{thm:discprop}.
\eproof

\begin{exas}\label{exas:discprop} The hypotheses on $\cH$ of Theorem \ref{thm:discprop} are satisfied by algebraic $K$-theory $K$, homotopy algebraic $K$-theory $KH$, and Hochschild and cyclic homology of $\ell$-algebras, $HH(-/\ell)$ and $HC(-/\ell)$. Neither negative cyclic $HN(-/\ell)$ nor periodic cyclic homology $HP(-/\ell)$ commute with filtering colimits, so the theorem does not apply to them.
\end{exas}

\subsection{Example 2: Self-similar higher rank graph algebras}
\label{ss:sshrg}

\subsubsection{Higher rank graphs}
Let $k\ge 1$; equip the monoid $\Nz_0^k$ with the product order and regard it as a category.  We write 
$n\vee m$ for the supremum of two elements $n,m\in\Nz_0^k$. A \emph{$k$-graph} is a small category 
$\Lambda$ equipped with a functor $|\,|:\Lambda\to \Nz_0^k$ such that if $\lambda\in\Lambda$ is such that 
$|\lambda|=n_1+n_2\in \Nz_0^k$, then there exist unique composable $\lambda_1,\lambda_2\in\Lambda$ with 
$|\lambda_i|=n_i$ such that $\lambda=\lambda_1\lambda_2$. For $n\in\Nz_0^k$, put 
$$
\Lambda^n=\{\lambda\in\Lambda\colon |\lambda|=n\}.
$$ 
The elements of $\Lambda^0$ are called \emph{vertices}; thus a vertex is the same as the identity of an object of $\Lambda$. If $\lambda\in\Lambda$ we write $r(\lambda)$ for 
its domain and $s(\lambda)$ for its codomain. If $v\in\Lambda^0$ and $n\in\Nz_0^k$, we write $v\Lambda=s^{-1}\{v\}$ and $v\Lambda^n$ for the subset of those $\lambda$ with $s(\lambda)=v$ and $|\lambda|=n$. Let $\cB=\{\epsilon_i\colon 1\le i\le k\}\subset \Nz_0^k$ be the canonical basis; we have $|\lambda|=\sum_{i=1}^k|\lambda|_i\epsilon_i$. An element $\lambda\in\Lambda$ is an \emph{$i$-edge} if $|\lambda|=\epsilon_i$. 
We say that $\Lambda$ is \emph{row-finite} if $v\Lambda^n$ is finite for every vertex $v$ and $n\in\Nz_0^k$.
Let $\lambda,\mu\in\Lambda$. Put
\[
\Lambda\times\Lambda\supset \Lambda(\lambda,\mu)=\{(\alpha,\beta)\colon s(\alpha)=r(\lambda),\, s(\beta)=r(\mu),\,\lambda\alpha=\mu\beta\}
\]
Remark that $s(\lambda)\ne s(\mu)\Rightarrow \Lambda(\lambda,\mu)=\emptyset$. We say that a $k$-graph $\Lambda$ is \emph{finitely aligned} 
if for every $\lambda,\mu\in\Lambda$ the set 
\[
\Lambda(\lambda,\mu)\supset\Lambda^{\min}(\lambda,\mu)=\{(\alpha,\beta)\colon |\lambda\alpha|=|\lambda|\vee|\mu|\}
\]
is finite. Every row-finite $k$-graph is finitely aligned, but the converse does not hold. We shall also consider the set of \emph{minimal common extensions} 
\[
MCE(\lambda,\mu)=\{\lambda\alpha\colon (\exists \beta)\, (\alpha,\beta)\in \Lambda^{\min}(\lambda,\mu)\}.
\]

\subsubsection{Self-similar higher rank graphs}
Let $k\ge 0$, $\Lambda$ a finitely aligned $k$-graph, and $\crG$ a discrete groupoid with unit space $\crG^{(0)}=\Lambda^0$, and domain and codomain maps $d,c \colon \crG \to \crG^{(0)}$. 
A \emph{self-similar} action of $\crG$ on $\Lambda$ consists of
\begin{itemize}
    \item  A map 
    $$
     \crG{}_d\times_s\Lambda\to \Lambda,\,\, (g,\lambda)\mapsto g(\lambda),
    $$
    satisfying $|g\cdot \lambda|=|\lambda|$ \
    $\forall (g,\lambda)\in  \crG{}_d\times_s\Lambda$, that makes $\Lambda$, equipped with 
 the discrete topology and the source map $s:\Lambda\to \Lambda^0= \crG^{(0)}$ as 
    anchor, into a left $ \crG$-correspondence.

    \medskip
    
    \item A map $\phi: \crG{}_d\times_s\Lambda\to  \crG$ such that

    \smallskip
    
    \begin{enumerate}
        
        \item $\phi(g,v)=g$ $\forall v\in \Lambda^0$ and $g\in  \crG^v=d^{-1}(\{v\})$.

        \smallskip
        
        \item $c(\phi(g,\lambda))=r(g(\lambda))$ and $d(\phi(g,\lambda))=r(\lambda)$ $\forall (g,\lambda)\in  \crG{}_d\times_s\Lambda$.

        \smallskip
        
        \item $g(\lambda\mu)=g(\lambda)\phi(g,\lambda)(\mu)$ $\forall  (g,\lambda,\mu)\in  \crG{}_d\times_s\Lambda_r\times_s\Lambda$.

        \smallskip
        
        \item $\phi(gh,\lambda)=\phi(g,h(\lambda))\phi(h,\lambda)$ $\forall (g,h,\lambda)\in  \crG{}_d\times_c \crG{}_d\times_s\Lambda$.
        
        \smallskip
        
        \item $\phi(g,\lambda\mu)=\phi(\phi(g,\lambda),\mu)$ $\forall (g,\lambda,\mu)\in  \crG{}_d\times_s\Lambda{}_r\times_s\Lambda$.
    \end{enumerate}
    \end{itemize}
\subsubsection{Cohn groupoids}    
Let $k\ge 1$ and let $( \crG,\Lambda,\phi)$ be a self-similar $k$-graph. We say that two elements $(\lambda,g,\mu)$ and $(\xi,h,\eta)$ of $\Lambda{}_r\times_c \crG{}_d\times{}_r\Lambda$ are \emph{orthogonal}, and write $(\lambda,g,\mu)\perp (\xi,h,\eta)$, if $MCE(\lambda,\xi)=MCE(\eta,\mu)=\emptyset$. Call a subset $F\subset \Lambda{}_r\times_c \crG{}_d\times{}_r\Lambda$ \emph{orthogonal} if any two distinct elements of $F$ are orthogonal. 
 Let 
 \[
 S( \crG,\Lambda)=\{F\subset \Lambda{}_r\times_c \crG{}_d\times{}_r\Lambda\colon F \text{ finite and orthogonal}\}.
 \]
 We shall equip $S(\crG,\Lambda)$ with a multiplication. We start with the case of singletons; for $\{(\lambda,g,\mu)\},\{(\xi,h,\eta)\}\in S( \crG,\Lambda)$, we set 
 \begin{equation}\label{eq:multisg}
 \{(\lambda,g,\mu)\}\cdot\{(\xi,h,\eta)\}=\{(\lambda g(\alpha),\phi(g,\alpha)\phi(h,h^{-1}(\beta)),\eta h^{-1}(\beta))\colon (\alpha,\beta)\in \Lambda^{\min}(\mu,\xi)\}
 \end{equation}
 For arbitrary $F,F'\in S( \crG,\Lambda)$, put
 \[
 F\cdot F'=\bigcup_{\overset{(\lambda,g,\mu)\in F}{(\xi,h,\eta)\in F'}} \{(\lambda,g,\mu)\}\cdot\{(\xi,h,\eta)\}
 \]

 \begin{lem}\label{lem:larki}
 \item[i)] The multiplication defined above makes $S( \crG,\Lambda)$ into an inverse semigroup, pointed by the empty set. 
 \item[ii)] If $F\in S( \crG,\Lambda)$ its inverse is 
 \[
 F^*=\{(\mu,g^{-1},\lambda)\colon (\lambda,g,\mu)\in F\}.
 \]
 \item[iii)] $F^2=F$ if and only if $$(\lambda,g,\mu)\in F\Rightarrow
 \lambda=\mu,\, g=r(\lambda).$$
 \end{lem}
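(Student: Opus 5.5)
The cleanest route is to avoid verifying associativity by hand and instead realise $S(\crG,\Lambda)$ as a subsemigroup of a known inverse semigroup. The plan is to represent each $F$ as a partial bijection of the path space and then read off i)–iii) from $\cI(X)$. As the set I would take $X=\Lambda$ itself; if that turns out to be too coarse, one can enlarge it to the set of finite or infinite paths.

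\textbf{Step 1: the singleton bijections.} For a singleton $(\lambda,g,\mu)$, let $\tau_{(\lambda,g,\mu)}$ be the bijection
\[
\mu\Lambda\to\lambda\Lambda,\qquad \mu\nu\mapsto\lambda\, g(\nu),
\]
where $\mu\Lambda=\{\mu\nu\colon s(\nu)=r(\mu)\}$ is the set of extensions of $\mu$. This is well defined by the unique factorisation property. It is a bijection because $g(\nu)$ ranges over all paths with $s=c(g)=r(\lambda)$ as $\nu$ ranges over paths with $s=d(g)=r(\mu)$; here we use the left $\crG$-action together with $|g(\nu)|=|\nu|$.

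\textbf{Step 2: orthogonal sets and products.} For $F$ orthogonal, the conditions $MCE(\mu,\eta)=\emptyset$ and $MCE(\lambda,\xi)=\emptyset$ say exactly that the domains $\mu\Lambda$ are pairwise disjoint and likewise the ranges $\lambda\Lambda$. Hence $\tau_F=\bigsqcup_{e\in F}\tau_e$ is a partial bijection of $\Lambda$. The next task is to check that $\tau_{e}\circ\tau_{e'}=\tau_{e\cdot e'}$ for singletons, using \eqref{eq:multisg}. The relevant domain is
\[
\tau_{e'}^{-1}(\mu\Lambda\cap\xi\Lambda)=\bigsqcup_{(\alpha,\beta)\in\Lambda^{\min}(\mu,\xi)}\tau_{e'}^{-1}(\xi\beta\Lambda),
\]
where the union is disjoint since $\mu\Lambda\cap\xi\Lambda=\bigsqcup_{(\alpha,\beta)}\mu\alpha\Lambda$. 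This uses finite alignment and unique factorisation in the $k$-graph. Unwinding $\tau_e\tau_{e'}$ on each piece with the self-similarity axioms (3), (4) and (5) gives the formula \eqref{eq:multisg}. The same computation shows that the product of two orthogonal sets is again orthogonal, because its pieces have disjoint domains and ranges.

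\textbf{Step 3: injectivity of $F\mapsto\tau_F$.} I expect this to be the main obstacle. On $\Lambda$ alone the map may fail to be injective: for example, $\{(v,\mathrm{id},v)\}$ and $\{(e,\mathrm{id},e)\colon e\in v\Lambda^{\epsilon_1}\}$ could agree when $v$ receives edges. The paper's $S(\crG,\Lambda)$ however distinguishes them as sets. The representation still gives a homomorphism, so associativity of the product can be verified on the image. Injectivity instead has to come from the \emph{minimal} element of each domain: $\mu$ is the unique minimal element of $\mu\Lambda$, and $\tau_e(\mu)=\lambda$. The element $g$ is recovered from the action of $\tau_e$ on all extensions of $\mu$.

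One subtlety remains, and I would handle it by augmenting $X$ with a $\crG$-torsor. If two different $g,g'$ act identically on $r(\mu)\Lambda$ with identical restrictions, then $\tau$ cannot see the difference. To fix this I would take $X=\Lambda\times_{\crG^{(0)}}\crG$ and let
\[
\tau_{(\lambda,g,\mu)}(\mu\nu,h)=\bigl(\lambda g(\nu),\ \phi(g,\nu)h\bigr).
\]
Injectivity is then immediate by evaluating at $(\mu,r(\mu))$.

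\textbf{Step 4: conclusion.} Given Steps 1–3, $S(\crG,\Lambda)$ embeds into $\cI(X)$ as a subsemigroup closed under $F\mapsto\tau_F^{-1}=\tau_{F^*}$. A subsemigroup of an inverse semigroup closed under inverses is itself inverse, which proves i) and ii). For iii), note that $\tau_F$ is idempotent iff it is the identity on its domain. Evaluating at the minimal points $(\mu,r(\mu))$ then forces $\lambda=\mu$ and $g=r(\lambda)$. The converse direction follows directly from \eqref{eq:multisg}, since $\Lambda^{\min}(\lambda,\lambda)=\{(r(\lambda),r(\lambda))\}$.
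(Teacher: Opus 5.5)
Your argument is correct, but it takes a different route from the paper. The paper gives no proof of its own: it cites Larki's Proposition~3.6 and Corollary~3.5 for the case where $\crG$ is a group, and asserts that the same arguments apply verbatim to groupoids.

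You instead embed $S(\crG,\Lambda)$ faithfully in the symmetric inverse monoid $\cI(X)$, with $X=\Lambda\times_{\crG^{(0)}}\crG$ the set of pairs $(\nu,h)$ with $r(\nu)=c(h)$. Associativity, the zero, uniqueness of inverses, the formula for $F^*$ and the description of idempotents are then all inherited from $\cI(X)$. Only three things need genuine checking:
\begin{enumerate}
\item $\tau_e\tau_{e'}=\tau_{e\cdot e'}$ on singletons. This is essentially the computation the paper later does in part i) of Lemma~\ref{lem:sgacts}, plus an extra $\crG$-coordinate whose matching uses axioms (4) and (5).
\item Finiteness and orthogonality of $F\cdot F'$.
\item Faithfulness. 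You get this from the minimal elements $\mu\in\pi_3(F)$ of the domain, together with $\tau_{(\lambda,g,\mu)}(\mu,r(\mu))=(\lambda,g)$, which uses axiom (1) and $g(r(\mu))=c(g)=r(\lambda)$.
\end{enumerate}
What your route buys is a self-contained proof that does not rely on re-reading a group-case computation in the groupoid setting. The torsor coordinate also makes it work for non-faithful self-similar actions. There, if $g\neq g'$ act identically on paths, the actions on $\Lambda_*$ and $X_\Lambda$ used later in the paper cannot tell $\{(\lambda,g,\mu)\}$ from $\{(\lambda,g',\mu)\}$.

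Two small corrections.
\begin{enumerate}
\item The example in your Step~3 is not a failure of injectivity on $\Lambda$. The vertex $v$ lies in the domain of $\tau_{\{(v,v,v)\}}$ but in no $e\Lambda$. The only real defect of using $\Lambda$ alone is non-faithfulness of the $\crG$-action, which is exactly what the torsor repairs.
\item You should state explicitly two consequences of axiom (4): $\phi(u,\nu)=r(\nu)$ for units $u$, and $\phi(g^{-1},\nu)=\phi(g,g^{-1}(\nu))^{-1}$. These are what give $\tau_{F^*}=\tau_F^{-1}$ and the converse direction of iii).
\end{enumerate}
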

 \begin{proof}
 For the case when $ \crG$ is a discrete group, this was proven by Hossein Larki in \cite{larki}*{Proposition 3.6 and Corollary 3.5}. The same arguments apply verbatim for arbitrary discrete groupoids. 
 \end{proof}
 
 In what follows we fix $k\ge 1$ and a self-similar, finitely aligned $k$-graph $( \crG,\Lambda,\phi)$ over a discrete groupoid $ \crG$.
 
 Recall that any poset $P$ may be regarded as a category where 
 $\hom_P(p,q)$ is empty if $p\not\le q$ and has the single element $(p,q)$ 
 if $p\le q$. Let $k\ge 1$. Write $\Omega_{k,\infty}$ for the $k$-graph 
 consisting of the category associated to the poset $\Nz_0^k$, 
 equipped with the grading $|(p,q)|=q-p$. For $m\in\Nz_0^k$, let 
 $\Omega_{k,m}\subset\Omega_{k,\infty}$ be the full $k$-subgraph with vertices $\{n\in\Nz_0^k\colon n\le m\}$.  
 By \emph{path of length} $m$ in $\Lambda$ we understand a degree-preserving functor $x:\Omega_{k,m}\to \Lambda$. 
 Let $X_{\Lambda}$ be the space of (both finite and infinite) paths in $\Lambda$ as defined in \cite{farthing}*{Definition 5.1}. As per \cite{farthing}*{Remark 5.3}, we may identify $\Lambda$ with the subset of $X_{\Lambda}$ consisting of all finite paths. Remark that an infinite path is essentially a countable transfinite composition of elements of $\Lambda$ together with all its factorisations. 
 If $\lambda\in\Lambda$ and $y\in X_{\Lambda}$ are such that $r(\lambda)=s(y)$ then $\lambda y$ is a well-defined element of $X_{\Lambda}$. If $\lambda\in\Lambda$ and $x\in X_{\Lambda}$ we write $\lambda\ge x$ if there exists $y\in X_{\Lambda}$ such that $x=\lambda y$. 
 
For $F\in S( \crG,\Lambda)$, let
\[
X_{\Lambda}\supset D_F=\{x\colon (\exists (\lambda,g,\mu)\in F)\, \mu\ge x\}
\]
Remark that $D_F=D_{F^*F}$. If $F=\{(\lambda,g,\mu)\}$, we abbreviate 
$D_{(\lambda,g,\mu)}:=D_F$ and if $v\in \Lambda^0$, we put $D_v=D_{(v,v,v)}$. 
For $x\in D_v$ and $g\in  \crG^v$, set
\begin{gather*}
g(x):\Omega_{k,|x|}\to \Lambda, \\
g(x)(p,q)=\phi(g,x(0,p))(x(p,q))
\end{gather*}
One checks that $g(x)\in D_{c(g)}$. Thus we have a map
\begin{gather*}
  \crG{}_d\times_sX_{\Lambda}\to X_{\Lambda}\\
 (g,x)\mapsto g(x). 
\end{gather*}
Let $F\in S(\crG,\Lambda,\phi)$ and $x\in D_F$. Remark that there are unique elements $(\lambda,g,\mu)\in F$ and $y\in D_{r(\mu)}$ such that $x=\mu y$. Set
\[
F\cdot x=\lambda g(y).
\]
This defines a map
\[
F\cdot: D_F\to D_{F^*}.
\]
 Let $\cI(\Lambda)$ and $\cI(\Lambda_*)$ be as in Example \ref{ex:iposet}. If $U,V\subset \Lambda$ are downsets, and $f:U\to V$ is a poset isomorphism, we write $U_*=U\sqcup\{0\}$, $V_*=V\sqcup\{0\}$, and $f_*:U_*\to V_*$ for the pointed extension of $f$. We have a semigroup homomorphism
\[
\cI(\Lambda)\to \cI(\Lambda_*), \, f\mapsto f_*
\]

 \begin{lem}\label{lem:kgraphlfws}
 \item[i)] $\Lambda_*$ is a locally finite weak semilattice.
 \item[ii)] The map $\phi:X_{\Lambda}\to \widehat{\Lambda_*}$, 
$\phi(x)=\chi_{(x\uparrow)}$ is a homeomorphism. 
 \end{lem}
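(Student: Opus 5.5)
Here the poset structure on $\Lambda$ is the one with $\lambda\le\mu$ if and only if $\lambda=\mu\alpha$ for some $\alpha$, i.e.\ $\mu$ is an initial segment of $\lambda$. This matches the notation $\lambda\ge x$ for $x=\lambda y$, and makes $(x\uparrow)$ the set of finite initial segments of $x$. We adjoin a bottom element $0$ to obtain $\Lambda_*$.

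For i), the first step is to compute $(\lambda\downarrow\mu)$ for $\lambda,\mu\in\Lambda$. A common lower bound $\nu$ of $\lambda$ and $\mu$ has the form $\nu=\lambda\alpha=\mu\beta$. By the unique factorisation property, $\nu$ factors uniquely as $\nu=\nu'\nu''$ with $|\nu'|=|\lambda|\vee|\mu|$. Uniqueness of factorisations then gives $\nu'\in MCE(\lambda,\mu)$ and $\nu\le\nu'$. So the maximal common lower bounds are exactly the elements of $MCE(\lambda,\mu)$, which are pairwise incomparable because they all have the same degree. Hence
\[
(\lambda\downarrow\mu)=MCE(\lambda,\mu)\text{ if this is nonempty, and }(\lambda\downarrow\mu)=\{0\}\text{ otherwise.}
\]
Finite alignment makes this set finite, so $\Lambda_*$ is a weak semilattice.

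For local finiteness, given a finite $F\subset\Lambda$, close it under iterated minimal common extensions. Every element produced is a minimal common extension of some subset of $F$, and has degree $\bigvee_{\mu\in F'}|\mu|$ for some $F'\subseteq F$. An iterated $MCE$ of a subset $F'$ is again an $MCE$ of $F'$ (that is, a common extension of minimal degree $\bigvee|\mu|$). Each $F'$ has finitely many such extensions by a straightforward induction on $|F'|$ using finite alignment. Therefore the closure $\bar F\cup\{0\}$ is finite and $\downarrow$-closed.

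For ii), first check that $\chi_{(x\uparrow)}$ is a character. It is nonzero because $x(0,0)\in(x\uparrow)$, and it vanishes at $0$. The characteristic condition follows from the previous paragraph: if $\lambda,\mu\ge x$, then the initial segment $x(0,|\lambda|\vee|\mu|)$ lies in $MCE(\lambda,\mu)$, and conversely. Injectivity holds because a path is determined by its finite initial segments.

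For surjectivity, take a character $\chi$ and set $T=\supp\chi$, which is an upward-closed and downward-directed family in the sense given by the character condition. All of its members share the same source vertex. Define $m\in(\Nz_0\cup\{\infty\})^k$ as the supremum of the degrees $|\lambda|$ for $\lambda\in T$. For $\lambda,\mu\in T$, the character condition provides $\nu\in MCE(\lambda,\mu)\cap T$. Since $T$ is an up-set, the initial segment of each member of $T$ of a given degree is unique: two members $\lambda,\mu$ are both initial segments of $\nu$, so their segments of any common degree agree. These compatible segments then assemble into a functor $x:\Omega_{k,m}\to\Lambda$, and we check that $(x\uparrow)=T$. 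The main obstacle is matching this with the precise definition of $X_\Lambda$ in \cite{farthing}*{Definition 5.1}: boundary paths there may have degree with some coordinates infinite, and every such degree must be realised. Since no maximality condition is imposed, all directed families should occur, which is the point to verify carefully.

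Continuity and openness are handled through bases. By definition $\phi^{-1}(\widehat{\Lambda_*}(\lambda))=\{x\colon\lambda\ge x\}=\lambda X_\Lambda$. Hence sets of the form $\widehat{\Lambda_*}(\lambda;\lambda_1,\dots,\lambda_n)$ correspond exactly to the basic sets $\lambda X_\Lambda\setminus\bigcup_i\lambda_iX_\Lambda$ of the topology on $X_\Lambda$ in \cite{farthing}. By Lemma \ref{lem:basieljá}~i), the former form a basis of $\widehat{\Lambda_*}$, so $\phi$ is a homeomorphism.
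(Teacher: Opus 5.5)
Your proposal is correct and follows the paper's route. You get bijectivity by reconstructing a path from the support of a character, and then match the basic compact open sets $\widehat{\Lambda_*}(\lambda;\lambda\nu_1,\dots,\lambda\nu_n)$ with the basic sets of \cite{farthing}. Your part i) argument via $(\lambda\downarrow\mu)=MCE(\lambda,\mu)$ (or $\{0\}$) is also fine; the paper only calls that part straightforward.

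Your surjectivity step takes $m$ to be the coordinatewise supremum of the degrees in $(\Nz_0\cup\{\infty\})^k$, and this is in fact more careful than the paper's argument. For infinite support, the paper builds $x$ on $\Omega_{k,\infty}$ as if the support contained an element of every degree $n\in\Nz_0^k$. That fails, for example, for the character of a path of degree $(\infty,0)$ in a $2$-graph. The point you flag is not a problem: \cite{farthing}*{Definition 5.1} includes all morphisms $\Omega_{k,m}\to\Lambda$ with $m\in(\Nz\cup\{\infty\})^k$.
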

 \begin{proof}
i) is straightforward. For ii), we first prove that $\phi$ is bijective. Injectivity follows from the fact that a path $x$ is 
completely determined by its factorisations, which are in turn 
determined by $\chi_{(x\uparrow)}=\phi(x)$. To prove surjectivity, let 
$\chi_{\cF}\in \widehat{\Lambda_*}$. Then $\Lambda\supset\cF\ne\emptyset$ and $\cF\owns x,y$ $\iff$ $(x\downarrow y)\subset \cF$. Thus if $\cF$ is finite, it must have a minimum element $\alpha$ and one checks that $\cF=\phi(\alpha)$. Next assume that $\cF$ is infinite. If $\lambda,\mu\in\cF$ then 
$\emptyset\ne MCE(\lambda,\mu)$ and if $|\lambda|=|\mu|$, we must have $\lambda=\mu$ by uniqueness of the factorisation in $\Lambda$. Since $\cF$ is an infinite upset, this implies that for any $n\in\Nz_0^k$ there is a unique $x(0,n)\in\cF$ with $|x(0,n)|=n$, and if $n\le m$, a unique  $x(n,m)\in\Lambda$ such that $x(0,m)=x(0,n)x(n,m)$. This defines a functor $x:\Omega_{k,\infty}\to\Lambda$ and one checks that $\phi(x)=\chi_{\cF}$. Thus $\phi$ is bijective. One checks that if $r(\lambda)=s(\nu_i)$ for $i=1,\dots, n$ are elements of $\Lambda$, then 
\[
\phi(D_\lambda\cap \bigcap_{i=1}^nD_{\lambda\nu_i}^c)=
\widehat{\Lambda_*}(\lambda)\cap \bigcap_{i=1}^n\widehat{\Lambda_*}(\lambda\nu_i)^c.
\]
Thus $\phi$ is a homeomorphism. 
 \end{proof}

\begin{lem}\label{lem:sgacts}
 \item[i)] The map $F\mapsto (F\cdot D_F\to D_{F^*})$ is a semigroup homomorphism $S( \crG,\Lambda,\phi)\to\cI(X_{\Lambda})$. This defines a partial action $S( \crG,\Lambda)\acts X_{\Lambda}$.
 \item[ii)] For all $F\in S( \crG,\Lambda,\phi)$, $D_F\cap \Lambda$ is a downset, $F\cdot (D_F\cap\Lambda)=(D_{F^*}\cap\Lambda)$, and $F\cdot \colon D_F\cap\Lambda\to D_{F^*}\cap\Lambda$ is a poset isomorphism. Hence we have a semigroup homomorphism $S( \crG,\Lambda,\phi)\to \cI(\Lambda_*)$, which defines an action $S( \crG,\Lambda,\phi)\acts \Lambda_*$.
 \item[iii)] Under the homeomorphism $\phi$ of Lemma \ref{lem:kgraphlfws}, the action of part i) corresponds to the action on $\widehat{\Lambda_*}$ that is dual to that of part ii).
\end{lem}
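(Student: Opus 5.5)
We prove the three parts in order. Throughout we use the factorisation property of $\Lambda$ and the axioms (1)–(5) of the self-similar action; the facts that $\crG$ acts on $X_\Lambda$ with $g(\mu y)=g(\mu)\phi(g,\mu)(y)$ (axiom (3) extended to infinite paths through the formula defining $g(x)$); and that $(gh)(x)=g(h(x))$, by axiom (4).

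For i) we first check that each $F\cdot$ is a homeomorphism $D_F\to D_{F^*}$. For a singleton $(\lambda,g,\mu)$, the map $\mu y\mapsto\lambda g(y)$ has inverse $\lambda z\mapsto \mu g^{-1}(z)$. It is continuous because cylinder sets $D_{\mu\nu}$ go to $D_{\lambda g(\nu)}$, and the complements that appear in the basis of Lemma \ref{lem:kgraphlfws} are transported in the same way. For general $F$, orthogonality ($MCE(\mu,\mu')=\emptyset$) makes the sets $D_\mu$ pairwise disjoint clopen, and likewise the $D_\lambda$. So $F\cdot$ is a disjoint union of these homeomorphisms.

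The multiplicativity $(FF')\cdot x=F\cdot(F'\cdot x)$, with matching domains, reduces by bilinearity of the product to singletons. Take $x=\eta y$. Then $\{(\xi,h,\eta)\}\cdot x=\xi h(y)$, and this lies in $D_\mu$ if and only if $\xi h(y)$ extends both $\mu$ and $\xi$. That happens if and only if there is a unique $(\alpha,\beta)\in\Lambda^{\min}(\mu,\xi)$ with $\xi h(y)=\xi\beta z$, i.e.\ $h(y)=\beta z$, and so $y=h^{-1}(\beta)\,\phi(h,h^{-1}(\beta))^{-1}(z)$ using axioms (3)–(4). Applying $(\lambda,g,\mu)$ then gives $\lambda g(\alpha)\phi(g,\alpha)(z)$. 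We compare this with the action of the product element $(\lambda g(\alpha),\phi(g,\alpha)\phi(h,h^{-1}(\beta)),\eta h^{-1}(\beta))$ on $x$, computed via axiom (4). Uniqueness of $(\alpha,\beta)$ here follows from unique factorisation with degree $|\mu|\vee|\xi|$.

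This bookkeeping with the cocycle identities is the main obstacle. I would first verify it when $\crG$ is a group, following Larki, and then note that the groupoid version is verbatim. The union condition $X_\Lambda=\bigcup D_F$ holds because $D_{(v,v,v)}$ contains every path with source $v$.

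For ii) we restrict to finite paths. Here $x\le y$ means $x=y\nu$ (longer paths are smaller, as in Lemma \ref{lem:kgraphlfws}). If $\mu\ge x$ and $x\ge x'$, then $\mu\ge x'$, so $D_F\cap\Lambda$ is a downset. The map $F\cdot$ sends finite paths to finite paths with the same degree, and it is order-preserving because $F\cdot(x\nu)=(F\cdot x)\,\phi(\cdot)(\nu)$. Its inverse $F^*\cdot$ has the same properties, so it is a poset isomorphism. Extending pointedly by $0$ and using the homomorphism $\cI(\Lambda)\to\cI(\Lambda_*)$ together with part i) restricted to $\Lambda$ gives the action on $\Lambda_*$.

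For iii) we must show $\phi(F\cdot x)=F\cdot\phi(x)$ for $x\in D_F$, where the right-hand side is the dual action. First, $\phi(D_F)=\dom_{\widehat{\Lambda_*}}(F)$: $x\in D_F$ if and only if some $\mu\ge x$ with $\mu\in D_F\cap\Lambda$, and this is exactly the condition $\chi_{(x\uparrow)}(l)=1$ for some $l\in\dom(F)$. Next, $(F\cdot\chi_{(x\uparrow)})(l)=1$ if and only if there is $k\le l$ in $\ran(F)$ with $k\ge F\cdot$-preimage\ldots, i.e.\ some finite $\nu\ge x$ in $D_F$ with $F\cdot\nu\le l$. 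Writing $x=\mu y$ and taking $\nu=\mu y(0,n)$ for large $n$, this is equivalent to $l\ge \lambda g(y)=F\cdot x$. Hence the two characters coincide.
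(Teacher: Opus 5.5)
Your proposal is correct and follows essentially the same route as the paper. Like the paper, you reduce multiplicativity to the unique singletons of $F$ and $F'$ that act on $x$ and use the unique factorisation through $\Lambda^{\min}(\mu,\xi)$; you read off the downset and poset-isomorphism properties from $D_F\cap\Lambda=\bigsqcup_{\mu\in\pi_3(F)}(\mu\downarrow)$; and you identify $F\cdot\phi(x)$ with $\chi_{(F\cdot x\uparrow)}$. You also check that each $F\cdot$ is a homeomorphism and that domains match, which the paper leaves implicit. One harmless slip: in ii) the map $F\cdot$ does not preserve degree, since it shifts it by $|\lambda|-|\mu|$. You only use that it sends finite paths to finite paths, and that is true.
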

\begin{proof}
\begin{itemize}
    \item[i)] We have to prove that if $F_1,F_2\in S( \crG,\Lambda,\phi)$
    and $x\in F_2^*(D_{F_1})$, then $F_1\cdot (F_2\cdot x)=(F_1F_2)\cdot x$.  
    By hypothesis there exist (necessarily unique) $(\lambda_2,g_2,
    \mu_2)\in F_2$ and $y\in D_{r(\mu_2)}$ such that $x=\mu_2y$ and 
    $(\lambda_1,g_1,\mu_1)\in F_1$ and $x\in D_{r(\mu_1)}$ such that 
    $\mu_1\ge\lambda_2g_2(y)$. Then 
    $F_2\cdot x=\lambda_2g_2(y)$ and  for $(\alpha,\beta)\in 
    \Lambda^{\min}(\mu_1,\lambda_2)$, we have
    \begin{gather*}
    \lambda_2g_2(y)=\lambda_2\beta g_2(y)(|\mu_1|\vee|\lambda_2|,\infty)\\
    =\lambda_2g_2(g_2^{-1}(\beta)y(|\mu_1|\vee|\lambda_2|,\infty)),\infty)=\lambda_2\beta\phi(g_2,g_2^{-1}(\beta))(y(|\mu_1|\vee|\lambda_2|,\infty)\\
    =\mu_1\alpha \phi(g_2,g_2^{-1}(\beta))(y(|\mu_1|\vee|\lambda_2|,\infty)).
    \end{gather*}
    Thus 
    \begin{gather*}
    F_1\cdot(F_2\cdot x)=\{\lambda_1g_1(\alpha)\phi(g_1,\alpha)\phi(g_2,g_2^{-1}(\beta))(y(|\mu_1|\vee|\lambda_2|,\infty)\colon (\alpha,\beta)\in\Lambda^{\min}(\mu_1,\lambda_2)\}\\
    =\{(\lambda_1g_1(\alpha),\phi(g_1,\alpha)\phi(g_2,g_2^{-1}(\beta),\mu_2g_2^{-1}(\beta))\}\cdot \mu_2y\\
    =(F_1F_2)\cdot x.
    \end{gather*}

\item[ii)] Let $F\in S(\crG,\Lambda,\phi)$ and let $\pi_3:\Lambda{}_r\times_c \crG{}_d\times_r\Lambda\to \Lambda$ be the projection. 
It follows from the definitions and the fact that $F$ is orthogonal that 
$$
D_F\cap \Lambda=\sqcup_{\mu\in \pi_3(F)}(\mu\downarrow).
$$
Hence $D_F\cap\Lambda$ is a downset. If $\lambda\in \pi_3(F)$, then there is a unique element in $F$ of the form $(\lambda,g,\mu)$ and $F$ acts on $(\mu\downarrow)$ via $\mu x\mapsto \lambda g(x)$, which is a poset isomorphism onto $D_{F^*}\cap \Lambda$.

\item[iii)] Let $F\in S(\crG,\Lambda,\phi)$. Then $\dom(F)=\phi(D_F\cap \Lambda)$ and thus
\begin{gather*}
\supp(F)=\{\phi(x)\colon \exists \mu\in \pi_3(F),\, \mu\ge x\}\\
=\phi(D_F).
\end{gather*}
Besides, if $x\in D_F$ then there are unique $(\lambda,g,\mu)\in F$ and $y\in D_{r(\mu)}$ such that $x=\mu y$ and $F\cdot \phi(x)=F\cdot\chi_{(\mu y\uparrow)}$ is the characteristic function of 
\begin{gather*}
 \Lambda\supset\{\xi\colon \exists \eta\in \bigsqcup_{\lambda\in \pi_1(F)}(\lambda\downarrow)\cap(\xi\downarrow)\quad (F^*\cdot \eta)\ge x\}\\
 =\{\xi\exists \lambda\in \pi_1(F),\, \zeta\le r(\lambda)\eta-\lambda\zeta,\, \mu\zeta\ge x\}\\
 =(\lambda\zeta\uparrow)=(F\cdot x\uparrow).
 \end{gather*}
 This shows that $F\cdot \phi(x)=\phi(F\cdot x)$.\qedhere
\end{itemize}
\end{proof}

The \emph{Cohn groupoid} of $( \crG,\Lambda,\phi)$ is the transformation groupoid $G_c=G_c( \crG,\Lambda,\phi)=S(\crG,\Lambda,\phi)\ltimes X_{\Lambda}$ as defined in \eqref{eq:gpdgrm}. Thus $G_c( \crG,\Lambda,\phi)$ consists of equivalence classes $[F,x]$ of pairs $(F,x)\in S( \crG,\Lambda,\phi)\times X_{\Lambda}$ with $x\in D_F$, where $[F_1,x]=[F_2,y]$ if and only if $x=y$ and there exists an idempotent $E\in S(\crG,\Lambda,\phi)$ such that $x\in D_E$ and 
$F_1E=F_2E$. In what follows, if $F$ has a single element $(\lambda,g,\mu)$, we shall often abbreviate $[F,x]=[(\lambda,g,\mu),x]$.

\begin{lem}\label{lem:tgpd}
\item[i)] For every $t\in G_c( \crG,\Lambda,\phi)$ there exist $x\in X_{\Lambda}$ and $\{(\lambda,g,\mu)\}\in S( \crG,\Lambda,\phi)$ with $\mu\ge x$ such that $t=[(\lambda,g,\mu),x]$.
\item[ii)] $\nabla=[(\lambda,g,\mu),x]=[(\xi,h,\eta),y]$ if and only if $x=y$ and there exist $\alpha\in r(\mu)X_{\Lambda}$ and $\beta\in r(\eta)X_{\Lambda}$ such that the following conditions hold simultaneously
\begin{enumerate}
      \item $\mu\alpha=\eta\beta\ge x$;
    \item $\lambda g(\alpha)=\xi h(\beta)$;
    \item $\phi(g,\alpha)=\phi(h,\beta)$;
\end{enumerate}
and in this case we have $\nabla=[(\lambda g(\alpha),\phi(g,\alpha),\mu\alpha),x]$.

\item[iii)] The assignment $[(\lambda,g,\mu),x]\mapsto |\lambda|-|\mu|$ gives a well-defined, continuous groupoid homomorphism from $G_c( \crG,\Lambda,\phi)$ to the discrete groupoid $\Z^k$.
\end{lem}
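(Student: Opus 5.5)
The three parts all come from the germ description \eqref{eq:gpdgrm} of $S(\crG,\Lambda,\phi)\ltimes X_\Lambda$, together with the product formula \eqref{eq:multisg}, Lemma~\ref{lem:larki} and the action of Lemma~\ref{lem:sgacts}.

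\textbf{Part i).} Let $t=[F,x]$ with $x\in D_F$. By orthogonality of $F$ there is a unique $(\lambda,g,\mu)\in F$ with $\mu\ge x$. Put $E=\{(\mu,r(\mu),\mu)\}$; by Lemma~\ref{lem:larki}~iii) it is idempotent, and $x\in D_E$. Compute $FE$ with \eqref{eq:multisg}. For every other $(\lambda',g',\mu')\in F$ we have $MCE(\mu',\mu)=\emptyset$, so that term contributes nothing. The term $(\lambda,g,\mu)\cdot(\mu,r(\mu),\mu)$ gives $\{(\lambda,g,\mu)\}$, since $\Lambda^{\min}(\mu,\mu)=\{(r(\mu),r(\mu))\}$. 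Hence $FE=\{(\lambda,g,\mu)\}E$, and so $[F,x]=[(\lambda,g,\mu),x]$.

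\textbf{Part ii), the ``if'' direction and the final formula.} First I show $[(\lambda,g,\mu),x]=[(\lambda g(\alpha),\phi(g,\alpha),\mu\alpha),x]$ whenever $\mu\alpha\ge x$, with $\alpha$ a finite path. Multiply both by $E=\{(\mu\alpha,r(\alpha),\mu\alpha)\}$. The product $\{(\lambda,g,\mu)\}E$ is computed by \eqref{eq:multisg} with $\Lambda^{\min}(\mu,\mu\alpha)=\{(\alpha,r(\alpha))\}$. It equals $\{(\lambda g(\alpha),\phi(g,\alpha),\mu\alpha)\}$, which is also the product of the second element with $E$. Conditions (1)--(3) say that both germs become the same singleton after this refinement, so they are equal.

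\textbf{Part ii), the ``only if'' direction.} Suppose $\{(\lambda,g,\mu)\}E=\{(\xi,h,\eta)\}E$ for some idempotent $E$ with $x\in D_E$. Choose the unique $(\nu,r(\nu),\nu)\in E$ with $\nu\ge x$ and shrink $E$ to that singleton, as in part i). Expanding both products with \eqref{eq:multisg}, the left side is indexed by $\Lambda^{\min}(\mu,\nu)$ and the right by $\Lambda^{\min}(\eta,\nu)$. The triples are pairwise orthogonal, so exactly one triple on each side has third entry $\ge x$. Matching those two triples produces $\alpha,\beta$ with $\mu\alpha=\eta\beta\ge x$, $\lambda g(\alpha)=\xi h(\beta)$ and $\phi(g,\alpha)=\phi(h,\beta)$.

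The main obstacle is the phrasing $\alpha\in r(\mu)X_\Lambda$, which allows infinite paths. I will read $\alpha$ as finite, which is what the germ relation produces, and check that nothing beyond this is needed. Care is also needed with the conventions $r$ versus $s$ and with the relation $\phi(h,h^{-1}(\beta))$ in \eqref{eq:multisg}. These are bookkeeping issues handled by the self-similarity axioms (3)--(5).

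\textbf{Part iii).} Well-definedness follows from ii). Since $|g(\alpha)|=|\alpha|$, we get $|\lambda g(\alpha)|-|\mu\alpha|=|\lambda|-|\mu|$.

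For multiplicativity, take $d[(\lambda,g,\mu),x]=x$. Write the composite using \eqref{eq:multisg}: a product of singletons contributes triples $(\lambda g(\alpha),\ast,\eta h^{-1}(\beta))$ with $|\mu\alpha|=|\xi\beta|$. Their degree is $|\lambda|+|\alpha|-|\eta|-|\beta|=(|\lambda|-|\mu|)+(|\xi|-|\eta|)$, so the map is a homomorphism.

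Continuity holds because the map is constant on each basic bisection $[(\lambda,g,\mu),D_{(\lambda,g,\mu)}]$, and by i) these bisections cover $G_c$.
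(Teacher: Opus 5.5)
Your proposal is correct and follows essentially the paper's route. For i) you reduce to a singleton via the idempotent $(\mu,r(\mu),\mu)$. For ii) you multiply by the unique idempotent $(\nu,r(\nu),\nu)\in E$ with $\nu\ge x$ and match the triples whose third entry lies above $x$. The paper first refines both germs to a common extension $\gamma\in MCE(\mu,\eta)$ lying above $x$; your direct matching makes that step unnecessary. You also spell out the converse direction and multiplicativity in iii), which the paper leaves as straightforward. Your reading of $\alpha,\beta$ as finite paths is exactly what the paper's proof produces.
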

\begin{proof}
Put $T=(\crG,\Lambda,\phi)$, $S=S(T)$, $G_c=G_c(T)$. If $[F,x]\in G_c$ then there is a unique $(\lambda,g,\mu)\in F$ with $\mu\ge x$, and $[F,x]=[F\{\mu,r(\mu),\mu\},x]=[(\lambda,g,\mu),x]$. This proves i). Next let $s=(\lambda,g,\mu)$, $t=(\xi,h,\eta)$ and assume that $[s,x]=[t,y]$. Then by definition of the equivalence relation of germs we must have $x=y$, and $\mu\ge x\le \eta$ implies that there exists $(\gamma_1,\gamma_2)\in\Lambda^{\min}(\mu,\eta)\ne\emptyset$.
Then $\gamma=\mu\gamma_1=\eta\gamma_2\ge x$ and for  $u=(\gamma,r(\gamma),\gamma)$, we have $\{s'\}:=\{s\}\{u\}=\{(\lambda g(\gamma_1),\phi(g,\gamma_1),\gamma)\}$, $\{t'\}:=\{t\}\{u\}=\{(\xi h(\gamma_2),\phi(h,\gamma_2),\gamma)\}$ and 
\begin{gather*}
[s',x][(\lambda g(\gamma_1),\phi(g,\gamma_1),\gamma),x]=[s,x]\\
=[t,x]=[(\xi h(\gamma_2),\phi(h,\gamma_2),\gamma),x]=[t',x].
\end{gather*} 
Again by the definition of germ, there exists an idempotent $E\in S( \crG,\Lambda,\phi)$ such that 
\begin{equation}\label{condi:germs}
 x\in D_E \text{ and } \{s'\}E=\{t'\}E.
\end{equation}  
By Lemma \ref{lem:larki} (iii), $E$ is a finite orthogonal set of elements of the form $e(\zeta)=(\zeta,r(\zeta),\zeta)$. The first condition of \eqref{condi:germs} implies that there is a unique such element with $\zeta\ge x$. Hence $x\in D_{e(\zeta)}$, $E e(\zeta)=e(\zeta)$ and right-multiplying both sides of the identity in \eqref{condi:germs} by $\{e(z)\}$, and using properties  (3) and (5) of $\phi$, we obtain that there are $(\omega_1,\rho_1),(\omega_2,\rho_2)\in \Lambda^{\min}(\gamma,\zeta)$ such that 
\begin{gather*}
 (\lambda g(\gamma_1\omega_1),\phi(g,\gamma_1\omega_1),\gamma\rho_1)=(\xi h(\gamma_2\omega_2), \phi(h,\gamma_2\omega_2),\gamma\rho_2).
\end{gather*}
Hence $\rho:=\rho_1=\rho_2$ and for $\alpha=\gamma_1\omega_1$ and $\beta=\gamma_2\omega_2$ we have $\lambda g(\alpha)=\xi h(\beta)$,\, $\phi(g,\alpha)=\phi(h,\beta)$
and 
\begin{gather*}
\mu\alpha=\mu\gamma_1\omega_1=\gamma\omega_1=\zeta\rho=\gamma\omega_2=\eta\gamma_2\omega_2=\eta\beta\ge x.   
\end{gather*}
This concludes the proof of ii). The fact that the assignment $[(\lambda,g,\mu),x]\mapsto |\lambda|-|\mu|$ of part iii) is well-defined is straightforward from ii). Continuity follows from the fact that if $[(\lambda,g,\mu),x]\mapsto n\in\Z^k$, then every element of $D_{(\lambda,g,\mu)}$ is also mapped to $n$. This concludes the proof.
\end{proof}

In what follows, we use the following abbreviated notation for elements of $S(\crG,\Lambda,\phi)$; we write $\lambda=(\lambda,r(\lambda),r(\lambda))$, $\lambda^*=(r(\lambda),r(\lambda),\lambda)$, $g=(c(g),g,d(g))$ $(
\lambda\in\Lambda,\, g\in \crG)$. Thus with our conventions in place, an element $(\lambda,g,\mu)\in\Lambda{}_r\times_c \crG{}_d\times_r\Lambda$, can be written as the following product in $S(\crG,\Lambda,\phi)$
\[
(\lambda,g,\mu)=\lambda g\mu^*.
\]
Let $\lambda_1g_1\mu^*_1,\,\lambda_2g_2\mu^*_2\in \Lambda{}_r\times_cG{}_d\times_r\Lambda$; in the next lemma we consider the set
\begin{multline*}
\Lambda^{\min}(\lambda_1g_1\mu_1^*,\,\lambda_2g_2\mu^*_2)=\\
\{(\alpha_1,\alpha_2)\colon \lambda_1g_1(\alpha_1)=\lambda_2g_2(\alpha_2),\, \mu_1\alpha_1=\mu_2\alpha_2,\, 
|\alpha_i|=(|\lambda_1|\vee|\lambda_2|-|\lambda_i|)\vee(|\mu_1|\vee|\mu_2|-|\mu_i|)\}
\end{multline*}
and write $\pi_1:\Lambda^{\min}(\lambda_1g_1\mu^*_1,\,\lambda_2g_2\mu^*_2)\to \Lambda$ for the projection onto the first coordinate. Finally, if $G$ is any ample groupoid, we write $G^{\cop}$ for the inverse semigroup of all compact open bisections of $G$.
\begin{lem}\label{lem:intergerm} 
Let $\Psi:S(\crG,\Lambda,\phi)\to G_c^{\cop}(\crG,\Lambda,\phi)$, $\Psi(F)=\bigsqcup_{\lambda g\mu^*\in F}[\lambda g\mu^*,D_{\mu^*}]$. Write $A^c$ for the complement of a subset $A\subset G_c(\crG,\Lambda,\phi)$
\item[i)] $\Psi$ is a homomorphism of pointed inverse semigroups.
\item[ii)]
\[
\Psi(\lambda_1 g_1\mu^*_1)\cap\Psi(\lambda_2 g_2\mu_2^*)=\bigsqcup_{\alpha\in\pi_1(\Lambda^{\min}(\lambda_1 g_1\mu_1^*,\,\lambda_2 g_2\mu_2^*))}\Psi((\lambda_1g_1(\alpha)\phi(g_1,\alpha)(\mu_1\alpha)^*)
\]
\item[iii)]
\[
\Psi(\lambda_1 g_1\mu^*_1)\cap\Psi(\lambda_2 g_2\mu_2^*)^c=\Psi(\lambda_1g_1\mu_1^*)\cap\bigcap_{\alpha\in\pi_1(\Lambda^{\min}(\lambda_1 g_1\mu_1^*,\,\lambda_2 g_2\mu_2^*))}\Psi((\lambda_1g_1(\alpha)\phi(g_1,\alpha)(\mu_1\alpha)^*)^c
\]
\item[iv)] If $\lambda g\mu^*\in S(\crG,\Lambda,\phi)$ and $\alpha_1,\dots,\alpha_m\in D_{r(\mu)}$, then 
\begin{equation}\label{eq:basisg}
\Psi(\lambda g\mu^*)\cap\bigcap_{i=1}^m\Psi(\lambda g(\alpha_i)\phi(g,\alpha_i)(\mu\alpha_i)^*)^c
\end{equation}
is a compact open subset of $G=G(\crG,\Lambda,\phi)$, and the family $\cB$ of all sets of the form \eqref{eq:basisg} is a basis for the topology of $G$.
\end{lem}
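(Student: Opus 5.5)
Throughout, I write $G_c=S(\crG,\Lambda,\phi)\ltimes X_\Lambda$. The plan is to reduce every item to two facts: the set-theoretic action $S(\crG,\Lambda)\acts X_\Lambda$ of Lemma \ref{lem:sgacts}, and the description of germs in Lemma \ref{lem:tgpd}.

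\textbf{Part i).} First I check that $\Psi(F)$ is a compact open bisection.
\begin{itemize}
\item For a single element $\lambda g\mu^*$, the set $[\lambda g\mu^*,D_{\mu^*}]$ is the basic bisection $[s,U]$ with $U=D_{\mu}$, the domain of $s=\lambda g\mu^*$ acting on $X_\Lambda$.
\item Under Lemma \ref{lem:kgraphlfws}, $D_\mu$ corresponds to $\widehat{\Lambda_*}(\mu)$, which is compact open by Lemma \ref{lem:basieljá}.
\item For general $F$, orthogonality says that distinct elements have disjoint $D_\mu$'s and disjoint ranges $D_\lambda$'s. Hence the union over $F$ is disjoint and is again a bisection; in fact it equals $[F,D_F]$.
\end{itemize}
Multiplicativity then follows from the general identity $[s,D_s][t,D_t]=[st,D_{st}]$ in a groupoid of germs, which holds because $F\mapsto F\cdot$ is an action (Lemma \ref{lem:sgacts} i)). Compatibility with $^*$ and with $0\mapsto\emptyset$ is immediate.

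\textbf{Part ii).} I compute the intersection germwise.
\begin{itemize}
\item An element of the left side is $[\lambda_1g_1\mu_1^*,x]=[\lambda_2g_2\mu_2^*,x]$.
\item By Lemma \ref{lem:tgpd} ii), this happens exactly when there are $\alpha_1,\alpha_2$ with $\mu_1\alpha_1=\mu_2\alpha_2\ge x$, $\lambda_1g_1(\alpha_1)=\lambda_2g_2(\alpha_2)$ and $\phi(g_1,\alpha_1)=\phi(g_2,\alpha_2)$.
\item I then shrink $\alpha_i$ to finite paths of minimal degree. Unique factorisation lets me cut at degree $(|\lambda_1|\vee|\lambda_2|-|\lambda_i|)\vee(|\mu_1|\vee|\mu_2|-|\mu_i|)$, so that $(\alpha_1,\alpha_2)\in\Lambda^{\min}(\lambda_1g_1\mu_1^*,\lambda_2g_2\mu_2^*)$.
\item Equality of the $\phi$'s need not hold at this truncation, but it can be absorbed: the germ is then $[\lambda_1g_1(\alpha)\phi(g_1,\alpha)(\mu_1\alpha)^*,x]$, by the last clause of Lemma \ref{lem:tgpd} ii).
\end{itemize}
This step is the main obstacle. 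The right side should presumably be read so that it only collects $\alpha$ for which the two germs agree on $D_{\mu_1\alpha}$. I would use properties (4) and (5) of $\phi$ to show that agreement on one point $x\ge\mu_1\alpha$ propagates to all of $D_{\mu_1\alpha}$; this is what makes each piece $\Psi(\cdots)$ entirely contained in the intersection. Disjointness of the pieces follows because distinct $\alpha$ of the same degree give orthogonal elements.

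\textbf{Parts iii) and iv).} Part iii) follows from ii) by set algebra: $A\cap B^c=A\cap(A\cap B)^c$, and $A\cap B$ is the disjoint union from ii).

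For iv), each set $\Psi(\lambda g(\alpha_i)\phi(g,\alpha_i)(\mu\alpha_i)^*)$ is a compact open subset of the compact open $\Psi(\lambda g\mu^*)$, so it is also closed there. Hence the set \eqref{eq:basisg} is compact open.

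To see that $\cB$ is a basis, I pull back along the homeomorphism $[\lambda g\mu^*,y]\mapsto y$ from $\Psi(\lambda g\mu^*)$ onto $D_\mu$. Under this map the set \eqref{eq:basisg} becomes $D_\mu\cap\bigcap_iD_{\mu\alpha_i}^c$, which corresponds to $\widehat{\Lambda_*}(\mu;\mu\alpha_1,\dots,\mu\alpha_m)$. By Lemmas \ref{lem:basieljá} i) and \ref{lem:morebasis} i), these sets form a basis of $D_\mu$. Finally, the bisections $\Psi(\lambda g\mu^*)$ cover $G_c$ by Lemma \ref{lem:tgpd} i), so $\cB$ is a basis for the topology of $G_c$.
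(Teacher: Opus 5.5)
\textbf{Parts i), iii), iv).} These are sound.

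For i) your route differs from the paper's and is cleaner. The paper expands $\Psi(\lambda_1g_1\mu_1^*)\Psi(\lambda_2g_2\mu_2^*)$ over $\Lambda^{\min}(\mu_1,\lambda_2)$ and then uses distributivity. You instead note $\Psi(F)=[F,D_F]$ and use $[s,\dom s][t,\dom t]=[st,\dom(st)]$. That identity holds in any groupoid of germs once $F\mapsto F\cdot$ is a homomorphism into $\cI(X_\Lambda)$, which is Lemma~\ref{lem:sgacts} i).

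A minor point in iv): with $\mu$ fixed, the sets $D_{\mu^*}\cap\bigcap_i D^c_{(\mu\alpha_i)^*}$ are not a basis of $D_{\mu^*}$. You must also use the restrictions $\lambda g(\beta)\phi(g,\beta)(\mu\beta)^*$, which again give sets of the form \eqref{eq:basisg}, so this is harmless.

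\textbf{The gap is in ii), at the step you flag.} Your inclusion of the left side in the literal right side is correct: truncate a witness from Lemma~\ref{lem:tgpd} ii) to minimal degree, and use that $\lambda_1g_1\mu_1^*$ restricted to $D_{(\mu_1\alpha)^*}$ is $\lambda_1g_1(\alpha)\phi(g_1,\alpha)(\mu_1\alpha)^*$.

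Your repair for the rest needs a claim: agreement of the two germs at one $x\ge\mu_1\alpha$ propagates to all of $D_{(\mu_1\alpha)^*}$. That claim is false. Properties (4) and (5) only push an equality $\phi(g_1,\alpha_1)=\phi(g_2,\alpha_2)$ forward to extensions $\alpha_i\gamma$. They never pull an equality at $\alpha_i\gamma$ back to $\alpha_i$.

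Here is a counterexample.
\begin{itemize}
\item Take $k=1$, one vertex $v$, three loops $0,1,2$, and $\crG=\Z/2=\{v,a\}$.
\item Let $a(0w)=0w$, $a(1w)=2w$, $a(2w)=1w$, with $\phi(a,e)=v$ for every edge $e$.
\item By Lemma~\ref{lem:tgpd} ii), $[v,x]=[a,x]$ if and only if $x$ begins with $0$, so $\Psi(v)\cap\Psi(a)=\Psi(00^*)\neq\emptyset$.
\item The only minimal $\alpha$ is $v$, and $\phi(v,v)=v\neq a=\phi(a,v)$.
\end{itemize}
So your reading of the right side gives $\emptyset$, while the literal right side gives $\Psi(v)$; both are wrong.

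The literal statement fails even for the pseudo-free odometer. There $\Psi(v)\cap\Psi(a)=\emptyset$, but the right side is $\Psi(v)$. The reason is that $\Lambda^{\min}(\lambda_1g_1\mu_1^*,\lambda_2g_2\mu_2^*)$, as defined, does not require $\phi(g_1,\alpha_1)=\phi(g_2,\alpha_2)$.

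\textbf{What is needed.} The missing ingredient is a hypothesis, not just an argument:
\begin{itemize}
\item add $\phi(g_1,\alpha_1)=\phi(g_2,\alpha_2)$ to the definition of $\Lambda^{\min}$;
\item assume pseudo-freeness: if $h(\gamma)=\gamma$ and $\phi(h,\gamma)$ is a unit, then $h$ is a unit.
\end{itemize}

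Under these assumptions the propagation argument goes through.
\begin{enumerate}
\item Agreement at $x$ gives a witness $(\alpha_1\gamma,\alpha_2\gamma)$ with $(\alpha_1,\alpha_2)$ of minimal degree.
\item Set $h_i=\phi(g_i,\alpha_i)$. By unique factorisation and (5), $h_1(\gamma)=h_2(\gamma)$ and $\phi(h_1,\gamma)=\phi(h_2,\gamma)$.
\item By (4), $h_2^{-1}h_1$ fixes $\gamma$ with unit restriction. Pseudo-freeness then gives $h_1=h_2$.
\item Hence the germs agree on all of $D_{(\mu_1\alpha_1)^*}$.
\end{enumerate}

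Without pseudo-freeness, the intersection is only the union over all witnesses of Lemma~\ref{lem:tgpd} ii). That union may be infinite and, in the non-Hausdorff case, non-compact.

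The paper's own proof of ii) just cites Lemma~\ref{lem:tgpd} ii) and does not supply this step either. Part iii), and the later uses that rely on finite disjoint-union decompositions, inherit the same defect.
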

\begin{proof}
\begin{itemize}
    \item[i)] Because the product of bisections distributes over disjoint unions, it suffices to consider the case of singleton elements of $S(\crG,\Lambda,\phi)$. We have
    \begin{align*}
        &\Psi(\lambda_1g_1\mu_1^*)\Psi(\lambda_2g_2\mu^*_2)\\
        = \ &[\lambda_1g_1\mu_1,D_{\mu_1^*}][\lambda_1g_1\mu_1,D_{\mu_1^*}]\\
        = \ &\{[(\lambda_1g_1\mu_1)(\lambda_2g_2\mu_2),\mu_2x]\colon \mu_1\ge \lambda_2g_2(x)\colon x\in D_{r(\mu_2)}\}\\
        = \ &\bigsqcup_{(\alpha,\beta)\in\Lambda^{\min}(\mu_1,\lambda_2)}\{[\lambda_1g_1(\alpha)\phi(g_1,\alpha)\phi(g_2,g_2^{-1}(\beta))(\mu_2g_2^{-1}(\beta))^*,\mu_2g_2^{-1}(\beta)y]\colon y\in D_{r(g_2^{-1}(\beta))} \}\\
        = \ &\Psi(\bigsqcup_{(\alpha,\beta)\in\Lambda^{\min}(\mu_1,\lambda_2)}\{\lambda_1g_1(\alpha)\phi(g_1,\alpha)\phi(g_2,g_2^{-1}(\beta))(\mu_2g_2^{-1}(\beta))^*)\})\\
        = \ &\Psi((\lambda_1g_1\mu_1^*)(\lambda_2g_2\mu^*_2)).
    \end{align*}

    \item[ii)] A groupoid element $\nabla$ is in $\Psi(\lambda_1 g_1\mu^*_1)\cap\Psi(\lambda_2 g_2\mu_2^*)$ if and only if $\nabla=[\lambda_1g_1\mu_1^*,x]=[\lambda_2g_2\mu_2^*,x]$ for some $x$. By Lemma \ref{lem:tgpd} ii), this happens if and only if $\nabla$ is in the disjoint union of the statement.

    \item[iii)] Immediate from ii).
    \item[iv)] We have
    \begin{gather*}
    \Psi(\lambda g\mu^*)\cap\bigcap_{i=1}^m\Psi(\lambda g(\alpha_i)\phi(g,\alpha_i)(\mu\alpha_i)^*)^c\\
    =[\lambda g\mu^*,D_{\mu\mu^*}\cap\bigsqcup_{i=1}^mD_{(\mu\alpha_i)(\mu\alpha_i)^*}^c],
    \end{gather*}
    which is homeomorphic to $U_{\mu;\alpha_1,\dots,\alpha_m}:=D_{\mu\mu^*}\cap\bigsqcup_{i=1}^mD_{(\mu\alpha_i)(\mu\alpha_i)^*}^c$, hence Hausdorff, since $X_{\Lambda}$ is Hausdorff. By \cite{farthing}*{Remark 5.5}, the family of all sets of the form $U_{\mu;\alpha_1,\dots,\alpha_m}$ is a basis of $X_{\Lambda}$. It follows from this and the definition of the topology of the transformation groupoid \cite{exel} that the family of all sets of the form \eqref{eq:basisg} is a basis of open subsets of $G$. In particular, \eqref{eq:basisg} is open. To see that it is compact, it suffices to show that $U_{\mu;\alpha_1,\dots,\alpha_m}$ is. Since the latter set is closed in $U_{\mu}$, it suffices to show that $U_{\mu}$ is compact, and this follows from \cite{farthing}*{Corollary 5.8}.
\end{itemize}
\end{proof}
\begin{coro}\label{coro:intergerm} 
If 
\[
\Psi(\lambda_1g_1\mu_1^*)\cap\bigcap_{i=2}^m\Psi(\lambda_ig_i\mu_i^*)^c\ne\emptyset
\]
then it contains $[\lambda_1g_1\mu_1^*,\mu_1]$.
\end{coro}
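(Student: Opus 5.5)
The plan is to work in the unit-space picture and reduce to the principal character at $\mu_1$. Put $W=\Psi(\lambda_1g_1\mu_1^*)\cap\bigcap_{i=2}^m\Psi(\lambda_ig_i\mu_i^*)^c$. Every element of $\Psi(\lambda_1g_1\mu_1^*)$ has the form $[\lambda_1g_1\mu_1^*,x]$ with $x\in D_{\mu_1}$, i.e.\ $\mu_1\ge x$. So $W$ is the image under the homeomorphism $x\mapsto[\lambda_1g_1\mu_1^*,x]$ of a set of the form $D_{\mu_1}\setminus\bigcup_i V_i$. Here $V_i=\{x\in D_{\mu_1}\colon [\lambda_1g_1\mu_1^*,x]\in\Psi(\lambda_ig_i\mu_i^*)\}$.

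By Lemma \ref{lem:intergerm} ii), each $V_i$ is a finite union of sets $D_{\mu_1\alpha}$, with $\alpha$ ranging over $\pi_1(\Lambda^{\min}(\lambda_1g_1\mu_1^*,\lambda_ig_i\mu_i^*))$. Indeed, the $i$-th intersection is the disjoint union of the bisections $\Psi(\lambda_1g_1(\alpha)\phi(g_1,\alpha)(\mu_1\alpha)^*)=[\lambda_1g_1\mu_1^*,D_{\mu_1\alpha}]$. Hence
\[
W=\bigl[\lambda_1g_1\mu_1^*,\,D_{\mu_1}\cap\bigcap_{i,\alpha}D_{\mu_1\alpha}^c\bigr].
\]
It suffices to show that if this unit-space set is nonempty, then it contains the finite path $\mu_1$ itself. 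Here $\mu_1$ is viewed in $X_\Lambda$ as in \cite{farthing}*{Remark 5.3}.

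The key observation is the following. If some $\alpha$ occurring above is a vertex, i.e.\ $\alpha=r(\mu_1)$, then $D_{\mu_1\alpha}=D_{\mu_1}$ and $W=\emptyset$. Otherwise every such $\alpha$ has $|\alpha|\neq0$. Then $\mu_1\notin D_{\mu_1\alpha}$, since $\mu_1\alpha\ge\mu_1$ would force $|\mu_1|\ge|\mu_1|+|\alpha|$, which is impossible. In the character picture of Lemma \ref{lem:kgraphlfws} this is the statement that $\cP_{\mu_1}(\mu_1\alpha)=0$. Since $\mu_1\in D_{\mu_1}$, we get $\mu_1\in D_{\mu_1}\cap\bigcap D_{\mu_1\alpha}^c$, and therefore $[\lambda_1g_1\mu_1^*,\mu_1]\in W$.

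The main point to check carefully is the dichotomy. If $W\neq\emptyset$, no $\alpha$ equals $r(\mu_1)$, by the argument just given. Conversely, once no such vertex $\alpha$ occurs, the length argument shows that $\mu_1$ avoids every removed set. This is the analogue of the density statement in Lemma \ref{lem:principal}: nonempty basic sets $\whL(l;M)$ contain the principal character $\cP_l$. That argument transfers directly once $W$ is written in the form $\whL(\mu_1;\{\mu_1\alpha\})$ via Lemmas \ref{lem:kgraphlfws} and \ref{lem:sgacts}.
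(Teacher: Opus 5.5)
Your proof is correct and follows the same route as the paper: you use Lemma \ref{lem:intergerm} ii)/iii) to write the set as the bisection $[\lambda_1g_1\mu_1^*,\,\cdot\,]$ over the cylinder of $\mu_1$ with the cylinders at the $\mu_1\alpha$ removed, observe that a vertex $\alpha$ makes it empty, and otherwise check that $\mu_1$ survives. Your degree argument ($\mu_1\alpha\ge\mu_1$ would force $|\alpha|=0$) simply makes explicit the step the paper's one-line proof leaves implicit.
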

\begin{proof}
Applying part iii) of Lemma \ref{lem:intergerm}, we obtain 
\begin{equation}\label{eq:intersection}
   \Psi(\lambda_1g_1\mu_1^*)\cap\bigcap_{i=2}^m\Psi(\lambda_ig_i\mu_i^*)^c= \Psi(\lambda_1g_1\mu_1^*)\cap\bigcap_{i=2}^m\cap_{\alpha_i\in\pi_1(\Lambda^{\min}(\lambda_1 g_1\mu_1^*,\,\lambda_i g_i \mu_i^*))}\Psi((\lambda_1g_1(\alpha_i)\phi(g_1,\alpha_i)(\mu_1\alpha_i)^*)^c
\end{equation}
If for some $2\le i\le m$ $\pi_1(\Lambda^{\min}(\lambda_1 g_1\mu_1^*,\,\lambda_i g_i \mu_i^*))$ contains a vertex, then the above intersection is empty. Hence if it is nonempty, it must contain $[\lambda_1g_1\mu_1^*,\mu_1]$. 
\end{proof}

\subsubsection{Cohn algebras}
Let $k\ge 1$, $\crG$ a discrete groupoid, and $T=(\crG,\Lambda,\phi)$ a finitely aligned self-similar $k$-graph.
The \emph{Cohn algebra} of $T$ over a unital commutative ring $\ell$ is the Steinberg $\ell$-algebra $C(T)=C_\ell(T)$ of the Cohn groupoid $G_c(T)$. 

\begin{lem}\label{lem:stct}
The map $S(T)\to C(T)$, $F\mapsto \chi_{\Psi(F)}$ is a homomorphism of pointed semigroups, and induces a surjective algebra homomorphism $\psi:\ell[S(T)]\to C(T)$.    
\end{lem}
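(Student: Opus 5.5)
The plan has two parts: multiplicativity, which comes from Lemma \ref{lem:intergerm}, and surjectivity, which comes from the basis description of $G_c(T)$ together with inclusion--exclusion.

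\textbf{Multiplicativity.} By Lemma \ref{lem:intergerm} i), $\Psi:S(T)\to G_c^{\cop}(T)$ is a homomorphism of pointed inverse semigroups. In any ample groupoid, the characteristic functions of compact open bisections multiply in the Steinberg algebra according to the product of bisections:
\[
\chi_U * \chi_V=\chi_{UV},
\]
since for each $g\in UV$ there is a unique factorisation $g=uv$ with $u\in U$, $v\in V$. Moreover, $\chi_\emptyset=0$. Composing these two facts, $F\mapsto\chi_{\Psi(F)}$ is a homomorphism of pointed semigroups into the multiplicative semigroup of $C(T)$. It therefore extends linearly to an algebra homomorphism $\psi$ from the contracted semigroup algebra $\ell[S(T)]$, in which the zero of $S(T)$ is identified with $0$.

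\textbf{Surjectivity.} The Steinberg algebra is spanned by the $\chi_U$ with $U$ compact open in $G_c(T)$.
\begin{itemize}
\item[a)] By Lemma \ref{lem:intergerm} iv), the sets in $\cB$ form a basis of compact open sets. Hence every compact open $U$ is a finite union of elements of $\cB$, and by inclusion--exclusion $\chi_U$ is a $\Z$-linear combination of characteristic functions of finite intersections of elements of $\cB$.
\item[b)] Each such intersection has the form
\[
\Psi(s_1)\cap\cdots\cap\Psi(s_p)\cap\Psi(t_1)^c\cap\cdots\cap\Psi(t_q)^c,
\]
with $s_i,t_j$ singletons. Repeated use of Lemma \ref{lem:intergerm} ii) rewrites $\Psi(s_1)\cap\dots\cap\Psi(s_p)$ as a finite disjoint union of sets $\Psi(s)$.
\item[c)] For the complements, write $\chi_{\Psi(s)\cap\bigcap_j\Psi(t_j)^c}=\chi_{\Psi(s)}\prod_j\bigl(1-\chi_{\Psi(t_j)}\bigr)$ and expand formally. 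This gives a signed sum of $\chi_{\Psi(s)\cap\bigcap_{j\in J}\Psi(t_j)}$, and by step b) each of these is a sum of $\chi_{\Psi(s')}$.
\end{itemize}
Since $\Psi$ of a singleton is a bisection and intersections of bisections are again bisections, all the characteristic functions involved lie in the image of $\psi$. Hence $\psi$ is surjective.

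\textbf{Main obstacle.} The delicate point is the first step of surjectivity: that the basis $\cB$ of Lemma \ref{lem:intergerm} iv) consists of sets whose characteristic functions, together with those of their intersections, span the Steinberg algebra. Concretely, one must make sure the union-to-sum reduction in steps a) and c) only produces sets of the form covered by Lemma \ref{lem:intergerm} ii).

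I would handle this with a simpler route: every compact open set is a finite union of basic sets, and in the ring of $\ell$-valued functions the indicator of a finite union is a polynomial in the indicators of the pieces. This reduces everything to showing that the span of the $\chi_{\Psi(s)}$ is closed under pointwise products. That closure is exactly Lemma \ref{lem:intergerm} ii), because
\[
\chi_{\Psi(s)}\cdot\chi_{\Psi(s')}=\chi_{\Psi(s)\cap\Psi(s')}
\]
pointwise.
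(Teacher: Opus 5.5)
Your multiplicativity argument is the paper's: Lemma \ref{lem:intergerm} i) composed with $U\mapsto\chi_U$.

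\textbf{Surjectivity: a different route.} The paper first uses Lemma \ref{lem:intergerm} and \cite{acm} to show that the $\chi_U$ with $U\in\cB$ span $\ell[G_c(T)]$. It then handles each basic set $U=[\lambda g\mu^*,D_\mu\setminus\bigcup_iD_{\mu\alpha_i}]$ by a convolution identity,
\[
\chi_U=\chi_{\Psi(\lambda g)}\Bigl(\prod_{i}\bigl(\chi_{\Psi(r(\mu))}-\chi_{\Psi(\alpha_i\alpha_i^*)}\bigr)\Bigr)\chi_{\Psi(\mu^*)}.
\]
So complements are absorbed by idempotents living in the unit space, together with multiplicativity of $\psi$. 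You never use convolution for surjectivity. Instead, pointwise inclusion--exclusion and Lemma \ref{lem:intergerm} ii) write everything as a $\Z$-combination of the $\chi_{\Psi(s)}$ with $s$ a singleton. This is more elementary: no citation and no convolution computation. The cost is that you need ii) for arbitrary pairs of singletons, i.e.\ intersections formed inside $G_c(T)$ rather than inside $X_\Lambda$. You also need every intermediate pointwise product to lie in $\ell[G_c(T)]$.

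\textbf{Where your route breaks.} It fails when $G_c(T)$ is not Hausdorff. Lemma \ref{lem:intergerm} ii) as written omits the condition $\phi(g_1,\alpha_1)=\phi(g_2,\alpha_2)$ of Lemma \ref{lem:tgpd} ii). Once that condition is restored, minimal-degree extensions no longer suffice.

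For an example, take one vertex $v$, two loops $a,b$, and $\crG=\{v,g\}\cong\Z/2$ acting trivially on paths, with $\phi(g,b^n)=g$ for $n\ge 0$ and $\phi(g,w)=v$ for every other path $w$. Then
\[
\Psi(g)\cap\Psi(v)=[g,X_\Lambda\setminus(\{b^n\colon n\ge0\}\cup\{b^\infty\})],
\]
whereas Lemma \ref{lem:intergerm} ii) would give $\Psi(g)$. This intersection is not compact. Moreover, every $f\in\ell[G_c(T)]$ satisfies
\[
f([g,b^na^\infty])=f([g,b^\infty])+f([v,b^\infty])\quad\text{for } n\gg0.
\]
Indeed, a compact open bisection contains $[g,b^na^\infty]$ for all large $n$ if it contains $[g,b^\infty]$ or $[v,b^\infty]$, and for only finitely many $n$ otherwise. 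The pointwise product $\chi_{\Psi(g)}\chi_{\Psi(v)}$ takes the values $1,0,0$ at these three points. So it is not in $\ell[G_c(T)]$, and the span of the $\chi_{\Psi(s)}$ is not closed under pointwise products. The paper's spanning step rests on the same ii) and is affected the same way, but its conclusion is true and its convolution identity is unaffected.

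\textbf{How to repair it.} Never intersect inside $G_c(T)$. Since $U$ is a bisection, refine a finite cover of $U$ by sets $[s_i,W_i]\in\cB$ into a partition of $d(U)$ by compact open subsets of the Hausdorff space $X_\Lambda$. This gives $U=\bigsqcup_k[s_k,W_k]$ with singletons $s_k$ and $W_k\subseteq d(\Psi(s_k))$. Then $\chi_{[s_k,W_k]}=\chi_{\Psi(s_k)}*\chi_{W_k}$. Each $\chi_{W_k}$ is a $\Z$-combination of idempotents $\chi_{\Psi(\nu\nu^*)}$, because $D_\nu\cap D_{\nu'}=\bigsqcup_{\gamma\in MCE(\nu,\nu')}D_\gamma$ is a finite union by finite alignment. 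This is also what Lemma \ref{lem:spanl} gives via Lemma \ref{lem:sgacts} iii).
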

\begin{proof}
That the first map is a homomorphism of pointed semigroups follows from  Lemma \ref{lem:intergerm} and the fact that for any ample groupoid $G$, $\chi:G^{\cop}\to \ell[G]$, $U\mapsto\chi_U$ is a pointed semigroup homomorphism. It follows from  \ref{lem:intergerm} that finite intersections of elements of $\cB$ are disjoint unions of elements of $\cB$. Hence by \cite{acm}*{Lemma 2.3.5 (i)}, the set $\{\chi_U\colon U\in\cB\}$ generates $\ell[G(T)]$ as an $\ell$-module. Hence to prove that $\psi$ is surjective it suffices to show that $\chi_U$ is in the image of $\psi$ for every $U\in\cB$. We have
\begin{gather*}
\chi_{\Psi(\lambda g\mu^*)\cap \bigcap_{i=1}^n \Psi(\lambda g(\alpha_i)\phi(g,\alpha_i)(\mu\alpha_i)^*)^c}=
\chi_{\Psi(\lambda g)}\left(\prod_{i=1}^n\chi_{\Psi(r(\mu))}-\chi_{\Psi(\alpha_i\alpha_i^*)})\right)\chi_{\Psi(\mu^*)}. 
\end{gather*}
\end{proof}

Even though the algebra homomorphism of Lemma \ref{lem:stct} need not be injective, we shall often abuse notation and write 
$\lambda g\mu^*$ for the image of the latter element in $C(T)$, that is, for $\chi_{\Psi(\lambda g\mu^*)}$.
Remark also that, as $ \crG$ is discrete, its Steinberg $\ell$-algebra is isomorphic to the algebra of the semigroup $S( \crG)= \crG\sqcup\{0\}$, pointed by $0$, and where the product of any two elements of $ \crG$ is their composition whenever the latter is defined, and is $0$ otherwise; the isomorphism $\ell[ \crG]\to \ell[S( \crG)]$ maps $\chi_g\mapsto g$. For this reason we shall abuse notation and identify $\ell[ \crG]$ with $\ell[S( \crG)]$ and $g$ with $\chi_{g}$.

Next we shall see that $C(T)$ is an iterated Toeplitz-Cuntz-Pimsner algebra. Notations and vocabulary are as in Appendix \ref{app2:lescure}.

For $0\le i\le k$, consider the subcategory
$$
\Lambda\supset\Lambda_{\le i}=\{\lambda\colon |\lambda|_j=0\,\forall i<j\le k\}.
$$ 
Thus $\Lambda_{\le 0}=\Lambda^0$ and $\Lambda_{\le k}=\Lambda$. For $i<k$, consider the following $\ell$-submodules of $C(T)$
\begin{gather*}
R_i=\lspan_\ell\{\lambda g\mu^*\colon \lambda,\mu\in\Lambda_{\le i}\}\\
P_{i}=\lspan_\ell\Lambda^{\epsilon_{i+1}}R_i,\, Q_i=P_i^*=R_i\lspan_\ell(\Lambda^{\epsilon_{i+1})^*}\\
\end{gather*}

\begin{lem}\label{lem:gxi}
Consider the $\ell[\Lambda^0]$-bimodules $M_i=\ell[\Lambda^{\epsilon_i}]$, $N_i=\ell[(\Lambda^{\epsilon_i})^*]$.
 \item[i)] $R_i$ is an $\ell$-subalgebra of $C(T)$.
 \item[ii)] The canonical map $\ell[\crG]\to R_0$ is an isomorphism of $\ell$-algebras.
 \item[iii)] The canonical maps $M_i\otimes_{\ell[\Lambda^0]}R_i\to P_i$ and $R_i\otimes_{\ell[\Lambda^0]}N_i\to Q_i$ are isomorphisms of right and of left $R_i$-modules, respectively. In particular $M_i\otimes_{\ell[\Lambda^0]}R_i$ and $R_i\otimes_{\ell[\Lambda^0]}N_i$ are $R_i$-bimodules.
 \item[iv)] For each $n\ge 0$, the product of $C(T)$ induces isomorphisms of $R_i$-bimodules 
 $\ell[\Lambda ^{n\epsilon_{i+1}}]\otimes_{\ell[\Lambda^0]}R_i\to P_i^{\otimes_{R_i} n}$ 
 and $R_i\otimes_{\ell[\Lambda^0]}\ell[(\Lambda^{n\epsilon_{i+1}})^*]\to Q_i^{\otimes_{R_i}n}$. 
\end{lem}
\begin{proof}
\begin{itemize}
    \item[i)] This is immediate from Lemma \ref{lem:stct}.
    \item[ii)] Remark that $S(\crG)\subset S$ is a pointed subsemigroup and that the canonical map is the corestriction to $R_0$ of the composite of $\ell[\crG]=\ell[S(\crG)]\subset\ell[S]$ with the homomorphism $\psi$ of Lemma \ref{lem:stct}. In particular the map of the current lemma, which is clearly surjective, is an algebra homomorphism. To prove that it is injective is equivalent to showing that the set $\psi(\crG)=\{\psi(g)\colon g\in \crG\}$ is $\ell$-linearly independent. Observe that by Lemma \ref{lem:tgpd}, if $g,h\in \crG$, then $[g,d(g)]=[h,d(h)]$ if and only if $g=h$. So $\psi(g)([h,d(h)])=\delta_{g,h}$, and thus $\psi(\crG)$ is linearly independent. 
    \item[iii)] Both maps are clearly surjective. Let $e_1,\dots,e_m$ be distinct elements of $\Lambda^{\epsilon_i}$ and let $x_1,\dots,x_m\in R_i$. If $\sum_{i=1}^me_ix_i=0$ in $P_i$ then multiplying on the left by $e_j^*$ we obtain that $r(e_j)x_j=0$, and so $e_j\otimes x_j=e_jr(e_j)\otimes x_j=e_j\otimes r(e_j)x_j=0$ in $M_i\otimes_{\ell[\Lambda^0]}R_i$. This proves that the first map is an isomorphism; a similar argument works for the second map.
    \item[iv)] This is immediate once one shows that $R_i\otimes_{R_i}P_i\to P_i$ and $Q_i\otimes_{R_i}R_i\to Q_i$ are bijective. Observe that $\Lambda^0\subset R_i$ is a system of local units; hence both maps are injective by Remark \ref{rem:unitality}. Surjectivity is immediate from the fact that $s(e)\cdot ex=ex$ $\forall e\in \Lambda^{\epsilon_{i+1}}$ and $x\in R_i$. 
\end{itemize}
\end{proof}
\begin{lem}\label{lem:nonzero}
Let $0\le i<k$ and let $f\in R_i$. If $f\ne 0$, then there exist $\xi,\eta\in\Lambda_{\le i}$ and $h\in G_{r(\xi)}^{r(\eta)}$ such that $f([\xi h\eta,\eta])\ne 0$.
\end{lem}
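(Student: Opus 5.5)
Since $f$ is a locally constant, compactly supported function on $G_c=G_c(T)$ and $f\ne 0$, there is some groupoid element $\nabla$ with $f(\nabla)\ne 0$. By Lemma \ref{lem:tgpd} i) we may write $\nabla=[\lambda g\mu^*,x]$ with $\mu\ge x$. The plan is to push this point to a \emph{finite} point of the form $[\xi h\eta^*,\eta]$, with $\xi,\eta\in\Lambda_{\le i}$, where $f$ still does not vanish. We do this by approximating $x$ by its finite initial segments and using local constancy.

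First step: record the shape of $f$. As an element of $R_i$, $f$ is a finite linear combination $\sum_j a_j\chi_{\Psi(\lambda_jg_j\mu_j^*)}$ with $\lambda_j,\mu_j\in\Lambda_{\le i}$. Write $U=\bigcup_j\Psi(\lambda_jg_j\mu_j^*)$. Then $f$ vanishes off $U$, so we may take $\nabla\in\Psi(\lambda_jg_j\mu_j^*)$ for some $j$, i.e. $\nabla=[\lambda_jg_j\mu_j^*,x]$ with $\mu_j\ge x$. Using the finitely many sets $\Psi(\lambda_jg_j\mu_j^*)$ and their complements, the value $f(\nabla)$ depends only on which of these sets contain $\nabla$. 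Let $I$ be the set of indices of those containing $\nabla$. Then $\nabla$ lies in
\[
W=\bigcap_{j\in I}\Psi(\lambda_jg_j\mu_j^*)\cap\bigcap_{j\notin I}\Psi(\lambda_jg_j\mu_j^*)^c,
\]
and $f$ is constant, equal to $f(\nabla)\ne0$, on $W$.

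Second step: apply Corollary \ref{coro:intergerm}, or rather its mechanism. Using Lemma \ref{lem:intergerm} ii), we rewrite the intersection over $I$ as a disjoint union of sets $\Psi(\lambda' g'\mu'^*)$, where $\lambda'=\lambda_1g_1(\alpha)$ and $\mu'=\mu_1\alpha$ for $\alpha\in\pi_1(\Lambda^{\min}(\dots))$. We pick the piece containing $\nabla$; intersected with the complements it is still nonempty, since it contains $\nabla$. Corollary \ref{coro:intergerm} then shows this set contains $[\lambda'g'\mu'^*,\mu']$, and $f$ takes the value $f(\nabla)\ne0$ there. Now set $\xi=\lambda'$, $h=g'$, $\eta=\mu'$.

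The main obstacle is checking that $\xi,\eta$ stay in $\Lambda_{\le i}$, so that $h\in\crG^{r(\eta)}_{r(\xi)}$ with the right degree constraints. The extensions $\alpha$ come from minimal common extensions of paths in $\Lambda_{\le i}$, and their degrees are joins of degrees whose coordinates $j>i$ vanish. So $|\alpha|_j=0$ for $j>i$, and since $\crG$ preserves degrees, $|g_1(\alpha)|=|\alpha|$. Hence $\xi,\eta\in\Lambda_{\le i}$. The remaining conditions, $h=\phi(g_1,\alpha)$ with $d(h)=r(\eta)$ and $c(h)=r(\xi)$, follow from property (2) of $\phi$.
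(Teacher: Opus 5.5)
Your proof is correct and is essentially the paper's argument. The paper also splits the union of the supports into Boolean atoms, rewrites the intersections via Lemma \ref{lem:intergerm} ii) into sets of the form treated in Corollary \ref{coro:intergerm}, and uses that $\Lambda_{\le i}$ is closed under minimal common extensions. The only difference is that you localise at a point $\nabla$ with $f(\nabla)\ne 0$, whereas the paper expands $f$ over that decomposition and picks a nonempty piece carrying a nonzero coefficient.
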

\begin{proof}
Because $f\in R_i$, we may write it as $f=\sum_{j=1}^mc_j\lambda_jg_j\mu_j^*$ for some $\lambda_j,\mu_j\in \Lambda_{\le i}$. Hence for $U_j=[\lambda_jg_j\mu_j^*, D_{\mu_j^*}]$, 
\begin{gather*}
 \supp(f)\subset U:=\bigcup_{j=1}^mU_j=\bigsqcup_{I\subset\{1,\dots,m\}}\bigcap_{p\in I}U_p\cap\bigcap_{q\notin I}U_q^c   
\end{gather*}
Remark that $\Lambda_{\le i}$ is closed under $\Lambda^{\min}$. Hence using Lemma \ref{lem:intergerm} ii) and \eqref{eq:intersection} we may rewrite $U$ as a disjoint union of sets of the form
\[
\Psi(\xi_1h_1\eta_1^*)\cap\bigcap_{j=2}^s\cap_{\alpha_i\in\pi_1(\Lambda^{\min}(\xi_1 h_1\eta_1^*,\,\xi_i h_j \eta_j^*))}\Psi((\xi_1h_1(\alpha_i)\phi(h_1,\alpha_i)(\eta_1\alpha_i)^*)^c
\]
where all the $\xi_j,\eta_j,\alpha_j\in\Lambda_{\le i}$. Now rewrite $f$ as an $\ell$-linear combination of characteristic functions of the sets above, as $f\ne 0$, one of the above sets must be non-empty and carry a nonzero coefficient $c\in\ell$. Hence by  Corollary \ref{coro:intergerm}, there are $\xi,\eta\in\Lambda_{\le i}$ and $h\in G_{r(\xi)}^{r(\eta)}$ such that $f([\xi h\eta,\eta])=c\ne 0$. 
\end{proof}

Remark that the product of $C(T)$ induces a balanced $R_i$-bilinear map 
\[
\cdot:  Q_i\times P_i\to R_i
\]
Hence we have an $R_i$-system 
\begin{equation}\label{eq:system}
\cX_i=(P_i,Q_i,\cdot)    
\end{equation}

\begin{lem}\label{lem:xiskosher}
    
The $R_i$-system $\cX_i$ satisfies the hypothesis of Theorem \ref{thm:toeplitz}.
\end{lem}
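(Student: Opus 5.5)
The plan is to check the hypotheses of Theorem \ref{thm:toeplitz} one at a time, using the concrete description of $P_i$, $Q_i$ and the pairing provided by Lemma \ref{lem:gxi}. I expect those hypotheses to be of three kinds: (a) a unitality/local units condition on $R_i$ and on the bimodules, (b) a projectivity (or freeness) condition on $P_i$ as a right $R_i$-module and on $Q_i$ as a left $R_i$-module, and (c) a nondegeneracy/faithfulness condition on the pairing $Q_i\times P_i\to R_i$ guaranteeing that the Toeplitz ring of $\cX_i$ is faithfully represented. I will handle them in that order.

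\textbf{Conditions (a) and (b).} For (a), the vertices $\Lambda^0\subset R_i$ form a set of local units for $R_i$. Since $s(e)\cdot ex=ex$ and $xe^*\cdot s(e)=xe^*$, the modules $P_i$ and $Q_i$ are unital; this was already observed in the proof of Lemma \ref{lem:gxi} iv). For (b), Lemma \ref{lem:gxi} iii) gives
\[
P_i\cong M_{i+1}\otimes_{\ell[\Lambda^0]}R_i\cong\bigoplus_{e\in\Lambda^{\epsilon_{i+1}}}r(e)R_i .
\]
Each summand $r(e)R_i$ is a direct summand of $R_i$ cut out by an idempotent, so $P_i$ is projective as a right $R_i$-module. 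Symmetrically, $Q_i\cong\bigoplus_e R_i r(e)$ is projective as a left $R_i$-module. The pairing is computed on generators by
\[
(xe^*)\cdot(fy)=x\,e^*f\,y=\delta_{e,f}\,x\,r(e)\,y ,
\]
since distinct $i{+}1$-edges have no minimal common extensions. So under these identifications the pairing is the canonical evaluation pairing. If Theorem \ref{thm:toeplitz} asks for $P_i$ to be finitely generated, I would note that this can fail when the graph is not row-finite. In that case I would instead verify the local version: the sums of the summands $r(e)R_i$ over finite sets of edges give a filtering system of finitely generated projectives.

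\textbf{Condition (c), the main obstacle.} I expect the faithfulness hypothesis to be the hard part. Concretely, one must show that the canonical map from the Toeplitz ring of $\cX_i$ to $C(T)$, whose image is $R_{i+1}$, is injective; equivalently, that the elements $\sum_{j} a_j\, \epsilon^{(j)}\, r_j\, \eta^{(j)*}$ with $r_j\in R_i$ and $\epsilon^{(j)},\eta^{(j)}$ paths of $i{+}1$-edges satisfy no relations beyond the Toeplitz ones. My first step is to use the $\Z^k$-grading of Lemma \ref{lem:tgpd} iii). Its $(i{+}1)$-st coordinate reduces the problem to homogeneous components of the form $\Lambda^{n\epsilon_{i+1}} R_i (\Lambda^{m\epsilon_{i+1}})^*$. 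Using the tensor-power identifications of Lemma \ref{lem:gxi} iv), I will write such a component as a finite sum $\sum_{\epsilon,\eta}\epsilon\, f_{\epsilon,\eta}\,\eta^*$ with distinct paths $\epsilon,\eta$. To show that it vanishes only when all $f_{\epsilon,\eta}$ vanish, I multiply by $\epsilon^*$ on the left and by $\eta$ on the right. This isolates $f_{\epsilon,\eta}$, though only up to a correction term supported on longer paths. To remove that correction, I will instead evaluate directly at a germ. By Lemma \ref{lem:nonzero}, a nonzero $f\in R_i$ is nonzero at some germ $[\xi h\eta',\eta']$ whose components are finite paths in $\Lambda_{\le i}$. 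Then $\epsilon f\eta^*$ is nonzero at $[\epsilon\xi h(\eta\eta')^*,\eta\eta']$. I will check, using Lemma \ref{lem:tgpd} ii), that no other term $\epsilon' f'\eta'^*$ with $\eta'\neq\eta$ (or $\epsilon'\neq\epsilon$) of the same degree is supported at this germ. The point is that a degree-$n\epsilon_{i+1}$ prefix in the $i{+}1$ direction of the finite path $\eta\eta'$ is unique, by unique factorisation. This gives injectivity, and with it the last hypothesis of Theorem \ref{thm:toeplitz}.
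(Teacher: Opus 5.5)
\textbf{Gap: the proposal checks the wrong hypotheses.} Theorem \ref{thm:toeplitz} requires only four things:
\begin{itemize}
\item $R_i$ has local units;
\item the canonical maps $R_i\otimes_{R_i}P_i\to P_i$ and $Q_i\otimes_{R_i}R_i\to Q_i$ are bijective;
\item $\cX_i$ satisfies FS;
\item $\cX_i$ satisfies $BL(I)$.
\end{itemize}
There is no projectivity or finite-generation requirement, and no faithfulness requirement.

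Your part (a) settles the first two correctly, via the local units $\Lambda^0$ and Lemma \ref{lem:gxi} iv). FS, however, is never verified. Given finite sets of elements of $P_i$ and $Q_i$, you must produce elements $\Theta,\Delta\in\cK(\cX_i)=P_i\otimes_{R_i}Q_i$ acting as the identity on them. The natural choice is $\Theta=\Delta=\sum_{e\in F}e\otimes r(e)\otimes e^*$, where $F$ is a finite set of $(i{+}1)$-edges containing all edges that occur. This works because $e^*f=\delta_{e,f}r(e)$, which is exactly the pairing computation you already did.

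Your identification $P_i\cong\bigoplus_e r(e)R_i$, $Q_i\cong\bigoplus_e R_ir(e)$ with the evaluation pairing does contain BL. But what is needed is not projectivity; it is a weak homomorphism into $R_i^{(\Lambda^{\epsilon_{i+1}})}$. Take $f_P(ex)=\epsilon_e x$ and $f_Q(xe^*)=x\epsilon_e$, and check $\langle f_Q(xe^*),f_P(fy)\rangle=\delta_{e,f}xy=(xe^*)\cdot(fy)$. You should state BL in this form. With these two additions you would have the paper's proof.

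\textbf{Part (c) belongs elsewhere.} Your main effort, part (c), is not a hypothesis of this lemma. It is the content of the separate Theorem \ref{thm:ctistoep}, which identifies $R_{i+1}$ with the Toeplitz ring, so it can be dropped here. As sketched it also has two flaws:
\begin{itemize}
\item The $\Z$-grading fixes only the difference $n-m$, not $n$ and $m$ separately.
\item Evaluating at the germ $[\epsilon\xi h(\eta\eta')^*,\eta\eta']$ isolates $f_{\epsilon,\eta}$ only if $|\eta|$ is chosen minimal. Otherwise a term $\epsilon'f'\eta''^*$ with $\eta''$ a proper prefix of $\eta$ is also supported there. For instance, for a loop $e$ at $v$, the nonzero element $v-ee^*$ vanishes at $[v,e]=[ee^*,e]$.
\end{itemize}
The paper's proof of Theorem \ref{thm:ctistoep} makes exactly this minimality choice.
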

\begin{proof}
Put $R=R_i$, $P=P_i$, and $Q=Q_i$. We have already shown, in the course of the proof of part iv) of Lemma \ref{lem:gxi}, that the maps $R\otimes_RP\to P$ 
and $Q\otimes_RR\to Q$ are isomorphisms. By part ii) of the same Lemma, 
$\cK(\cX_i)=\ell[E_i^1]\otimes_{\ell[\Lambda^0]}R\otimes_{\ell[\Lambda^0]}\ell[(E^1_i)^*]$. If $e_1,\dots,e_n\in E_i^1$ and $x_1,\dots,x_n\in R$, then 
$\Theta=\sum_{j=1}^ne_j\otimes r(e_j)\otimes e_j^*\in\cK(\cX_i)$ satisfies $\Theta\cdot e_j\otimes x_j=e_j\otimes x_j$ and 
$x_j\otimes e_j^*\cdot\Theta=x_j\otimes e_j^*$ for all $1\le j\le n$. Hence $\cX_i$ satisfies FS. Let $f_P:P\to R^{(E_i^1)}$, 
$f_P(e\otimes x)=\epsilon_{e}x$ and $f_Q:Q\to R^{(E_i^1)}$, $f_Q(x\otimes e^*)=x\epsilon_{e^*}$. It is clear that $f_P$ and $f_Q$ are respectively right and 
left $R$-linear. Furthermore we have
\begin{gather*}
 \langle f_Q(x\otimes e^*),f_P(f\otimes y)\rangle=\langle x\epsilon_e, \epsilon_fy\rangle\\
 =\delta_{e,f}xy=(xe^*)\cdot (fy)=\langle x\otimes e^*,f\otimes y\rangle.
\end{gather*}
Hence $(f_P,f_Q)$ is a weak homomorphism of $R$-systems $\cX_i\to R^{(E_i^1)}$. It follows that $\cX_i$ satisfies BL. This finishes the proof. 
\end{proof}

\begin{lem}\label{lem:rigraded}
Let $0\le i<k$. The algebra $R_{i+1}$ admits a $\Z$-grading where for $n\in \Z$, the homogeneous component of degree $n$ is $$(R_{i+1})_n=\lspan_\ell\{\lambda g\mu^*\colon |\lambda|_{i+1}-|\mu|_{i+1}=n,\, |\lambda|_j=|\mu|_j=0\, \forall j>i+1\}$$.
\end{lem}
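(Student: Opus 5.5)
The plan is to obtain the grading from a grading of the ambient Steinberg algebra $C(T)=\ell[G_c(T)]$, and then show that $R_{i+1}$ is a graded subalgebra whose homogeneous pieces are exactly the displayed spans.

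\emph{Step 1: a grading on $C(T)$.} By Lemma \ref{lem:tgpd} iii), the assignment $[(\lambda,g,\mu),x]\mapsto|\lambda|-|\mu|$ is a continuous groupoid homomorphism $c\colon G_c(T)\to\Z^k$. Compose it with the projection onto the $(i+1)$-st coordinate to get a continuous cocycle $c_{i+1}\colon G_c(T)\to\Z$. For $n\in\Z$, the sets $G_n:=c_{i+1}^{-1}(n)$ are clopen and partition $G_c(T)$, and $G_nG_m\subset G_{n+m}$. It is standard that a continuous cocycle into a discrete group grades the Steinberg algebra: $C(T)=\bigoplus_n\ell[G_n]$, where $\ell[G_n]$ consists of the functions supported in $G_n$. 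Indeed, a compactly supported locally constant $f$ meets only finitely many of the clopen sets $G_n$, so $f=\sum_n f\chi_{G_n}$, and each $f\chi_{G_n}$ is again locally constant with compact support. The convolution formula $(f_1*f_2)(\gamma)=\sum_{\gamma=\alpha\beta}f_1(\alpha)f_2(\beta)$, together with the fact that $c_{i+1}$ is a homomorphism, gives $\ell[G_n]*\ell[G_m]\subset\ell[G_{n+m}]$.

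\emph{Step 2: generators are homogeneous.} Every element of $\Psi(\lambda g\mu^*)=[\lambda g\mu^*,D_{\mu^*}]$ has the form $[(\lambda,g,\mu),x]$, so $c_{i+1}$ is constant on it, with value $|\lambda|_{i+1}-|\mu|_{i+1}$. Hence $\lambda g\mu^*=\chi_{\Psi(\lambda g\mu^*)}$ lies in $\ell[G_n]$ for $n=|\lambda|_{i+1}-|\mu|_{i+1}$.

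\emph{Step 3: conclusion.} By definition, $R_{i+1}$ is spanned by the $\lambda g\mu^*$ with $\lambda,\mu\in\Lambda_{\le i+1}$, that is, with $|\lambda|_j=|\mu|_j=0$ for all $j>i+1$. Grouping these spanning elements by the value of $|\lambda|_{i+1}-|\mu|_{i+1}$ gives $R_{i+1}=\sum_n(R_{i+1})_n$, where $(R_{i+1})_n$ is the span in the statement. Step 2 gives $(R_{i+1})_n\subset\ell[G_n]$, so the sum is direct by Step 1. Multiplicativity $(R_{i+1})_n(R_{i+1})_m\subset(R_{i+1})_{n+m}$ follows from two facts. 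First, $R_{i+1}$ is a subalgebra by Lemma \ref{lem:gxi} i), so any such product lies in $R_{i+1}=\bigoplus_p(R_{i+1})_p$. Second, the product lies in $\ell[G_{n+m}]$ by Step 1. Since $(R_{i+1})_p\subset\ell[G_p]$ and the $\ell[G_p]$ form a direct sum, the components of the product in degrees $p\neq n+m$ vanish. Therefore the product lies in $(R_{i+1})_{n+m}$.

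As a sanity check, the multiplication formula \eqref{eq:multisg} also shows directly that degrees add. For $(\alpha,\beta)\in\Lambda^{\min}(\mu,\xi)$ we have $|\alpha|-|\beta|=|\xi|-|\mu|$, and the action $g(\cdot)$ preserves lengths, so the product of a degree-$n$ and a degree-$m$ generator is a sum of degree-$(n+m)$ generators. The formula also shows that $\Lambda_{\le i+1}$ is preserved, since minimal common extensions of paths in $\Lambda_{\le i+1}$ stay in $\Lambda_{\le i+1}$.

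The only step that needs care is the directness of the sum. The relations in $C(T)$ are not evident from the generators, so a direct combinatorial argument would be awkward. This is why I route the argument through the clopen partition $\{G_n\}$ given by the cocycle, rather than working inside the semigroup algebra.
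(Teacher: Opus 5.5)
Your proof is correct and follows the paper's argument: the paper likewise grades $C(T)$ by composing the $\Z^k$-valued cocycle of Lemma \ref{lem:tgpd} iii) with the projection to the $(i+1)$-st coordinate, observes that the $\lambda g\mu^*$ are homogeneous, and concludes that the grading restricts to $R_{i+1}$ because $R_{i+1}$ is spanned by homogeneous elements; you simply spell out the details the paper leaves implicit. One minor point: $G_c(T)$ need not be Hausdorff, so in Step 1 it is cleaner to work with the spanning functions $\chi_U$, for $U$ a compact open bisection, via $\chi_U=\sum_n\chi_{U\cap G_n}$, rather than speaking of ``compactly supported locally constant'' functions; the argument is otherwise unchanged.
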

\begin{proof}
By Lemma \ref{lem:tgpd}, $R_k=C(\Lambda)$ is  $\Zz^k$-graded; composing with the projection onto the $i+1$-th coordinate we obtain a $\Zz$-grading, for which the elements $\lambda g\mu^*$ are homogeneous of degree $|\lambda|_{i+1}-|\mu|_{i+1}$. This $\Zz$-grading restricts to $R_{i+1}$ because the latter is $\ell$-linearly generated by homogeneous elements.
\end{proof}

\begin{thm}\label{thm:ctistoep}
The canonical map $\cT(\cX_i)\to R_{i+1}$ is an isomorphism of\, $\Zz$-graded $\ell$-algebras. 
\end{thm}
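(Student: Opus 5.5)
The plan is to show that the canonical map $\pi:\cT(\cX_i)\to R_{i+1}$ is a graded surjection and then to detect its kernel with the gauge grading. The Toeplitz algebra $\cT(\cX_i)$ is generated by $R_i$, $P_i$ and $Q_i$. It is $\Zz$-graded with $P_i$ in degree $1$, $Q_i$ in degree $-1$ and $R_i$ in degree $0$. The map $\pi$ is the identity on $R_i$ and the inclusions $P_i,Q_i\subset C(T)$. It is well defined because the relations of $\cT(\cX_i)$ are exactly the bimodule structures together with $q\cdot p=qp\in R_i$, and these hold in $C(T)$. By Lemma \ref{lem:rigraded} the map $\pi$ is graded.

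Surjectivity comes from the factorisation property of $\Lambda$. Any $\lambda\in\Lambda_{\le i+1}$ with $|\lambda|_{i+1}=n$ factors uniquely as $\lambda=e_1\cdots e_n\lambda'$ with $e_j\in\Lambda^{\epsilon_{i+1}}$ and $\lambda'\in\Lambda_{\le i}$. Hence
$$\lambda g\mu^*=e_1\cdots e_n(\lambda' g\mu'^*)f_m^*\cdots f_1^*\in P_i^{\otimes n}R_iQ_i^{\otimes m}.$$
These elements span $R_{i+1}$.

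For injectivity, I would use the standard structure of Toeplitz algebras from Appendix \ref{app2:lescure} and Theorem \ref{thm:toeplitz}, together with the hypotheses verified in Lemma \ref{lem:xiskosher}. Each homogeneous component of $\cT(\cX_i)$ is spanned by the elements $P^{\otimes n}\otimes R\otimes Q^{\otimes m}$ with $n-m$ fixed. The degree-zero part is the increasing union of the subalgebras $B_N=\sum_{n\le N}P^{\otimes n}\otimes Q^{\otimes n}$. Since $\pi$ is graded, it suffices to prove injectivity in each degree. Multiplying by suitable elements $e^*$ or $e$ reduces this to degree $0$, and then to injectivity on each $B_N$.

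For $B_N$ I would argue by induction on $N$. The base case $B_0=R_i$ holds because $\pi|_{R_i}$ is the inclusion. The ideal $\ker(B_N\to B_{N-1}\text{-quotient})$ is described via the compact operators $\cK(\cX_i^{\otimes N})$, and Lemma \ref{lem:gxi} iv) identifies these with $\ell[\Lambda^{N\epsilon_{i+1}}]\otimes R_i\otimes\ell[(\Lambda^{N\epsilon_{i+1}})^*]$.

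The key input is a ``gap projection'' argument. Suppose $x=\sum_j a_j\otimes b_j\in B_N$ with $\pi(x)=0$, and take it of minimal length. Compress by $p_v-\sum_{e\in v\Lambda^{\epsilon_{i+1}}}ee^*$, or rather by evaluating at germs. Concretely, evaluating $\pi(x)$ at germs $[\xi h\eta^*,\eta]$ with $|\eta|_{i+1}=n$ minimal isolates the component $P^{\otimes n}\otimes Q^{\otimes n}$. Left-multiplying by $e^*$'s and right-multiplying by $f$'s then brings this component down to an element of $R_i$. Lemma \ref{lem:nonzero} shows that a nonzero element of $R_i$ is detected at a germ $[\xi h\eta,\eta]$ with $\xi,\eta\in\Lambda_{\le i}$. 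Applying it gives a nonzero value of $\pi(x)$ at some germ, which contradicts $\pi(x)=0$.

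The main obstacle is this last step: making the induction rigorous when $\Lambda$ is not row-finite. In that case no finite gap projection exists, so I would rely entirely on the germ evaluation and Lemma \ref{lem:nonzero}, rather than on the projections $p_v-\sum ee^*$.
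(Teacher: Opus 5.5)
Your proposal is correct and follows essentially the paper's argument: surjectivity from the factorisation property, reduction to degree zero by multiplying with the $\lambda_j^*$, and then isolating the minimal-level component of a kernel element, compressing it by $\lambda_q^*(\cdot)\mu_q$ into $R_i$, and using Lemma \ref{lem:nonzero} to find a germ $[\xi h\eta^*,\eta]$ with $\eta\in\Lambda_{\le i}$ at which every higher-level term vanishes. The induction over $B_N$, the compact-operator description of the kernel, and the appeal to Theorem \ref{thm:toeplitz} are not needed: this germ-evaluation argument is direct and does not require row-finiteness, exactly as you suspected.
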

\begin{proof}
By definition, $R_{i+1}$ is $\ell$-linearly spanned by the elements $\lambda g\mu^*$ with $|\lambda|_j=|\mu|_j=0$ for $j>i+1$. By the factorisation property of $\Lambda$, we may write
$\lambda=\lambda_1\lambda_2$, $\mu=\mu_1\mu_2$ with $\lambda_1,\mu_1\in\Nz_0\epsilon_{i+1}$ and 
$|\lambda_2|_{i+1}=|\mu_2|_{i+1}=0$. Hence $x=\lambda_2g\mu_2^*\in R_i$ and 
$\lambda g\mu^*=\lambda_1 x\mu_1^*$ is in the image of the map of the theorem. In particular the
latter map is surjective. Since it is homogeneous with respect to the canonical $\Zz$-gradings, 
it suffices to check injectivity on each homogeneous component. By Lemma \ref{lem:gxi} iv), the 
homogeneous component of degree $n$ of $\cT=\cT(\cX_i)$ is spanned by all the elements of the 
form $\lambda x\mu^*$ with $x\in R_i$ and $|\lambda|,|\mu|\in \Nz_0\epsilon_{i+1}$ with 
$|\lambda|_{i+1}-|\mu|_{i+1}=n$. Let $y\in\cT_n$; if $n>0$ we may write
$y=\sum_{j=1}^m \lambda_j y_j$ with all the $\lambda_j$ distinct, $|\lambda_j|=n\epsilon_{i+1}$ 
and $y_j=r(\lambda_j)y_j\in\cT_0$. If $y\mapsto 0$ in $R_{i+1}$ then left-multiplying by 
$\lambda_j^*$ we get that $y_j\mapsto 0$. A similar argument works for $n<0$, so it suffices to 
prove that the zero degree component $f:\cT_0\to (R_{i+1})_0$ of the map of the theorem is a monomorphism. Assume otherwise that there exists $0\ne y\in \ker(f)$. Write $y=\sum_p \lambda_px_p\mu_p^*$ as a sum of nonzero terms, with 
$x_p\in R_i$, $|\lambda_p|=|\mu_p|$, $|\lambda_p|_j=0$ $\forall p,\, j\ne i+1$, and all $(\lambda_p,\mu_p)$ distinct.
Choose $q$ such that $|\lambda_q|$ is minimal; then 
\begin{gather}\label{eq:zerosum}
0\ne \lambda_q^*y\mu_q=x_q+\sum_p \alpha_px_p\beta^*_p\in\ker(f)
\end{gather}
where the sum runs over all $p\ne q$ such that there exist $\alpha_p\in \pi_1(\Lambda^{\min}(\lambda_q,\lambda_p)$ and $\beta_p\in\Lambda^{\min}(\mu_q,\mu_p)$. By Lemma \ref{lem:nonzero}, there exist $\xi,\eta\in \Lambda_{\le i}$ and $g\in G_{r(\xi)}^{r(\eta)}$ such that $\gamma=[\xi g\eta^*,\eta]\in \supp(x_q)$. Hence $\gamma\in\supp(\sum_p\alpha_px_p\beta^*_p)$ and therefore $\gamma\in \supp(\alpha_px_p\beta^*_p)$ for some $p$, and thus $\eta\le \beta_p$. This is a contradiction, since $|\eta|_{i+1}|=0$ and $|\beta_p|_{i+1}>0$. 
\end{proof}
\begin{coro}\label{coro:ctistoep1}
 There is a canonical isomorphism $C(\crG,\Lambda_{\le i},\phi)\to R_i$ $\forall\, 0\le i\le k$.
\end{coro}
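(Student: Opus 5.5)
The plan is to compare two presentations of the same algebra, using induction on $i$ together with Theorem \ref{thm:ctistoep} applied to both sides.

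First, note that $(\crG,\Lambda_{\le i},\phi)$ is itself a finitely aligned self-similar $i$-graph. Indeed, $\Lambda_{\le i}$ is closed under factorisation and under $\Lambda^{\min}$, and $\phi$ preserves degrees, so the action and cocycle restrict. For $i=0$ the algebra $C(\crG,\Lambda^0,\phi)$ is the Steinberg algebra of $\crG$ itself: the path space is $\Lambda^0$ and $S(\crG,\Lambda^0)=S(\crG)$. The required isomorphism is then Lemma \ref{lem:gxi} ii).

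For the inductive step, let $R_j'$ denote the algebras $R_j$ of the previous lemmas built from $T'=(\crG,\Lambda_{\le i},\phi)$ in place of $T$. Then $R'_{i}=C(T')$ and $R'_j$ is defined for $j\le i$.

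1. Check that the $R'_j$-system $\cX'_j$ attached to $T'$ agrees with $\cX_j$ attached to $T$ under the inductive identification $R'_j\cong R_j$, for $j<i$. By Lemma \ref{lem:gxi} iii), both $P$ and $Q$ are described intrinsically: $P\cong \ell[\Lambda^{\epsilon_{j+1}}]\otimes_{\ell[\Lambda^0]}R_j$, and similarly for $Q$. The pairing $Q\times P\to R_j$ is given by the same formula in both cases, namely $e^*f=\delta_{e,f}r(e)$ together with the multiplication of $R_j$, since the relevant edges lie in $\Lambda_{\le i}$.
2. Apply Theorem \ref{thm:ctistoep} to $T'$ and to $T$. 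This gives
\[
R'_{j+1}\cong\cT(\cX'_j)\cong\cT(\cX_j)\cong R_{j+1},
\]
and the composite is the canonical map sending $\lambda g\mu^*\mapsto \lambda g\mu^*$.
3. Iterate up to $j=i-1$ to obtain $C(T')=R'_i\cong R_i$. The isomorphism is canonical because on generators it is $\chi_{\Psi'(\lambda g\mu^*)}\mapsto\chi_{\Psi(\lambda g\mu^*)}$.

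Step 1 is the main obstacle. The issue is that the identification of systems must be compatible with the bimodule structures, which involve the self-similar action $g\cdot e=g(e)\phi(g,e)$ computed in the respective Cohn algebras. Since all products occurring are evaluated by the same formula \eqref{eq:multisg} inside $S(\crG,\Lambda_{\le i})\subset S(\crG,\Lambda)$, I would argue that the inclusion of inverse semigroups together with Lemma \ref{lem:stct} yields a well-defined algebra map $C(T')\to C(T)$ with image $R_i$. Its injectivity then follows from the Toeplitz isomorphisms as above.

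An alternative route to injectivity avoids the induction on systems. One restricts functions along the map $X_{\Lambda}\supset D\to X_{\Lambda_{\le i}}$ and uses Lemma \ref{lem:nonzero}, which detects nonzero elements of $R_i$ by evaluation at germs $[\xi h\eta^*,\eta]$ with $\xi,\eta\in\Lambda_{\le i}$. Evaluating at such germs in the two groupoids gives the same result.
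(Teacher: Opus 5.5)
Your Steps 1--3 are essentially the paper's proof. The paper also inducts, using Lemma \ref{lem:gxi} iii) to identify the systems over $R_{j-1}(\Lambda)$ and over $R_{j-1}(\Lambda_{\le j})$ with a common $C(\crG,\Lambda_{\le j-1},\phi)$-system, and then applies Theorem \ref{thm:ctistoep} to both sides; you instead fix $i$ and compare $(\crG,\Lambda_{\le i},\phi)$ with $(\crG,\Lambda,\phi)$ level by level, which is the same mechanism. One loose remark: the semigroup inclusion together with Lemma \ref{lem:stct} does not by itself make $C(T')\to C(T)$ well defined, but your main line never needs this, because the map comes out of the chain of Toeplitz isomorphisms (and your germ-evaluation argument via Lemma \ref{lem:nonzero}, applied to both $T$ and $T'$, would in fact give well-definedness and injectivity at once).
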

\begin{proof}
 Fix a discrete groupoid $\crG$. We prove by induction on $i$ that if $k\ge 0$, $\Lambda$ any self-similar $k$-graph over $\crG$, and $i\le k$ then for the subalgebra $R_{i}(\Lambda)\subset C(\crG,\Lambda,\phi)$ of Lemma \ref{lem:gxi} we have an isomorphism $C(\crG,\Lambda_{\le i},\phi)\cong R_i(\Lambda)$. The base case $i=0$ is part ii) of Lemma \ref{lem:gxi}. Assume $j\ge 1$ and that the assertion we want to prove is true for all $i<j$ and all self-similar $k$-graphs over $\crG$ with $k\ge i$. Let $k\ge j$ and let $\Lambda$ be a $k$-graph. Then $R_{j-1}(\Lambda)\cong C(\crG,\Lambda_{\le j-1},\phi)\cong R_{j-1}(\Lambda_{\le j})$. It follows from this and from part iii) of Lemma \ref{lem:gxi} that the $R_{j-1}(\Lambda_{\le j})$-system $\cX_j$ and the $R_{j-1}(\Lambda)$-system of the same name, both correspond to the $C_{j-1}=C(\crG,\Lambda_{\le j-1},\phi)$-system 
 $\cY_{j-1}=(M_j\otimes_{\ell[\Lambda^0]}C_{j-1},C_{j-1}\otimes_{\ell[\Lambda^0]}N_j, \langle\,,\,\rangle)$-where $\langle\,,\,\rangle$ is the $C_{j-1}$-balanced and bilinear form determined by $\langle r(f)\otimes f^*,e\otimes r(e)\rangle=\delta_{e,f}r(e)$. By Theorem \ref{thm:ctistoep}, $R_j(\Lambda)\cong \cT(\cY_j)\cong R_j(\Lambda_{\le j})=C_j$; this finishes the proof.
\end{proof}

In the following corollary and elsewhere, we call an algebra homomorphism \emph{a $kk$-equivalence} if the  canonical functor $j:\ahal\to kk$ to the bivariant $K$-theory triangulated category constructed in \cite{ct} is an isomorphism. Further properties and vocabulary associated with $j$ and its universal property are recalled in Appendix \ref{app2:lescure}.

\begin{coro}\label{coro:ctistoep2}
    The inclusion $\ell[\crG]\to C(\crG,\Lambda,\phi)$ is a $kk$-equivalence. 
\end{coro}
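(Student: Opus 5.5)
The plan is to factor the inclusion $\ell[\crG]\to C(\crG,\Lambda,\phi)$ as the composite of the inclusions
\[
\ell[\crG]\cong R_0\subset R_1\subset\dots\subset R_k=C(\crG,\Lambda,\phi),
\]
using Lemma \ref{lem:gxi} ii) for the first identification. It then suffices to show that each inclusion $R_i\to R_{i+1}$, $0\le i<k$, is a $kk$-equivalence, since isomorphisms in $kk$ are closed under composition. By Theorem \ref{thm:ctistoep}, $R_{i+1}\cong\cT(\cX_i)$, the Toeplitz algebra of the $R_i$-system $\cX_i$ of \eqref{eq:system}, and under this isomorphism the inclusion $R_i\subset R_{i+1}$ corresponds to the canonical inclusion $R_i\to\cT(\cX_i)$.

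The key step is to invoke Lescure's theorem as reviewed in Appendix \ref{app2:lescure}, namely Theorem \ref{thm:toeplitz}. For an $R$-system satisfying its hypotheses, this result should say that the canonical map $R\to\cT(\cX)$ is a $kk$-equivalence; this is the algebraic analogue of Pimsner's result that the Toeplitz algebra is $KK$-equivalent to its coefficient algebra. Lemma \ref{lem:xiskosher} verifies exactly that $\cX_i$ satisfies these hypotheses: the unitality isomorphisms $R\otimes_RP\cong P$ and $Q\otimes_RR\cong Q$, FS, and BL. Applying the theorem to each $i$ therefore yields the result by induction on $i$.

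The main point to check is that the conclusion of Theorem \ref{thm:toeplitz} is indeed stated for the canonical inclusion $R\to\cT(\cX)$, rather than only as an abstract isomorphism in $kk$ or one requiring extra hypotheses such as unitality of $R$. Here $R_i$ has only local units, given by $\Lambda^0$. If the appendix states the result via the exact sequence $0\to\cK(\cX)\to\cT(\cX)\to\cO(\cX)\to0$ together with the $kk$-equivalence $R\to\cK(\cX)$, then I would instead argue as follows. Split exactness and matricial stability of $j$ identify $\cK(\cX)$ with $R$ through the weak homomorphism into $R^{(E^1_i)}$ used for BL. The usual Pimsner–Voiculescu argument then shows that the inclusion $R\to\cT$ induces an isomorphism in $kk$.

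A secondary routine check is that the isomorphisms of Theorem \ref{thm:ctistoep} commute with the inclusions, so that the composite of the $kk$-equivalences is exactly the inclusion of the corollary.
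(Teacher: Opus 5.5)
Your proposal is correct and follows the paper's proof. You factor the inclusion through $\ell[\crG]=R_0\subset R_1\subset\dots\subset R_k=C(\crG,\Lambda,\phi)$ and apply Theorem \ref{thm:toeplitz} to each step via Lemma \ref{lem:xiskosher}, with Theorem \ref{thm:ctistoep} identifying $R_{i+1}$ with $\cT(\cX_i)$ compatibly with the inclusions (a step the paper leaves implicit). Your fallback is not needed: Theorem \ref{thm:toeplitz} is stated exactly for the natural inclusion $R\subset\cT(\cX)$, and it only assumes that $R$ has local units.
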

\begin{proof}
 The map of the corollary is the composite of the inclusions $\ell[\crG]=R_0\to\cdots\to R_k=C(\crG,\Lambda,\phi)$. By Theorem \ref{thm:toeplitz} and Lemma \ref{lem:xiskosher}, each of these inclusions is a $kk$-equivalence.
\end{proof}

\subsubsection{Discretisation for self-similar higher rank graphs}
\begin{thm}\label{thm:discg}
Let $\crG$ be a discrete groupoid, $k\ge 1$, $\Lambda$ a finitely aligned self-similar $k$-graph over $\crG$, $\Lambda_*$ the associated locally finite weak semilattice and $S(\crG,\Lambda,\phi)$ the inverse semigroup of Lemma \ref{lem:larki}. Put $G=S(\crG,\Lambda,\phi)\ltimes \widehat{\Lambda_*}$, $G_d=S(\crG,\Lambda,\phi)\ltimes \Lambda$. Let $j:\ahal\to kk$ be the universal excisive, homotopy invariant and $\Lambda$-stable functor of \cite{ct}. Then the homomorphism of $\ell$-algebras 
$\theta:\ell[G_d]\to M_{\Lambda}\ell[G]$ of \eqref{map:elteta} is a $kk$-equivalence. 
\end{thm}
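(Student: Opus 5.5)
The plan is a two-out-of-three argument in $kk$, routed through the subalgebra $\ell[\crG]$. By Lemma \ref{lem:kgraphlfws} ii) and Lemma \ref{lem:sgacts} iii), $G\cong G_c(\crG,\Lambda,\phi)$, so $\ell[G]=C(T)$. By Corollary \ref{coro:ctistoep2}, the inclusion $j_0:\ell[\crG]\to C(T)$ is already a $kk$-equivalence. I would build a commutative square
\[
\xymatrix{\ell[\crG]\ar[r]^{\kappa}\ar[d]_{j_0}& \ell[G_d]\ar[d]^{\theta}\\
C(T)\ar[r]^{\psi}& M_{\Lambda}C(T)}
\]
and show that $\kappa$ and $\psi$ are $kk$-equivalences. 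Here $\kappa(\chi_g)=\chi_{[g,d(g)]}$, using the shorthand $g=(c(g),g,d(g))\in S(\crG,\Lambda,\phi)$, and
\[
\psi(a)=\sum_{v,w\in\Lambda^0}\epsilon_{v,w}\,v a w .
\]
Commutativity should be a direct check from \eqref{map:elteta}. Indeed, $\theta(\kappa(g))=\epsilon_{c(g),d(g)}\chi_{[g,\whL(d(g))]}$, and $\chi_{[g,\whL(d(g))]}$ is the element written $g$ in $C(T)$, which equals $c(g)\,g\,d(g)$. Once the square is in place, $\theta$ is a $kk$-equivalence by two-out-of-three.

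For $\kappa$, I would first determine the structure of the discrete groupoid $G_d=S\ltimes\Lambda$. The element $\lambda^*=(r(\lambda),r(\lambda),\lambda)$ sends $\lambda\mapsto r(\lambda)$, so every orbit meets $\Lambda^0$. Two vertices lie in the same orbit iff they are joined by an element of $\crG$. The isotropy at a vertex $v$ consists of the germs $[(v,g,v),v]$, and these are distinct for distinct $g$ by Lemma \ref{lem:tgpd} ii). Hence the isotropy group is $\crG_v^v$, and $\kappa$ identifies $\crG$ with the full subgroupoid of $G_d$ on $\Lambda^0$. Choosing a transversal $Z\subset\Lambda^0$ and applying Lemma \ref{lem:gpd101} to both $\crG$ and $G_d$, $\kappa$ becomes the direct sum over $z\in Z$ of corner inclusions
\[
M_{\cO_z\cap\Lambda^0}\ell[\crG_z^z]\to M_{\cO_z}\ell[\crG_z^z].
\]
Each of these is a $kk$-equivalence by $\Lambda$-stability. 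The direct sum is handled as in Lemma \ref{lem:gpd101}: with the sums possibly infinite, I would pass through $M_\Lambda$ and conjugate so that $\kappa$ becomes a single corner embedding of matrix algebras.

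For $\psi$, fix $x\in\Lambda^0$ and let $\iota_x$ be the corner inclusion, a $kk$-equivalence by $\Lambda$-stability. Set $V=\sum_v\epsilon_{v,x}v$ and $V^*=\sum_w\epsilon_{x,w}w$, viewed as multipliers of $M_\Lambda C(T)$ (rows and columns have finitely many nonzero entries after multiplying by a local unit). Then $\psi(a)=V\iota_x(a)V^*$, and $V^*V\iota_x(a)=\iota_x(a)=\iota_x(a)V^*V$. By the standard fact from \cite{ct} that conjugation by such a partial-isometry multiplier induces the identity in $kk$ (up to the corner identifications), $j(\psi)=j(\iota_x)$, which is an isomorphism.

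The main obstacle I expect is this last step. One must make precise that $V$ acts as a multiplier when $\Lambda^0$ is infinite and that the conjugation-invariance result of \cite{ct} applies. If that fails, the fallback is to write $\psi$ as an orthogonal sum over $v$ of maps $a\mapsto\sum_w\epsilon_{v,w}vaw$ and argue block by block.
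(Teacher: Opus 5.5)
Your proof is correct and is essentially the paper's: the same commutative square, with $\kappa=\inc_d$ a $kk$-equivalence via Lemma \ref{lem:gpd101} and the inclusion $\ell[\crG]\to C(T)$ a $kk$-equivalence by Corollary \ref{coro:ctistoep2}. The one difference is cosmetic. Since $\psi\circ\inc=M_{\Lambda}(\inc)\circ\iota$ with $\iota(g)=\epsilon_{c(g),d(g)}g$, the paper places the Morita-type map on $\ell[\crG]$ and cites \cite{cm1}*{Proposition 3.12} for it, whereas you place it on $C(T)$ and prove it directly by conjugating the corner inclusion with the multiplier $V$. That direct argument is valid because the vertices form orthogonal local units of $C(T)$, so $V$ and $V^*$ are genuine two-sided multipliers of $M_{\Lambda}C(T)$.
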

\begin{proof}
Let $\inc:\ell[\crG]\to C(\crG,\Lambda,\phi)$ and $\inc_d:\ell[\crG]\to \ell[G_d]$ be the inclusion maps and let 
$\iota:\ell[\crG]\to M_{\Lambda}\ell[\crG]$, $\iota(g)=\epsilon_{c(g),d(g)}g$. Use Lemma \ref{lem:sgacts} to identify $C(\crG,\Lambda,\phi)=\ell[G]$. Then the following diagram commutes
\[
\xymatrix{M_{\Lambda}\ell[\crG]\ar[r]^{M_{\Lambda}\inc}&M_{\Lambda}C(\crG,\Lambda,\phi)\\
\ell[\crG]\ar[u]^{\iota}\ar[r]^{\inc_d}& \ell[G_d]\ar[u]^\theta}
\]
In this diagram, the horizontal arrow at the bottom is induced by the inclusion $\crG \subset G_d$. One checks that the latter induces a bijection $\crG^{(0)}/\crG=\Lambda^0/\crG\isom \Lambda/G_d=G_d^{(0)}/G_d$; hence $\inc_d$ is a $kk$-equivalence by Lemma 
\ref{lem:gpd101}. Next, the upward arrow on the left of the commutative diagram above is a $kk$-equivalence by \cite{cm1}*{Proposition 3.12}, while the top horizontal arrow is a $kk$-equivalence by Corollary \ref{coro:ctistoep2}. It follows that $\theta$ is a $kk$-equivalence. 
\end{proof}

\begin{exas}\label{exas:discg}
 In \cite{eugema} Ellis and Rodr\'\i guez Cirone constructed a stable infinity category $kk_\infty$ whose homotopy category is triangle equivalent to $kk$. In particular for every pair of algebras $A,B$, there is a mapping spectrum 
 $\bbKK(A,B)$ and if an algebra homomorphism $f:B_1\to B_2$ is a $kk$-equivalence, then $\bbKK(A,f):\bbKK(A,B_1)\to \bbKK(A,B_2)$ is an equivalence of spectra. Hence it follows from Theorem \ref{thm:discg} that the action $S( \crG,\Lambda,\phi)\acts \Lambda_*$ of part ii) of Lemma \ref{lem:sgacts}  satisfies discretisation with respect to $\bbKK(A,-)$, for all $A$. In particular it satisfies discretisation with respect to $KH(-)\sim\bbKK(\ell,-)$. Similarly, the same action satisfies the contravariant version of discretisation with respect to $\bbKK(-,B)$, for all $B\in\ahal$. If $\ell$ is a field of characteristic zero, then the functor that assigns to each $A\in\ahal$ the Cuntz-Quillen $\Z/2\Z$ pro-complex $X^\infty(A)$ satisfies excision, homotopy invariance and matricial stability \cite{cq3}, so by universal property of $kk$, it sends $kk$-equivalences to equivalences. It follows that the action above also satisfies discretisation with respect to periodic cyclic homology and cohomology $HP(A,-), HP(-,B)$ for all $A,B\in\ahal$. In particular this holds for $HP(-)=HP(\ell,-)$.
\end{exas}

\subsection{Example 3: Groupoids associated to right-angled Artin groups}
\label{ss:Artin}

An \emph{undirected graph} $\Gamma$ consists of a nonempty set $X=X(\Gamma)$ and a collection $E=E(\Gamma)$ of $2$-element subsets of $X$. The elements of $X$ and $E$ are the \emph{vertices} and \emph{edges} of $\Gamma$. Two vertices $x,y\in X(\Gamma)$ are \emph{adjacent} if $\{x,y\}\in E(\Gamma)$. Let $\sim$ be the transitive hull of the adjacency relation; we say that $\Gamma$ is \emph{connected} if $x\sim y$ $\forall x,y\in X(\Gamma)$. The \emph{complementary} graph 
$\Gamma^{\coco}$ is the graph with the same vertices whose edges are the two-element subsets of $X$ that are not in $E$. To an undirected graph $\Gamma$ we associate the universal pointed monoid $S(\Gamma)$ on generators $\{V_x,V_x^*\colon x\in X(\Gamma)\}$ subject to the following relations
\begin{itemize}
    \item[SA1] $V_x^*V_x=1$ $\forall x\in X$.
    \item [SA2] $V_xV_y=V_yV_x$, $V_y^*V_x=V_xV_y^*$ if $\{x,y\}\in E$.
    \item[SA3] $V_x^*V_y=0$ if $x\ne y$ and $\{x,y\}\notin E$.    
\end{itemize} 

The \emph{Artin group} of $\Gamma$ is the universal group $A(\Gamma)$ on the same generators subject to conditions SA1 and SA2. We write $A_+(\Gamma)\subset A(\Gamma)$ for the submonoid generated by the $\{V_x\colon x\in X(\Gamma)\}$. For $\alpha,\beta\in A(\Gamma)$ we write $\alpha\ge\beta$ if $\alpha^{-1}\beta\in A_+(\Gamma)$. Thus $\ge$ is the reverse of the partial order relation considered, for example, in \cite{crisplaca1}*{Theorem 10}. In particular, $\ge$ is a left-invariant partial order with respect to which $A_+(\Gamma)$ is a semilattice in the sense of the present article.
\begin{lem}\label{lem:sartin}
\item[i)] The canonical monoid homomorphism $\iota:A_+(\Gamma)\to S(\Gamma)$, $\iota(V_x)=V_x$ is injective.
\item[ii)] $\forall s\in S(\Gamma)\setminus\{0\}$ $\exists !\, a,b\in A_+(\Gamma)$ such that $s=ab^*$.
\item[iii)] $S(\Gamma)$ is a pointed inverse monoid. In particular, 
$\Idem(S(\Gamma))=\{p\in S(\Gamma)\colon p^2=p\}$ is a semilattice. 
\item[iv)] The map $A_+(\Gamma)\to \Idem(S(\Gamma))\setminus\{0\}$, $a\mapsto aa^*$ is a semilattice isomorphism.
\end{lem}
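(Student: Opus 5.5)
The plan is to build a normal form for $S(\Gamma)$ by a rewriting argument, and then read off i)–iv) from the combinatorics of the right-angled Artin monoid $A_+(\Gamma)$. We use the standard facts recalled before the statement: $A_+(\Gamma)$ embeds in $A(\Gamma)$, and it is a semilattice for $\ge$, i.e.\ any two elements $a,b$ either have a least common multiple $a\vee b$ (in the prefix order) or no common multiple at all. In addition we use the standard description of this lcm: writing $a\vee b=a\beta=b\alpha$ with $\alpha,\beta\in A_+(\Gamma)$, the lcm exists iff, after cancelling the common left part, the remaining letters of $a$ and of $b$ pairwise commute. Concretely, it fails exactly when some letter $x$ occurring in $a$ cannot be moved to the front past the letters preceding it, or some letter $y$ in $b$ likewise, with $x\neq y$ and $\{x,y\}\notin E$.

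First we show that every nonzero element of $S(\Gamma)$ can be written as $ab^*$ with $a,b\in A_+(\Gamma)$. Every word in the generators is a product of blocks $b^*a$, and the key computation is
\[
b^*a=\begin{cases}\alpha\beta^* & \text{if } a\vee b=a\beta=b\alpha \text{ exists},\\ 0 & \text{otherwise.}\end{cases}
\]
We prove this by induction on $|a|+|b|$, using SA1, SA2 and SA3 to pass a single letter $V_y^*$ through $V_x$. Since a word $ab^*a'b'^*$ then reduces to $a\alpha(b'\beta)^*$ or to $0$, this gives existence in ii). It also shows that the set $\{0\}\cup\{ab^*\}$, with the multiplication $(ab^*)(cd^*)=a\gamma(d\delta)^*$ when $b\vee c=b\delta'=\dots$ (and $0$ otherwise), is closed under products.

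For uniqueness, and hence also i), we avoid working with relations directly. Instead we realise $S(\Gamma)$ faithfully as partial bijections of $A_+(\Gamma)$: let $V_x$ act by left multiplication $w\mapsto xw$, and let $V_x^*$ be its partial inverse, defined on $xA_+(\Gamma)$. The relations SA1 and SA2 are checked directly. For SA3 we use that $xA_+\cap yA_+=\emptyset$ when $x\neq y$ and $\{x,y\}\notin E$. This gives a homomorphism $S(\Gamma)\to\cI(A_+(\Gamma))$ under which $ab^*$ maps to the partial bijection $bw\mapsto aw$, defined on $bA_+$. From this partial bijection we recover $b$ as the generator of its (principal) domain, using left cancellativity and the antisymmetry of $\ge$, and then $a$ as the image of $b$. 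This proves uniqueness in ii), and restricting to $b=1$ gives injectivity of $\iota$, which is i).

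For iii), the set $S(\Gamma)$ is a monoid with zero, and $ab^*$ has the inverse $ba^*$ (one checks $ab^*ba^*ab^*=ab^*$ using SA1). Via the faithful representation, $S(\Gamma)$ is a subsemigroup of the inverse semigroup $\cI(A_+(\Gamma))$ that is closed under $*$, hence it is inverse, and its idempotents commute and form a semilattice. For iv), the element $ab^*$ is idempotent iff the partial bijection $bw\mapsto aw$ is an identity, i.e.\ iff $a=b$. The map $a\mapsto aa^*$ is therefore a bijection onto the nonzero idempotents. It preserves the order and meets, because $aa^*\cdot cc^*=(a\vee c)(a\vee c)^*$ or $0$, which matches the order on $A_+(\Gamma)$ (here $\ge$ is arranged so that longer words are smaller).

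We expect the main obstacle to be the normal form formula for $b^*a$, in particular showing that it yields $0$ exactly when no common multiple exists. This requires the combinatorics of lcms in right-angled Artin monoids, and we would cite \cite{crisplaca1} for the lattice property and carry out the induction on word length. The faithful representation then takes care of all the well-definedness issues.
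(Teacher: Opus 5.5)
Your argument is correct and is essentially the paper's proof: the paper also represents $S(\Gamma)$ in $\cI(A_+(\Gamma))$ by left multiplications and their partial inverses, recovers $b$ and $a$ from the domain $bA_+(\Gamma)$ and the image of $b$, and reads off i)--iv) from this. The only difference is that the paper quotes \cite{iva}*{Lemma 2.1} for the existence of the form $ab^*$, whereas you prove it by rewriting. For that, the one-letter rule $V_y^*V_x\in\{1,\,V_xV_y^*,\,0\}$ already suffices, and the exact lcm formula for $b^*a$ that you flag as the main obstacle then follows from faithfulness of the representation (together with the lattice property of $A_+(\Gamma)$), so it needs no separate induction.
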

\begin{proof} For $a\in A_+(\Gamma)$, let 
\[
L_a:A_+(\Gamma)\to A_+(\Gamma),\, b\mapsto ab
\]
We have a homomorphism of monoids $L:A_+(\Gamma)\to\cI(A_+(\Gamma))$, $a\mapsto L_a$. Because $\cI(A_+(\Gamma))$ is a pointed inverse semigroup, we also have a homomorphism
of pointed monoids $\widehat{L}:S(\Gamma)\to \cI(A_+(\Gamma))$ such that $\widehat{L}\circ\iota=L$.
By \cite{iva}*{Lemma 2.1}, every element of $S(\Gamma)$ can be written as $a=V_{x_1}\cdots V_{x_m}V_{y_n}^*\dots V_{y_1}^*$. For $\alpha=V_{x_1}\cdots V_{x_m}$ and 
$\beta=V_{y_1}\cdots V_{y_n}$ we have 
\[
L_a:\dom(L_a)=(\beta\downarrow)\to (\alpha\downarrow), \, L_a(\beta b)=\alpha b.
\]
Hence if $a'=V_{x'_1}\cdots V_{x'_{m'}}V_{y'_{n'}}^*\dots V_{y'_1}^*$ is such that $L_a=L_a'$, then for $\alpha'=V_{x'_1}\cdots V_{x'_{m'}}$ and $\beta'=V_{y'_1}\cdots V_{y'_{n'}}$ we must have $(\alpha'\downarrow)=(\alpha\downarrow)$ and $(\beta'\downarrow)=(\beta\downarrow)$ which implies that $\alpha=\alpha'$ and $\beta=\beta'$. All assertions of the lemma are straightforward from this. 
\end{proof}
We shall henceforth fix a finite undirected graph $\Gamma$  with at least two vertices such that $\Gamma^{\coco}$ is connected; we write $n+1=|X(\Gamma)$. Remark that if $n=1$ then $\Gamma^{\coco}$ is connected if and only if $E(\Gamma)=\emptyset$. Assume $n\ge 1$; choose a subgraph $\Gamma'$ with $|X(\Gamma')|=n$ such that $(\Gamma')^{\coco}$ is connected. Number the vertices of $\Gamma$ $1,\dots, n+1$ so that $X(\Gamma')=\{1,\dots, n\}$, and so the vertices adjacent to $n+1$, if any, are the first $k$. Thus for $V=V_{n+1}$ and $1\le i\le n$, we have $VV_i=V_iV$ if and only if $i\le k$. Remark that, as we are assuming that $\Gamma^{\coco}$ is connected, we must have $k<n$. 
Let $\ell$ be a commutative unital ring; let 
\[
C(\Gamma)=C_\ell(\Gamma):=\ell[S(\Gamma)]
\]
By definition, $C(\Gamma)$ is the universal $\ell$-algebra on generators $V_i,V,V_i^*,V^*$, $1\le i\le n$, subject to conditions SA$1$--$3$. Set
\[
\Omega=\{1\}\sqcup\{a\in \iota(A_+(\Gamma'))\colon aV\ne Va\}.
\]
\begin{lem}\label{lem:iva}
Let $m\ge 1$, and $\omega,\eta\in\Omega^m$. Then
\[
V^*\eta_m^*\cdots V^*\eta_1^*\omega_1V\cdots\omega_mV=\delta_{\omega,\eta}.
\]
\end{lem}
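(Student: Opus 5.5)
We argue by induction on $m$, with the case $m=1$ carrying all the content: for $\omega,\eta\in\Omega$ we claim $V^*\eta^*\omega V=\delta_{\omega,\eta}$. The inductive step is then formal. Write the left-hand side as $V^*\eta_m^*\,(X)\,\omega_mV$, where $X=V^*\eta_{m-1}^*\cdots\omega_{m-1}V$. By induction $X=\delta_{(\omega_1,\dots,\omega_{m-1}),(\eta_1,\dots,\eta_{m-1})}$, which is either $0$ or $1$. In the second case we are left with $V^*\eta_m^*\omega_mV$, and the case $m=1$ finishes.

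For $m=1$ we work in $S(\Gamma)$ using Lemma \ref{lem:sartin}. By part ii) of that lemma every nonzero element is uniquely of the form $ab^*$ with $a,b\in A_+(\Gamma)$, and products are governed by the semilattice structure of $A_+(\Gamma)$ under $\ge$. Concretely, for $a,b\in A_+(\Gamma)$ the product $b^*a$ is $0$ if $a,b$ have no common upper bound in the right-divisibility sense. Otherwise it equals $a'b'^*$, where $a\,\cdot$ and $b\,\cdot$ meet at $ba'=ab'$, the least common right multiple. This can be read off from the representation $\widehat L$ on $\cI(A_+(\Gamma))$ used in the proof of Lemma \ref{lem:sartin}, which is faithful by that proof. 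Apply this with $a=\omega V$ and $b=\eta V$. If $\omega=\eta$, SA1 gives $V^*\eta^*\eta V=1$.

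Suppose now $\omega\ne\eta$, and assume towards a contradiction that $\omega V$ and $\eta V$ have a common right multiple $c$ in $A_+(\Gamma)$. The plan is to use the normal form for right-angled Artin monoids (Cartier--Foata / heaps), under which $A_+(\Gamma)$ is a trace monoid. Since $\omega,\eta\in A_+(\Gamma')$, neither involves the letter $V$. If both are nontrivial, they satisfy $\omega V\ne V\omega$, so some letter of $\omega$ fails to commute with $V$, i.e.\ some $i>k$ occurs; likewise for $\eta$.

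In the trace monoid, $\omega V$ being a prefix of $c$ means the letter $V$ appears in $c$ after the specified occurrences of the letters of $\omega$. Any letter of $\omega$ that does not commute with $V$ must precede that occurrence of $V$. The first occurrence of $V$ in $c$ is the same for both prefixes. Hence the set of letters of $c$ that precede that first $V$ and lie in $A_+(\Gamma')$ must equal both $\omega$ and $\eta$ up to the commuting letters. This set is the maximal $\Gamma'$-prefix before the first $V$, modulo those letters commuting past $V$.

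The main obstacle is controlling those commuting letters $V_1,\dots,V_k$. A letter of $\omega$ that commutes with $V$ could in principle be shuffled past $V$, so that $\omega V=\omega_0V\omega_1$. For this we would use the definition of $\Omega$ more carefully. We would first try to show that an element $a\in A_+(\Gamma')$ with $aV\ne Va$ is determined by the prefix relation $aV\le c$, comparing the two pieces $\omega_0$ and $\omega_1$ for $\omega$ and $\eta$. If uniqueness fails, we fall back to a direct computation with SA2 and SA3 on reduced words. Starting from $V^*\eta^*\omega V$, we cancel the common prefix of $\eta$ and $\omega$ via SA1. We then commute the first mismatched generators, which vanish by SA3 if they are nonadjacent and distinct. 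Adjacent mismatched generators are pushed through $V^*$ or $V$, using that $V^*V_i=V_iV^*$ for $i\le k$ and $V^*V_i=0$ for $i>k$, eventually reaching a forbidden $V^*V_i$ or $V_j^*V$ with $j>k$. That yields $0$, as desired.
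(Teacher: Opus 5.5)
Your reduction to $m=1$ is the same as the paper's, which then settles $m=1$ only by deferring to Ivanov. The gap is in your treatment of $m=1$. It stops exactly at the point you call the main obstacle, and with $\Omega$ as defined that obstacle cannot be overcome. The condition $aV\ne Va$ does not stop $a$ from having a nontrivial right factor that commutes with $V$, and when it does, the identity fails.

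Concretely, let $X(\Gamma)=\{1,2,3\}$ with the single edge $\{1,3\}$, and let $\Gamma'$ be the full subgraph on $\{1,2\}$, so that $\Gamma^{\coco}$ and $(\Gamma')^{\coco}$ are both connected. Take $V=V_3$ and $k=1$. Then $\omega=V_2V_1$ and $\eta=V_2$ are distinct elements of $\Omega$, yet SA1 and SA2 (for the edge $\{1,3\}$) give
\[
V_3^*V_2^*V_2V_1V_3=V_3^*V_1V_3=V_1V_3^*V_3=V_1,
\]
which is neither $0$ nor $1$ by Lemma \ref{lem:sartin}. In your heap picture, $\omega V=V_2V_1V_3=V_2V_3V_1$ has $\eta V$ as a prefix, so the hoped-for uniqueness is false. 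Your fallback computation also fails to reach a forbidden pair: after cancelling the common $V_2$, the only remaining mismatch is $V_3^*V_1$, which commutes rather than vanishes. So the step ``eventually reaching a forbidden $V^*V_i$ or $V_j^*V$'' is wrong, and the lemma as literally stated is false.

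What is needed is a stronger normalisation. Let $\Omega$ consist of $1$ together with those $a\in A_+(\Gamma')$ that admit no factorisation $a=a'b$ with $b\ne 1$ in the submonoid generated by $V_1,\dots,V_k$. Every such $a\ne 1$ automatically satisfies $aV\ne Va$, but not conversely. With this $\Omega$ your trace-monoid argument closes:

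\begin{enumerate}
\item If $c$ is a common right multiple of $\omega V$ and $\eta V$, both prefixes are order ideals of the heap of $c$. Each contains exactly one $V$-piece, which must be the first $V$-piece $v_0$ of $c$.
\item The pieces of $\omega$ not below $v_0$ form an up-set of the heap of $\omega$. All their labels commute with $V$, since a non-commuting piece would be comparable to, hence below, the maximal piece $v_0$ of $\omega V$.
\item That up-set is therefore a right factor $b$ of $\omega$ lying in the submonoid generated by $V_1,\dots,V_k$. By the normalisation $b=1$, so $\omega$ is the trace of $\{x\in c\colon x<v_0\}$; this set lies inside the prefix because the prefix is down-closed.
\item The same holds for $\eta$, so $\omega=\eta$.
\end{enumerate}

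Together with SA1 for the case $\omega=\eta$ and your induction, this proves the corrected lemma.
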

\begin{proof}
An induction argument reduces the lemma to the case $m=1$. So for $\omega,\eta\in\Omega$ we have to prove that 
 \begin{equation}\label{eq:iva}
 V^*\omega\eta^*V=\delta_{\omega,\eta}    
 \end{equation} 
 The case when $\omega=\eta=1$ is immediate from SA1. The cases $\omega=1\ne \eta$ and $\omega\ne 1=\eta$ follow from SA2 and SA3. The argument explained in \cite{iva}*{bottom of page 6006--first two paragraphs of page 6007} to establish the identity \eqref{eq:iva} upon projection to a certain quotient $C_Q(\Gamma)$ of $C(\Gamma)$ defined in loc. cit. also works to prove it in $S(\Gamma)$.
\end{proof}

Set $B_0=T_0=C(\Gamma')$; for $m\ge 1$ put
\begin{gather}
T_{m}=\lspan_\ell\{\omega_1V\cdots \omega_mV xV^*\eta^*_m\cdots V^*\eta^*_1\colon
x\in C(\Gamma'),\, \omega,\eta\in\Omega^m\},\, B_m=\sum_{i=0}^mT_i
\end{gather}

\begin{prop}\label{prop:cgamma0}
 \item[i)] The map
 \[
 M_{\Omega^m}T_0\to T_m,\, \epsilon_{\omega,\eta}x\mapsto \omega_1V\cdots \omega_mV xV^*\eta^*_m\cdots V^*\eta^*_1
\]
is an isomorphism of algebras.
\item[ii)] $T_i T_j=T_{\max\{i,j\}}$ for all $i,j\ge 0$. In particular, for all $m\ge 0$, $B_m$ is a subalgebra of $C(\Gamma)$.
\item[iii)] For each $m\ge 0$, the inclusion $B_m\subset B_{m+1}$ has a left inverse $\pi_m:B_{m+1}\to B_m$ with $\ker(\pi_m)=T_m$. In particular we have a split exact sequence of algebras
\[
0\to T_{m+1}\lra B_{m+1}\overset{\pi_m}{\lra} B_m\to 0. 
\]
\end{prop}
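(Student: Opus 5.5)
The plan is to write $w_\omega:=\omega_1V\cdots\omega_mV$ for $\omega\in\Omega^m$ ($w_\emptyset=1$). Lemma \ref{lem:iva} says $w_\eta^*w_\omega=\delta_{\omega,\eta}$, so $\{w_\omega\}_{\omega\in\Omega^m}$ is a family of isometries with mutually orthogonal ranges.

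For i), the map sends $\epsilon_{\omega,\eta}x\mapsto w_\omega x w_\eta^*$. Multiplicativity follows at once:
\[
(w_\omega xw_\eta^*)(w_{\omega'}yw_{\eta'}^*)=\delta_{\eta,\omega'}w_\omega xyw_{\eta'}^*.
\]
Surjectivity holds by the definition of $T_m$. For injectivity, suppose $\sum c_{\omega,\eta}$ maps to $0$, where $c_{\omega,\eta}=x_{\omega,\eta}$. Multiplying on the left by $w_{\omega}^*$ and on the right by $w_{\eta}$ gives $x_{\omega,\eta}=0$. This uses that $C(\Gamma')\to C(\Gamma)$ is injective, which should hold since $T_0=C(\Gamma')$ is here regarded as a subalgebra. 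If that is not already known, I would check it by faithfulness of the representation $\widehat L$ on $A_+(\Gamma)$ from the proof of Lemma \ref{lem:sartin}, restricted to $A_+(\Gamma')$.

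For ii), I first establish absorption identities for $x\in C(\Gamma')$ and $\omega\in\Omega$. By Lemma \ref{lem:sartin}, $S(\Gamma')$ spans $C(\Gamma')$ and elements have the form $ab^*$, so it suffices to compute $V^*\omega^* a$ and $V^*b\,\omega$ for $a,b\in A_+(\Gamma')$. I then show that $V^*a$ is either $0$ or of the form $c\,V^*$ or $c$ with $c\in A_+(\Gamma')$. Concretely, $a$ factors as $a=a'\omega'$ with $a'$ commuting with $V$ (a word in $V_1,\dots,V_k$) and $\omega'\in\Omega$; this is the key combinatorial step, using SA2/SA3 and normal forms in the right-angled Artin monoid. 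I expect the main obstacle here. With this I would prove that $V^*\eta^*\, x\,\omega V\in\ell$-span of $C(\Gamma')$, i.e. $T_1^*$-type compressions land in $T_0$, and that $x\,\omega V\in\operatorname{span}\{\omega' V y\}$ with $y\in C(\Gamma')$ plus terms in $C(\Gamma')$ only when ... . The cleaner route is to verify $T_0T_m\subseteq T_m$ and $T_mT_0\subseteq T_m$ by showing $x w_\omega\in\sum_{\omega'}w_{\omega'}C(\Gamma')$. Then $T_iT_j$ for $i\le j$ is computed by cancelling the $i$ inner pairs via Lemma \ref{lem:iva} and the absorption. Equality rather than inclusion follows since $T_m$ contains the idempotents $w_\omega w_\omega^*$, which act as local units.

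For iii), I define $\pi_m$ on $B_{m+1}=B_m+T_{m+1}$ as the identity on $B_m$ and $0$ on $T_{m+1}$. Two things must be checked. First, $B_m\cap T_{m+1}=0$, which is a directness statement. To prove it, I would apply the compressions $w_\omega^*(\cdot)w_\eta$ with $|\omega|=|\eta|=m+1$; these kill $B_m$ modulo something, so instead I would use a representation (the left regular action on $A_+(\Gamma)$, or $\ell[A_+(\Gamma)]$) in which elements of $T_{m+1}$ have "finite-rank after depth $m+1$" behaviour while nonzero elements of $B_m$ do not. Second, $T_{m+1}$ is an ideal of $B_{m+1}$, which holds by ii). Once both are in place, $\pi_m$ is a homomorphism, split by the inclusion.
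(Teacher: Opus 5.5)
Your treatment of i) and ii) matches the paper, which simply calls them straightforward from Lemma \ref{lem:iva}. The absorption identity $x\,w_\omega\in\sum_{\omega'}w_{\omega'}C(\Gamma')$ is exactly what ii) needs. Reducing iii) to $B_m\cap T_{m+1}=0$, plus the fact that $T_{m+1}$ is an ideal of $B_{m+1}$, is also the paper's reduction.

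The gap is that you never prove $B_m\cap T_{m+1}=0$, which is the whole content of iii). You discard the compressions too early. The map $w_\lambda^*(\cdot)\,w_\mu$ with $|\lambda|=|\mu|=m+1$ does not kill $B_m$. By SA1--SA3 it sends every $T_i$ with $i\le m$ into the span of the inverse semigroup generated by $V_1,\dots,V_k$, whose elements commute with $V$ and $V^*$. That is the paper's key observation. It forces the $T_{m+1}$-coefficients $b_{\lambda,\mu}$ into that span. One more compression at depth $m$ then puts a nonzero element whose normal form contains $V$ into the span of $S(\Gamma')$, contradicting uniqueness of normal forms (Lemma \ref{lem:sartin} ii)). Your replacement, a representation in which elements of $T_{m+1}$ are ``finite-rank after depth $m+1$'', is not a usable statement. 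In the regular representation on $\ell[A_+(\Gamma)]$, already $w_\omega w_\omega^*\in T_{m+1}$ has infinite rank, and you say nothing precise about what nonzero elements of $B_m$ fail to do.

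The representation idea can be made to work, and it then gives a tidy alternative to the paper's double compression:
\begin{itemize}
\item Let $C(\Gamma)$ act on $\ell[A_+(\Gamma)]$ by linearising the action of $S(\Gamma)$ on $A_+(\Gamma)$ by partial bijections from the proof of Lemma \ref{lem:sartin}. Let $|b|_V$ count the occurrences of $V$ in $b$.
\item Every element of $T_j$ kills $\delta_b$ when $|b|_V<j$, because $w_\eta^*\delta_b\neq0$ forces $b\in w_\eta A_+(\Gamma)$.
\item Suppose $0\neq f=\sum_{i\le m}t_i$ with $t_i\in T_i$. Take $i_0$ least with $t_{i_0}\neq0$ and, by i), a nonzero entry $x_{\omega_0,\eta_0}\in C(\Gamma')$ of $t_{i_0}$.
\item For $c\in A_+(\Gamma')$ we have $|w_{\eta_0}c|_V=i_0$, so only $t_{i_0}$ acts nontrivially on $\delta_{w_{\eta_0}c}$. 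Then $w_{\omega_0}^*f\,\delta_{w_{\eta_0}c}=x_{\omega_0,\eta_0}\delta_c$.
\item This is nonzero for a suitable $c$, since $C(\Gamma')$ acts faithfully on $\ell[A_+(\Gamma')]$: test $\sum c_{a,b}ab^*$ on $\delta_{b_0}$ with $b_0$ of minimal length.
\end{itemize}
Hence $f\notin T_{m+1}$.

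One caveat affects you and the paper alike. With $\Omega$ exactly as defined, Lemma \ref{lem:iva} and the injectivity in i) fail once $k\ge1$. Since $V_1V=VV_1$, one gets $V^*(V_{k+1}V_1)^*V_{k+1}V=V_1^*\neq0$, although $V_{k+1}\neq V_{k+1}V_1$ both lie in $\Omega$. Equivalently, $V_{k+1}V_1V=V_{k+1}VV_1$ gives two distinct matrix units with the same image. Everything works if $\Omega$ consists of the elements of $A_+(\Gamma')$ with no nontrivial right divisor in the monoid generated by $V_1,\dots,V_k$. Your absorption step in ii) then rests on the factorisation $a=\omega c$ with $\omega\in\Omega$ and $c$ a word in $V_1,\dots,V_k$, so that $aV=\omega Vc$. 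It does not rest on $a=a'\omega'$ as you wrote.
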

\begin{proof} 
The first two assertions of the proposition are straightforward from  Lemma \ref{lem:iva}. To prove iii) it suffices to show that 
\begin{equation}\label{eq:bmitm+1}
B_m\cap T_{m+1}=0    
\end{equation} for each $m\ge 0$. To abbreviate notation, for $a\in T_0$ and tuples $\omega,\eta\in\Omega^i$ we shall identify $\epsilon_{\omega,\eta}a$ with its image under the isomorphism of part i). To prove \eqref{eq:bmitm+1} we must show that if
\begin{gather}\label{eq:intzero}
\sum_{i=0}^m\sum_{(\omega,\eta)\in\Omega^i}\epsilon_{\omega,\eta}a_{\omega,\eta}=
\sum_{\lambda,\eta\in \Omega^{m+1}}\epsilon_{\lambda,\mu}b_{\lambda,\mu}    
\end{gather}
then all $b_{\lambda,\mu}$ must be zero. So assume $b_{\lambda,\mu}\ne 0$ for some $\lambda,\mu\in \Omega^{m+1}$. Multiplying both sides of the identity \eqref{eq:intzero} by $(\lambda_1V\cdots\lambda_{m+1}V)^*$ on the left and by $\mu_1V\cdots\mu_{m+1}V$ on the right, we get
\[
\sum_{i=0}^mV^*\lambda_{m+1}^*\cdots V^*\lambda_{i+1}^*a_{\lambda_{\le i},\mu_{\le i}}\mu_{i+1}V\cdots\mu_{m+1}V=b_{\lambda,\mu}.
\]
It follows from $SA1-3$ that if the left hand side of the identity above is nonzero, then it must be in the semigroup algebra of the inverse semigroup $S_k\subset \Gamma'$ generated by $V_1,\dots,V_k$. But then $b_{\lambda,\mu}$ commutes with $V$ and $V^*$, and so 
\[
\epsilon_{\lambda,\mu}b_{\lambda,\mu}=(\lambda_1V\cdots\lambda_mV)b_{\lambda,\mu}(\mu_1V\cdots\mu_mV)^*.
\]
Hence multiplying both sides of the identity \eqref{eq:intzero} by $(\lambda_1V\cdots\lambda_{m}V)^*$ on the left and by $\mu_1V\cdots\mu_{m}V$ on the right, we get
\[
a_{\lambda_{\le m},\mu_{\le m}}+\sum_{i=0}^{m-1}V^*\lambda_{m}^*\cdots V^*\lambda_{i+1}^*a_{\lambda_{\le i},\mu_{\le i}}\mu_{i+1}V\cdots\mu_{m}V=b_{\lambda,\mu}VV^*.
\]
By definition, the first term of the left hand side is in $C(\Gamma')$ and as before, the summation is in $\ell[S_k]$, so we get that $b_{\lambda,\mu}VV^*\in C(\Gamma')$. Since the elements of $\Gamma$ are $\ell$-linearly independent in $C(\Gamma)$, $b_{\lambda,\mu}\ne 0$ implies that there are $\xi,\zeta\in S_k$ and $\alpha,\beta\in\Gamma'$ such that 
\[
\alpha\beta^*=\xi\zeta^*VV^*=\xi V(V\zeta)^*.
\]
By Lemma \ref{lem:sartin} ii), this implies that $\alpha=\xi V$ so $V=\xi^*\alpha\in\Gamma'$, a contradiction. 
\end{proof}

\begin{thm}\label{thm:discartin}
Let $\Gamma$ be a finite undirected graph such that $\Gamma^{\coco}$ is connected. Let $\ell$ be a commutative unital ring, $C(\Gamma)$ the Cohn $\ell$-algebra and $\iota:\ell\to C(\Gamma)$ the inclusion. Let $\cT$ be a cocomplete stable $\infty$-category and $\cH:\ahal\to\cT$ a functor. Assume that $H$ is excisive, homotopy invariant and matricially-stable, and that it preserves inductive colimits. Then $\cH(\iota)$ is an equivalence. 
\end{thm}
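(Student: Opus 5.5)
We argue by induction on the number of vertices $n+1=|X(\Gamma)|$, using the filtration $B_0\subset B_1\subset\cdots$ of Proposition \ref{prop:cgamma0}. The base case is $n=1$, where $\Gamma$ has two vertices and no edges. Then $S(\Gamma)$ is generated by two isometries $V_1,V_2$ with $V_1^*V_2=0$, so $C(\Gamma)$ is the Cohn (Toeplitz) algebra of the graph with one vertex and two loops. Alternatively, and more uniformly, we take the inductive step below with $\Gamma'$ a single vertex. In that case $C(\Gamma')=\ell[V_1,V_1^*]/(V_1^*V_1=1)$ is the Toeplitz algebra, and $\cH(\ell\to C(\Gamma'))$ is an equivalence by the Toeplitz part of Theorem \ref{thm:toeplitz} (for a one-edge graph), or equivalently by Corollary \ref{coro:ctistoep2} with $\crG$ trivial and $k=1$. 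So it suffices to show that, for $\Gamma'\subset\Gamma$ as fixed before Lemma \ref{lem:iva}, the inclusion $C(\Gamma')=B_0\to C(\Gamma)$ is sent to an equivalence by $\cH$. Composing with the inductive hypothesis for $\Gamma'$ (whose complement is connected) then gives the theorem.

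\textbf{Step 1: $C(\Gamma)$ is the union of the $B_m$.} Using \cite{iva}*{Lemma 2.1} and SA1--SA3, every word in $S(\Gamma)$ can be rewritten as $\omega_1V\cdots\omega_mVxV^*\eta_m^*\cdots V^*\eta_1^*$ with $x\in S(\Gamma')$ and $\omega_i,\eta_i\in\Omega$. The point is that any $a\in A_+(\Gamma')$ factors as $a=\omega c$ with $c$ a word in $V_1,\dots,V_k$ commuting with $V$, and then $cV=Vc$. Hence $C(\Gamma)=\bigcup_m B_m$ is a filtering union, and since $\cH$ preserves inductive colimits, $\cH(C(\Gamma))=\coli_m\cH(B_m)$. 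It therefore suffices to show that each inclusion $B_m\to B_{m+1}$ induces an equivalence.

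\textbf{Step 2: the split extensions.} By Proposition \ref{prop:cgamma0} iii) we have a split extension $T_{m+1}\to B_{m+1}\to B_m$. Excision then gives $\cH(B_{m+1})\simeq\cH(B_m)\oplus\cH(T_{m+1})$, with the inclusion $B_m\subset B_{m+1}$ corresponding to the first summand. So the task is to show that $\cH(T_{m+1})\to\cH(B_{m+1})$ contributes nothing, i.e.\ that $\cH(T_{m+1})\simeq0$ or at least that the summand cancels. By Proposition \ref{prop:cgamma0} i) we have $T_{m+1}\cong M_{\Omega^{m+1}}C(\Gamma')$, so matricial stability and induction give $\cH(T_{m+1})\simeq\cH(C(\Gamma'))\simeq\cH(\ell)$, which is not zero. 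So the summand does not vanish outright, and this is the main obstacle.

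\textbf{Step 3: handling the main obstacle.} I would first look at the "shift" homomorphism $\sigma:B_m\to T_{m+1}\subset B_{m+1}$ obtained by conjugating with the isometry $W=\omega V$ for a fixed $\omega\in\Omega$, $b\mapsto WbW^*$, possibly together with the complementary corner. The aim is a relation in $kk$ of the form $\id=\iota_m\pi_m+(\text{corner})$ which forces the idempotent splitting to collapse. Concretely, I expect the relation $\sum_{\omega\in\Omega}\omega VV^*\omega^*=1-(\text{something in }B_0)$, coming from the Cuntz-type relation implicit in Lemma \ref{lem:iva}, to play this role. It should identify the extra summand with a difference of two copies of $\cH(C(\Gamma'))$ that cancel, using the homotopy invariance of $\cH$ and the fact that conjugation by an isometry is $\cH$-homotopic to the identity (the standard rotation trick in matrices).

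If this cancellation fails at a fixed level $m$, I would instead argue at the colimit. There the connecting maps $\cH(T_{m+1})\to\cH(T_{m+2})$ induced through the split extensions should be zero, because multiplication by $V$ pushes $T_{m+1}$ into the matrix stabilisation, where the corner maps $\Omega^{m+1}\to\Omega^{m+2}$ factor through a sum indexed by $|\Omega|-1\ge1$ elements that kills the class. With these connecting maps zero, $\coli_m\cH(T_m)$-type contributions vanish and $\coli_m\cH(B_m)\simeq\cH(B_0)$.

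Verifying precisely which of these two mechanisms works, and that $\Omega$ is infinite or at least nonempty (which uses $k<n$ and the connectedness of $\Gamma^{\coco}$), is the step I expect to be hardest.
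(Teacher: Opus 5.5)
There is a genuine gap, and it starts in Step 1. The relations SA1--SA3 are homogeneous for the grading $|V|=1$, $|V^*|=-1$, $|V_i|=0$ $(1\le i\le n)$. Every spanning element $\omega_1V\cdots\omega_mVxV^*\eta_m^*\cdots V^*\eta_1^*$ of $T_m$ has degree $0$. So $\bigcup_mB_m$ is the degree-zero part $B:=C(\Gamma)_0$, not all of $C(\Gamma)$; for instance $V\notin\bigcup_mB_m$.

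Your reduction ``each $B_m\to B_{m+1}$ is an $\cH$-equivalence'' is also false, as your own Step 2 shows. The split extension of Proposition \ref{prop:cgamma0} iii) gives $\cH(B_{m+1})\simeq\cH(B_m)\oplus\cH(T_{m+1})$. Here $B_m\subset B_{m+1}$ induces the inclusion of a direct summand whose complement is $\cH(T_{m+1})\simeq\cH(C(\Gamma'))\simeq\cH(\ell)$, which is nonzero in general (e.g.\ $\cH=KH$, $\ell$ a field). Since the transition maps are summand inclusions, there are no connecting maps left to kill anything. Thus $\hocolim_m\cH(B_m)\simeq\bigoplus_{m\ge0}\cH(C(\Gamma'))$, with $\cH(B_0)$ just the $0$-th summand, and neither mechanism in Step 3 can change this. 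Indeed, if Step 1 were true, your Steps 1--2 would show that $\cH(\iota)$ is the inclusion of one summand of $\bigoplus_{m\ge0}\cH(\ell)$, contradicting the theorem.

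The missing ingredient is the passage from $B$ to $C(\Gamma)$. Because $V^*V=1$, the map $\phi(x)=VxV^*$ is an injective endomorphism of $B=C(\Gamma)_0$, and $C(\Gamma)$ is the fractional skew monoid ring $B[V,V^*;\phi]$ (\cite{fracskewmon}*{Lemma 2.4}). The argument of \cite{abc}*{Theorem 8.4} then gives a Pimsner--Voiculescu type cofiber sequence $\cH(B)\xrightarrow{1-\phi}\cH(B)\to\cH(C(\Gamma))$.

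Your Steps 1--2, correctly applied to $B$ (using colimit preservation, excision and matricial stability), give $\cH(B)\simeq\bigoplus_{m\ge0}\cH(C(\Gamma'))$. The map $\phi$ sends $T_m$ into $T_{m+1}$ via the corner embedding $\epsilon_{\omega,\eta}x\mapsto\epsilon_{(1,\omega),(1,\eta)}x$, so it acts as the shift. The cofiber of $1-\mathrm{shift}$ on $\bigoplus_{m\ge0}M$ is a single copy of $M$, reached from the $0$-th summand, i.e.\ from $B_0=C(\Gamma')\subset C(\Gamma)$. That is exactly the equivalence you need.

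Your instinct to conjugate by an isometry points this way, but with the opposite conclusion: on $\cH(B)$, conjugation by $V$ is \emph{not} the identity (since $V\notin B$), and the PV sequence is what exploits that. Your base case and the overall inductive reduction to the inclusion $C(\Gamma')\subset C(\Gamma)$ are fine.
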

\begin{proof}
Let $n+1=|X(\Gamma)|$, $n\ge 0$; we proceed by induction on $n$. If $n=0$, $\Gamma$ has only one vertex, and $C(\Gamma)$ is the Toeplitz $\ell$-algebra of \cite{ct}*{Section 4.10}, and the theorem holds more generally without the colimit assumption, by \cite{ct}*{Lemma 7.3.2}. Next assume that $n\ge 1$, and let $\Gamma'\subset \Gamma$ and $V$ be as above. The relations $SA1--3$ that define $S(\Gamma)$ are homogeneous with respect to the weights $|V|=1$, $|V^*|=-1$, $|V_i|=0$ for $1\le i\le n$. Thus we have a $\Z$-grading $C(\Gamma)=\bigoplus_{m\in\Z}C(\Gamma)_m$ with $V\in C(\Gamma)_1$, $V^*\in C(\Gamma)_{-1}$ and $V^*V=1$. Hence $\phi:C(\Gamma)_0\to C(\Gamma)_0$, $\phi(x)=VxV^*$ is a monomorphism of algebras, and by \cite{fracskewmon}*{Lemma 2.4}, $C(\Gamma)$ is isomorphic to the fractional skew monoid ring
$C(\Gamma)_0[V,V^*;\phi]$ defined in loc. cit. Now the argument of \cite{abc}*{Theorem 8.4}, formulated for $KH$, applies more generally to show that for any functor $\cH$ as in the current theorem, for $B=C(\Gamma)_0$ and the inclusion map $\inc:B\to C(\Gamma)$ we have a triangle
\begin{equation}\label{seq:fracskewmon}
\xymatrix{\cH(B)\ar[r]^{1-\phi}&\cH(B)\ar[r]^{\inc}& \cH(C(\Gamma))}    
\end{equation}
Next observe that $B=\bigcup_{m\ge 0}B_m$, the union of the subalgebras of Proposition \ref{prop:cgamma0}. Hence using the definition of $B$, and the colimit preservation, excision, and matricial stability hypotheses on $H$, we get
\[
H(B)=H(\coli_mB_m)\isom \hocolim_mH(B_m)\isom \hocolim_m \bigoplus_{i=0}^mH(T_i)\isom\hocolim_mH(T_0)^m\isom\bigoplus_{m=0}^\infty H(C(\Gamma'))
\]
Under the equivalence above, $\phi$ becomes the shift map $s$ that sends the 
$i$-th copy of $H(C(\Gamma'))$ identically onto the $i+1$-th copy, and thus by \eqref{seq:fracskewmon}, we have that
$$H(C(\Gamma))\isom \hocolim_m(H(C(\Gamma'),s))\isom HC(\Gamma')$$
and that $H$ maps the inclusion $C(\Gamma')=B_0\subset B\to C(\Gamma)$ to an equivalence. By the inductive hypothesis $H$ also maps $\ell\to C(\Gamma')$ to an equivalence. Therefore $H(\iota)$ is an equivalence; this concludes the proof. 
\end{proof}

\begin{coro}\label{coro:discartin}
Let $\cH$ be as in Theorem \ref{thm:discartin}. Then the canonical action $S(\Gamma)\acts \widehat{\Idem S(\Gamma)}$ satisfies discretisation with respect to $\cH$.
\end{coro}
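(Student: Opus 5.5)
The plan is to read discretisation here in the algebraic formulation of \S~\ref{subsec:detour}. We show that $\cH(\theta)$ is an equivalence for $\theta:\ell[G_d]\to M_{L^\times}\ell[G]$, with $L=\Idem S(\Gamma)$, $G=S(\Gamma)\ltimes\whL$ and $G_d=S(\Gamma)\ltimes L^\times$. By Example \ref{ex:discfull} (with \cite{steinappr}), $\ell[G]\cong\ell[S(\Gamma)]=C(\Gamma)$ and $\ell[G_d]\cong\ell[S(\Gamma)_d]$. Under these identifications $\theta(s)=\epsilon_{ss^*,s^*s}s$. As in the proof of Theorem \ref{thm:discg}, we will compare both sides with $\ell$ through a commutative square and use two-out-of-three.

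\textbf{Step 1 (the discrete side).} By Lemma \ref{lem:sartin} iv), $L^\times\cong A_+(\Gamma)$ via $a\mapsto aa^*$. We then compute the orbits and isotropy of $G_d$. For $a,b\in A_+(\Gamma)$, the element $ab^*$ gives an arrow from $bb^*$ to $aa^*$, so $G_d$ is transitive with unit $1=1\cdot1^*$. By the uniqueness in Lemma \ref{lem:sartin} ii), the isotropy at $1$ consists of the single element $s=ab^*$ with $a=b=1$, so it is trivial. Lemma \ref{lem:gpd101} then gives $\ell[G_d]\cong M_{A_+(\Gamma)}\ell$. Under this isomorphism, the inclusion $\iota_d:\ell\to\ell[G_d]$, $1\mapsto \chi_{[1,1]}$, is the corner embedding, so $\cH(\iota_d)$ is an equivalence by matricial stability.

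\textbf{Step 2 (the square).} Consider the diagram with top row $M_{L^\times}\ell\xrightarrow{M_{L^\times}\iota}M_{L^\times}C(\Gamma)$, bottom row $\ell\xrightarrow{\iota_d}\ell[G_d]$, and vertical maps the corner embedding $\epsilon_{1,1}:\ell\to M_{L^\times}\ell$ and $\theta$. Since $\theta(1)=\epsilon_{1,1}\cdot 1$, the square commutes. The left vertical map and the bottom map become equivalences under $\cH$ by matricial stability and Step 1. The top map becomes an equivalence because $\cH(\iota)$ is one by Theorem \ref{thm:discartin}, and $M_{L^\times}$ is compatible with $\cH$ through the corner embeddings $\iota_x$, which intertwine $\iota$ and $M_{L^\times}\iota$. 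Hence $\cH(\theta)$ is an equivalence.

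\textbf{Main obstacle.} The delicate point is the matrix-stability bookkeeping. Matricial stability is presumably stated for finite or countable index sets, and $L^\times=A_+(\Gamma)$ is countable since $\Gamma$ is finite, so this should be fine; if only finite matrices are assumed, we would use the colimit-preservation hypothesis to write $M_{L^\times}=\coli_{F}M_F$ over finite $F\subset L^\times$. A second check is that Lemma \ref{lem:gpd101} and Example \ref{ex:discfull} apply to the pointed monoid $S(\Gamma)$, with the zero removed and the unit space identified with $L^\times$.
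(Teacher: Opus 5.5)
Your proposal is correct and follows essentially the paper's argument: you compare $\theta$ with $\iota:\ell\to C(\Gamma)$ through a commutative square, invoke Theorem \ref{thm:discartin} for $\iota$, and use Lemma \ref{lem:gpd101} plus matricial stability on the discrete side. The differences are cosmetic. The paper puts the corner embedding $\epsilon_{1,1}:C(\Gamma)\to M_{A_+}C(\Gamma)$ directly on the right of the square, so it needs no naturality remark about $M_{L^\times}\iota$. Your explicit orbit/isotropy computation for $G_d$ (transitive with trivial isotropy, so $\ell[G_d]\cong M_{A_+}\ell$) fills in what the paper leaves implicit.
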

\begin{proof}
Recall that by definition $\ell[S(\Gamma)]=C(\Gamma)$; using Lemma \ref{lem:sartin} to identify $\Idem(S(\Gamma))=A_+$, we have a commutative diagram
\begin{equation}\label{diag:discartin}
\xymatrix{\ell[S(\Gamma)_d]\ar[r]^{\theta}& M_{A_+}C(\Gamma)\\
\ell\ar[u]\ar[r]^{\iota}& C(\Gamma)\ar[u]_{\epsilon_{1,1}}}
\end{equation}
By Theorem \ref{thm:discartin}, $H(\iota)$ is an equivalence. Next use matricial stability, and, in the case of the first map, also Lemma \ref{lem:gpd101}, to show that $H$ applied to the two vertical maps is an equivalence. It follows that $H(\theta)$ is an equivalence, concluding the proof. 
\end{proof}

\begin{ex}\label{ex:discartin}
 Homotopy algebraic $K$-theory $KH$ satisfies the hypothesis of Theorem \ref{thm:discartin} (see \cite{kh}). Hence Theorem \ref{thm:discartin} and the Corollary \ref{coro:discartin} applies to $\cH=KH$. 
\end{ex}

\subsection{\topdf{$K$}{K}-theory of \topdf{$C(\Gamma)$}{C(G)}}
Let $\Gamma$ be an undirected graph on $n+1$ vertices, $n\ge 0$, such that $\Gamma^{\coco}$ is connected. Choose a filtration $\Gamma_0\subset\dots\subset\Gamma_n=\Gamma$ by full subgraphs so that $\Gamma_i$ has $i+1$ vertices and $(\Gamma_i)^{\coco}$ is connected. Number the vertices of $\Gamma$ so that $X(\Gamma^i)=\{1,\dots,i+1\}$ and let $V_1,\dots,V_{n+1},V_1^*,\dots,V_{n+1}^*$ be the corresponding generators of $S(\Gamma)$. We have a filtration by subalgebras $\ell\subsetneq C(\Gamma_0)\subsetneq C(\Gamma_1)\subsetneq\dots\subsetneq C(\Gamma_n)=C(\Gamma)$. Equip $C(\Gamma_i)$ with the $\Zz$-grading $|\,|_i$ where for $1\le j\le i+1$, $|V_j|_i=\delta_{i+1,j}=-|V_j^*|_i$. Let $B^i=C(\Gamma_i)_0$ be the zero degree component with respect to $|\,|_i$, and $\phi_i:B^i\to B^i$, $\phi_i(b)=V_{i+1}bV_{i+1}^*$. Put 
\[
\tilde{B}^i=B^i[\phi_i^{-1}]=\coli (B^i\overset{\phi_i}{\lra}B^i\overset{\phi_i}{\lra}B^i\overset{\phi_i}{\lra}\dots)
\]
Let $\tilde{\phi}_i:\tilde{B}^i\to\tilde{B}^i$ be the algebra automorphism induced by $\phi_i$, $\tilde{B}^i[t,t^{-1};\tilde{\phi}_i]=\tilde{B}^i\rtimes_{\phi_i}\Z$, the twisted Laurent polynomial ring, and $\tilde{B}[t;\tilde{\phi}_i]\subset \tilde{B}^i[t,t^{-1};\tilde{\phi}_i]\supset \tilde{B}[t^{-1};\tilde{\phi}_i]$ the twisted polynomial ring.
Put 
\begin{gather*}
NK(\tilde{B}^i,\tilde{\phi}_i)_+=\hocofi(K(\tilde{B}^i)\to 
K(\tilde{B}^i[t,\tilde{\phi}]\\
NK(\tilde{B}^i,\tilde{\phi}_i)_-=\hocofi(K(\tilde{B}^i)\to 
K(\tilde{B}^i[t^{-1},\tilde{\phi}]
\end{gather*}

\begin{prop}\label{prop:kacegamma}
$K(C(\Gamma))\isom K(\ell)\oplus \bigoplus_{i=0}^nNK(\tilde{B}^i,\tilde{\phi}_i)_+\oplus NK(\tilde{B}^i,\tilde{\phi}_i)_-$.    
\end{prop}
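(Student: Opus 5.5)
We argue by induction along the filtration $\ell\subsetneq C(\Gamma_0)\subsetneq\dots\subsetneq C(\Gamma_n)=C(\Gamma)$. It suffices to prove, for each $0\le i\le n$ (with $C(\Gamma_{-1}):=\ell$), that
\[
K(C(\Gamma_i))\simeq K(C(\Gamma_{i-1}))\oplus NK(\tilde{B}^i,\tilde{\phi}_i)_+\oplus NK(\tilde{B}^i,\tilde{\phi}_i)_-,
\]
with the first summand embedded by the inclusion. Iterating over $i$ then yields the formula of the statement.

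For fixed $i$, we use the $\Z$-grading $|\,|_i$, under which $V_{i+1}^*V_{i+1}=1$. As in the proof of Theorem \ref{thm:discartin}, \cite{fracskewmon}*{Lemma 2.4} identifies $C(\Gamma_i)$ with the fractional skew monoid ring $B^i[V_{i+1},V_{i+1}^*;\phi_i]$. For algebraic $K$-theory, which is not homotopy invariant, we replace the triangle \eqref{seq:fracskewmon} by the version of \cite{abc}*{Theorem 8.4} valid for $K$. That result is a twisted Bass--Heller--Swan/Pimsner--Voiculescu type decomposition for fractional skew monoid rings: a cofiber sequence $K(B^i)\xrightarrow{1-\phi_i}K(B^i)\to K(C(\Gamma_i))$, corrected by the twisted nil terms of the localisation $\tilde B^i=B^i[\phi_i^{-1}]$. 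Concretely, we pass through the corner isomorphism $C(\Gamma_i)\cong$ a full corner of $\tilde B^i[t;\tilde\phi_i]$, or more precisely the relation between $B^i[V,V^*;\phi_i]$ and $\tilde B^i[t,t^{-1};\tilde\phi_i]$. We then apply the twisted Bass--Farrell fundamental theorem
\[
K(\tilde B^i[t,t^{-1};\tilde\phi_i])\simeq \mathrm{cofib}(1-\tilde\phi_i)\oplus NK_+\oplus NK_-,
\]
where $\mathrm{cofib}(1-\tilde\phi_i)$ denotes the cofiber of $1-\tilde\phi_{i*}$ on $K(\tilde B^i)$. This gives
\[
K(C(\Gamma_i))\simeq \mathrm{cofib}\bigl(1-\phi_{i*}\colon K(B^i)\to K(B^i)\bigr)\oplus NK(\tilde B^i,\tilde\phi_i)_+\oplus NK(\tilde B^i,\tilde\phi_i)_-.
\]

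Next we compute the cofiber term exactly as in the proof of Theorem \ref{thm:discartin}. Since $K$ is matricially stable, commutes with filtering colimits, and is additive on split extensions, Proposition \ref{prop:cgamma0} gives $K(B^i)\simeq\bigoplus_{m\ge0}K(C(\Gamma_{i-1}))$. The relevant properties are that $B^i=\bigcup_m B_m$, that $B_{m+1}=T_{m+1}\rtimes B_m$ is split with $T_{m+1}\cong M_{\Omega^{m+1}}C(\Gamma_{i-1})$, and that split extensions suffice because $K$ satisfies excision for split ideals of this matricial form. Under this identification $\phi_i$ is the shift, so the cofiber of $1-\phi_{i*}$ is $K(C(\Gamma_{i-1}))$, embedded via $B_0=C(\Gamma_{i-1})\subset C(\Gamma_i)$. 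For $i=0$ we have $C(\Gamma_0)$ the Toeplitz algebra, and the same argument gives $K(\ell)$ plus the two nil terms; here $\tilde B^0$ is the ring of finitary-type matrices over $\ell$.

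We expect the main obstacle to be the $K$-theoretic (non-homotopy-invariant) version of \eqref{seq:fracskewmon}. Specifically, we must check that the argument of \cite{abc} indeed produces exactly the twisted nil terms of $(\tilde B^i,\tilde\phi_i)$ and no extra error term. We would establish this by comparing $C(\Gamma_i)$ with $\tilde B^i[t;\tilde\phi_i]$ via the localisation $B^i\to\tilde B^i$ (a filtering colimit, so $K$ commutes with it), and by using that $K(B^i)\to K(\tilde B^i)$ is identified with the colimit along $\phi_{i*}$.
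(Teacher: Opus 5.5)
Your proposal is correct and is essentially the paper's proof. You induct along the filtration, identify $C(\Gamma_i)$ with the fractional skew monoid ring over $B^i=C(\Gamma_i)_0$, and invoke the $K$-theoretic decomposition of \cite{abc}*{Theorem 3.6}, which is the $K$-analogue of the $KH$ statement you allude to, to write $K(C(\Gamma_i))$ as $\hocofi\bigl(K(B^i)\xrightarrow{1-\phi_i}K(B^i)\bigr)$ plus the two twisted nil terms. You then identify this cofiber with $K(C(\Gamma_{i-1}))$ via Proposition~\ref{prop:cgamma0} as in Theorem~\ref{thm:discartin}, with excision justified, as you note, by the local units of $T_{m+1}\cong M_{\Omega^{m+1}}C(\Gamma_{i-1})$.

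One harmless slip: $B^0$ is spanned by the commuting idempotents $V_1^m(V_1^*)^m$, so $\tilde{B}^0$ is a commutative ring rather than a ring of finitary matrices, but this plays no role in the argument.
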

\begin{proof}
It suffices to show that for $\Gamma'=\Gamma^{n-1}$, $B=B^n$, $V=V_{n+1}$ and $\phi=\phi_n$, we have 
$$
K(C(\Gamma))=K(C(\Gamma'))\oplus NK(\tilde{B},\tilde{\phi})_+\oplus NK(\tilde{B},\tilde{\phi})_-.
$$ 
As explained in the proof of Theorem \ref{thm:discartin}, $C(\Gamma)\cong C(\Gamma')[t_+,t_-,\phi]$ is the fractional skew monoid ring of \cite{fracskewmon}. Hence by \cite{abc}*{Theorem 3.6}, we have
\[
K(C(\Gamma))=\hocofi(K(B)\overset{1-\phi}{\lra} K(B))\oplus NK(\tilde{B},\tilde{\phi})_+\oplus NK(\tilde{B},\tilde{\phi})_-.
\]
So we have to prove that the canonical map
\[
K(C(\Gamma'))\to \hocofi(K(B)\overset{1-\phi}{\lra} K(B))
\]
induced by the inclusion $C(\Gamma')\subset B$ is an equivalence. This follows as in the proof of the analogous statement in Theorem \ref{thm:discartin}.
\end{proof}
\begin{coro}\label{coro:kacegamma}
 The universal action $S(\Gamma)\acts \Idem(S(\Gamma))$ satisfies discretisation with respect to algebraic $K$-theory if and only if $NK(\tilde{B}^i,\tilde{\phi}_i)_+$ is contractible for all $i=0,\dots, n$. 
\end{coro}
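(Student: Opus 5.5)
The plan is to compare $K(\theta)$ with $K(\iota)$ using the commutative square \eqref{diag:discartin}, and then to read off when $K(\iota)$ is an equivalence from Proposition \ref{prop:kacegamma}. Algebraic $K$-theory of rings with local units is $M_X$-stable and commutes with direct sums and filtering colimits. So the right vertical map $\epsilon_{1,1}$ of \eqref{diag:discartin} induces an equivalence on $K$, by Lemma \ref{lem:matrimori}.

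For the left vertical map we use Lemma \ref{lem:gpd101} and compute the discrete groupoid $G_d=S(\Gamma)\ltimes A_+^\times$ explicitly.
\begin{itemize}
\item Under Lemma \ref{lem:sartin} iv), the unit space of $G_d$ is $A_+(\Gamma)$.
\item Every $a\in A_+(\Gamma)$ lies in the orbit of $1$, because $a^*$ maps $aa^*$ to $a^*a=1$.
\item The isotropy at $1$ is trivial. Indeed, if $s=ab^*$ satisfies $ss^*=s^*s=1$, then uniqueness in Lemma \ref{lem:sartin} ii) forces $a=b=1$.
\end{itemize}
Hence $\ell[G_d]\cong M_{A_+}\ell$, and the left vertical map $\ell\to\ell[G_d]$ is, up to this isomorphism, a corner inclusion, so it too induces an equivalence on $K$. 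It follows that $K(\theta)$ is an equivalence if and only if $K(\iota)\colon K(\ell)\to K(C(\Gamma))$ is one.

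Next we check that, in the splitting of Proposition \ref{prop:kacegamma}, the summand $K(\ell)$ is exactly the image of $K(\iota)$. We argue by induction along the filtration $\ell\subset C(\Gamma_0)\subset\dots\subset C(\Gamma_n)$. At each step the proof of that proposition identifies $K(C(\Gamma_{i-1}))\to\hocofi(K(B^i)\xrightarrow{1-\phi_i}K(B^i))$ with the map induced by the inclusion $C(\Gamma_{i-1})\subset B^i\subset C(\Gamma_i)$. Also, the splitting of \cite{abc}*{Theorem 3.6} has the $\hocofi$ term mapping to $K(C(\Gamma_i))$ via the inclusion. Composing these steps, $K(\iota)$ is the inclusion of the first summand, so $K(\iota)$ is an equivalence if and only if all the spectra $NK(\tilde B^i,\tilde\phi_i)_\pm$ are contractible.

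The main obstacle is the last reduction: showing that contractibility of the $NK_-$ terms follows from that of the $NK_+$ terms, since the statement only mentions $NK_+$. The first thing I would try is the substitution $t\mapsto t^{-1}$. It identifies $\tilde B^i[t^{-1};\tilde\phi_i]$ with $\tilde B^i[t;\tilde\phi_i^{-1}]$, so $NK(\tilde B^i,\tilde\phi_i)_-\simeq NK(\tilde B^i,\tilde\phi_i^{-1})_+$. I would then relate the twisted nil terms for $\tilde\phi_i$ and $\tilde\phi_i^{-1}$ through their description by twisted nilpotent endomorphism categories, as in Farrell--Hsiang/Grayson, or through an anti-involution of $C(\Gamma_i)$ exchanging $V$ and $V^*$. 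If neither argument goes through, I would check carefully whether the $\tilde\phi_i$-structure on $\tilde B^i$ (a colimit of corner embeddings $b\mapsto VbV^*$) makes the two nil terms coincide directly. Failing that, the correct formulation of the corollary would be the one with both signs.
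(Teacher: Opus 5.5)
Up to the last step you follow the paper's proof. You use the square \eqref{diag:discartin} with both vertical maps $K$-equivalences; your check that $G_d$ is the full equivalence relation on $A_+(\Gamma)$ with trivial isotropy, so that $\ell[G_d]\cong M_{A_+}\ell$, is a welcome detail. You then use Proposition~\ref{prop:kacegamma} to get $\hocofi K(\iota)\simeq\bigoplus_{i=0}^n NK(\tilde B^i,\tilde\phi_i)_+\oplus NK(\tilde B^i,\tilde\phi_i)_-$; the paper takes the compatibility of this splitting with $\iota$ for granted, as you essentially do.

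The gap is the step you flag yourself. You never show that contractibility of $NK(\tilde B^i,\tilde\phi_i)_-$ follows from that of $NK(\tilde B^i,\tilde\phi_i)_+$, and you even allow that the statement might need both signs. This is the only thing the corollary adds beyond Proposition~\ref{prop:kacegamma}, so as written you have proved only the version with both signs. Your first attempt cannot close the gap on its own. The substitution $t\mapsto t^{-1}$ only gives $NK(\tilde B^i,\tilde\phi_i)_-\simeq NK(\tilde B^i,\tilde\phi_i^{-1})_+$, and there is no general reason for the twisted nil terms of an automorphism and of its inverse to agree. So the Farrell--Hsiang/Grayson route needs extra input.

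That input is the anti-involution you mention but do not use.
\begin{itemize}
\item The anti-automorphism $*$ of $C(\Gamma_i)=\ell[S(\Gamma_i)]$ induced by $s\mapsto s^*$ reverses the grading $|\,|_i$, so it restricts to an anti-automorphism of $B^i$.
\item It commutes with $\phi_i$: $\phi_i(b)^*=(V_{i+1}bV_{i+1}^*)^*=V_{i+1}b^*V_{i+1}^*=\phi_i(b^*)$. Hence $*$ descends to the colimit $\tilde B^i$ and commutes with $\tilde\phi_i$.
\item Using $c\,t^{-m}=t^{-m}\tilde\phi_i^m(c)$, one checks that $bt^m\mapsto t^{-m}b^*$ is an anti-automorphism of $\tilde B^i[t,t^{-1};\tilde\phi_i]$. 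It is $*$ on $\tilde B^i$ and maps $\tilde B^i[t;\tilde\phi_i]$ onto $\tilde B^i[t^{-1};\tilde\phi_i]$.
\item $K(R)\simeq K(R^{\mathrm{op}})$ naturally in $R$, via $P\mapsto\Hom_R(P,R)$ on finitely generated projectives, and via unitalisation for rings with local units.
\end{itemize}
Therefore the maps $K(\tilde B^i)\to K(\tilde B^i[t;\tilde\phi_i])$ and $K(\tilde B^i)\to K(\tilde B^i[t^{-1};\tilde\phi_i])$ are identified. This gives $NK(\tilde B^i,\tilde\phi_i)_-\simeq NK(\tilde B^i,\tilde\phi_i)_+$, and the corollary follows.

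The paper disposes of this step in one line, by identifying $\tilde B^i[t;\tilde\phi_i]$ with $\tilde B^i[t^{-1};\tilde\phi_i]$. The involution is what makes that identification work, as an anti-isomorphism, which is all $K$-theory needs.
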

\begin{proof}
Applying $K$ to the arrows in diagram \eqref{diag:discartin}, we obtain that the two vertical maps are equivalences, so $K(\theta)$ is an equivalence if and only if $K(\iota)$ is. By Proposition \ref{prop:kacegamma}, $\hocofi K(\iota)$ is the direct sum of all $NK(\tilde{B}^i,\tilde{\phi}_i)_+$ and $NK(\tilde{B}^i,\tilde{\phi}_i)_-$. The proof is concluded by observing that  $\tilde{B}_i[t,\tilde{\phi}_i]\cong\tilde{B}^i[t^{-1},\tilde{\phi}_i]$, and therefore $NK(\tilde{B}^i,\tilde{\phi}_i)_-\cong NK(\tilde{B}^i,\tilde{\phi}_i)_+$.
\end{proof}
\subsection{The \topdf{$K$}{K}-theory of \topdf{$\ell[G_{\tight}(\Gamma)]$}{l[Gtight]}}

Let $\Gamma$ be an undirected graph and $S\subset X(\Gamma)$ a subset. The \emph{full subgraph} on $S$ is the graph $\Gamma_S$ with $X(\Gamma_S)=S$ and
$E(\Gamma_S)=\{\sigma\in E(\Gamma)\colon \sigma\subset S\}$; $\Gamma_S$ is \emph{complete} or \emph{clique} if $\{s,t\}\in E(\Gamma_S)$ for all $s\ne t\in S$. For brevity, we say that a subset $S\subset X(\Gamma)$ is complete to mean that $\Gamma_S$ is complete. Let $m=|X(\Gamma)|$. The \emph{Euler characteristic} of $\Gamma$ is
\[
\chi(\Gamma)=\sum_{i=0}^m (-1)^i|\{S\subset X(S) \text{ complete }\}|
\]
\begin{lem}\label{lem:sumidempo}
Let $R$ be a unital ring and $p_1,\dots,p_n\in R$ commuting idempotents. For $S\subset I_n=\{1,\dots,n\}$, put $p_S=\prod_{i\in S}$. Then
\[
[\prod_{i\in I_n}(1-p_i)]=\sum_{S\subset I}(-1)^{|S|}[p_S]\in K_0(R)
\]
\end{lem}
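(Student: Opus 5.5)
The plan is to prove this by induction on $n$, using only two standard facts. First, if $e,f\in R$ are orthogonal idempotents, then $e+f$ is an idempotent and $[e+f]=[e]+[f]$ in $K_0(R)$, since $(e+f)R\cong eR\oplus fR$. Second, for commuting idempotents $e,p$, the idempotents $ep$ and $e(1-p)$ are orthogonal with sum $e$. Together these give the basic identity
\[
[e(1-p)]=[e]-[ep]\in K_0(R),
\]
which is the only $K$-theoretic input needed. Throughout, $p_S=\prod_{i\in S}p_i$; the product is well defined because the $p_i$ commute, and it is again an idempotent, with $p_\emptyset=1$.

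For $n=0$ both sides equal $[1]$. For $n=1$ the claim is $[1-p_1]=[1]-[p_1]$, which is the identity above with $e=1$. For the inductive step I will set
\[
e=\prod_{i\in I_{n-1}}(1-p_i).
\]
This is an idempotent that commutes with $p_n$, since all the $p_i$ commute. Applying the basic identity with $p=p_n$ gives
\[
\Bigl[\prod_{i\in I_n}(1-p_i)\Bigr]=[e]-[ep_n].
\]
By induction, $[e]=\sum_{S\subset I_{n-1}}(-1)^{|S|}[p_S]$.

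For $[ep_n]$, I will write $ep_n=\prod_{i\in I_{n-1}}(p_n-p_np_i)=\prod_{i\in I_{n-1}}(p_n-p_{\{i,n\}})$ and work inside the corner ring $R'=p_nRp_n$. This ring is unital with unit $p_n$, and it contains the commuting idempotents $q_i=p_np_i$, for which $q_S=p_{S\cup\{n\}}$ when $S\neq\emptyset$ and $q_\emptyset=p_n$. The inductive hypothesis applied in $R'$ gives
\[
[ep_n]=\sum_{S\subset I_{n-1}}(-1)^{|S|}[p_{S\cup\{n\}}]\in K_0(R').
\]
I then push this forward along the non-unital corner inclusion $R'\to R$, which induces a map $K_0(R')\to K_0(R)$ sending the class of an idempotent to the class of the same idempotent in $R$. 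Subtracting, and noting that $(-1)^{|S\cup\{n\}|}=-(-1)^{|S|}$, the two sums combine into $\sum_{S\subset I_n}(-1)^{|S|}[p_S]$, which is the claimed formula.

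The only point that needs care is the corner step, namely that the induction can be run in $p_nRp_n$ with unit $p_n$ and that the classes are compatible under the inclusion. If I want to avoid corners altogether, I will instead prove the more general statement
\[
\Bigl[e\prod_{i}(1-p_i)\Bigr]=\sum_{S}(-1)^{|S|}[ep_S]
\]
for any idempotent $e$ commuting with all the $p_i$, by induction on $n$ using only the basic identity; taking $e=1$ then gives the lemma directly.
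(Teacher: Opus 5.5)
Your argument is correct, but it handles the term $[ep_n]$ differently from the paper. Both proofs rest on the same additive identity $[e(1-p)]=[e]-[ep]$ for commuting idempotents $e,p$.

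\emph{The paper's route.} It first reduces to the universal commutative ring $\Z[p_1,\dots,p_n\mid p_i^2=p_i]$, which maps unitally to $R$ because the $p_i$ commute. There $K_0$ is a ring with $[p][q]=[pq]$, so the paper writes $[p_{n+1}e]=[p_{n+1}][e]$ and multiplies the inductive formula for $[e]$ by $[p_{n+1}]$ to get the second sum.

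\emph{Your route.} You apply the inductive hypothesis a second time instead of using multiplicativity, in one of two ways:
\begin{itemize}
\item In the corner $p_nRp_n$. This needs the standard but non-unital functoriality $K_0(p_nRp_n)\to K_0(R)$, given for instance by $M\mapsto M\otimes_{p_nRp_n}p_nR$, and it needs the induction quantified over all unital rings.
\item More cleanly, through the strengthened statement $[e\prod_i(1-p_i)]=\sum_S(-1)^{|S|}[ep_S]$. Here the induction is quantified over all idempotents $e$ commuting with the $p_i$, so the hypothesis can be reapplied with $ep_n$ in place of $e$.
\end{itemize}

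\emph{What each buys.} The paper's proof is shorter once you grant the universal-example reduction and the multiplicative structure on $K_0$ of commutative rings. Yours uses only additivity of $K_0$ on orthogonal idempotents and, in the strengthened form, never leaves $R$. It also yields a slightly more general relative identity for an arbitrary commuting idempotent $e$, which still makes sense without a unit if you read $e(1-p_i)$ as $e-ep_i$.
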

\begin{proof}
 It is straightforward to check that 
 \[
 \prod_{i\in I_n}(1-p_i)=\sum_{S\subset I_n}(-1)^{|S|}p_S
 \]
 as elements of $R$. What we have to check is that 
 \[
 [\sum_{S\subset I_n}(-1)^{|S|}p_S]=\sum_{S\subset I_n}(-1)^{|S|}[p_S]
 \]
 as elements of $K_0(R)$. It suffices to prove this for the universal commutative ring 
 $R=\Z[p_1,\dots,p_n\,|\, p_i^2=p_i,\, i=1,\dots,n]$. We shall use the fact that, as $R$ is commutative, $K_0(R)$ is a ring, and that if $p,q\in R$ are idempotent, we have $[p]\cdot [q]=[pq]$. We proceed by induction on $n$. If $n=1$, the assertion is clear, since $1\ge p_1$ in the order of idempotents. Let $n\ge 1$ and assume the lemma proven for $n$ commuting idempotents. Then
 \begin{gather*}
 [\sum_{S\subset I_{n+1}}(-1)^{|S|}p_S]=[\sum_{S\subset I_n}(-1)^{|S|}p_S-p_{n+1}\sum_{S\subset I_{n}}(-1)^{|S|}p_S]\\
 =[\sum_{S\subset I_n}(-1)^{|S|}p_S]-[p_{n+1}][\sum_{S\subset I_n}(-1)^{|S|}p_S]=\sum_{S\subset I_n}(-1)^{|S|}[p_S]-\sum_{S\subset I_n}(-1)^{|S|}[p_{S\cup\{n+1\}}]\\
 =\sum_{S\subset I_{n+1}}(-1)^{|S|}[p_S].
 \end{gather*}
\end{proof}

\begin{lem}\label{lem:euler}
Let $\Gamma$ and $C(\Gamma)$ be as in Theorem \ref{thm:discartin}. Then
$[\prod_{i=1}^{n+1}(1-V_iV_i^*)]=\chi(\Gamma)[1]\in K_0(C(\Gamma))$.
\end{lem}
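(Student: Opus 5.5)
The plan is to expand the product with Lemma \ref{lem:sumidempo} and then evaluate each resulting class in $K_0(C(\Gamma))$ separately. For this, the idempotents $p_i=V_iV_i^*$ must commute and their products must be identified.

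First, in $S(\Gamma)$ the idempotents commute, since $S(\Gamma)$ is an inverse monoid (Lemma \ref{lem:sartin} iii)). So Lemma \ref{lem:sumidempo} applies, with $R=C(\Gamma)$ unital, and gives
\[
\Bigl[\prod_{i=1}^{n+1}(1-V_iV_i^*)\Bigr]=\sum_{S\subset\{1,\dots,n+1\}}(-1)^{|S|}[p_S],\qquad p_S=\prod_{i\in S}V_iV_i^*.
\]

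Next I compute $p_S$ in two cases.

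\emph{$S$ not complete.} Pick $x\ne y\in S$ with $\{x,y\}\notin E$. Since the $p_i$ commute, $p_S$ contains the factor $V_xV_x^*V_yV_y^*$. By SA3, $V_x^*V_y=0$, so this factor vanishes and $p_S=0$.

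\emph{$S$ complete.} By SA2 the generators $V_i$ with $i\in S$ pairwise commute, and $V_j^*V_i=V_iV_j^*$ for $i\ne j$ in $S$. Put $V_S=\prod_{i\in S}V_i$. Using these relations together with SA1, one checks that $p_S=V_SV_S^*$ and $V_S^*V_S=1$. Hence $p_S$ is Murray--von Neumann equivalent to $1$, and $[p_S]=[1]$ in $K_0(C(\Gamma))$. The empty set is complete, with $p_\emptyset=1$.

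Summing over $S$, I get $\sum_{S \text{ complete}}(-1)^{|S|}[1]=\chi(\Gamma)[1]$. This matches the definition of $\chi(\Gamma)$, where the $i$-th term counts the complete subsets of size $i$; I read the typo $X(S)$ as $X(\Gamma)$.

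The main thing to check carefully is the identity $p_S=V_SV_S^*$ with $V_S^*V_S=1$ for complete $S$. This reduces to the rearrangement $V_xV_x^*V_yV_y^*=V_xV_yV_y^*V_x^*$, which holds because $V_x^*$ commutes with $V_y$ and with $V_y^*$ when $\{x,y\}\in E$. The first commutation is SA2; the second follows from SA2 by taking adjoints. The rest is routine induction on $|S|$.
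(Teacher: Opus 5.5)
Your proposal is correct and follows the same route as the paper: expand with Lemma~\ref{lem:sumidempo}, use SA3 to show $p_S=0$ for non-complete $S$, and for complete $S$ write $p_S=\prod_{i\in S}V_i\prod_{i\in S}V_i^*$, which is Murray--von Neumann equivalent to $1$. The only difference is that you spell out the SA1/SA2 commutation steps that the paper leaves implicit.
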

\begin{proof}
Remark that $\prod_{i\in S}V_iV_i^*=0$ if $S$ is not complete, while if $S$ is complete is equal to $\prod_{i\in S}V_i\prod_{i\in S}V_i^*$, which is Murray-von Neumann equivalent to $1$. Now apply Lemma \ref{lem:sumidempo}. 
\end{proof}
Put
\[
C_Q(\Gamma)=C(\Gamma)/\langle\prod_{i=1}^{n+1}(1-V_iV_i^*)\rangle 
\]     

Lemma \ref{lem:tight} below says that $C_Q(\Gamma)$ is the Steinberg algebra of Exel's tight groupoid 
\[
G_{\tight}(\Gamma)=G_{\tight}(S(\Gamma))
\]
as defined in (\cite{exel}*{page 196}).
\begin{lem}\label{lem:tight}
There is a natural isomorphism $C_Q(\Gamma)\cong \ell[G_{\tight}(\Gamma)]$.    
\end{lem}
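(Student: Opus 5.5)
The plan is to obtain the isomorphism by restricting the universal groupoid to its closed invariant subspace of tight characters. The argument then amounts to showing that the kernel of the resulting restriction map is generated, as an ideal, by the single idempotent $\prod_{i=1}^{n+1}(1-V_iV_i^*)$.

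\emph{Step 1: identify $C(\Gamma)$ with the Steinberg algebra of the universal groupoid.} By Example \ref{ex:discfull} (that is, by \cite{steinappr}*{Theorem 6.3}), $C(\Gamma)=\ell[S(\Gamma)]\cong \ell[G]$ for $G=S(\Gamma)\ltimes\widehat{E}$ with $E=\Idem S(\Gamma)$. Lemma \ref{lem:sartin} identifies $E^\times$ with $A_+(\Gamma)$, the idempotent $aa^*$ corresponding to $a$. By \cite{exel}, $G_{\tight}(\Gamma)$ is the reduction $G|_{\widehat{E}_{\tight}}$ to the closed invariant set of tight characters. Restriction of functions gives a surjection $\ell[G]\to\ell[G|_{\widehat{E}_{\tight}}]$, since restriction is surjective on compact open bisections. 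Its kernel is $\ell[G|_U]$, where $U=\widehat{E}\setminus\widehat{E}_{\tight}$ is open and invariant. It therefore suffices to show that this kernel is the ideal generated by $q:=\prod_{i=1}^{n+1}(1-V_iV_i^*)$.

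\emph{Step 2: identify $U$.} The element $q$ is the characteristic function of the compact open set $\widehat{E}(1;V_1V_1^*,\dots,V_{n+1}V_{n+1}^*)$, which consists of the characters $\chi$ with $\chi(V_iV_i^*)=0$ for all $i$. Since $X(\Gamma)$ is finite, the set $\{V_iV_i^*\}$ is a finite cover of $1$. I will show, using the normal form $e=aa^*$ with $a\in A_+(\Gamma)$, that every finite cover of any $e=aa^*$ is generated from this one by conjugation. Concretely, if $Z\subset (e\downarrow)$ is a finite cover, then $a^*Za$ is a cover of $1$, and since every nonzero idempotent below $1$ other than $1$ itself lies below some $V_iV_i^*$, the cover condition at $1$ follows from the single one. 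The main obstacle is this last reduction: one must check that covering in the sense of Exel for general finite sets reduces to the single relation at the top. In particular, a character vanishing on every $V_iV_i^*$ must be $\cP_1$ up to the invariant structure, and an arbitrary cover of $1$ must contain $1$ or meet each $V_iV_i^*$-branch compatibly. I would try to prove this by induction on word length in $A_+(\Gamma)$, using the right-angled Artin semilattice structure from \cite{crisplaca1}.

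Given this, $U=\bigcup_{s}s\cdot\widehat{E}(1;V_1V_1^*,\dots,V_{n+1}V_{n+1}^*)$, i.e.\ $U$ is the invariant open set generated by $\supp q$.

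\emph{Step 3: conclude.} For an open invariant set $U$ generated by a compact open set $W\subset\widehat{E}$, the ideal $\ell[G|_U]$ is generated by $\chi_W$. Indeed, every compact open bisection in $G|_U$ is a finite disjoint union of pieces of the form $\sigma\,[s,W']$, where $W'$ is a compact open subset of $W$ and $\sigma$ is a bisection. Each such piece equals $\chi_{\sigma}\,\chi_{[s,\cdot]}\,\chi_W$ up to restriction, and hence lies in the ideal generated by $\chi_W$. Applying this with $W=\supp q$ gives $\ker=\langle q\rangle$, so the restriction map induces $C_Q(\Gamma)\cong\ell[G_{\tight}(\Gamma)]$. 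Naturality in $\ell$ holds because every map used is defined over $\mathbb{Z}$.
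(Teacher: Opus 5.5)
Your Steps 1 and 3 are fine, but Step 2, which is the real content of the lemma, is not carried out. You never show that the non-tight locus $U$ is the saturation of $\supp q$, and the mechanism you sketch runs the wrong way. That every nonzero idempotent other than $1$ lies below some $V_iV_i^*$ says that a finite cover $Z$ of $1$ with $1\notin Z$ \emph{refines} $\{V_iV_i^*\}$. Meeting $Z$ is therefore a \emph{stronger} requirement on a character than meeting $\{V_iV_i^*\}$. So knowing that $\chi$ meets $\{V_iV_i^*\}$ and all its conjugates $\{(aV_i)(aV_i)^*\}$ tells you nothing about $Z$ from this observation alone. Note also that $\supp q=\widehat{E}(1;V_1V_1^*,\dots,V_{n+1}V_{n+1}^*)=\{\cP_1\}$. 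What must be proved is exactly this: the non-tight characters are precisely the principal ones $\cP_{aa^*}$, $a\in A_+(\Gamma)$.

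The paper gets this from a dichotomy on the filter $\cF=\supp\chi$, which is closed under products.

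If $\cF$ is finite, it has a minimum $aa^*$. Then $\chi=\cP_{aa^*}$ and $q_a:=aqa^*=\prod_i(aa^*-(aV_i)(aV_i)^*)$ equals $1$ at $\chi$, so $\chi$ is not tight.

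If $\cF$ is infinite and $aa^*\in\cF$, there is $bb^*\in\cF$ with $|b|>|a|$. Then $aa^*bb^*=cc^*$ with $c$ properly extending $a$, so some $(aV_i)(aV_i)^*$ lies in $\cF$ and $q_a(\chi)=0$ for every $a$. Tightness then follows from the criterion, which the paper takes from Exel's Proposition 11.8, that $\chi$ is tight iff all $q_a(\chi)$ vanish.

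Consequently $U=\{\cP_{aa^*}\}$ is discrete, $\ell[U]$ is spanned by the $q_a=aqa^*$, and your Step 3 is immediate.

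If you want to check arbitrary finite covers directly, as your Step 2 intended, no induction is needed.
Conjugating by $a$ (the translated filter is still infinite), let $Z$ be a finite cover of $1$ all of whose elements $cc^*$ have $|c|\le N$. Choose $ff^*\in\cF$ in which some generator $y_0$ occurs more than $N$ times. List the generators as $y_0,\dots,y_n$ so that each $y_k$ with $k\ge 1$ is non-adjacent in $\Gamma$ to some earlier $y_j$; this is possible because $\Gamma^{\coco}$ is connected. Set $w=(y_0\cdots y_n)^{N+1}$.

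By Levi's lemma in $A_+(\Gamma)$, any $c$ with $|c|\le N$ that has a common right multiple with $fw$ must be a prefix of $fw$, since every generator survives after removing $c\wedge fw$. It is then a prefix of $f$: only $y_0$ can be moved to the front of $w$, and $y_0$ still occurs in $f$ after deleting any $N$ letters. Hence $cc^*\ge ff^*$ lies in $\cF$, so $Z\cap\cF\neq\emptyset$.
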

\begin{proof}
Let $S=S(\Gamma)$, $E=\Idem(S)$,  and $A_*=A_+(\Gamma)\sqcup \{\emptyset\}$. Identify $\ell[E]=\ell[\widehat{E}]$ via the canonical isomorphism $p\mapsto \chi_{\widehat{E}(p)}$. Let $\chi\in \widehat{E}$. It follows from \cite{exel}*{Proposition 11.8} that $\chi$ is tight if and only if for all $a\in A_*$ the element
\[
q_a=(\prod_{i=1}^{n+1}(aa^*-(ax_i)(ax_i)^*))
\]
satisfies $q_a(\chi)=0$. Hence the subset of tight characters is closed in $Z=\widehat{E}$ and thus coincides with the tight spectrum 
$Z_t=\widehat{E}_{\tight}$. Let $U=Z\setminus Z_t$; then  $\ell[U]\triqui \ell[E]$ is the ideal of all elements supported in $U$. From what we have just 
seen, $q_a\in I(U)$ for all $a\in A_*$. We shall show that $I(U)$ is in fact linearly spanned by the $q_a$. Let $\chi\in Z$ and $\cF=\supp(\chi)$; then 
$\cF$ is an upset, it does not contain $0$, and is closed under products.  It follows that if $\cF$ is finite, then it has a minimum element $p=aa^*$; thus 
$\chi=\cP_p$ and $q_a(\chi)\ne 0$, so $\chi\in U$. We claim that $\chi$ is tight if  $\cF$ is infinite. Let $p=aa^*\in E$; if $p\notin \cF$, then the fact 
that $\cF$ is an upset implies that $q_a(\chi)=0$. If $p\in \cF$, then because $\cF$ is infinite, we may choose a word $b$ with length $|b|>|a|$ such that 
$bb^*\in \cF$. Then $pbb^*=cc^*$ for some word $c$ with $|c|>|a|$. Because $\cF$ is an upset this implies that there exists $i$ such that $px_ix_i^*\in\cF$, 
and thus $q_a(\chi)=0$. Thus the claim is proved. It follows that $U$ consists of the principal characters, and that $q_a$ corresponds to 
$\chi_{\cP_{aa^*}}$, the characteristic function of the singleton on the principal character of $aa^*$. As $q_a$ is continuous, it follows that $\{\cP_{aa^*}\}$ is open. Hence $U$ is discrete, and thus the $q_a$ span $\ell[U]$, as we wanted 
to show. Next put $G=G_u(S)$, $G_{t}=G_{\tight}(S)$; by definition $G_{t}=(G_u)_{|Z}$ and  $\ell[G_t]=\ell[G]/\ell[G_{|U}]$. Moreover it is straightforward that $\ell[G_{|U}]$ is the 
ideal $\ell[G]$ generated by $\ell[U]$. By what we have just seen, and since $q_a=aa^*q_\emptyset$, $q_\emptyset=\prod_{i=1}^{n+1}(1-V_iV_i^*)$ generates $\ell[G_{|U}]$ as a two-sided ideal of $\ell[G]$. 
This concludes the proof. 
\end{proof}

 \begin{thm}\label{thm:bquotient}
 Let $\Gamma$ and $\cH$ be as in Theorem \ref{thm:discartin}. There is a bifibration sequence
 \[
 \cH(\ell)\overset{\chi(\Gamma)}{\lra} \cH(\ell)\to \cH(C_Q(\Gamma))
 \]
 \end{thm}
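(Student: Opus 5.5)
The plan is to realise $C_Q(\Gamma)$ as the quotient of $C(\Gamma)$ by an ideal $J$ that is, as a nonunital algebra, isomorphic to an infinite matrix algebra over $\ell$. Excision for the extension $0\to J\to C(\Gamma)\to C_Q(\Gamma)\to 0$ then gives the fibration, and Theorem \ref{thm:discartin} together with Lemma \ref{lem:euler} identify the first map as multiplication by $\chi(\Gamma)$.

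\textbf{Step 1: identify the ideal.} Let $q=q_\emptyset=\prod_{i=1}^{n+1}(1-V_iV_i^*)$ and $J=\langle q\rangle$. By the proof of Lemma \ref{lem:tight}, $J=\ell[G_{|U}]$, where $U$ is the discrete, open, invariant set of principal characters $\{\cP_{aa^*}\colon a\in A_+(\Gamma)\}$, and $q_a=aqa^*$.

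\textbf{Step 2: $J\cong M_{A_+}\ell$.} Since $U$ is a single $G$-orbit (each $\cP_{aa^*}$ is the translate of $\cP_1$ by $a$), Lemma \ref{lem:gpd101} gives $J\cong M_{A_+}\ell[H]$, where $H$ is the isotropy of $G$ at $\cP_1$. We claim $H$ is trivial. A germ $[ab^*,\cP_1]$ fixing $\cP_1$ must have $b=1$ in order to be defined at $\cP_1$, and then $a\cdot\cP_1=\cP_{aa^*}=\cP_1$ forces $a=1$, using Lemma \ref{lem:sartin}. Concretely, the matrix units are $\epsilon_{a,b}=aqb^*$, and the identity $b^*qa'\ldots$ reduces to $qb^*a'q=\delta_{b,a'}q$. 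This holds because $q V_i=0$ and $V_i^*q=0$, together with Lemma \ref{lem:sartin} ii).

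\textbf{Step 3: excision and identification of the map.} Excision applied to the extension gives a fibration
\[
\cH(J)\to\cH(C(\Gamma))\to\cH(C_Q(\Gamma)).
\]
Matricial stability, combined with colimit preservation to handle the infinite index set $A_+$ as a filtered union of finite corners, gives $\cH(\ell)\simeq\cH(J)$ via the corner $\ell\to J$, $1\mapsto q$. By Theorem \ref{thm:discartin}, $\cH(\iota)\colon\cH(\ell)\to\cH(C(\Gamma))$ is an equivalence. Transported along these equivalences, the composite $\cH(\ell)\to\cH(\ell)$ is $\cH$ of the homomorphism $\ell\to C(\Gamma)$, $1\mapsto q$, followed by $\cH(\iota)^{-1}$.

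\textbf{Step 4 (main obstacle): show this map is $\chi(\Gamma)$ as a map of $\cH$-objects.} Lemma \ref{lem:euler} only gives $[q]=\chi(\Gamma)[1]$ in $K_0$, which is not enough for a general $\cH$. The approach is to lift the inclusion–exclusion of Lemma \ref{lem:sumidempo} to the level of homomorphisms.

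First, decompose $1$ in $C(\Gamma)$ into orthogonal idempotents. For each complete $S$, put $r_S=\prod_{i\in S}V_iV_i^*\prod_{j\notin S}(1-V_jV_j^*)$. Then $\sum_S r_S=1$ and $r_\emptyset=q$. Using additivity of $\cH$ on orthogonal homomorphisms (a consequence of excision and matricial stability, as in the proof of Theorem \ref{thm:discprop}), we get
\[
\cH(\iota)=\sum_S\cH(1\mapsto r_S).
\]
Next, for $S\ne\emptyset$ complete, put $v_S=\prod_{i\in S}V_i$. Then $r_S$ is conjugate via $v_S$ to an idempotent of the form $\prod_{j\notin S}(1-\ldots)$ that lives in the commutant structure of $S$. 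One expects $\cH(1\mapsto r_S)$ to equal a signed sum of the $\cH(1\mapsto p_T)$, and each $\cH(1\mapsto p_T)=\cH(\iota)$, since $p_T=v_Tv_T^*$ with $v_T^*v_T=1$ and $\cH$ is matricially stable.

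Inverting this inclusion–exclusion (Möbius inversion on the complete subsets) yields
\[
\cH(1\mapsto q)=\sum_S(-1)^{|S|}\cH(\iota)=\chi(\Gamma)\,\cH(\iota).
\]
The delicate points are making the orthogonal decomposition precise enough to apply additivity to $1\mapsto p_T$, and verifying that conjugation by an isometry acts as the identity on $\cH$. For the latter, I would invoke the standard fact that, for a matricially stable and additive $\cH$, conjugation by an isometry $v$ with $v^*v=1$ induces the identity.
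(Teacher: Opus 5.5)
Your Steps 1--3 match the paper's proof. The paper also sets $I=\langle q\rangle$, uses $V_j^*q=0=qV_j$ to get $M_{A_+}\ell\cong I$ via $\epsilon_{a,b}\mapsto aqb^*$, and then combines excision, matricial stability and Theorem \ref{thm:discartin}. Your groupoid argument via Lemma \ref{lem:gpd101} usefully justifies the injectivity that the paper leaves implicit.

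The routes diverge at Step 4, and there your premise is wrong: under the standing hypotheses the $K_0$-identity of Lemma \ref{lem:euler} \emph{is} enough. The paper factors $\cH=\bar{\cH}\circ j$ through $kk_\infty$ by its universal property. It then notes that the class of the homomorphism $\ell\to C(\Gamma)$, $1\mapsto q$, in $kk(\ell,C(\Gamma))=KH_0(C(\Gamma))$ is the image of $[q]=\chi(\Gamma)[1]\in K_0(C(\Gamma))$. This gives $\cH(\inc\circ q)\sim\cH(\iota)\circ(\chi(\Gamma)\id_{\cH(\ell)})$ at once.

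Your homomorphism-level inclusion--exclusion is still a valid and more elementary alternative. It needs only additivity on orthogonal homomorphisms and invariance under conjugation by an isometry, both consequences of excision and $M_2$-stability. It uses neither homotopy invariance nor the $\infty$-categorical universal property in this step.

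It can be stated more cleanly, though; conjugating $r_S$ into the link of $S$ is an unnecessary detour. Extend $r_S$ to all $S$ by the same formula. Then $p_T=\sum_{S\supseteq T}r_S$ is an orthogonal decomposition, with $p_T=0$ and $r_T=0$ whenever $T$ is not complete. Additivity gives $\cH(1\mapsto p_T)=\sum_{S\supseteq T}\cH(1\mapsto r_S)$. M\"obius inversion on the Boolean lattice then gives $\cH(1\mapsto q)=\sum_T(-1)^{|T|}\cH(1\mapsto p_T)=\chi(\Gamma)\,\cH(\iota)$. Equivalently, rerun the induction of Lemma \ref{lem:sumidempo} with $[e]$ replaced by the homotopy class of $\cH(1\mapsto e)$.

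In short, the paper's route is a two-line reduction to Lemma \ref{lem:euler} that rests on heavy machinery ($kk_\infty$ and $kk(\ell,-)\cong KH_0$, compatibly with $K_0\to KH_0$). Yours is self-contained, at the cost of proving that inclusion--exclusion by hand.
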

 \begin{proof}
 Let $q=\prod_{i=1}^{n+1}(1-V_iV_i^*)$, $I=\langle q\rangle\triqui C(\Gamma)$. For $1\le j\le n+1$ we have $V_j^*q=V_j^*(1-V_jV_j^*)\prod_{i\ne j}(1-V_iV_i^*)=0=qV_j$. It follows from this that the $\ell$-linear map $M_{A_+}\to I$, $\epsilon_{a,b}\mapsto aqb^*$ is an isomorphism of algebras. Hence
 the algebra homomorphism $q:\ell\to I$, $x\mapsto xq$ induces an equivalence $\cH(\ell)\to \cH(I)$. Let $\inc:I\to C(\Gamma)$ be the inclusion. We have a commutative diagram with vertical equivalences, where $\iota$ is as in Theorem \ref{thm:discartin}, and whose bottom line is a bifibration sequence
 \[
 \xymatrix{\cH(\ell)\ar[d]_{\cH(q)}^{\wr}\ar[r]^f& \cH(\ell)\ar[d]_{\cH(\iota)}^{\wr}&\\
 \cH(I)\ar[r]^(.4){\cH(\inc)}& \cH(C(\Gamma))\ar[r]& \cH(C_Q(\Gamma)}
 \]
 By the universal property of the canonical functor $j:\ahal\to kk_\infty$, we have $\cH=\bar{\cH}\circ j$ for some exact functor $\bar{\cH}$. Hence the homotopy class of $\cH(\inc\circ q)$ is the image under $\bar{\cH}$ of the class in $kk(\ell,C(\Gamma))=KH_0(\C(\Gamma))$ of $\inc\circ q$, and the latter is the image of $[q]\in K_0(C(\Gamma))$ under the comparison map $K_0(C(\Gamma))\to KH_0(C(\Gamma))$. Hence $\cH(\inc\circ q)\sim\cH(\iota)\circ (\chi(\Gamma)\id_{\cH(\ell)})$ by Lemma \ref{lem:euler}. This completes the proof. 
 \end{proof}

 \begin{prop}\label{prop:kaceqgamma} With notations as in Proposition \ref{prop:kacegamma}, we have an equivalence of (nonconnective) spectra
 \begin{gather*}
 K(C_Q(\Gamma))\isom \hocofi(K(\ell)\overset{\chi(\Gamma)}{\lra} K(\ell))\oplus \bigoplus_{i=0}^nNK(\tilde{B}^i,\tilde{\phi}_i)_+\oplus NK(\tilde{B}^i,\tilde{\phi}_i)_-
 \end{gather*}
 
 \end{prop}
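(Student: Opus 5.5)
Algebraic $K$-theory is excisive (as a nonconnective spectrum in the sense of Bass/Thomason–Trobaugh), so the extension
\[
0\to I\to C(\Gamma)\to C_Q(\Gamma)\to 0,\qquad I=\langle q\rangle,\ q=\prod_{i=1}^{n+1}(1-V_iV_i^*),
\]
gives a fibration sequence $K(I)\to K(C(\Gamma))\to K(C_Q(\Gamma))$, provided $I$ is $K$-excisive. This holds here: the proof of Theorem \ref{thm:bquotient} shows $I\cong M_{A_+}\ell$, a ring with local units generated by idempotents. Moreover $K(\ell)\to K(I)$, $x\mapsto xq$, is an equivalence by matricial stability. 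So I would rerun the proof of Theorem \ref{thm:bquotient} with $\cH=K$, without using homotopy invariance.

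\textbf{Step 1: identify the first map.} The composite $K(\ell)\to K(I)\to K(C(\Gamma))$ is determined by the class $[q]\in K_0(C(\Gamma))$, since a homomorphism $\ell\to R$, $1\mapsto p$, induces on $K$ the multiplication by $[p]$ via the $K(\ell)$-module structure. By Lemma \ref{lem:euler}, $[q]=\chi(\Gamma)[1]$. Hence the map equals $K(\iota)\circ\chi(\Gamma)$, where $\iota:\ell\to C(\Gamma)$. To make this precise at the spectrum level, I would use that homotopic-class statements suffice for identifying cofibers up to equivalence, as in Theorem \ref{thm:bquotient}. Alternatively, I would note that $q$ and $\chi(\Gamma)$ copies of $1$ (with signs) differ by Murray–von Neumann equivalences, which induce homotopic maps on $K$.

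\textbf{Step 2: use the splitting of $K(C(\Gamma))$.} By Proposition \ref{prop:kacegamma}, $K(\iota)$ is the inclusion of the summand $K(\ell)$ in
\[
K(C(\Gamma))\simeq K(\ell)\oplus N,\qquad N=\bigoplus_{i=0}^n NK(\tilde{B}^i,\tilde{\phi}_i)_+\oplus NK(\tilde{B}^i,\tilde{\phi}_i)_-.
\]
One must check that the splitting there is compatible with $\iota$. It is built inductively from $K(C(\Gamma'))\to\hocofi(K(B)\xrightarrow{1-\phi}K(B))$ and the inclusions $\ell\subset C(\Gamma_0)\subset\dots$, so the $K(\ell)$ summand is exactly the image of $K(\iota)$.

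\textbf{Step 3: take cofibers.} The map $K(I)\to K(C(\Gamma))$ becomes $K(\ell)\xrightarrow{(\chi(\Gamma),0)}K(\ell)\oplus N$. Its cofiber is $\hocofi(\chi(\Gamma))\oplus N$, which yields the claimed formula.

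The main obstacle is Step 2: ensuring the decomposition of Proposition \ref{prop:kacegamma}, which comes from \cite{abc}*{Theorem 3.6}, is natural enough that the image of $K(\iota)$ is a direct summand with complement exactly the $NK$ terms. If that naturality is unclear, I would argue instead that $\iota$ admits a retraction after applying $K$ modulo $N$. I would also use that the $NK$ terms are the fiber of $K\to KH$ restricted appropriately, so the cofiber computation can be compared with the $KH$ case of Theorem \ref{thm:bquotient}.
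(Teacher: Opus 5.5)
Your proposal is correct and is essentially the paper's own proof. Both use excision for the local-unital ideal $I=\langle q\rangle\cong M_{A_+}\ell$, the factorisation $K(\inc\circ q)\simeq K(\iota)\circ\chi(\Gamma)$ from Lemma~\ref{lem:euler}, and the fact from Proposition~\ref{prop:kacegamma} that $K(\iota)$ is a split inclusion whose cofiber is the sum of the $NK$-terms, then take cofibers. Your $K(\ell)$-module argument in Step 1 and the inductive naturality check in Step 2 spell out points the paper leaves implicit, and the fallback remarks at the end are not needed.
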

 \begin{proof}
We adopt the notations of the proof of Theorem \ref{thm:bquotient}. By Lemma \ref{lem:euler}, $K(\inc\circ q):K(\ell)\to K(C(\Gamma))$ factors as the cup product $-\cup\chi(\Gamma):K(\ell)\to K(\ell)$ followed by $K(\iota)$. By Proposition \ref{prop:kacegamma}, $K(\iota)$ is a split inclusion with cofiber $\bigoplus_{i=0}^n NK(\tilde{B}^i,\tilde{\phi}_i)_+\oplus NK(\tilde{B}^i,\tilde{\phi}_i)_-$. Hence
\begin{gather*}
K(C_Q(\Gamma))\isom \hocofi(K(\inc))\isom \hocofi(K(\inc\circ q))\isom \hocofi(K(\iota)\circ(-\cup\chi(\Gamma)))\\
\isom \hocofi(K(\ell)\overset{\chi(\Gamma)}{\lra} K(\ell))\oplus  \bigoplus_{i=0}^nNK(\tilde{B}^i,\tilde{\phi}_i)_+\oplus NK(\tilde{B}^i,\tilde{\phi}_i)_-.
\end{gather*}
 \end{proof}
 \begin{coro}\label{coro:kaceqgamma}
 The element $[1]\in K_0(C_Q(\Gamma))$ has order $|\chi(\Gamma)|$.
 \end{coro}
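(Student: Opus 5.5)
Read $K_0$ off the exact sequence of Proposition \ref{prop:kaceqgamma} in degree zero, and then locate $[1]$ in it.

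First, apply $\pi_0$ to the equivalence of Proposition \ref{prop:kaceqgamma}. The long exact sequence of the cofibre of $\chi(\Gamma)\colon K(\ell)\to K(\ell)$ gives
\[
K_0(\ell)\overset{\chi(\Gamma)}{\lra} K_0(\ell)\lra \pi_0\hocofi\lra K_{-1}(\ell)\overset{\chi(\Gamma)}{\lra}K_{-1}(\ell),
\]
and $K_0(C_Q(\Gamma))$ is $\pi_0$ of the cofibre plus the $\pi_0$ of the twisted nil-terms. I need to be sure that $[1]$ is the image of $[1]\in K_0(\ell)$ under the map $K(\ell)\to\hocofi\to K(C_Q(\Gamma))$. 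This holds because in the proof of Theorem \ref{thm:bquotient} and Proposition \ref{prop:kaceqgamma} that map is $K(\ell)\overset{\iota}{\to}K(C(\Gamma))\to K(C_Q(\Gamma))$, the map induced by the unital inclusion $\ell\to C(\Gamma)\to C_Q(\Gamma)$, and this sends $[1]\mapsto[1]$. The summand $K(\ell)$ lands in the cofibre summand, so the nil-terms do not interact with $[1]$.

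It then suffices to show that the image of $[1]$ in $\operatorname{coker}(\chi(\Gamma)\cdot\colon K_0(\ell)\to K_0(\ell))=K_0(\ell)/\chi(\Gamma)K_0(\ell)$ has order exactly $|\chi(\Gamma)|$. Clearly $\chi(\Gamma)[1]=0$ there; directly in $K_0(C_Q(\Gamma))$ this is Lemma \ref{lem:euler}, since $q\mapsto 0$ in the quotient. For the converse I take $m$ with $m[1]\in\chi(\Gamma)K_0(\ell)$ and must deduce $\chi(\Gamma)\mid m$. To do this I pick a maximal ideal $\mathfrak m\subset\ell$ and use the rank map $K_0(\ell)\to K_0(\ell/\mathfrak m)=\Z$, a ring map sending $[1]\mapsto 1$. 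It sends $\chi(\Gamma)K_0(\ell)$ into $\chi(\Gamma)\Z$, so $m\in\chi(\Gamma)\Z$. The case $\chi(\Gamma)=0$ is covered by the convention that order $0$ means infinite order, and the same argument gives that $m[1]\ne0$ for $m\ne0$.

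The main obstacle is the identification in the first step: I have to check that the splitting in Proposition \ref{prop:kaceqgamma} is compatible with the canonical map from $K(\ell)$, so that $[1]$ really lies in the $\hocofi$ summand as the image of $1\in K_0(\ell)$. I will verify this by tracing the diagram in the proof of Theorem \ref{thm:bquotient}, whose vertical map $\cH(\iota)$ is induced by the unit. The rest is elementary. Note also that the cup product with $\chi(\Gamma)$ is just multiplication by the integer $\chi(\Gamma)$ on $K_0(\ell)$, since $\chi(\Gamma)[1]$ is the class of the integer $\chi(\Gamma)$.
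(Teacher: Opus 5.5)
Your proposal is correct and follows essentially the same route as the paper. Both identify $[1]$ with the image of $[1_\ell]$ under the unital maps $\ell\to C(\Gamma)\to C_Q(\Gamma)$, and both use the splitting from Proposition~\ref{prop:kaceqgamma} to reduce to the condition $m[1_\ell]\in\chi(\Gamma)K_0(\ell)$. You then conclude with the rank map at a maximal ideal, which is exactly the retraction of $\Z\to K_0(\ell)$ the paper invokes, and you also handle $\chi(\Gamma)=0$ (infinite order) explicitly, which the paper leaves implicit.
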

 \begin{proof}
 Let $1_{C_Q}$, $1_{C}$ and $1_\ell$ be the unit elements of $C_Q(\Gamma)$, $C(\Gamma)$ and $\ell$ and let $\pi:C(\Gamma)\to C_Q(\Gamma)$ be the projection. Then $[1_{C_Q}]=K_0(\pi[1_C])=K_0(\pi\circ\iota [1_\ell])$, and $K_0(\iota)$ is a split monomorphism, so $m[1_{C_Q}]=0$ iff $m[1_\ell]$ is a multiple of $\chi(\Gamma)[1_\ell]$. This implies that $m\in\chi(\Gamma)\Zz$, because as $\ell$ is commutative, the natural map $\Zz=K_0(\Zz)\to K_0(\ell)$, $1\mapsto [1_\ell]$ is a split monomorphism.
 \end{proof}
 
\section{\texorpdfstring{Discretisation for algebraic $K$-theory spectra of ample groupoids}{Discretisation for algebraic K-theory spectra of ample groupoids}}\label{s:Disc_KBG}
\numberwithin{equation}{section}
In this section, we establish discretisation for algebraic $K$-theory spectra of ample groupoids as constructed in \cite{Li22}.

First of all, recall the setting and constructions from \cite{Li22}. We will be rather brief and refer to \cite{Li22} for details.

Let $G$ be an ample groupoid with locally compact Hausdorff unit space $\Gn$, codomain map $\rmr$ and domain map $\rms$ (these are also called range and source maps). Let $\cR$ be the full equivalence relation on $\Nz = \gekl{1, 2, 3, \dotsc}$, i.e., $\cR = \Nz \times \Nz$, $\cR^{(0)} = \Nz$ and we view an element $(j,i) \in \cR$ as a morphism from $i$ to $j$. Equip $\cR$ with the discrete topology. Equivalently, $\cR = \bigcup_N \cR_N$, where $\cR_N$ is the full equivalence relation on $\gekl{1, \dotsc, N}$. We set out to define a small permutative category $\fB_G$. In short, objects in $\fB_G$ are just compact open subspaces $u \subseteq \Nz \times \Gn$ and morphisms in $\fB_G$ are nothing else but compact open bisections of $\cR \times G$, the direct product of the groupoids $\cR$ and $G$. Let us spell this out: The objects of $\fB_G$ are given by 
$$
 \menge{\coprod_{i=1}^m (i,U_i) \subseteq \Nz \times \Gn}{U_i \in \cC \cO}.
$$
Here $\cC \cO$ stands for the set of compact open subspaces of $\Gn$. For $m=0$, the disjoint union becomes the empty set $\emptyset$. Now given objects $u = \coprod_{i=1}^m (i,U_i)$ and $v = \coprod_{j=1}^n (j,V_j)$, a morphism with target $v$ and domain $u$ is of the form $\coprod_{j,i} (\bbs_{j,i}, \sigma_{j,i})$, where $1 \leq i \leq m, 1 \leq j \leq n$, $\bbs_{j,i}$ is the map only defined on $\gekl{i}$ which sends $i$ to $j$, $\sigma_{j,i}$ are compact open bisections of $G$ and $U_{j,i} \in \cC \cO$ such that $\rms(\sigma_{j,i}) = U_{j,i}$. Moreover, we require that for all $i$, $U_i = \coprod_j U_{j,i}$, and that for all $j$, $V_j = \coprod_i \rmr(\sigma_{j,i})$. We denote the set of objects of $\fB_G$ by $\obj \fB_G$, the set of morphisms of $\fB_G$ by $\mor \fB_G$ and the set of morphisms with target $v$ and domain $u$ by $\fB_G(v,u)$. Given $\sigma \in \fB_G(v,u)$, we set $\mft(\sigma) \defeq v$ and $\mfd(\sigma) \defeq u$.

The composition of two morphisms $\tau = \coprod_{k,j} (\bbs_{k,j}, \sigma_{k,j})$ and $\sigma = \coprod_{j,i} (\bbs_{j,i}, \sigma_{j,i})$ with $\mfd(\tau) = \mft(\sigma)$ is given by
$$
 \tau \sigma = \coprod_{k,i} (\bbs_{k,i}, \coprod_j \tau_{k,j} \sigma_{j,i}).
$$
Here the product of two bisections is by definition given by
$$
 \tau_{k,j} \sigma_{j,i} \defeq \menge{hg}{h \in \tau, \, g \in \sigma, \, \rms(h) = \rmr(g)}.
$$
Note that if we view $\tau$ and $\sigma$ as compact open bisections of $\cR \times G$, then composition is just multiplication of bisections in $\cR \times G$.

We define the functor $\oplus$. To do so, we introduce the notation that for $m \in \Nz$, $\bbt_m$ denotes the map $\Nz \to \Nz$ given by addition with $m$. Given $u = \coprod_{i=1}^m (i,U_i)$, $u' = \coprod_{i'=1}^{m'} (i',U'_{i'})$, define
$$
 u \oplus u' \defeq \coprod_{i=1}^m (i,U_i) \amalg \coprod_{i'=1}^{m'} (\bbt_m(i'),U'_{i'}).
$$
Furthermore, define $\pi_{u,u'} \in \fB_G(u' \oplus u, u \oplus u')$ by setting
$$
 \pi_{u,u'} \defeq \coprod_{i=1}^m (\bbs_{m'+i,i},U_i) \amalg \coprod_{i'=1}^{m'} (\bbs_{i',m+i'},U'_{i'}).
$$
Moreover, given $\sigma = \coprod_{j,i} (\bbs_{j,i}, \sigma_{j,i}) \in \fB_G(v,u)$ and $\sigma' = \coprod_{j',i'} (\bbs_{j',i'}, \sigma_{j',i'}) \in \fB_G(v',u')$, define
$$
 \sigma \oplus \sigma' \defeq \coprod_{j,i} (\bbs_{j,i}, \sigma_{j,i}) \amalg \coprod_{j',i'} (\bbs_{\bbt_n(j'),\bbt_m(i')}, \sigma_{j',i'}).
$$
It is now straightforward to check that $\oplus$ indeed defines a functor and that $\gekl{\pi_{u,u'}}$ are natural isomorphisms such that $\fB_G$ becomes a small permutative category, with unit $\emptyset$.

Every element $U \in \cC \cO$ will be viewed as an element of $\obj \fB_G$ by identifying $U$ with $(1,U) \in \obj \fB_G$. With this convention, it is clear that $\coprod_{i=1}^m (i,U_i) = U_1 \oplus \dotso \oplus U_m$.

Now let us suppose that $L$ is a locally finite weak semilattice equipped with an action of an inverse semigroup $S$. Our goal is to construct a permutative functor $\fB_{S \ltimes L\reg} \to \fB_{S \ltimes \widehat{L}}$ which induces a weak equivalence $\Kz \fB_{S \ltimes L\reg} \sim \Kz \fB_{S \ltimes \widehat{L}}$. To achieve this goal, let us first describe the maps on homology induced by permutative functors. Assume that $G$ and $H$ are ample groupoids and $\Phi: \: \fB_G \to \fB_H$ is a permutative functor. Consider the permutative categories $\fB_{G \acts \Gnu}$ and $\fB_{H \acts \Hnu}$ from \cite{Li22}*{\S~3}. Assume that there are permutative functors $\Phinu: \: \fB_{G \acts \Gnu} \to \fB_{H \acts \Hnu}$ for all $\nu$. Consider the permutative functors $\Delta_{\nu + 1}^{\mu}: \: \fB_{G \acts \Gnup} \to \fB_{G \acts \Gnu}$ and $\Delta_{\nu + 1}^{\mu}: \: \fB_{H \acts \Hnup} \to \fB_{H \acts \Hnu}$ from \cite{Li22}*{\S~4.3}. Assume that we have, for all $\nu$ and $\mu$, the following commutative diagram
\begin{equation}
\label{e:CDnu}
 \xymatrix{
 \fB_{G \acts \Gnup} \ar[r]^{\Delta_{\nu + 1}^{\mu}} \ar[d]_{\Phinup} & \fB_{G \acts \Gnu} \ar[d]^{\Phinu} \\
 \fB_{H \acts \Hnup} \ar[r]_{\Delta_{\nu + 1}^{\mu}} & \fB_{H \acts \Hnu} \\
 }
\end{equation}

Now let $\sfC$ be an abelian group, which will be our coefficients. Let $\Phinu_*$ be the map uniquely determined by the commutative diagram
\[
 \xymatrix{
 \ti{H}_0(\Kz \fB_{G \acts \Gnu}) \ar[r]^{\cong} \ar[d]_{\ti{H}_0(\Kz \Phinu)} & \pi_0(\Kz \fB_{G \acts \Gnu}) \ar[r]^{\cong} & \spkl{\pi_0(B \fB_{G \acts \Gnu})}_{\oplus} \ar[r]^{\cong} & \C(\Gnu,\sfC) \ar[d]^{\Phinu_*} \\
 \ti{H}_0(\Kz \fB_{H \acts \Hnu}) \ar[r]^{\cong} & \pi_0(\Kz \fB_{H \acts \Hnu}) \ar[r]^{\cong} & \spkl{\pi_0(B \fB_{H \acts \Hnu})}_{\oplus} \ar[r]^{\cong} & \C(\Hnu,\sfC)
 }
\]
Here the first horizontal maps in each row are given by inverses of Hurewicz maps, and $\spkl{\pi_0(B \fB_{G \acts \Gnu})}_{\oplus}$ is the Grothendieck group of $\pi_0(B \fB_{G \acts \Gnu})$ with respect to $\oplus$.

Now the collection of these maps $(\Phinu_*)_{\nu}$ defines a chain map $(\C(\Gnu,\sfC), \ti{\partial}_{\nu})_{\nu} \to (\C(\Hnu,\sfC), \ti{\partial}_{\nu})_{\nu}$ and hence a map $\Phi_*: \: H_*(G) \to H_*(H)$ on groupoid homology, which by definition is given by the homology groups of $(\C(\Gnu,\sfC), \ti{\partial}_{\nu})_{\nu}$ and $(\C(\Hnu,\sfC), \ti{\partial}_{\nu})_{\nu}$ (see for instance \cite{Li22}*{\S~2.3} for basics of groupoid homology). 

\bprop
\label{prop:Phi_*}
In the situation above, we have the following commutative diagram
\[
 \xymatrix{
 \ti{H}_*(\Kz \fB_G) \ar[r]^{\cong} \ar[d]_{\ti{H}_*(\Kz \Phi)} & H_*(G) \ar[d]^{\Phi_*} \\
 \ti{H}_*(\Kz \fB_H) \ar[r]^{\cong} & H_*(H)
 }
\]
where the horizontal isomorphisms are the ones constructed in \cite{Li22}*{Theorem~4.18}.
\eprop
\bproof
Just go through the proof of \cite{Li22}*{Theorem~4.18} and observe that every step is natural with respect to permutative functors as our $\Phi$, $\Phinu$.
\eproof

Now, the question is how do permutative functors as above arise? In general, given any permutative functor $\Upsilon: \: \fB_G \to \fB_H$, view $\Upsilon(u) = \bigoplus_i v_i$ as a compact open subspace of $\Nz \times \Hn = (\cR \times H)^{(0)}$ for any $u \subseteq \Gn$. This way we obtain another permutative functor $\Phi: \: \fB_G \to \fB_{\cR \times H}$ with the property that $\Phi(u) \subseteq (\cR \times H)^{(0)}$ for all $u \subseteq \Gn$. Because of this additional property, we obtain a map $\Phinu$ from $\obj \fB_{G \acts \Gnu}$ to $\obj \fB_{\cR \times H \acts (\cR \times H)^{(\nu)}}$ by sending $\sigma_1 \tensor[_{\rms}]{\times}{_{\rmr}} \dotso \tensor[_{\rms}]{\times}{_{\rmr}} \sigma_{\nu}$ to $\Phi(\sigma_1) \tensor[_{\rms}]{\times}{_{\rmr}} \dotso \tensor[_{\rms}]{\times}{_{\rmr}} \Phi(\sigma_{\nu})$. 

For these maps to give rise to functors, we need the following condition:
\begin{enumerate}
\item[(F)] Whenever there exists a morphism in $\fB_{G \acts \Gnu}$ from $\sigma_1 \tensor[_{\rms}]{\times}{_{\rmr}} \dotso \tensor[_{\rms}]{\times}{_{\rmr}} \sigma_{\nu}$ to $\tau_1 \tensor[_{\rms}]{\times}{_{\rmr}} \dotso \tensor[_{\rms}]{\times}{_{\rmr}} \tau_{\nu}$, there exists a morphism in $\fB_{\cR \times H \acts (\cR \times H)^{(\nu)}}$ from $\Phi(\sigma_1) \tensor[_{\rms}]{\times}{_{\rmr}} \dotso \tensor[_{\rms}]{\times}{_{\rmr}} \Phi(\sigma_{\nu})$ to $\Phi(\tau_1) \tensor[_{\rms}]{\times}{_{\rmr}} \dotso \tensor[_{\rms}]{\times}{_{\rmr}} \Phi(\tau_{\nu})$.
\end{enumerate}
Note that morphisms, if they exist in $\fB_{\cR \times H \acts (\cR \times H)^{(\nu)}}$, for $\nu \geq 1$, are uniquely determined by domain and target.

If (F) holds, then we obtain permutative functors $\Phinu$, and the diagrams in \eqref{e:CDnu} always commute for $\nu \geq 1$. However, we need another condition to make sure that the diagram in \eqref{e:CDnu} commutes for $\nu = 0$. Assume that (F) holds, and consider the following condition: Assume that $\sigma$ is an object in $\fB_{G \acts G}$ and $\tau$ is a compact open bisection in $\cR \times G$ such that $\rms(\tau) = \rmr(\sigma)$, so that $\tau$ gives rise to a morphism in $\fB_{G \acts G}(\tau \sigma, \sigma)$. Hence, condition (F) implies that there exists a morphism $\bm{\tau}' \in \fB_{\cR \times H \acts \cR \times H}(\Phi(\tau \sigma), \Phi(\sigma))$. At the same time, $\tau$ also gives rise to a morphism $\bm{\tau} \in \fB_G(\rmr(\tau), \rmr(\sigma))$. 
\begin{enumerate}
\item[(CD)$_0$] For any $\sigma$ and $\tau$ as above, we have $\Delta_1^0(\bm{\tau}') = \Phi(\bm{\tau})$.
\end{enumerate}

Once we have (F) and (CD)$_0$, we are in the setting of Proposition~\ref{prop:Phi_*} and thus obtain a description of the map induced by $\Phi$ and hence $\Upsilon$ on homology.

\bex
\label{ex:PhiG0-H0}
Sometimes (as we will see below), we are given a permutative functor $\Phi: \: \fB_G \to \fB_H$ which already has the property that $\Phi(u) \subseteq \Hn$ for all $u \subseteq \Gn$. In that case, once we check (the analogues of) conditions (F) and (CD)$_0$, Proposition~\ref{prop:Phi_*} provides a description of the map induced by $\Phi$ on homology. Note that in such a setting, the functors $\Phinu$ will restrict to functors $\fB_{\Gn \acts \Gnum} \to \fB_{\Hn \acts \Hnum}$, where we view $\fB_{\Gn \acts \Gnum}$ and $\fB_{\Hn \acts \Hnum}$ as subcategories of $\fB_{G \acts \Gnu}$ and $\fB_{H \acts \Hnu}$ following \cite{Li22}*{\S~4.2}. For instance, we are in a setting as described for permutative functors arising from open embeddings or fibrewise bijective proper surjections between groupoids as in \cite{Li22}*{\S~3.1}. In these cases it is easy to check that (the analogues of) (F) and (CD)$_0$ are satisfied, and the induced maps on groupoid homology provided by Proposition~\ref{prop:Phi_*} coincide with the usual ones.
\eex

Now suppose that $L$ is a locally finite weak semilattice equipped with an action of an inverse semigroup $S$. Write $\fB_d \defeq \fB_{S \ltimes L\reg}$ and $\fB \defeq \fB_{S \ltimes \widehat{L}}$. Construct the permutative functor $\Phi: \: \fB_d \to \fB$ sending $l \in L\reg$ to $\widehat{L}(l) \defeq \menge{\chi \in \widehat{L}}{\chi(l) = 1}$. More generally, $\Phi$ sends $(l_1, \dotsc, l_j)$ to $\bigoplus_i \widehat{L}(l_i)$. On morphisms, define $\Phi([s,l]) = [s, \widehat{L}(l)] \coloneq \menge{[s,\chi]}{\chi(l) = 1}$, and extend $\Phi$ analogously as we did for objects. 

Moreover, write $\fB_d^{(\nu)} \defeq \fB_{S \ltimes L\reg \acts (S \ltimes L\reg)^{(\nu)}}$ and $\fB^{(\nu)} \defeq \fB_{S \ltimes \widehat{L} \acts (S \ltimes \widehat{L})^{(\nu)}}$ and consider $\Phinu: \: \obj \fB_d^{(\nu)} \to \fB^{(\nu)}$ sending $\bigoplus_i ([s_i^{\mu},l_i^{\mu}])_{\mu}$ to $\bigoplus_i ([s_i^{\mu},\widehat{L}(l_i^\mu)])_{\mu}$. It is now straightforward to check that (F) holds and that the diagrams in \eqref{e:CDnu} commute. Moreover, it is straightforward to see that the maps $\Phinu_*$ as introduced before Proposition~\ref{prop:Phi_*} coincide with the ones appearing in \cite{Mil}*{Proposition~8.4}. Hence, combining Proposition~\ref{prop:Phi_*} with \cite{Mil}*{Proposition~8.4}, we deduce that $\Kz \Phi$ induces isomorphisms in homology. The following now follows immediately.

\bcor
\label{cor:Disc}
$\Phi$ induces a weak equivalence $\Kz \Phi: \: \Kz \fB_d \to \Kz \fB$. 
\ecor

\bremark
\label{rem:Phi-ind-by-Omega}
Let us explain how Corollary~\ref{cor:Disc} fits in with the formulation of discretisation in \S~\ref{sec:disc}. Let $\cR_{L\reg}$ denote the full equivalence relation on $L\reg$, i.e., $\cR_{L\reg} \coloneq L\reg \times L\reg$. Let $H$ be the open subgroupoid 
\[
 H \coloneq \menge{((l'',l'),[s,\chi]) \in \cR_{L\reg} \times (S \ltimes \whL)}{l' \in \dom_L(s), \, l'' = s \cdot l, \, \chi(l') = 1}
\]
of $\cR_{L\reg} \times (S \ltimes \whL)$. Consider the following translation action $S \ltimes L\reg \to \cR_{L\reg} \times (S \ltimes \whL)$ in the sense of \cites{BEM,MB,Mil24}, given by the anchor map $\cR_{L\reg} \times (S \ltimes \whL) \to L\reg, \, ((s \cdot l', l'),[s,\chi]) \ma s \cdot l'$ and the left action $S \ltimes L\reg \acts \cR_{L\reg} \times (S \ltimes \whL)$ defined by $(t,l) \bullet ((s \cdot l', l'),[s,\chi]) \coloneq ((ts \cdot l', l'), [ts,\chi])$. Note that this left action commutes with the canonical right action $\cR_{L\reg} \times (S \ltimes \whL) \curvearrowleft \cR_{L\reg} \times (S \ltimes \whL)$, as required for a translation action. Also note that the restriction of the anchor map to the unit space of $\cR_{L\reg} \times (S \ltimes \whL)$ is proper. Let $\Theta$ be the composition $S \ltimes L\reg \to H \to \cR_{L\reg} \times (S \ltimes \whL)$, where the first arrow is given by the translation action just defined and the second arrow is the canonical inclusion. Here we view all these arrows as proper groupoid correspondences, and form composition of groupoid correspondences. Then $\Theta$ agrees with $\Omega$ from \S~\ref{ss:DiscFormul} up to the canonical Morita equivalence $S \ltimes \whL \sim_M \cR_{L\reg} \times (S \ltimes \whL)$. Also note that $\Theta$ induces the map $\theta$ from \S~\ref{subsec:detour} (see \eqref{map:elteta}).
Moreover, the assignment $G \ma \fB_G$ from \cite{Li22} is functorial with respect to translation actions as above (with the property that the restriction of the anchor map to the unit space is proper). $G \ma \fB_G$ is also functorial with respect to open embeddings. Hence $\Theta$, being a composition of two such morphisms, induces a permutative functor $\fB_{\Theta} \colon \fB_{S \ltimes L\reg} \to \fB_{\cR_{L\reg} \times (S \ltimes \whL)}$. Now $\fB_{\Theta}$ agrees with $\Phi$ up to an equivalence of permutative categories $fB_{S \ltimes \whL} \sim \fB_{\cR_{L\reg} \times (S \ltimes \whL)}$. In this sense, $\Phi$ is induced by $\Omega$ from \S~\ref{ss:DiscFormul}.
\eremark

For $l \in L\reg$, define $S_l \defeq \menge{g \in S \ltimes L\reg}{\rmr(g) = l = \rms(g)}$. In other words, $S_l$ is the stabilizer group at $l$.
\bcor
\label{cor:Disc-B_G}
We have an equivalence of permutative categories $\bigoplus_{[l] \in S \backslash L\reg} \fB_{S_l} \sim \fB_d$. 
\ecor

\bcor
\label{cor:Disc-KB_G}
At the level of algebraic K-theory spectra, we obtain weak equivalences
\[
 \Kz \fB \sim \Kz \fB_d \sim \bigvee_{[l] \in S \backslash L\reg} \Kz \fB_{S_l} \sim \bigvee_{[l] \in S \backslash L\reg} \Sigma^{\infty} (B S_l)_+.
\]
\ecor
The last equivalence is provided by \cite{Tho82}*{Lemma~2.5}.

\section{\texorpdfstring{Describing algebraic $K$-theory spectra of ample groupoids}{Describing algebraic K-theory spectra of ample groupoids}}
\numberwithin{equation}{subsection}

\subsection{A contractibility criterion}

We will need the following contractibility criterion to establish excision for algebraic $K$-theory spectra of ample groupoids. In the following, given a small category $\C$, we write $\mft(\bmalpha)$ for the codomain and $\mfd(\bmalpha)$ for the domain of a morphism $\bmalpha \in \mor(\C)$, and $\C(\bmfy,\bmfx)$ denotes the set of morphisms $\menge{\bmalpha \in \mor(\C)}{\mft(\bmalpha) = \bmfy, \, \mfd(\bmalpha) = \bmfx}$. Moreover, given $\bmfx \in \obj(\C)$, we also denote the identity morphism $\bmfx \to \bmfx$ in $\mor(\C)$ by $\bmfx$.

\blemma
\label{Lem:ContractCrit}
Let $\C$ be a small category. Assume that the following hold:
\begin{enumerate}[label=(\roman*)]
\item $\C$ is not empty.
\item $\C$ is connected in the sense that the equivalence relation on $\obj(\C)$ generated by $\bmfx \sim \bmfy$ if $\C(\bmfy,\bmfx) \neq \emptyset$ is trivial (i.e., there is only one equivalence class).
\item For all finite subsets $D_0 \subseteq \obj(\C)$ and $D_1 \subseteq \mor(\C)$ with $\mft(D_1), \mfd(D_1) \subseteq D_0$, such that $D_0$ is $D_1$-connected in the sense that the equivalence relation on $D_0$ generated by $\bmfx \sim \bmfy$ if $D_1 \cap \C(\bmfy,\bmfx) \neq \emptyset$ is trivial, the following are true:
\begin{itemize}
 \item There exist maps $D_0 \to \obj(\C), \, \bmfx \ma \bmfx'$, and $D_1 \to \mor(\C), \, \bmalpha \ma \bmalpha'$, such that, for every $\bmfx \in \obj(\C)$ which is contained in $D_1$ (here we view $\bmfx$ as the identity morphism $\bmfx \to \bmfx$ in $\mor(\C)$), we have $\bmfx' \in \obj(\C)$, and we have $(\bmalpha_1 \bmalpha_2)' = \bmalpha_1' \bmalpha_2'$ for any $\bmalpha_1, \bmalpha_2 \in D_1$ such that $\bmalpha_1 \bmalpha_2 \in D_1$, and there exist morphisms $\theta_{\bmfx} \colon \bmfx' \to \bmfx$ in $\C$ for all $\bmfx \in D_0$ such that the diagram
\[
 \xymatrix{
 \bmfx' \ar[d]_{\bmalpha'} \ar[r]^{\theta_{\bmfx}} & \bmfx \ar[d]^{\bmalpha} \\
 \bmfy' \ar[r]_{\theta_{\bmfy}} & \bmfy
 }
\]
commutes for all $\bmalpha \in D_1$.
\item Moreover, there exists $\bm{\ast} \in \obj(\C)$ together with morphisms $\omega_{\bmfx} \colon \bmfx' \to \bm{\ast}$ in $\C$ such that the diagram
\[
 \xymatrix{
 \bmfx' \ar[d]_{\bmalpha'} \ar[r]^{\omega_{\bmfx}} & \bm{\ast} \\
 \bmfy' \ar[ur]_{\omega_{\bmfy}} &
 }
\]
commutes for all $\bmalpha \in D_1$.
\end{itemize}
\end{enumerate}
Then $\C$ has contractible classifying space.
\elemma
\bproof
(i) implies that the classifying space $\vert N(\C) \vert$ is not empty. (ii) implies that $\vert N(\C) \vert$ is connected. Now take an arbitrary element 
of $\pi_n( \vert N(\C) \vert )$, for $n \geq 1$. Find a continuous map $f \colon S^n \to \vert N(\C) \vert$ representing this element of 
$\pi_n( \vert N(\C) \vert )$. The image of $f$ is contained in $\vert K \vert$, where $K \subseteq N(\C)$ is a finite sub-simplicial set (in the sense of 
\cite{kerodon}*{\href{https://kerodon.net/tag/0131}{Tag 0131}}). Now set $D_0 \coloneq K_0$, $D_1 \coloneq K_1$. By (ii), we may assume without loss of 
generality that $D_0$ is $D_1$-connected. Now let $\iota \colon K \to N(\C)$ be the canonical inclusion. Define the simplicial map $\iota' \colon K \to 
N(\C)$ by $\iota'(\bmfx) \coloneq \bmfx'$ for $\bmfx \in D_0$, $\iota'(\bmalpha) \coloneq \bmalpha'$ for $\bmalpha \in D_1$, and $\iota'(\bmalpha_1, \dotsc, 
\bmalpha_p) \coloneq (\bmalpha_1', \dotsc, \bmalpha_p')$ at the $p$-th level. Moreover, the morphisms $\gekl{\theta_{\bmfx}}_{\bmfx \in D_0}$ give rise to a 
simplicial homotopy $\iota \sim_h \iota'$ (see for example \cite{Fri2012}*{Definition~8.6}) determined by $h_i^p \colon K_p \to N(\C)_{p+1}, \, (\bmalpha_1, 
\dotsc, \bmalpha_p) \ma (\bmalpha_1, \dotsc, \bmalpha_i, \theta_{\mfd(\bmalpha_i)}, \bmalpha_{i+1}', \dotsc, \bmalpha_p')$. Furthermore, define the 
simplicial map $\bigast \colon K \to N(\C)$ by $\bigast(\bmfx) \coloneq \bm{\ast}$ for $\bmfx \in D_0$, $\bigast(\bmalpha) \coloneq \bm{\ast}$ for $\bmalpha 
\in D_1$, and $\bigast(\bmalpha_1, \dotsc, \bmalpha_p) \coloneq (\bm{\ast}, \dotsc, \bm{\ast})$ at the $p$-th level. The morphisms $\gekl{\omega_{\bmfx}}
_{\bmfx \in D_0}$ give rise to a simplicial homotopy $\iota' \sim_h \bigast$ (see \cite{Fri2012}*{Definition~8.6}) determined by $h_i^p \colon K_p \to 
N(\C)_{p+1}, \, (\bmalpha_1, \dotsc, \bmalpha_p) \ma (\bm{\ast}, \dotsc, \bm{\ast}, \omega_{\mft(\bmalpha_{i+1}')}, \bmalpha_{i+1}', \dotsc, \bmalpha_p')$. 

Now let $f^{\vert K \vert}$ be the map $S^n \to \vert K \vert$ given by restricting the codomain of $f$ to $\vert K \vert$. Then we conclude
\[
 f = \vert \iota \vert \circ f^{\vert K \vert} \sim_h \vert \iota' \vert \circ f^{\vert K \vert} \sim_h \vert \bigast \vert \circ f^{\vert K \vert}.
\]
This shows that $f$ is homotopic to a constant map, which in turn implies that the element of $\pi_n( \vert N(\C) \vert )$ we started with was trivial.
\eproof

\subsection{Excision}
\label{ss:Excision}

Let $G$ be an ample groupoid. Assume that $U$ is an open, $G$-invariant subspace of $\Gn$, and let $C \defeq \Gn \setminus U$. Then $C$ is a closed $G$-invariant subspace of $\Gn$. Restriction gives rise to the groupoids $G \vert_U \defeq \menge{g \in G}{\rmr(g), \, \rms(g) \in U}$ and $G \vert_C$. We set $\fB_U \defeq \fB_{G \vert_U}$, $\fB_C \defeq \fB_{G \vert_C}$ and $\fB \defeq \fB_G$. The inclusion $G \vert_U \into G$ is an open embedding, so that we obtain an induced permutative functor $\iota: \: \fB_U \to \fB$ (see \cite{Li22}*{\S~3.1.1}). Our goal is to identify (up to homotopy) $\fB_C$ with the mapping cone of $\iota$ (in the sense of \cite{Tho82}*{\S~3}). 

To be more precise, let $\fB_{\emptyset}$ be the trivial permutative category consisting of the single object $\emptyset$ and the only morphism given by $\id_{\emptyset}$. Consider the diagram
$$
 \xymatrix{
 \fB_U \ar[r]^{\iota} \ar[d] & \fB \\
 \fB_{\emptyset} &
 }
$$
Let $\fP$ be the homotopy pushout of this diagram (or double mapping cylinder) as constructed in \cite{Tho82}*{Construction~5.1}. Our goal is to construct a permutative functor $\fP \to \fB_C$ which induces a homotopy equivalence on classifying spaces.

Let us start by describing $\fP$, following \cite{Tho82}*{Construction~5.1}. Objects in $\fB$ are pairs of the form $(\mathring{u},u)$, where $\mathring{u} \in \obj \fB_U$ and $u \in \obj \fB$. A morphism $(\mathring{u},u) \to (\mathring{v},v)$ is represented by objects $a,z \in \obj \fB_U$ and morphisms $\mathring{\sigma}: \: \mathring{u} \to a \oplus \mathring{v} \oplus z$ in $\fB_U$ and $\sigma: \: u \oplus \iota(a) \to v$ in $\fB$. Let $a', z', \mathring{\tau},\tau$ be another such set of data, i.e., $\mathring{\tau}: \: \mathring{u} \to a' \oplus \mathring{v} \oplus z'$ is a morphism in $\fB_U$ and $\tau: \: u \oplus \iota(a') \to v$ is a morphism in $\fB$. We define an equivalence relation by setting $(\mathring{\sigma},\sigma) \sim (\mathring{\tau},\tau)$ if there exist morphisms $\alpha: \: a \to a'$ and $\zeta: \: z \to z'$ in $\fB_U$ such that $\mathring{\tau}$ is given by the composite
$$
 \xymatrix{
 \mathring{u} \ar[r]^{\hspace{-0.5cm} \mathring{\sigma}} & a \oplus \mathring{v} \oplus z \ar[r]^{\alpha \oplus \mathring{v} \oplus \zeta} & a' \oplus \mathring{v} \oplus z'
 }
$$
and $\tau$ is given by the composite
$$
 \xymatrix{
 u \oplus \iota(a') \ar[r]^{\hspace{0.5cm} u \oplus \iota(\alpha)^{-1}} & u \oplus \iota(a) \ar[r]^{\hspace{0.25cm} \sigma} & v
 }
$$
Composition of morphisms is described in \cite{Tho82}*{Construction~5.1}.

Let us now construct a permutative functor $\Phi: \: \fP \to \fB_C$. Let $R: \: \fB \to \fB_C$ be the permutative functor given by restriction and deleting $\emptyset$ whenever necessary. So $R(u) = u \cap C$ for $u \subseteq \Gn$, and if for instance $u = (1,U_1) \amalg (2, U_2)$ with $U_1 \cap C = \emptyset$, then set $R(u) = (1,U_2 \cap C)$. (This is what we mean by \an{deleting $\emptyset$}). Similarly, on morphisms, $R(\sigma) = \sigma \cap (\cR \times G \vert_C)$ (again up to deleting $\emptyset$). Clearly, $R \circ \iota$ is the trivial functor sending all objects to $\emptyset$ and all morphisms to the identity $\id_{\emptyset}$. Hence, by universal property (see \cite{Tho82}*{\S~5}), we obtain a permutative functor $\Phi: \: \fP \to \fB_C$ given by $(\mathring{u},u) \ma R(u)$ and $(\mathring{\sigma},\sigma) \ma R(\sigma)$. Note that our permutative functors preserve $\oplus$ (and $\emptyset$) on the nose.

In order to show that $\Phi$ induces a homotopy equivalence of classifying spaces, we study categories of the form $\mfc \backslash \Phi$ as in \cite{Qui}*{\S~1}, for arbitrary objects $\mfc \in \obj \fB_C$. Objects in $\mfc \backslash \Phi$ are given by pairs $(\bmu, \beta)$, where $\bmu \in \obj \fP$ and $\beta \colon \mfc \to \Phi(\bmu)$ is a morphism in $\fB_C$. A morphism $(\bmu, \beta) \to (\bmu', \beta')$ in $\mfc \backslash \Phi$ is given by a morphism $\bmsigma: \: \bmu \to \bmu'$ in $\fP$ such that the diagram
\[
 \xymatrix{
 \mfc \ar[r]^{\beta} \ar[rd]_{\beta'} & \Phi(\bmu) \ar[d]^{\Phi(\bmsigma)} \\
 & \Phi(\bmu')
 }
\]
commutes.

Our goal is to prove the following.
\btheo
\label{thm:cPhi}
For all $\mfc \in \obj \fB_C$, the category $\mfc \backslash \Phi$ has contractible classifying space.
\etheo

We will use the following observation several times.
\blemma
\label{lem:Rrho=pi}
Let $o \subseteq \cR \times G$ be an open set such that $\pi \defeq o \cap (\cR \times (G \vert_C))$ is a compact open bisection in $\cR \times (G \vert_C)$. Then there exists a compact open bisection $\rho \subseteq o$ such that $R(\rho) = \pi$.
\elemma
\bproof
For all $g \in \pi$, there exists a compact open bisection $o_g \subseteq o$ such that $g \in o_g$. Then $\pi = \bigcup_{g \in \pi} R(o_g)$, and all $R(o_g)$ are compact open bisections. By compactness of $\pi$, there exists $n \in \Nz$ and $\gekl{g_i}_{i=1}^n \subseteq \pi$ such that, with $o_i \defeq o_{g_i}$, we have $\pi = \bigcup_{i=1}^n R(o_i)$. Now construct $o'_i \subseteq o_i$ such that we still have $\pi = \bigcup_i R(o'_i)$, but now $\rmr(o'_i)$ are pairwise disjoint, and similarly for $\rms(o'_i)$. To achieve this, just start with $o'_1 \defeq o_1$ and, proceeding inductively, if $o'_1, \dotsc, o'_{i-1}$ have been constructed, then set
\[
 o'_i \defeq (\rmr(o_i) \setminus (\rmr(o'_1) \cup \dotso \cup \rmr(o'_{i-1}))) o_i (\rms(o_i) \setminus (\rms(o'_1) \cup \dotso \cup \rms(o'_{i-1}))).
\]
Indeed, the $o'_i$ have pairwise disjoint codomain and domain, and we just need to verify that $\pi = \bigcup_i R(o'_i)$. Take $g \in \pi$, and let $k \defeq \min(\gekl{1 \leq j \leq n}{g \in o_j}$. Then $g \notin o_j$ for all $1 \leq j \leq k-1$, so that $\rms(g) \notin \bigcup_{j=1}^{k-1} \rms(o_j)$ and thus $\rms(g) \notin \bigcup_{j=1}^{k-1} \rms(o'_j)$. Similarly, $\rmr(g) \notin \bigcup_{j=1}^{k-1} \rmr(o'_j)$. It follows that $g \in o'_k$. This shows $\pi = \bigcup_i R(o'_i)$, as desired.

Now set $\rho \defeq \coprod_i o'_i$.
\eproof

Now let us prove Theorem~\ref{thm:cPhi}. The strategy is to use Lemma~\ref{Lem:ContractCrit}.
\bremark
\cite{Li22}*{Proposition~4.1} is not applicable because $\Phi$ is not faithful.
\eremark

\blemma
\label{lem:cPhiNonEmpty}
For all $\mfc \in \obj \fB_C$, $\mfc \backslash \Phi$ is not empty.
\elemma
\bproof
Since $\mfc$ is an open subspace of $\Nz \times C$, there exists an open subspace $o \subseteq \Nz \times \Gn$ such that $R(o) = \mfc$. Apply Lemma~\ref{lem:Rrho=pi} to this situation, with $\pi = \mfc$, to obtain a compact open bisection $u \subseteq o$ with $R(u) = \mfc$. Set $\bmu \defeq (\emptyset,u)$ and $\beta \defeq \mfc$. Then $(\bmu,\beta) \in \mfc \backslash \Phi$.
\eproof

\blemma
\label{lem:w-u,w-v}
Let $\mfc \in \obj \fB_C$ be arbitrary. For all $\bmfx = (\bmu_{\bmfx}, \beta_{\bmfx}), \bmfy = (\bmu_{\bmfy}, \beta_{\bmfy}) \in \obj (\mfc \backslash \Phi)$, there exists $\bmfz = (\bmu_{\bmfz}, \beta_{\bmfz}) \in \obj (\mfc \backslash \Phi)$ together with morphisms $\bmfz \to \bmfx$ and $\bmfz \to \bmfy$ in $\mfc \backslash \Phi$.
\elemma

\bproof
$\beta_{\bmfy} \beta_{\bmfx}^{-1}$ is a compact open bisection in $\cR \times (G \vert_C)$, so there exists an open subspace $\gamma \subseteq \cR \times G$ such that $\beta_{\bmfy} \beta_{\bmfx}^{-1} = \gamma \cap (\cR \times (G \vert_C))$. Lemma~\ref{lem:Rrho=pi}, applied to $\pi \defeq \beta_{\bmfy} \beta_{\bmfx}^{-1}$ and $o \defeq u_{\bmfy} \gamma u_{\bmfx}$, where $\bmu_{\bmfx} = (\mathring{u}_{\bmfx},u_{\bmfx})$ and $\bmu_{\bmfy} = (\mathring{u}_{\bmfy},u_{\bmfy})$, produces a compact open bisection $\rho \subseteq o$ with $R(\rho) = \pi$. Let $u_{\bmfz} \defeq \rms(\rho)$. Define 
\[
 \mathring{u}_{\bmfz} \defeq (u_{\bmfx} \setminus u_{\bmfz}) \oplus \mathring{u}_{\bmfx} \oplus (u_{\bmfy} \setminus \rmr(\rho)) \oplus \mathring{u}_{\bmfy}.
\] 
Note that $u_{\bmfx} \setminus u_{\bmfz} \subseteq \Nz \times U$ and $u_{\bmfy} \setminus \rmr(\rho) \subseteq \Nz \times U$. Set $\bmu_{\bmfz} \defeq (\mathring{u}_{\bmfz},u_{\bmfz})$ and $\beta_{\bmfz} \defeq \beta_{\bmfx}$.

Define the morphism $\bmsigma \colon \bmu_{\bmfz} \to \bmu_{\bmfx}$ in $\fP$ by setting $a_{\bmsigma} \defeq u_{\bmfx} \setminus u_{\bmfz}$, $z_{\bmsigma} \defeq (u_{\bmfy} \setminus \rmr(\rho)) \oplus \mathring{u}_{\bmfx}$, $\mathring{\sigma} \coloneq \mathring{u}_{\bmfz} \colon \mathring{u}_{\bmfz} \to (u_{\bmfx} \setminus u_{\bmfz}) \oplus \mathring{u}_{\bmfx} \oplus (u_{\bmfy} \setminus \rmr(rho)) \oplus \mathring{u}_{\bmfy}$ and $\sigma \colon u_{\bmfz} \oplus \iota(u_{\bmfx} \setminus u_{\bmfz}) \to u_{\bmfx}$ as the composite
\[
 u_{\bmfz} \oplus \iota(u_{\bmfx} \setminus u_{\bmfz}) \cong u_{\bmfz} \amalg (u_{\bmfx} \setminus u_{\bmfz}) = u_{\bmfx},
\]
where the first map is the natural morphism. Note that $R(\sigma) = R(u_{\bmfx})$, so that this data indeed defines the desired morphism.

Define the morphism $\bmtau \colon \bmu_{\bmfz} \to \bmu_{\bmfy}$ in $\fP$ by setting $a_{\bmtau} \defeq u_{\bmfy} \setminus \rmr(\rho)$, $z_{\bmtau} \defeq (u_{\bmfx} \setminus u_{\bmfz}) \oplus \mathring{u}_{\bmfx}$, 
\[
\mathring{\tau} \defeq \pi_{(u_{\bmfx} \setminus u_{\bmfz}) \oplus \mathring{u}_{\bmfx},(u_{\bmfy} \setminus \rmr(\rho)) \oplus \mathring{u}_{\bmfy}} \colon \mathring{u}_{\bmfz} \to (u_{\bmfy} \setminus \rmr(\rho)) \oplus \mathring{u}_{\bmfy} \oplus (u_{\bmfx} \setminus u_{\bmfz}) \oplus \mathring{u}_{\bmfx}
\]
and $\tau \colon u_{\bmfz} \oplus \iota(u_{\bmfy} \setminus \rmr(\rho)) \to u_{\bmfy}$ as the composite
\[
 u_{\bmfz} \oplus \iota(u_{\bmfy} \setminus \rmr(\rho)) \cong \rmr(\rho) \oplus \iota(u_{\bmfy} \setminus \rmr(\rho)) \cong \rmr(\rho) \amalg (u_{\bmfy} \setminus \rmr(\rho)) = u_{\bmfy},
\]
where the first map is given by $\rho \oplus \iota(u_{\bmfy} \setminus \rmr(\rho))$ and the second map is the natural morphism. Note that $R(\tau) = R(\rho) = \beta_{\bmfy} \beta_{\bmfx}^{-1}$, so that this data indeed defines the desired morphism.
\eproof

\bprop
\label{prop:FF'T}
Let $\mfc \in \obj \fB_C$ be arbitrary. For all finite subsets $D_0 \subseteq \obj(\mfc \backslash \Phi)$ and $D_1 \subseteq \mor(\mfc \backslash \Phi)$ with $\mft(D_1), \mfd(D_1) \subseteq D_0$, such that $D_0$ is $D_1$-connected, the following are true:
\begin{itemize}
 \item There exist maps $D_0 \to \obj(\mfc \backslash \Phi), \, \bmfx \ma \bmfx'$, and $D_1 \to \mor(\C), \, \bmalpha \ma \bmalpha'$, such that, for every $\bmfx \in \obj(\mfc \backslash \Phi)$ which is contained in $D_1$ (here we view $\bmfx$ as the identity morphism $\bmfx \to \bmfx$ in $\mor(\mfc \backslash \Phi)$), we have $\bmfx' \in \obj(\C)$, and we have $(\bmalpha_1 \bmalpha_2)' = \bmalpha_1' \bmalpha_2'$ for any $\bmalpha_1, \bmalpha_2 \in D_1$ such that $\bmalpha_1 \bmalpha_2 \in D_1$, and there exist morphisms $\theta_{\bmfx} \colon \bmfx' \to \bmfx$ in $\mfc \backslash \Phi$ for all $\bmfx \in D_0$ such that $\bmalpha \circ \theta_{\bmfx} = \theta_{\bmfy} \circ \bmalpha'$ for all morphisms $\bmalpha \colon \bmfx \to \bmfy$ in $D_1$.
 \item Moreover, there exists $\bm{\ast} \in \obj(\mfc \backslash \Phi)$ together with morphisms $\omega_{\bmfx} \colon \bmfx' \to \bm{\ast}$ in $\C$ such that $\omega_{\bmfy} \circ \bmalpha' = \omega_{\bmfx}$ for all morphisms $\bmalpha \colon \bmfx \to \bmfy$ in $D_1$.
\end{itemize}
\eprop

\bproof
Fix $\bmfw \in D_0$ and write $\bmfw = ((\mathring{u}_{\bmfw},u_{\bmfw}), \beta_{\bmfw})$. For all $\bmsigma \in D_1$, write $\bmsigma = (\mathring{\sigma},\sigma)$. Given $\bmfx \in D_0$, since $D_0$ is $D_1$-connected, there exist finitely many $\bmfx_m \in D_0$ such that $\bmfx_1 = \bmfw$, $\bmfx_n = \bmfx$ and, for all $1 \leq m \leq n-1$, there exists $\bmsigma_m \in D_1 \cap ((\mfc \backslash \Phi)(\bmfx_{m+1},\bmfx_m))$ or $\bmsigma_m \in D_1 \cap ((\mfc \backslash \Phi)(\bmfx_m,\bmfx_{m+1}))$. Form
\[
 \sigma_{\bmfx} \defeq \sigma_{n-1}^{\epsilon_{n-1}} \dotsm \sigma_1^{\epsilon_1},
\]
where $\epsilon_m = 1$ if $\bmsigma_m \in (\mfc \backslash \Phi)(\bmfx_{m+1},\bmfx_m)$ and $\epsilon_m = -1$ if $\bmsigma_m \in (\mfc \backslash \Phi)(\bmfx_m,\bmfx_{m+1})$. We will always arrange that $\sigma_{\bmfw} = u_{\bmfw}$. Note that $u_{\bmfw} \subseteq \rms(\sigma_{\bmfx})$ and $\rmr(\sigma_{\bmfx}) \subseteq u_{\bmfx}$.

Take a morphism $\bmsigma \colon \bmfx \to \bmfy$ in $D_1$ given by $\bmsigma = [\mathring{\sigma},\sigma]$. Set $\varsigma \defeq \beta_{\bmfy} \beta_{\bmfx}^{-1}$. Then $o \defeq \sigma \cap \sigma_{\bmfy} \sigma_{\bmfx}^{-1}$ is an open subspace of $\cR \times G$ with $o \cap (\cR \times (G \vert_C)) = \varsigma$. So Lemma~\ref{lem:Rrho=pi} applies and produces a compact open bisection $\rho_{\bmsigma} \subseteq o$ with $R(\rho_{\bmsigma}) = \varsigma$. In particular, $\rho_{\bmsigma} \subseteq \sigma_{\bmfy} \sigma_{\bmfx}^{-1}$, so that $\rms(\rho_{\bmsigma}) \subseteq \rmr(\sigma_{\bmfx})$. Define
\[
 v_{\bmfw} \defeq \bigcap_{\bmsigma \in D_1} \sigma_{\mfd(\bmsigma)}^{-1} \rms(\rho_{\bmsigma}) \sigma_{\mfd(\bmsigma)}.
\]
For every $\bmfx \in D_0$, define $v_{\bmfx} \defeq \sigma_{\bmfx} v_{\bmfw} \sigma_{\bmfx}^{-1}$. Note that $\sigma v_{\bmfx} = \sigma_{\bmfy} \sigma_{\bmfx}^{-1} v_{\bmfx}$ because $v_{\bmfx} \subseteq \rms(\rho_{\bmsigma}) \subseteq \rms(o)$. It follows that $\sigma v_{\bmfx} \sigma^{-1} = \sigma_{\bmfy} \sigma_{\bmfx}^{-1} v_{\bmfx} \sigma_{\bmfx} \sigma_{\bmfy}^{-1} = \sigma_{\bmfy} v_{\bmfw} \sigma_{\bmfy}^{-1} = v_{\bmfy}$. Also, observe that $v_{\bmfx} \subseteq \rmr(\sigma_{\bmfx}) \subseteq u_{\bmfx}$. 

Now define $w_{\bmfx} \defeq u_{\bmfx} \setminus v_{\bmfx}$. Moreover, given $\bmfx = ((\mathring{u}_{\bmfx}, u_{\bmfx}), \beta_{\bmfx}) \in D_0$, define $\bmfx' \coloneq ((\mathring{u}_{\bmfx} \oplus w_{\bmfx}, v_{\bmfx}), \beta_{\bmfx})$. Now suppose that $\bmsigma \colon \bmfx \to \bmfy$ is a morphism in $D_1$ given by $\bmsigma = [\mathring{\sigma},\sigma]$. First of all, note that $\sigma$ is a morphism 
\[
 (v_{\bmfx} \amalg w_{\bmfx}) \oplus \iota(a_{\bmsigma}) = u_{\bmfx} \oplus \iota(a_{\bmsigma}) \to u_{\bmfy} = v_{\bmfy} \amalg w_{\bmfy}.
\]
As $\sigma v_{\bmfx} \sigma^{-1} = v_{\bmfy}$, $\sigma$ restricts to a morphism $w_{\bmfx} \oplus \iota(a_{\bmsigma}) \to w_{\bmfy}$ and hence to a morphism $\bar{\sigma} \colon w_{\bmfx} \oplus a_{\bmsigma}\to w_{\bmfy}$. Moreover, we have the morphism $\mathring{\sigma} \colon \mathring{u}_{\bmfx} \to a_{\bmsigma} \oplus \mathring{u}_{\bmfy} \oplus z_{\bmsigma}$. 

Define $\sigma' \defeq \sigma v_{\bmfx} \colon v_{\bmfx} \to v_{\bmfy}$ and $\mathring{\sigma}'$ as the composite
\[
 \mathring{u}_{\bmfx} \oplus w_{\bmfx} \to a_{\bmsigma} \oplus \mathring{u}_{\bmfy} \oplus z_{\bmsigma} \oplus w_{\bmfx}
 \to \mathring{u}_{\bmfy} \oplus (w_{\bmfx} \oplus a_{\bmsigma}) \oplus z_{\bmsigma} \to \mathring{u}_{\bmfy} \oplus w_{\bmfy} \oplus z_{\bmsigma},
\]
where the first map is given by $\mathring{\sigma} \oplus w_{\bmfx}$, the second map is the obvious one provided by our permutative category, and the third map is given by $\mathring{u}_{\bmfy} \oplus \bar{\sigma} \oplus z_{\bmsigma}$. By construction, we arrive at a morphism $\bmsigma' \coloneq [\mathring{\sigma}',\sigma'] \colon \bmfx' \to \bmfy'$ in $\mfc \backslash \Phi$. It is now straightforward to check that $\bmfx \ma \bmfx'$, $\bmsigma \ma \bmsigma'$ has the desired functoriality properties.

Next, for $\bmfx \in D_0$, let $\theta_{\bmfx} \colon (\mathring{u}_{\bmfx} \oplus w_{\bmfx}, v_{\bmfx}) \to (\mathring{u}_{\bmfx}, u_{\bmfx})$ be given by $a_{\theta_{\bmfx}} \defeq w_{\bmfx}$, $z_{\theta_{\bmfx}} \defeq \emptyset$, $\mathring{\vartheta}_{\bmfx} \defeq \pi_{\mathring{u}_{\bmfx},w_{\bmfx}} \colon \mathring{u}_{\bmfx} \oplus w_{\bmfx} \to w_{\bmfx} \oplus \mathring{u}_{\bmfx}$ and $\vartheta_{\bmfx} \colon v_{\bmfx} \oplus \iota(w_{\bmfx}) \cong v_{\bmfx} \amalg w_{\bmfx} = u_{\bmfx}$. This data indeed defines a morphism in $\mfc \backslash \Phi$ because $R(\vartheta_{\bmfx}) = R(u_{\bmfx})$ implies that $\beta_{\bmfx} R(\vartheta_{\bmfx}) = \beta_{\bmfx'}$.

Let us check that $\bmsigma \theta_{\bmfx} = \theta_{\bmfy} \bmsigma'$ for all $\bmsigma \colon \bmfx \to \bmfy$ in $D_1$. $\bmsigma \theta_{\bmfx}$ is given by the pair
\[
 \xymatrix{
 v_{\bmfx} \oplus \iota(w_{\bmfx}) \oplus \iota(a_{\bmsigma}) \ar[rr]^{\vartheta_{\bmfx} \oplus \iota(a_{\bmsigma})} & & u_{\bmfx} \oplus \iota(a_{\bmsigma}) \ar[rr]^{\sigma} & & u_{\bmfy} \\
 \mathring{u}_{\bmfx} \oplus w_{\bmfx} \ar[rr]^{\pi_{\mathring{u}_{\bmfx}, w_{\bmfx}}} & & w_{\bmfx} \oplus \mathring{u}_{\bmfx} \ar[rr]^{w_{\bmfx} \oplus \mathring{\sigma}} & & w_{\bmfx} \oplus a_{\bmsigma} \oplus \mathring{u}_{\bmfy} \oplus z_{\bmsigma}.
 }
\]
$\theta_{\bmfy} \bmsigma'$ is given by the pair
\[
 \xymatrix{
 v_{\bmfx} \oplus \iota(w_{\bmfy}) \ar[rr]^{\sigma' \oplus \iota(w_{\bmfy})} & & v_{\bmfy} \oplus \iota(w_{\bmfy}) \ar[rr]^{\vartheta_{\bmfy}} & & u_{\bmfy} \\
 \mathring{u}_{\bmfx} \oplus w_{\bmfx} \ar[rr]^{\mathring{\sigma}'} & & \mathring{u}_{\bmfy} \oplus w_{\bmfy} \oplus z_{\bmsigma} \ar[rr]^{\pi_{\mathring{u}_{\bmfy}, w_{\bmfy}} \oplus z_{\bmsigma}} & & w_{\bmfy} \oplus \mathring{u}_{\bmfy} \oplus z_{\bmsigma}.
 }
\]
Now it is straightforward to see, using the definition of the equivalence relation for representatives of morphisms in $\fP$ and $\bar{\sigma} \colon w_{\bmfx} \oplus a_{\bmsigma} \to w_{\bmfy}$, that $\bmsigma \theta_{\bmfx} = \theta_{\bmfy} \bmsigma'$, as desired.

As the next step, set $\bm{\ast} \defeq (\emptyset, v_{\bmfw})$ and $\beta_{\bm{\ast}} \defeq \beta_{\bmfw}$. Let $\omega_{\bmfx} \colon (\mathring{u}_{\bmfx} \oplus w_{\bmfx}, v_{\bmfx}) \to (\emptyset, v_{\bmfw})$ be the morphism given by the data $a_{\omega_{\bmfx}} \defeq \emptyset$, $z_{\omega_{\bmfx}} \defeq \mathring{u}_{\bmfx} \oplus w_{\bmfx}$, $\sigma_{\bmfx}^{-1} v_{\bmfx} \colon v_{\bmfx} \to v_{\bmfw}$ and $\mathring{u}_{\bmfx} \oplus w_{\bmfx} \colon \mathring{u}_{\bmfx} \oplus w_{\bmfx} \to \mathring{u}_{\bmfx} \oplus w_{\bmfx}$.

We have $R(\sigma_{\bmfx}^{-1} v_{\bmfx}) = R(\sigma_{\bmfx}^{-1}) = \beta_{\bmfw} \beta_{\bmfx}^{-1}$. Moreover, we have $\omega_{\bmfx} = \omega_{\bmfy} \bmsigma'$ for all $\bmsigma \colon \bmfx \to \bmfy$ in $D_1$. Indeed, $\omega_{\bmfy} \bmsigma'$ is given by the pair
\[
 \xymatrix{
 v_{\bmfx} \ar[rr]^{\sigma'} & & v_{\bmfy} \ar[rr]^{\sigma_{\bmfy}^{-1} v_{\bmfy}} & &  v_{\bmfw} \\
 \mathring{u}_{\bmfx} \oplus w_{\bmfx} \ar[rr]^{\mathring{\sigma}'} & & \mathring{u}_{\bmfy} \oplus w_{\bmfy} \oplus z_{\bmsigma} \ar[rr]^{\mathring{u}_{\bmfy} \oplus w_{\bmfy} \oplus z_{\bmsigma}} & & \mathring{u}_{\bmfy} \oplus w_{\bmfy} \oplus z_{\bmsigma}.
 }
\]
Now $\sigma_{\bmfy}^{-1} v_{\bmfy} \sigma' = \sigma_{\bmfy}^{-1} v_{\bmfy} \sigma v_{\bmfx} = \sigma_{\bmfy}^{-1} v_{\bmfy} \sigma_{\bmfy} \sigma_{\bmfx}^{-1} v_{\bmfx} = v_{\bmfw} \sigma_{\bmfx}^{-1} v_{\bmfx} = \sigma_{\bmfx}^{-1} v_{\bmfx}$ and $\mathring{u}_{\bmfx} \oplus w_{\bmfx} \cong \mathring{u}_{\bmfy} \oplus w_{\bmfy} \oplus z_{\bmsigma}$ is given by $\mathring{\sigma}'$. This shows that $\omega_{\bmfx} = \omega_{\bmfy} \bmsigma'$, as desired.
\eproof

\bproof[Proof of Theorem~\ref{thm:cPhi}]
Theorem~\ref{thm:cPhi} follows from Lemma~\ref{Lem:ContractCrit} because of Lemma~\ref{lem:cPhiNonEmpty}, Lemma~\ref{lem:w-u,w-v} and Proposition~\ref{prop:FF'T}.
\eproof

\bcor
\label{cor:Excision}
$\Phi$ induces a homotopy equivalence of the classifying spaces of $\fP$ and $\fB_C$, and hence $\Kz \fP \sim \Kz \fB_C$.

Therefore, $\fB_C$ is equivalent to the mapping cone of $\iota: \: \fB_U \to \fB$, and $\Kz \fB_C$ is equivalent to the mapping cone of $\Kz \iota: \: \Kz \fB_U \to \Kz \fB$.
\ecor

\bcor \label{cor:Exci2}
There are long exact sequences
\[
 \xymatrix{
 \dotso \ar[r] & \ti{H}_*(\Kz \fB_U) \ar[r]^{(\Kz \iota)_*} & \ti{H}_*(\Kz \fB) \ar[r]^{(\Kz R)_*} & \ti{H}_*(\Kz \fB_C) \ar[r] & \dotso
 }
\]
and
\[
 \xymatrix{
 \dotso \ar[r] & \pi_*(\Kz \fB_U) \ar[r]^{(\Kz \iota)_*} & \pi_*(\Kz \fB) \ar[r]^{(\Kz R)_*} & \pi_*(\Kz \fB_C) \ar[r] & \dotso
 }
\]
\ecor
Note that the long exact sequence in homology is the usual one in groupoid homology.

\subsection{Continuity}
\label{ss:Cont}

The goal of this section is to show continuity of the functor $G \ma \fB_G$. 

Let us first explain the setting of limits of groupoids as in \cite{BLII}*{\S~3} and \cite{Li20}*{\S~5}. Let $G_k$ be ample groupoids, and suppose that we are given open subgroupoids $H_k \subseteq G_{k+1}$ and fibrewise bijective proper surjections $p_k: \: H_k \onto G_k$. Denote the open embedding $H_k \into G_{k+1}$ by $\iota_k$. Then set $G_{k,0} \defeq G_k$ and $G_{k,j+1} \defeq p_{k+j}^{-1}(G_{k,j})$. Moreover, define $\bar{G}_k \defeq \plim_j \gekl{G_{k,j}; p_{k+j}}$. The embeddings $\iota_k$ induce embeddings $\bar{G}_k \into \bar{G}_{k+1}$, and we set $\bar{G} \defeq \ilim_k \bar{G}_k$. $G_{k,j}$, $\bar{G}_k$ and $\bar{G}$ are again ample groupoids. Note that \cite{BLII}*{\S~3} and \cite{Li20}*{\S~5} only treat the case of Hausdorff groupoids, but analogous results are true in the non-Hausdorff setting.

Now let $\fB_l \defeq \fB_{G_l}$ and consider the permutative functor $I_l \defeq F_{\iota_l} F^{p_l}: \: \fB_l \to \fB_{l+1}$ (using functoriality of our construction as explained in \cite{Li22}*{\S~3.1}). Define $I_{l',l} \defeq I_{l'-1} \circ \dotso \circ I_l$ for $l' \geq l$. $\Nz$ with the usual order is a poset, hence a category. The objects are given by $\Nz$, and a morphism $l' \to l$ exists if and only if $l \leq l'$. Consider the functor $F$ from $\Nz$ to small permutative categories sending $l$ to $\fB_l$ and a morphism $l \to l'$ to $I_{l',l}$. 

Our goal is to identify $\fB_{\bar{G}}$ with the homotopy colimit of $F$.

Let $\fL \defeq {\rm hocolim} F$. First, we need the explicit construction of $\fL$ from \cite{Tho82}*{Construction~3.22}. Objects of $\fL$ are of the form
\[
 (l_1,u_1) \oplus \dotso \oplus (l_n,u_n),
\]
where $l_i \in \Nz$ and $u_i$ are objects in $F(l_i) = \fB_{l_i}$. A morphism from $(l_1,u_1) \oplus \dotso \oplus (l_n,u_n)$ to $(l'_1,u'_1) \oplus \dotso \oplus (l'_m,u'_m)$ is given by a triple $(\lambda_i, \psi, \chi_j)$, where $\psi$ is a surjective map $\gekl{1, \dotsc, n} \to \gekl{1, \dotsc, m}$, $\lambda_i: \: l_i \to l'_{\psi(i)}$ are morphisms in $\Nz$ (in our setting, this simply means that $l_i \leq l'_{\psi(i)}$), and 
\[
 \chi_j: \: \bigoplus_{\psi(i) = j} F(\lambda_i)(u_i) = \bigoplus_{\psi(i) = j} I_{l'_{\psi(i)},l_i} (u_i) \to u'_j
\]
is a morphism in $\fB_{l'_j}$.

Now write $\fB \defeq \fB_{\bar{G}}$. Let us construct a permutative functor $I: \: \fL \to \fB$. The canonical projection $p_0^l: \: \bar{G}_l \onto G_l$ and the inclusion $\bar{G}_l \into \bar{G}$ induce the permutative functor $I_{\infty,l}: \: \fB_l \to \fB$. Now our functor $I$ sends $(l_1,u_1) \oplus \dotso \oplus (l_n,u_n) = \bigoplus_i (l_i,u_i)$ to $\bigoplus_i I_{\infty,l_i}(u_i)$. Moreover, $I$ sends a morphism given by the triple $(\lambda_i, \psi, \chi_j)$ to the morphism
\[
 \xymatrix{
 \bigoplus_i I_{\infty,l_i}(u_i) = \bigoplus_j \bigoplus_{\psi(i) = j} I_{\infty,l'_j}(I_{l'_j,l_i}(u_i)) \ar[rr]^{\hspace{2.5cm} \bigoplus_j I_{\infty,l'_j}(\chi_j)} & & 
 \bigoplus_j I_{\infty,l'_j}(u'_j). 
 }
\]
Here we used that $I_{\infty,l'} \circ I_{l',l} = I_{\infty,l}$.

First, we note that $I$ is faithful. Indeed, $\psi$ is determined by which $i$th summand in $\bigoplus_i I_{\infty,l_i}(u_i)$ is mapped to which $j$th summand in $\bigoplus_j I_{\infty,l'_j}(u'_j)$, $\lambda_i$ is determined by $\psi$ as we must have $l_i \leq l'_{\psi(i)}$, and we recover $\chi_j$ from $I_{\infty,l'_j}(\chi_j)$ by applying $p_0^{l'_j}$.

Moreover, $\fB$ is a groupoid. Hence \cite{Li22}*{Proposition~4.1} applies. Fix $u \in \obj \fB$ and form the category $\fF_{u,I}$ as in \cite{Li22}*{\S~4.1}. Objects in $\fF_{u,I}$ are equivalence classes $[v,\sigma]$, where $v \in \obj \fL$ and $\sigma: \: u \to I(v)$. Here we set $(v,\sigma) \sim (w,\tau)$ if there exists an invertible morphism $a \in \fL(w,v)$ such that $I(a) \sigma = \tau$. Morphisms in $\fF_{u,I}$ from $[v,\sigma]$ to $[w,\tau]$ are equivalence classes of morphisms $f \in \fL(w,v)$ which satisfy $I(f) \sigma = \tau$. Here we set $f \sim f'$ if there exist invertible morphisms $b, b' \in u \backslash I$ such that $bf = f'b'$.

Now our goal is to prove that $\fF_{u,I}$ is contractible. Let us start with the following observation.
\blemma
$\fF_{u,I}$  is a poset.
\elemma
\bproof
Given $v \in \obj \fL$, any morphism in $\fL(v,v)$ is invertible. Hence $\fF_{u,I}$ is a poset by construction.
\eproof

We write $[v,\sigma] \leq [w,\tau]$ if there exists a morphism $[w,\tau] \to [v,\sigma]$.

\blemma
For every $u$, $\fF_{u,I} \neq \emptyset$.
\elemma
\bproof
The topology of $\bar{G}$ is generated by cylinder sets of the form $(p_0^l)^{-1}(\sigma_l)$ for compact open bisections $\sigma_l \subseteq G_l$. Thus
\[
 u = \coprod_i (p_0^{l_i})^{-1}(u_i) = \coprod_i I_{\infty,l_i}(u_i).
\]
Let $v \defeq \bigoplus_i (l_i,u_i$ and $\sigma$ given by the canonical morphism (see \cite{Li22}*{Remark~3.3})
\[
 u = \coprod_i I_{\infty,l_i}(u_i) \to \bigoplus_i I_{\infty,l_i}(u_i).
\]
Then $[v,\sigma] \in \fF_{u,I}$.
\eproof

\bprop
$\fF_{u,I}$ is directed, i.e. for all $[v,\sigma], [w,\tau] \in \obj \fF_{u,I}$, there exists $[x,\upsilon] \in \obj \fF_{u,I}$ with $[x,\upsilon] \leq [v,\sigma]$ and $[x,\upsilon] \leq [w,\tau]$.
\eprop
\bproof
Take $[v,\sigma] \in \obj \fF_{u,I}$. $\sigma$ lies in $\bar{G}_l$, hence is of the form $(p_0^L)^{-1}(\sigma_L)$ for a compact open bisection $\sigma_L \subseteq G_L$. Let $\ti{v}_L \defeq \rms(\sigma_L)$. Set $\ti{v} \defeq (L,\ti{v}_L)$. Then $I_{\infty,L}(\ti{v}) = u$. Moreover, $f = \sigma_L^{-1}$ determines a morphism from $[v,\sigma]$ to $[\ti{v},u]$. Hence $[\ti{v},u] \leq [v,\sigma]$. Similarly, construct $\ti{w}$ such that $[\ti{w},u] \leq [w,\tau]$. We may arrange that $\ti{w} = (L,\ti{w}_L)$ for the same $L \in \Nz$ as in the construction of $\ti{v}_L$. Now $I_{\infty,L}(\ti{v}) = u = I_{\infty,L}(\ti{w})$ implies that $\ti{v} = \ti{w}$. We set $[x,\upsilon] \defeq [\ti{v},u] = [\ti{w},u]$, and we are done.
\eproof

\bcor
$I$ induces a homotopy equivalence between the classifying spaces of $\fL$ and $\fB$. 
\ecor

\bcor\label{cor:continuity}
Let $G_l$ and $\bar{G}$ be as above. We have $\Kz \fB_{\bar{G}} \sim \hocolim_l \, \Kz \fB_{G_l}$.
\ecor
Note that this produces formulas in homology and homotopy for $\Kz \fB_{\bar{G}}$ (see for example \S~\ref{sss:AF}).

\subsection{Examples}
\label{ss:Ex}

In the following, we describe algebraic $K$-theory spectra of example classes of ample groupoids. For the first three example classes, we use discretisation and excision. For the last example class of AF groupoids, we use continuity.

\bremark
It is possible to develop a general approach to describing and computing algebraic $K$-theory spectra of ample groupoids using discretisation, excision and the notion of independent resolutions (for the latter, see for instance \cite{LM}*{\S~3}). It has been described in \cite{LM}*{\S~6.1} how this approach can be used to compute groupoid homology of ample groupoids (which is isomorphic to $\ti{H}_*(\Kz \fB_G)$ by \cite{Li22}), and there is a similar description for computing stable homotopy groups of algebraic $K$-theory spectra (i.e. $\pi_*(\Kz \fB_G)$).
\eremark

\subsubsection{Graphs}
\label{sss:Graphs}

Let $\cE = (\cE^0,\cE^1)$ be a graph with target and domain maps $\mft, \mfd: \: \cE^1 \to \cE^0$. Let $S$ be the left inverse hull of $\cE$ (where we view $\cE$ as a left cancellative small category). Let $E$ be the semilattice of idempotents in $S$. Let $\cE_0^0 = \menge{v \in \cE^0}{0 < \# \, \mft^{-1}(v) < \infty}$ and $E_0 \defeq E \cE_0^0$ (the set of finite paths in $\cE$ starting in $\cE_0^0$ together with a zero element). We have a decomposition $\widehat{E} = E_0\reg \amalg \partial \widehat{E}$, and the groupoid $S \ltimes \partial \widehat{E}$ is a model for graph algebras. 

An independent resolution of $S \ltimes \partial \widehat{E}$ is given by
\[
 S \ltimes E_0\reg \to S \ltimes \widehat{E} \to S \ltimes \partial \widehat{E}.
\]
This appears already in \cite{LNII}*{\S~6.1}.

Thus $\fB_{S \ltimes \partial \widehat{E}}$ is equivalent to the mapping cone of the map $\iota: \: \fB_{S \ltimes E_0\reg} \to \fB_{S \ltimes \widehat{E}}$ induced by the canonical inclusion $S \ltimes E_0\reg \into S \ltimes \widehat{E}$. Let us analyse this map and the induced map on homotopy groups.

Let $\delta: \: \fB_{S \ltimes E\reg} \to \fB_{S \ltimes \widehat{E}}$ be the permutative functor from \S~\ref{s:Disc_KBG}. Let $i: \: \fB_{S \ltimes E_0\reg} \to \fB_{S \ltimes E\reg}$ be the permutative functor induced by the canonical inclusion $E_0 \into E$. Let $k: \: \fB_{S \ltimes E_0\reg} \to \fB_{S \ltimes E\reg}$ be the permutative functor sending $v \in \cE_0^0$ to $\bigoplus_{\mft(e) = v} e$ and more generally $\mu v$ to $\bigoplus_{\mft(e) = v} \mu e$. $k$ can be decomposed into permutative functors $k_{v,e}: \: \fB_{S \ltimes E_0\reg} \to \fB_{S \ltimes E\reg}$, for $v \in \cE_0^0$ and $e \in \mft^{-1}(v)$, which send $v$ to $e$ and more generally $\mu v$ to $\mu e$, and which maps all other $\nu$ to $\emptyset$.

We have symmetric monoidal transformations (in the sense of \cite{Tho78}*{\S~1} $k \Rightarrow \bigoplus_{v,e} k_{v,e}$ and $\delta \circ k \oplus \iota \Rightarrow \delta \circ i$. 

Now let $\fB_0^0$ be the free permutative category on $\cE_0^0$ and $\fB_0$ the free permutative category on $\cE^0$. Then we have, for fixed $v, e$ as above, the following commutative diagram
\[
 \xymatrix{
 \fB_0^0 \ar[r] \ar[d]_{j^0} & \fB_0 \ar[d]^{j_{v,e}} \\
 \fB_{S \ltimes E_0\reg} \ar[r]^{k_{v,e}} & \fB_{S \ltimes E\reg}
 }
\]
Here the top horizontal arrow sends $v$ to $\mfd(e)$, $j_0$ sends $v$ to $v$ and $j_{v,e}$ sends $\mfd(e)$ to $e$ and all other $w$ to $w$. Let $j: \: \fB_0 \to \fB_{S \ltimes E\reg}$ be the canonical inclusion. Then we have a symmetric monoidal transformations (in the sense of \cite{Tho78}*{\S~1} $j \Rightarrow j_{v,e}$. 

All this implies that we get the following long exact sequence in homotopy (and similarly in homology): 

\[
\begin{tikzpicture}[descr/.style={fill=white,inner sep=1.5pt}]
        \matrix (m) [
            matrix of math nodes,
            row sep=1em,
            column sep=2.5em,
            text height=1.5ex, text depth=0.25ex
        ]
        { 0 & \bigoplus_{\cE_0^0} \pi_*(\Sz) & \bigoplus_{\cE_0} \pi_*(\Sz) & \pi_*(\Kz \fB_{S \ltimes \partial \widehat{E}}) & \dotso \\
          \dotso & \bigoplus_{\cE_0^0} \pi_0(\Sz) & \bigoplus_{\cE_0} \pi_0(\Sz) & \pi_0(\Kz \fB_{S \ltimes \partial \widehat{E}}) & 0. \\
        };

        \path[overlay,->, font=\scriptsize,>=latex]
        (m-1-1) edge (m-1-2)
        (m-1-2) edge node[midway,above] {$\rukl{\begin{smallmatrix} I - A_0^t \\ - A_1^t \end{smallmatrix}} $} (m-1-3)
        (m-1-3) edge (m-1-4)
        (m-1-4) edge (m-1-5)
        (m-1-5) edge[out=355,in=175] (m-2-1)
        (m-2-1) edge (m-2-2)
        (m-2-2) edge node[midway,below] {$\rukl{\begin{smallmatrix} I - A_0^t \\ - A_1^t \end{smallmatrix}} $} (m-2-3)
        (m-2-3) edge (m-2-4)
        (m-2-4) edge (m-2-5);
\end{tikzpicture}
\]

Here $I$ is the identity matrix, and $A_0$ and $A_1$ are given as follows: The incidence matrix $A$ of $\cE$ (defined by $A(v,w) = \# \, \menge{e \in \cE^1}{\mft(e) = v, \, \mfd(e) = w}$ is of the form
$
  \rukl{
  \begin{smallmatrix}
  A_0 & A_1 \\
  * & *
  \end{smallmatrix}
  }
$
with respect to the decomposition $\cE^0 = \cE_0^0 \amalg (\cE^0 \setminus \cE_0^0)$. 

\bremark
This generalizes the description of the algebraic K-theory spectrum for the permutative category of the ample groupoid corresponding to Thompson's group $V_n$ as the Moore spectrum $\Mz_{n-1}$ from \cite{SW}.
\eremark

\subsubsection{Groupoids arising from piecewise affine transformations}
\label{sss:AffineTrafos}

This class of examples appears also in, for instance, \cite{Li22}*{\S~2.2.7}. Let $\Pi$ be a multiplicative subgroup of $\Rz_{>0}$ and $A_+$ be the monoid $A \cap [0,\infty)$ for some subgroup $A \subseteq (\Rz,+)$. Assume that $\Pi$ acts on $A_+$ by multiplication. Let $S$ be the left inverse hull of the $ax+b$-monoid $A_+ \rtimes \Pi$ and $E$ its semilattice of idempotents. We have a decomposition $\widehat{E} = \Omega \amalg \gekl{\infty}$. We are interested in a restriction of the groupoid $S \ltimes \Omega$; let us call it $G$. The topological full groups of such groupoids are generalisations of Thompson's group $V$ (where we vary the conditions on slopes and breakpoints).

This time, $\fB_{A \ltimes \Pi}$ is equivalent to the mapping cone of the permutative functor $\iota: \: \fB_G \to \fB_{S \ltimes \widehat{E}}$ induced by the canonical embedding $G \into S \ltimes \widehat{E}$.

Consider the diagram
\[
 \xymatrix{
 \fB_G \ar[r]^{\iota} & \fB_{S \ltimes \widehat{E}} \ar[r] & \fB_{A \ltimes \Pi} \\
 & \fB_{S \ltimes E\reg} \ar[u] & \\
 & \fB_{\Pi} \ar[u] \ar[uur]^{\iota_{\Pi}} &
 }
\]
Here the arrow $\fB_{S \ltimes E\reg} \to \fB_{S \ltimes \widehat{E}}$ is the permutative functor from \S~\ref{s:Disc_KBG}, $\fB_{\Pi} \to \fB_{S \ltimes E\reg}$ is the canonical inclusion (coming from viewing $\Pi$ as the stabilizer group of $A_+ \ltimes \Pi \in E$), and $\iota_{\Pi}$ is induced by the canonical embedding $\Pi \into A \ltimes \Pi$. This diagram (or rather the right triangle in it) commutes.

Therefore, all in all, we obtain the following long exact sequence in homotopy (and similarly in homology):
\[
\begin{tikzpicture}[descr/.style={fill=white,inner sep=1.5pt}]
        \matrix (m) [
            matrix of math nodes,
            row sep=1em,
            column sep=2.5em,
            text height=1.5ex, text depth=0.25ex
        ]
        { 0 & \pi_*(\Kz \fB_G) & \pi_*(\Sigma^{\infty} (B \Pi)_+) & \pi_*(\Sigma^{\infty} (B (A \ltimes \Pi))_+) & \dotso \\
          \dotso & \pi_0(\Kz \fB_G) & \pi_0(\Sigma^{\infty} (B \Pi)_+) & \pi_0(\Sigma^{\infty} (B (A \ltimes \Pi))_+) & 0. \\
        };

        \path[overlay,->, font=\scriptsize,>=latex]
        (m-1-1) edge (m-1-2)
        (m-1-2) edge (m-1-3)
        (m-1-3) edge node[midway,above] {$(\iota_{\Pi})_*$} (m-1-4)
        (m-1-4) edge (m-1-5)
        (m-1-5) edge[out=355,in=175] (m-2-1)
        (m-2-1) edge (m-2-2)
        (m-2-2) edge (m-2-3)
        (m-2-3) edge node[midway,below] {$(\iota_{\Pi})_*$} (m-2-4)
        (m-2-4) edge (m-2-5);
\end{tikzpicture}
\]
Moreover, note that $(\iota_{\Pi})_*$ is split-injective as the canonical projection $A \ltimes \Pi \onto \Pi$ induces a split for $\iota_{\Pi}$.

\subsubsection{Groupoids arising from self-similar groups}
\label{sss:SelfSimilarGps}

Consider a self-similar group $\Gamma$ (as in, for instance, \cite{Nek}), which comes with an action $\Gamma \acts X$ on a finite set and a faithful action $\Gamma \acts X^*$. We think of $X^*$ as a tree. At the same time, $X^*$ can be canonically identified with $\Nz_0^{*d}$, where $d = \# \, X$. Form the Zappa–Sz{\'e}p product monoid $\Nz_0^{*d} \bowtie \Gamma$ and let $S$ be its left inverse hull. Let $E$ be the semilattice of idempotents in $S$. We are interested in the groupoid $S \ltimes \partial X^*$. The decomposition $\widehat{E} = X^* \amalg \partial X^*$ gives rise to the following independent resolution for $S \ltimes \partial X^*$:
\[
 S \ltimes X^* \to S \ltimes \widehat{E} \to S \ltimes \partial X^*.
\]
Thus $\fB_{S \ltimes \partial X^*}$ is equivalent to the mapping cone of the map $\iota: \: \fB_{S \ltimes X^*} \to \fB_{S \ltimes \widehat{E}}$ induced by the canonical inclusion $S \ltimes X^* \into S \ltimes \widehat{E}$. Let us analyse this map and the induced map on homotopy groups.

Consider the following diagram
\begin{equation}
\label{e:self-sim}
 \xymatrix{
 \fB_{S \ltimes X*} \ar[r]^{\iota} & \fB_{S \ltimes \widehat{E}} \ar[r] & \fB_{S \ltimes \partial X^*} \\
 \fB_{\Gamma} \ar[u] \ar@{..>}[r] & \fB_{\Gamma} \ar[u]
 }
\end{equation}
Here the vertical arrows are provided by \S~\ref{s:Disc_KBG}. Our task is to describe the map in homotopy corresponding to the dotted arrow.

For that purpose, given $u \in X$, consider the diagram
\[
 \xymatrix{
 \fB_{\Gamma} \ar[r] \ar@{..>}[dr]_{\rho_u} & \fB_{\cR_{\Gamma / \Gamma_u} \times \Gamma_u} & \\
 & \fB_{\Gamma_u} \ar[u] \ar[r]^{r_u} & \fB_{\Gamma}
 }
\]
Here $\Gamma_u$ is the stabilizer group at $u$. Moreover, the upper horizontal arrow is given as follows: Let $r$ be a set-theoretic split for the canonical quotient map $\Gamma \onto \Gamma / \Gamma_u$. Then the upper horizontal arrow is the permutative functor sending $\gamma \in \Gamma$ to $\coprod_{x \Gamma_u \in \Gamma / \Gamma_u} ((gx \Gamma_u, x \Gamma_u), r(gx \Gamma_u)^{-1} g r(x \Gamma_u))$. The vertical arrow is given by the identification of $\Gamma_u$ with $\gekl{(\Gamma_u, \Gamma_u)} \times \Gamma_u \subseteq \cR_{\Gamma / \Gamma_u} \times \Gamma_u$. The dotted arrow $\rho_u$ is the composition of the induced map in homotopy of the upper horizontal arrow and the inverse of the map induced by the vertical arrow. Furthermore, the lower horizontal arrow $r_u$ sends $\gamma$ to $\gamma \vert_u$ (using the self-similar group structure on $\Gamma$). Now it turns out that the dotted arrow in \eqref{e:self-sim} is given by $\id - \sum_{[u] \in \Gamma \backslash X} (r_u)_* \circ \rho_u$ in homotopy. 

All in all, we have the following long exact sequence in homotopy:

\[
\begin{tikzpicture}[descr/.style={fill=white,inner sep=1.5pt}]
        \matrix (m) [
            matrix of math nodes,
            row sep=1em,
            column sep=2.5em,
            text height=1.5ex, text depth=0.25ex
        ]
        { 0 & \pi_*(\Sigma^{\infty} (B \Gamma)_+) & & & \pi_*(\Sigma^{\infty} (B \Gamma)_+) & \pi_*(\Kz \fB_{S \ltimes \partial X^*}) & \dotso \\
          \dotso & \pi_0(\Sigma^{\infty} (B \Gamma)_+) & & & \pi_0(\Sigma^{\infty} (B \Gamma)_+) & \pi_0(\Kz \fB_{S \ltimes \partial X^*}) & 0. \\
        };

        \path[overlay,->, font=\scriptsize,>=latex]
        (m-1-1) edge (m-1-2)
        (m-1-2) edge node[midway,above] {$\id - \sum_{u \in \Gamma \backslash X} (r_u)_* \circ \rho_u$} (m-1-5)
        (m-1-5) edge (m-1-6)
        (m-1-6) edge (m-1-7)
        (m-1-7) edge[out=355,in=175] (m-2-1)
        (m-2-1) edge (m-2-2)
        (m-2-2) edge node[midway,below] {$\id - \sum_{u \in \Gamma \backslash X} (r_u)_* \circ \rho_u$} (m-2-5)
        (m-2-5) edge (m-2-6)
        (m-2-6) edge (m-2-7);
\end{tikzpicture}
\]

There is an analogous long exact sequence in homology. Note that the map in homology corresponding to $\rho_u$ is given by the restriction homomorphism $\res: \: H_*(\Gamma) \to H_*(\Gamma_u)$.

\bremark
Many detailed homology computations were obtained in \cite{steimil}, using the idea of independent resolutions as described above (but derived independently from this paper in \cite{steimil}), in combination with sophisticated techniques developed in \cite{steimil}.
\eremark

\subsubsection{AF groupoids}
\label{sss:AF}

Let us now apply continuity as in \S~\ref{ss:Cont}. Let $G$ be an AF groupoid. In that case $G_l = \coprod_{\Iz_l} \cR_{\kappa_l}$, where $\cR_{\kappa}$ is the full equivalence relation on a finite set with $\kappa$ elements. Hence $\fB_l = \bigoplus_{\Iz_l} \fB_{\cR_{\kappa_l}} \sim \bigoplus_{\Iz_l} \fB_{\bullet}$, where $\bullet$ denotes the trivial groupoid with just one object and only the identity morphism. Hence $\Kz \fB_l \sim \bigvee_{\Iz_l} \Sz$ and 
\[
 \Kz \fB_G \sim {\rm Hocolim}_l \, \bigvee_{\Iz_l} \Sz
\]
and the structure maps are induced by the connecting maps defining $G$ (which for instance can be read off from a Bratteli diagram describing $G$).

\appendix
\numberwithin{equation}{section}
\bigskip
\section{The local structure of proper ample groupoids}\label{app1:proper}

Let $G$ be an ample groupoid, $U \subseteq \Gu$ a compact open set and $F \subseteq \bmF(G_U^U)$ a subgroup of the topological full group of $G_U^U$. Then $G(F) \coloneq \bigcup_{\sigma \in F} \sigma$ is an open subgroupoid of $G$. Moreover, given a compact open subset $V \subseteq U$ which is $F$-invariant, we set $G(F,V) \coloneq G(F)_V^V$. $G(F,V)$ is an open subgroupoid of $G$. In the cases of interest later on, $F$ will be a finite group, and then $G(F)$ as well as $G(F,V)$ will be compact.

The proof of the following is very similar to that of \cite{Mil25b}*{Lemma~3.4}.
\blemma
\label{lem:HxxFU}
Let $H$ be a proper ample groupoid. For every $x \in \Hu$ there exists a compact open neighbourhood $U \subseteq \Hu$ of $x$ and a finite subgroup $F \subseteq \bmF(H_U^U)$ such that $\sigma\cdot x=x$ for all $\sigma \in F$, and $F \to H_x^x, \, \sigma \ma \sigma x$ is a group isomorphism.
\elemma
\bproof
For every $h \in H_x^x$, choose a compact open bisection $\tau_h$ such that $h \in \tau_h$. Since $H$ is proper, we know that $H_x^x$ is finite. Then there exists a compact open neighbourhood $\ti{U}$ of $x$ such that $\tau_1 \ti{U} = \ti{U}$, $\tau_{h^{-1}} \ti{U} = \tau_h^{-1} \ti{U}$ and $\tau_{h'} \tau_h \ti{U} = \tau_{h'h} \ti{U}$ for all $h, h' \in H_x^x$. Now define $U \coloneq \bigcap_{h \in H_x^x} \tau_h.\ti{U}$ and $\sigma_h \coloneq \tau_h U$ for all $h \in H_x^x$. By construction, we have $c(\sigma_h) = U = d(\sigma_h)$. Thus $\sigma_h \in \bmF(H_U^U)$. Moreover, $U$ is invariant under all $\tau_h$. We have
\[
 \sigma_{h'} \sigma_h = \tau_{h'} U \tau_h U = \tau_{h'} \tau_h (\tau_h^{-1}.U) U = \tau_{h'} \tau_h U = \tau_{h'h} U = \sigma_{h'h}.
\]
This shows that the $\sigma_h$ form a finite group, and we can set $F \coloneq \menge{\sigma_h}{h \in H_x^x}$.
\eproof

\begin{rem}\label{rem:HxxFU}
In the situation of Lemma \ref{lem:HxxFU}, assume that $H=S\ltimes X$ for some action $S\acts X$ of an inverse semigroup $S$ by homeomorphisms between open subsets on a Hausdorff, locally compact, totally disconnected space $X$. Then $\cB=\{[s,V]\colon \dom(s)\supset V\in\CO(X)\}$ is a basis for the topology of $H$. Hence the $\tau_h$ at the beginning of the proof of the lemma can be chosen in $\cB$, and so following the proof, every element of the group $F$ obtained is of the form $[s,U]$ for $U$ as in the lemma and some $s\in S$. 
\end{rem}

The following lemma is straightforward from \cite{Mil25b}*{Lemma 3.3}. 

\blemma
\label{lem:HVVinHF}
Let $H$ be a proper ample groupoid. For all $x \in \Hu$ there exist $U$ and $F$ as in Lemma~\ref{lem:HxxFU} and a compact open neighbourhood $V$ of $x$ with $V \subseteq U$ such that $H_V^V \subseteq H(F)$. Moreover, $V$ may be chosen to be $F$-invariant.
\elemma
\begin{proof}
Applying \cite{Mil25b}*{Lemma 3.3} to the proper groupoid $P=H$ and the open subset $O=H(F)$, we obtain an open subset $x\in U'\subset U$ such that $H_{U'}^{U'}\subset H(F)$. Then $V=\cap_{\sigma\in F}\sigma\cdot U'$ is $F$-invariant, compact and open (as $F$ is finite) and $H_V^V\subset H_{U'}^{U'}\subset H(F)$.
\end{proof}

\bcor
\label{cor:HWW}
Let $H$ be a compact ample groupoid. Let $\pi \colon \Hu \to \Hu/H$ be the canonical projection. Then there exist finitely many points $x_i \in \Hu$, and for each $i$, $U_i$ and $F_i$ as in Lemma~\ref{lem:HxxFU}, $V_i \subseteq U_i$ which is $F_i$-invariant as in Lemma~\ref{lem:HVVinHF}, compact open $F_i$-invariant subsets $W_i \subseteq V_i$ which are pairwise disjoint, such that $W \coloneq \coprod_i W_i$ satisfies $\pi(W) = \Hu / H$ and
\[
 H_W^W = \coprod_i H_{W_i}^{W_i} = \coprod_i H(F_i,W_i).
\]
\ecor
\bproof
For every $x \in \Hu$, choose $U_x$ and $F_x$ as in Lemma~\ref{lem:HxxFU} and $F_x$-invariant $V_x \subseteq U_x$ as in 
Lemma~\ref{lem:HVVinHF}. Then $\Hu = \bigcup_x V_x$. By compactness of $\Hu$, we can find a finite cover $\Hu = \bigcup_i V_{x_i}$. Set 
$U_i \coloneq U_{x_i}$, $V_i \coloneq V_{x_i}$ and $F_i \coloneq F_{x_i}$. Since $\pi$ is open, the $\pi(V_i)$ are compact open. As $H$ is compact, $\Hu / H$ is Hausdorff. It is also totally disconnected. Hence we can 
disjointify and obtain a partition $\Hu / H = \bigsqcup_i \pi(V_i)'$, where $\forall i$, $\pi(V_i)'\subset \pi(V_i)$ is compact open. Set 
$W_i \coloneq \pi^{-1}(\pi(V_i)') \cap V_i$. Then $W_i$ is $F_i$-invariant. By construction, we have $H_{W_i}^{W_i} \subseteq H_{V_i}^{V_i} 
\subseteq H(F_i,V_i)$ and thus $H_{W_i}^{W_i} = H(F_i,W_i)$. Moreover, $\pi(W_i) = \pi(V_i)'$ are pairwise disjoint. It follows that, with $W 
\coloneq \coprod_i W_i$, we have $H_W^W = \coprod_i H_{W_i}^{W_i}$. Finally, we have $\pi(W) = \bigcup_i \pi(W_i) = \Hu / H$ by construction.
\eproof

An inverse semigroup $S$ is called \emph{locally finite} if for every finite subset $F\subset S$ there exists a finite inverse subsemigroup $F\subset \bar{F}\subset S$.

\blemma\label{lem:FHLocFin} 
Let $H$ be a compact ample groupoid. Then $\bmF(H)$ is locally finite.
\elemma
\bproof
Let $W$ be as in Corollary~\ref{cor:HWW}. Then there exists a decomposition $\Hu = \coprod_{n=1}^N X_n$ into compact open subsets $X_n$ for which there exist compact open bisections in $H$ with $\rms(\sigma_n) = X_n$ and $\rmr(\sigma_n) \subseteq W$. We can and will always arrange that $X_1 = W$ and $\sigma_1 = W$. Then we have an open groupoid embedding $H \into H_W^W \times R_N = (\coprod_i H(F_i,W_i)) \times R_N$ sending $h \in H$ to $(\sigma_n h \sigma_m^{-1}, (n,m))$ if $\rms(h) \in X_m$ and $\rmr(h) \in X_n$. Here $R_N = \gekl{1, \dotsc, N} \times \gekl{1, \dotsc, N}$ is the full equivalence relation on $\gekl{1, \dotsc, N}$. Write $H' \coloneq (\coprod_i H(F_i,W_i)) \times R_N$. Now $F = \menge{(\coprod_i \sigma_i W_i) \times \sigma}{\sigma_i \in F_i, \, \sigma \in S_N}$, where $S_N$ is the symmetric group on $\gekl{1, \dotsc, N}$, is a finite subgroup of $\bmF(H')$ such that $H' = \bigcup_{\tau \in F} \tau$.

As $H$ embeds into $H'$ as an open subgroupoid, it suffices to show that $\bmF(H')$ is locally finite, which we now prove. The important property of $H'$ we will use is the existence of the finite subgroup $F \subseteq \bmF(H')$ such that $H' = \bigcup_{\tau \in F} \tau$. Now take finitely many $\sigma_i \in \bmF(H')$. Then $\rho_i = \coprod_h \tau_{ih} C_{ih}$, where $\tau_{ih} \in F$ and $C_{ih}$ are compact open subsets of $(H')^{(0)}$, and for each fixed $i$, $\cC_i \coloneq \gekl{C_{ih}}_h$ is a partition of $(H')^{(0)}$. Let $\cC$ be the common refinement of all $\cC_i$ given by all non-empty intersections of the form $\bigcap_i C_{i h_i}$. We further consider the $F$-invariant partition $\cD$ given by all non-empty intersections of the form $\bigcap_{\tau \in F} \tau.C_{\tau}$, where $C_{\tau} \in \cC$. Note that $\cD$ refines $\cC$. Now every $\rho_i$ is of the form $\coprod_j \tau_{ij} D_{ij}$ with $D_{ij} \in \cD$. Note that we also have $\tau_{ij}.D_{ij} \in \cD$ as $\cD$ is $F$-invariant. 

Now let $\Gamma$ be the set of all $\rho \in \bmF(H)$ such that $\rho = \coprod_j \tau_j D_j$ for some $\tau_j \in F$ and $D_j \in \cD$. Then $\rho_i \in \Gamma$ for all $i$. Moreover, $\Gamma$ is a group because
\[
 (\coprod_j \tau_j D_j) (\coprod_k \tau_k D_k) = \coprod_{j,k} \tau_j \tau_k ((\tau_k^{-1}.D_j) \cap D_k),
\]
and $\tau_j \tau_k \in F$, $(\tau_k^{-1}.D_j) \cap D_k \in \cD$ for all $j$, $k$. Finally, $\Gamma$ is finite because $\# \Gamma \leq (\# F)^{\# \cD}$.
\eproof

\bcor
\label{cor:H=UF}
Let $H$ be a compact ample groupoid. Then
\begin{enumerate}[label=(\roman*)]
\item Every $h \in H$ is contained in an element $\sigma \in \bmF(H)$.
\item There is a finite subgroup $F \subseteq \bmF(H)$ such that $H = \bigcup_{\tau \in F} \tau$.
\end{enumerate}
\ecor
\bproof
(i) If $\rmr(h) \neq \rms(h)$, then find a compact open bisection $\rho$ in $H$ with $h \in \rho$ and $\rms(\rho) \cap \rmr(\rho) = \emptyset$. Then $\sigma \coloneq (\rho \amalg \rho^{-1}) \amalg (\Hu \setminus (\rmr(\rho) \amalg \rms(\rho)))$ is in $\bmF(H)$ with $h \in \sigma$.

If $\rmr(h) = \rms(h)$, Lemma~\ref{lem:HxxFU} produces $\sigma \in \bmF(H)$ with $h \in \sigma$.

(ii) By (i), for every $h \in H$, there exists $\sigma_h \in \bmF(H)$ such that $h \in \sigma_h$. So $H = \bigcup_{h \in H} \sigma_h$. By compactness, $H = \bigcup_{i=1}^n \sigma_i$ for some $\sigma_i \in \bmF(H)$. Now $\bmF(H)$ is locally finite by Lemma~\ref{lem:FHLocFin}, so that $\gekl{\sigma_i}$ generates a finite subgroup $F$ of $\bmF(H)$, and we have $H = \bigcup_{i=1}^n \sigma_i \subseteq \bigcup_{\tau \in F} \tau \subseteq H$.
\eproof

\begin{prop}\label{prop:semilocafin}
 Let $G$ be a proper ample groupoid. Then the inverse semigroup $\cS^{\cop}(G)$ of compact open bisections is locally finite.
\end{prop}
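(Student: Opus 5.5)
Given finitely many compact open bisections $\sigma_1,\dots,\sigma_n$ of the proper ample groupoid $G$, the plan is to reduce to the compact case and then invoke Lemma~\ref{lem:FHLocFin}. Let $K=\bigcup_i (\rmr(\sigma_i)\cup\rms(\sigma_i))$; this is a compact open subset of $\Gu$. We then pass to its saturation inside the compact open set $U=K$ and consider $H=G_U^U$.

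First, $H$ is compact. Properness of $G$ means $(\rmr,\rms)\colon G\to \Gu\times\Gu$ is proper, so $H=(\rmr,\rms)^{-1}(U\times U)$ is compact. It is also an open subgroupoid of $G$ (with unit space $U$), hence a compact ample groupoid. Every $\sigma_i$ is a compact open bisection of $H$, and products and inverses of bisections contained in $H$ stay in $H$. So the inverse subsemigroup of $\cS^{\cop}(G)$ generated by the $\sigma_i$ lies in $\cS^{\cop}(H)$, and it suffices to show that $\cS^{\cop}(H)$ is locally finite.

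Next we embed partial bisections into full ones. By Lemma~\ref{lem:FHLocFin}, $\bmF(H)$ is locally finite. Each compact open bisection $\sigma\subseteq H$ should be written as $\sigma=\tau\,\rms(\sigma)$ with $\tau\in\bmF(H)$ and $\rms(\sigma)$ a compact open subset of $U$; this is the key step. To produce $\tau$, use Corollary~\ref{cor:H=UF}(ii): there is a finite group $F\subseteq\bmF(H)$ with $H=\bigcup_{\tau\in F}\tau$. Cover $\sigma$ by the open sets $\sigma\cap\tau$ for $\tau\in F$, and disjointify by source as in Lemma~\ref{lem:Rrho=pi}. This gives $\sigma=\coprod_j \tau_j C_j$ with $\tau_j\in F$ and $C_j$ compact open, with the $C_j$ partitioning $\rms(\sigma)$.

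Given such decompositions for the $\sigma_i$, we follow the proof of Lemma~\ref{lem:FHLocFin}. Let $\cD$ be the finite $F$-invariant partition of $U$ generated by the sets $C_{ij}$ and by $U\setminus\rms(\sigma_i)$. Let $T$ be the set of all bisections of the form $\coprod_j \tau_j D_j$ with $\tau_j\in F$ and the $D_j\in\cD$ pairwise distinct, together with $\emptyset$. The product formula $(\coprod_j\tau_jD_j)(\coprod_k\tau_kD_k)=\coprod_{j,k}\tau_j\tau_k((\tau_k^{-1}.D_j)\cap D_k)$ shows that $T$ is closed under products. It is also closed under inverses, because $\tau^{-1}D=(\tau^{-1}.D)\tau^{-1}$ and $\cD$ is $F$-invariant. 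We must check that the resulting pieces are still bisections; this holds when the pieces are restrictions of a bisection, and more generally after intersecting appropriately with range sets. The set $T$ is finite, with $\#T\le (\#F+1)^{\#\cD}$, and it contains every $\sigma_i$. Hence $T$ is a finite inverse subsemigroup containing the $\sigma_i$.

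I expect the main obstacle to be confirming that $T$ consists of genuine bisections, in particular that ranges of the pieces are disjoint. I would handle this by working with the $F$-invariant partition $\cD$, so that each $\tau_j.D_j$ is again an atom of $\cD$, and imposing disjointness of the atoms $\tau_j.D_j$ as part of the definition of $T$.
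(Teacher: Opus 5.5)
Your proof is correct and follows essentially the paper's route. You reduce to the compact groupoid $G_U^U$ via properness, take the finite group $F\subseteq\bmF(H)$ with $H=\bigcup_{\tau\in F}\tau$ from Corollary~\ref{cor:H=UF}, write each $\sigma_i$ as $\coprod_j\tau_jC_j$, refine to an $F$-invariant compact open partition, and take the finite set of bisections of the form $\coprod_j\tau_jD_j$. Two of your choices supply exactly the care needed to make that finite set a genuine inverse subsemigroup of bisections: you require the atoms $\tau_j.D_j$ to be distinct, and you make $\cD$ separate each $\rms(\sigma_i)$ from its complement. The announced ``key step'' $\sigma=\tau\,\rms(\sigma)$ with $\tau\in\bmF(H)$, and the appeal to Lemma~\ref{lem:FHLocFin}, are neither established nor used, so you can simply delete them.
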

\begin{proof}
Put $S=\cS^{\cop}(G)$ and $X=G^{(0)}$, and let $F\subset S$ be a finite subset. Then $U=\bigcup_{\sigma\in F}(d(\sigma)\cup c(\sigma))$ is compact, whence properness of $G$ implies that $G_U^U$ is compact. Moreover, $F\subset\cS^{\cop}(G_U^U)$. Thus we may assume that $G$ is compact. By Corollary \ref{cor:H=UF} there is a finite subgroup 
$\Gamma\subset F(G)$ such that $G=\bigcup_{\gamma\in \Gamma}\gamma$. Because $F$ is finite, for every $x\in X$ there exists a compact open 
$x\in U_x\subset X$ such that if $g\in F$ and $x\in d(g)$ then $U_x\subset d(g)$ and there is a $\gamma(g,x)\in \Gamma$ such that $gU_x=\gamma(g,x)U_x$. Because $X$ is compact, 
there are $x_1,\dots,x_n\in X$ such that for $U_i=U_{x_i}$ we have $X=\bigcup_{i=1}^nU_i$. Disjointifying $\cU=\{U_i\colon 1\le i\le n\}$ we obtain a compact open partition of $X$ refining $\cU$, which by finiteness of $\Gamma$ we may further refine to a $\Gamma$-invariant compact open partition $\cP=\{V_j\colon 1\le j\le m\}$.  By construction, for each $g\in F$ and $1\le j\le m$ there is $\gamma(g,j)\in\Gamma$ such that $gV_j=\gamma(g,j)V_j$. Hence if $g\in F$, then for $\Lambda(g)=\{1\le j\le m\colon V_j\subset d(g)\}$ we have
\[
g=\bigsqcup_{j\in\Lambda(g)}\gamma(g,j)V_j. 
\]
Hence $F$ is a subset of 
\[
\Gamma':=\{\bigsqcup_{j\in \Lambda}\gamma_jV_j\colon \Lambda\subset\{1,\dots, m\},\, \gamma:\Lambda\to \Gamma\}.
\]
It follows from the facts that $\Gamma\subset F(G)$ is a subgroup and that $\cP$ is $\Gamma$-invariant, that $\Gamma'$ is an inverse subsemigroup of $S$. This concludes the proof. 
\end{proof}

\numberwithin{equation}{section}
\bigskip
\section{\topdf{$K$}{K}-theory of Cuntz-Pimsner-Toeplitz rings}\label{app2:lescure}
For the sake of completenes, we review and reprove Lescure's results from \cite{lescure} about the $K$-theory of the Pimsner-Toeplitz rings introduced in \cite{toke}.

Let $R$ be a (nonzero) ring with local units; put $\otimes=\otimes_R$. An \emph{$R$-system} is a triple $\cX=(P,Q,\psi)$ consisting of two
$R$-bimodules $P$ and $Q$ and a bimodule homomorphism $\psi:Q\otimes P\to R$. We write $\langle q,p\rangle=\psi(q\otimes p)$. Let $\cK(\cX)=P\otimes Q$ be the 
$R$-algebra that results from equipping the $R$-bimodule $P\otimes Q$ with the product
\[
(p_1\otimes q_1)(p_2\otimes q_2)=p_1\langle q_1,p_2\rangle\otimes q_2.
\]
Remark that $\cK(\cX)$ acts by right $R$-module endomorphisms on $P$ and by left $R$-module endomorphisms on $Q$ via
\[
(p\otimes q)\cdot p'=p\langle q,p'\rangle,\,\, q'\cdot (p\otimes q)=\langle q',p\rangle q.
\]
Hence we may regard $P$ and $Q$ respectively as a $(\cK(\cX),R)$ and as an $(R,\cK(\cX))$-bimodule. The $R$-system $\cX$ is said to satisfy \emph{condition FS} if for any finite subsets $F\subset P$ and $F'\subset Q$ there exist elements $\Theta,\Delta\in\cK(\cX)$ such that $\Theta\cdot f=f$ and $f'\cdot \Delta=f'$ for all $f\in F$ and $f'\in F'$. 

\begin{rem}\label{rem:unitality}
Because we are assuming that $R$ has local units, the canonical maps $M\otimes R\to M$ and $R\otimes N\to N$ are injective for any right $R$-module $M$ and any left $R$-module $N$. If $\cX=(P,Q,\psi)$ satisfies FS, then it follows from the definitions that $P\otimes R\to P$ and $R\otimes Q\to Q$ are also surjective, and therefore bijective.     
\end{rem}

Let $\{\cX_i=(P_i,Q_i,\psi_i)\colon i\in I\}$ be a family of $R$-systems. Write $\bigoplus_{i\in I}\cX_i$ for the $R$-system consisting of the bimodules $P=\bigoplus_{i\in I}P_i$ and $Q=\bigoplus_{i\in I}Q_i$ and the composite
\[
\psi:Q\otimes P\onto \bigoplus_{i\in I}Q_i\otimes P_i\overset{\sum\psi_i}{\lra}R
\]
\begin{lem}\label{lem:sumfs}
 Let $\{\cX_i\colon i\in I\}$ be a family of $R$-systems and let $\cX=\bigoplus_{i\in I}\cX_i$. If $\forall i\in I$ $\cX_i$ satisfies FS, then $\cX$ satisfies FS.
\end{lem}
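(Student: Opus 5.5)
The plan is to identify $\cK(\cX)$ with a direct sum of blocks, and then to build $\Theta$ and $\Delta$ block by block. First note that $P\otimes Q=\bigoplus_{i,j}P_i\otimes Q_j$ as $R$-bimodules, since tensor products commute with direct sums. For each $i$ let
\[
\kappa_i:\cK(\cX_i)=P_i\otimes Q_i\to P\otimes Q=\cK(\cX)
\]
be the map induced by the inclusions $P_i\subset P$ and $Q_i\subset Q$. The pairing $\psi$ restricts to $\psi_i$ on $Q_i\otimes P_i$ and vanishes on $Q_j\otimes P_i$ for $j\ne i$. Consequently $\kappa_i$ is multiplicative, and $\kappa_i(\cK(\cX_i))\cdot\kappa_j(\cK(\cX_j))=0$ for $i\ne j$. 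For the same reason, the action of $\kappa_i(\theta)$ on $P$ kills $P_j$ for $j\ne i$ and acts on $P_i$ as $\theta$ does. Symmetrically, the right action of $\kappa_i(\delta)$ on $Q$ kills $Q_j$ for $j\ne i$ and acts on $Q_i$ as $\delta$ does.

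Now let $F\subset P$ and $F'\subset Q$ be finite. Each element of $F$ has only finitely many nonzero components, so there is a finite set $J\subset I$ such that every $f\in F$ can be written $f=\sum_{i\in J}f_i$ with $f_i\in P_i$; enlarge $J$ so that the same holds for $F'$. For each $i\in J$, apply condition FS for $\cX_i$ to the finite sets $F_i=\{f_i\colon f\in F\}$ and $F'_i=\{f'_i\colon f'\in F'\}$. This gives $\Theta_i,\Delta_i\in\cK(\cX_i)$ with $\Theta_i\cdot f_i=f_i$ and $f'_i\cdot\Delta_i=f'_i$. Set
\[
\Theta=\sum_{i\in J}\kappa_i(\Theta_i),\qquad \Delta=\sum_{i\in J}\kappa_i(\Delta_i).
\]
By the orthogonality observed above, $\Theta\cdot f=\sum_{i\in J}\Theta_i\cdot f_i=f$. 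Likewise $f'\cdot\Delta=f'$.

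The only step needing care is that the action of $\kappa_i(\theta)$ on a summand $P_j$ with $j\ne i$ really is zero. This follows directly from the formula $(p\otimes q)\cdot p'=p\langle q,p'\rangle$ together with $\langle Q_i,P_j\rangle=0$ for $i\ne j$. I do not expect any genuine obstacle; it is routine bookkeeping. Note also that no injectivity of $\kappa_i$ is needed, since only the images $\kappa_i(\Theta_i)$ and $\kappa_i(\Delta_i)$ and their actions are used.
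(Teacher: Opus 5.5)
Your proposal is correct and follows essentially the same route as the paper: split the finite sets into their finitely many components, apply FS in each summand $\cX_i$, and take $\Theta=\sum_i\Theta_i$, $\Delta=\sum_i\Delta_i$, using $\langle Q_i,P_j\rangle=0$ for $i\ne j$. Your version is more explicit than the paper's, which works directly with homogeneous elements $x_{i,j}\in P_i$ and leaves the vanishing of the cross terms implicit.
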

\begin{proof}
Let $F, F'\subset I$  and $F_i, F'_i\subset \Nz$ $(i\in I)$ be finite and $\{x_{i,j}\colon i\in F, \, j\in F_i\}\subset P$ and $\{x'_{i,j}\colon i\in F', j\in F'_j\}\subset Q$ with $x_{i,j}\in P_i$ and $y_{i,j}\in Q_i$. For $i\in F$ and $i'\in F'$, choose 
$\Theta_i\in P_i\otimes Q_i$, $\Delta_{i'}\in P_{i'}\otimes Q_{i'}$ such that $\Theta_i x_{i,j}=x_{i,j}$ and $y_{i',j'}\Delta_{i'}=y_{i',j'}$ for all $j\in F_i$ and $j'\in F'_{i'}$. Then $\Theta=\sum_{i\in F}\Theta_i$ and $\Delta_{i'\in F'}\Delta_{i'}$ satisfy $\Theta x_{i,j}=x_{i,j}$ and $y_{i',j'}\Delta=y_{i',j'}$ for all $i,j,i'$ and $j'$.
\end{proof}

A \emph{weak homomorphism} of $R$-systems $f:\cX=(P,Q,\psi)\to \cX'=(P',Q',\psi')$ is a pair $f=(f_P,f_Q)\in\Hom_R(P_R,P'_R)\times \Hom_R({}_RQ,{}_RQ')$ such that the following diagram commutes
\[
\xymatrix{Q\otimes_{\Z} P\ar[rr]^{f_Q\otimes f_P}\ar[d]&& Q'\otimes_\Z P'\ar[d]\\
           Q\otimes P\ar[r]^{\psi}& R& Q'\otimes P'\ar[l]_{\psi'}}
\]
We do not require $f_P$ to be a left $R$-module homomorphism, nor $f_Q$ to be a right $R$-module homomorphism; if $f$ has both of these properties, we say that $f$ is \emph{strong}. For any weak homomorphism $f$ as above, $f_P\otimes f_Q:P\otimes R\to P'\otimes Q'$ is well-defined and gives a ring homomorphism $\cK(f):\cK(\cX)\to\cK(\cX')$.

Because $R$ has local units, multiplication induces a bimodule isomorphism $\mu:R\otimes R\isom R$; we shall write $R$ for the $R$-system
$(R,R,\mu)$, and set $R^{(I)}=\bigoplus_{i\in I}R$. Let $I$ be a nonempty set and $\cX$ an $R$-system. We say that $\cX$ has \emph{property} $BL(I)$ if there exists a weak homomorphism $f:\cX\to R^{(I)}$. We say that $\cX$ has property BL if it has property $BL(I)$ for some $I$.

\begin{ex}
Let $G$ be a discrete groupoid, $E$ a (directed) graph, $(G,E,\sigma)$ a self-similar groupoid action in the sense of 
\cite{steimil}*{Definition 2.3} and $E_r\times_cG$ the associated $G$-correspondence as defined in \emph{loc. cit.} immediately after 
said definition. Let $E^*$ be another copy of $E$, and let $G_{d}\times_rE^*$ be the dual correspondence, equipped with the obvious left action 
and the right action $\sigma^*:E^*_s\times_c G\to G$, $\sigma^*(e^*,g)=\sigma(g,g^{-1}(e))$. Let $k$ be a commutative unital ring; set 
$R=k[G]$. We have $R$-bimodules $P=k[E_{r}\times_cG]$,
and $Q=k[G_d\times_rE^*]$. One checks that the map $Q\times P\to R$ defined by
\[
\psi(\chi_{(g,e^*)},\chi_{(f,h)})=\delta_{e,f}gh
\]
is $R$-balanced and $R$-bilinear. Thus it induces an $R$-bimodule map $\psi:Q\otimes_RP\to R $ so that $\cX=(P,Q,\psi)$ is an 
$R$-system. One checks further that $\cX$ satisfies properties FS and BL. 
\end{ex}
\begin{lem}\label{lem:yira}
Let $f^1,f^2:\cX\to\cY$ be weak homomorphisms of $R$-systems. Let $H:\rings\to \fC$ be a functor. If $H$ is $M_2$-stable, then 
$H(f^1)=H(f^2)$.
\end{lem}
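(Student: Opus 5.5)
The plan is the standard rotation trick for $M_2$-stable functors. Here $H(f^i)$ means $H(\cK(f^i))$, where $\cK(f^i)\colon\cK(\cX)\to\cK(\cY)$ is the ring homomorphism induced by the weak homomorphism $f^i$. I will show that $\cK(f^1)$ and $\cK(f^2)$ become conjugate by an invertible element, up to the corner embeddings into a $2\times 2$ matrix ring. More precisely, I will show that $\iota_1\circ\cK(f^1)$ and $\iota_1\circ\cK(f^2)$ are conjugate, where $\iota_1\colon\cK(\cY)\to M_2\cK(\cY)$ is the upper-left corner inclusion. Since $M_2$-stability makes $H(\iota_1)$ an equivalence, it suffices to show that $H(\iota_1\cK(f^1))=H(\iota_1\cK(f^2))$.

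The key step is the factorisation through an ``orthogonal sum''. Both maps factor through the diagonal map
\[
\cK(\cX)\to \cK(\cY)\oplus\cK(\cY),\qquad k\mapsto(\cK(f^1)(k),\cK(f^2)(k)),
\]
followed by either coordinate inclusion into $M_2\cK(\cY)$. The choice of inclusion is $\iota_1$ for the first coordinate and $\iota_2$, the lower-right corner, for the second. The point is to realise the swap between these corners by conjugation by the permutation matrix $\begin{pmatrix}0&1\\1&0\end{pmatrix}$. The difficulty is that $\cK(\cY)$ need not be unital. I would therefore conjugate inside a suitable unital overring, for instance matrices over the ring of adjointable-type operators on $P'\oplus Q'$, or simply inside $M_2$ of a unitisation. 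Conjugation by an invertible element of a ring containing the image as an ideal induces the identity under $H$ for $M_2$-stable $H$; this is the standard argument via the $3\times 3$ rotation, as in Cortiñas–Thom, which I will invoke.

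A cleaner alternative avoids the diagonal. Define $g=(f^1_P\oplus f^2_P,\ f^1_Q\oplus f^2_Q)\colon\cX\to\cY\oplus\cY$. This is not a weak homomorphism because $\psi$ is not preserved. Instead I use $g_\theta$ built from the rotation matrix with entries $\cos,\sin$. Since no scalars are available, I replace rotation by the $3\times3$ permutation trick: $\cK(\cY\oplus\cY)\cong M_2\cK(\cY)$ via $P'^2\otimes Q'^2$. The weak homomorphisms $\cX\to\cY\oplus\cY$ given by $(f^1,0)$ and $(0,f^1)$ induce $\iota_1\cK(f^1)$ and $\iota_2\cK(f^1)$, which agree under $H$ by the usual $M_2$-stability lemma: the two corner inclusions are conjugate by a permutation.

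It remains to pass from $f^1$ to $f^2$, and I expect this to be the main obstacle. The essential observation is that $\cK(f)$ depends only on $f_P\otimes f_Q$ modulo the relation imposed by $\psi$. I would compare the two maps using the weak homomorphism $(f^1_P,f^2_Q)$; this is the step where one must check that the pairing is preserved. One has $\langle f^2_Q(q),f^1_P(p)\rangle=\langle f^1_Q(q),f^1_P(p)\rangle$ only if the two pairings agree, which is not given. So I would instead build a single weak homomorphism into $\cY\oplus\cY$ combining $f^1$ and $f^2$ in off-diagonal form, and show that it is conjugate, in $M_2\cK(\cY)$, to both the first-corner copy of $f^1$ and the second-corner copy of $f^2$. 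Verifying this compatibility with $\psi$ is where I expect the real work to be.
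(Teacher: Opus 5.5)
Your reduction is fine: since $H(\iota_1)$ is invertible, it suffices to compare $H(\iota_1\cK(f^1))$ and $H(\iota_1\cK(f^2))$. So is your remark that $(f^1,0)$ and $(0,f^1)$ induce $\iota_1\cK(f^1)$ and $\iota_2\cK(f^1)$. But the actual content of the lemma is getting from $f^1$ to $f^2$, and you leave that open; the route you propose for it cannot work.

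You want $\iota_1\cK(f^1)$ and $\iota_1\cK(f^2)$ to be conjugate, or a single weak homomorphism $\cX\to\cY\oplus\cY$ conjugate to both $\iota_1\cK(f^1)$ and $\iota_2\cK(f^2)$. Conjugation by an invertible element preserves injectivity, but $\cK$ of a weak homomorphism need not be injective. Take $R=\ell$, $P=Q=\ell^2$, $\psi(q\otimes p)=q_1p_1$, and $\cX=\cY=(P,Q,\psi)$. Let $f^1=(\mathrm{id},\mathrm{id})$ and $f^2=(\pi,\pi)$ with $\pi(x_1,x_2)=(x_1,0)$. Both preserve the pairing, and $\cK(f^1)=\mathrm{id}$, while $\cK(f^2)(e_2\otimes e_2)=0$. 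So no conjugacy of this kind exists in general. A rotation or polynomial-homotopy substitute would need homotopy invariance of $H$, which is not among the hypotheses.

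The missing idea is to double the source as well as the target. The pair $f^1\oplus f^2=(f^1_P\oplus f^2_P,\,f^1_Q\oplus f^2_Q)\colon\cX\oplus\cX\to\cY\oplus\cY$ is a weak homomorphism. This is because the pairing on a direct sum is the sum of the pairings and the map is diagonal. Your map $\cX\to\cY\oplus\cY$ doubled $\psi$ precisely because the source was not doubled.

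Under $\cK(\cX\oplus\cX)\cong M_2\cK(\cX)$ and $\cK(\cY\oplus\cY)\cong M_2\cK(\cY)$, one has $\cK(f^1\oplus f^2)\circ\iota_i=\iota_i\circ\cK(f^i)$ for $i=1,2$. The off-diagonal entries involve mixed terms such as $f^1_P\otimes f^2_Q$, which do no harm. Using $H(\iota_1)=H(\iota_2)$ on both sides,
\begin{align*}
H(\iota_1)H(\cK(f^1))&=H(\cK(f^1\oplus f^2))H(\iota_1)=H(\cK(f^1\oplus f^2))H(\iota_2)\\
&=H(\iota_2)H(\cK(f^2))=H(\iota_1)H(\cK(f^2)),
\end{align*}
and cancelling $H(\iota_1)$ gives the claim. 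This is exactly the paper's argument. No relation between $\cK(f^1)$ and $\cK(f^2)$ themselves is needed, only one ring homomorphism at the $2\times 2$ level that restricts to each of them in a different corner.
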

\begin{proof}
One checks that $\cK(\cX\oplus \cX)=M_2(\cK(\cX))$. Using this identification, and writing $\iota_1$ and $\iota_2$ for the inclusion in upper left and lower right corner respectively, we have a commutative diagram
\[
\xymatrix{M_2\cK(\cX)\ar[rr]^{\cK(f^1\oplus f^2)}&&M_2\cK(\cY)\\
         \cK(\cX)\ar[u]^{\iota_i}\ar[rr]^{\cK(f^i)}&&\cK(\cY)\ar[u]^{\iota_i}}
\]
By \cite{friendly}*{Lemma 2.2.4}, $M_2$-stability of $H$ implies that $H(\iota_1)=H(\iota_2)$. Hence we have
\begin{gather*}
H(\iota_1)\circ H(\cK(f^1))=H(\cK(f^1\oplus f^2))\circ H(\iota_1))=H(\cK(f^1\oplus f^2))\circ H(\iota_2)=\\
H(\iota_2)\circ H(\cK(f_2))=H(\iota_1)\circ H(\cK(f_2)).
\end{gather*}
Because $H$ is $M_2$-stable, $H(\iota_1)$ is an isomorphism, so the above identities imply that $H(\cK(f_1))=H(\cK(f_2))$. 
\end{proof}
\begin{lem}[\cite{lescure}*{Proposition 2.11}]\label{lem:kmorita}
Let $R$ be a ring with local units and $\cX$ an $R$-system satisfying $BL(I)$. Let $\inc_{\cX}:R\to R\oplus\cX$ be the inclusion; set $\iota_{\cX}=\cK(\inc_\cX)$. Let $X$ be an infinite set with $|X|\ge |I|$ and $H$ an $M_X$-stable functor. Then $j(\iota_{\cX})$ is an isomorphism. 
\end{lem}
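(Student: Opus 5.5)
The plan is to use property $BL(I)$ to factor the corner embedding of $R$ into a matrix algebra over $R$ through $\iota_{\cX}$, and to use Lemma~\ref{lem:yira} to identify the composite in the other order with a corner embedding as well. Then $H(\iota_{\cX})$ has both a left and a right inverse. I read the conclusion as ``$H(\iota_{\cX})$ is an isomorphism'' for any $M_X$-stable $H$; the case of $j$ is then the universal instance.

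First, the bookkeeping identities. For a family of $R$-systems, $\cK(\bigoplus_{j\in J}\cY_j)$ is the algebra of finitely supported $J\times J$ matrices whose $(j,k)$ entry lies in $P_j\otimes Q_k$. In particular:
\begin{itemize}
\item $\cK(R^{(J)})\cong M_JR$, using $R\otimes R\cong R$, which holds since $R$ has local units;
\item $\cK(\cY^{(J)})\cong M_J\cK(\cY)$;
\item $\cK(R)\cong R$, and under this identification $\iota_{\cX}$ is the corner inclusion of $R$ into the linking algebra $\cK(R\oplus\cX)$.
\end{itemize}
Put $J=\{0\}\sqcup I$. Since $X$ is infinite with $|X|\ge|I|$, we have $|J|\le|X|$. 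Hence corner inclusions $A\to M_JA$ become isomorphisms under $H$: compose with $M_J\subset M_X$ and use $M_X$-stability, together with $M_X M_J\cong M_X$ for the two-out-of-three argument.

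Next, choose a weak homomorphism $f\colon\cX\to R^{(I)}$ and let $g=\mathrm{id}_R\oplus f\colon R\oplus\cX\to R^{(J)}$, again a weak homomorphism. The composite $\cK(g)\circ\iota_{\cX}\colon R\to M_JR$ is the corner inclusion $\epsilon_{0,0}$. Therefore $H(\cK(g))\circ H(\iota_{\cX})$ is an isomorphism, so $H(\iota_{\cX})$ has a left inverse.

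For the other composite, let $e\colon R^{(J)}\to (R\oplus\cX)^{(J)}$ be the strong homomorphism given by $\inc_{\cX}$ in each coordinate, so that $\cK(e)=M_J\iota_{\cX}$. Consider the following two weak homomorphisms $R\oplus\cX\to(R\oplus\cX)^{(J)}$:
\begin{itemize}
\item $e\circ g$;
\item the inclusion into the $0$-th summand, whose $\cK$ is the corner inclusion $c\colon\cK(R\oplus\cX)\to M_J\cK(R\oplus\cX)$.
\end{itemize}
Lemma~\ref{lem:yira} applies because $M_X$-stability implies $M_2$-stability. It gives $H(M_J\iota_{\cX})\circ H(\cK(g))=H(c)$. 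Since $H(c)$ is an isomorphism, $H(\cK(g))$ has a left inverse. Combined with the previous step, $H(\cK(g))$ also has a right inverse, namely $H(\iota_{\cX})\circ\bigl(H(\cK(g))H(\iota_{\cX})\bigr)^{-1}$. So $H(\cK(g))$ is an isomorphism, and hence so is $H(\iota_{\cX})$.

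The main obstacle is the second step. One must check that $\cK$ is functorial for weak (not merely strong) homomorphisms, so that $\cK(e\circ g)=\cK(e)\cK(g)$. One must also check that the matrix identifications of $\cK$ of direct sums are compatible with these maps; in particular, $\cK(g)$ restricted to the off-diagonal blocks $P$ and $Q$ must be given by $f_P$ and $f_Q$. The place I expect to need the most care is the exact hypothesis of Lemma~\ref{lem:yira}, which is stated for $M_2$-stable functors on rings and for weak homomorphisms between the same pair of systems. We apply it with source $R\oplus\cX$ and target $(R\oplus\cX)^{(J)}$, and verifying that $\cK(f^1\oplus f^2)$ is well defined there is the point I would check first.
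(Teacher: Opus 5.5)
Your proof is correct and is essentially the paper's argument: your $g=\mathrm{id}_R\oplus f$, your $e$, and the inclusion into the $0$-th summand are exactly the paper's $\rho$, $\inc'$ and $\inc''$. In both, $H(\cK(\rho))\circ H(\iota_{\cX})$ is a corner inclusion, and Lemma~\ref{lem:yira} gives $H(\cK(\inc'))\circ H(\cK(\rho))=H(\cK(\inc''))$, so $H(\cK(\rho))$ is invertible and hence so is $H(\iota_{\cX})$. The points you flag hold routinely: $\cK$ is functorial on weak homomorphisms, and the direct sum of two weak homomorphisms is again weak.
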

\begin{proof}
Let $\rho=\id_R\oplus f:R\oplus\cX\to R\oplus R^{(I)}$. We have 
$\rho\circ\inc_{\cX}=\inc_{R^{(I)}}$; hence $H(\cK(\rho))\circ 
H(\iota_{\cX})=H(\iota_{R^{(I)}})$ which is an isomorphism by matricial stability. Thus 
$H(\iota_{\cX})$ is a split mono. To finish, it suffices to prove that $H(\cK(\rho))$ is a 
split epi. Let $\inc'=\inc_{\cX}\oplus\bigoplus_{i\in I}\inc_{\cX}:R\oplus R^{(I)}\to R\oplus\cX\oplus (R\oplus\cX)^{(I)}$ and 
$\inc":R\oplus\cX\to R\oplus\cX\oplus (R\oplus \cX)^{(I)}$ the inclusion into the 
first summand. By Lemma \ref{lem:yira}, $H(\cK(\inc'))\circ H(\cK(\rho))=H(\cK(\inc'\circ\rho))=H(\cK(\inc"))$ which is an isomorphism by matricial stability. 
\end{proof}
 Let  $\cX_i=(P_i,Q_i,\psi_i)$ $(i\ge 1)$ be $R$-systems. Define recursively $\psi_{[1,1]}=\psi_1$ and for $n\ge 1$
 \begin{gather*}
 \psi_{[1,n+1]}:Q_{n+1}\otimes\cdots\otimes Q_1\otimes P_1\otimes\cdots\otimes P_{n+1}\to R\\
 \psi_n(q_{n+1}\otimes\psi_{[1,n]}(q_{n}\otimes\cdots\otimes q_1\otimes p_1\otimes\cdots\otimes p_n)\otimes p_{n+1})
 \end{gather*}
 Set $\cX_1\otimes\cdots\otimes \cX_n=(P_1\otimes\cdots\otimes P_n,Q_n\otimes\cdots\otimes Q_1,\psi_{[1,n]})$. 

\begin{lem}\label{lem:bltenso}
Let $I$ be an infinite set. Let $\{\cX_n\colon n\in \Nz\}$ be a sequence of $R$-systems. Assume that $\cX_m$ satisfies $BL(I)$ for all $m$. Then $\cX_1\otimes\cdots\otimes\cX_n$ and $\bigoplus_{m\ge 1}\cX_m$ satisfy $BL(I)$.
\end{lem}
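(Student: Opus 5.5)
The plan is to build the required weak homomorphisms explicitly from the given ones, using that $I$ is infinite so that $|I\times I|=|I|$ and $|\Nz\times I|=|I|$. For each $m$ fix a weak homomorphism $f^m=(f^m_P,f^m_Q):\cX_m\to R^{(I)}$. Write $\langle\,,\,\rangle$ for the pairing on $R^{(I)}$, namely $\langle \sum_i x_i\epsilon_i,\sum_j\epsilon_j y_j\rangle=\sum_i x_iy_i$.

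\emph{Direct sum.} On $P=\bigoplus_m P_m$ define $f_P=\bigoplus_m f^m_P:P\to\bigoplus_m R^{(I)}=R^{(\Nz\times I)}$, and similarly $f_Q=\bigoplus_m f^m_Q$. These are right (resp. left) $R$-linear. For $q\in Q_m$ and $p\in P_n$ we get $\langle f_Q(q),f_P(p)\rangle=\delta_{m,n}\langle f^m_Q(q),f^m_P(p)\rangle=\delta_{m,n}\psi_m(q\otimes p)$. This is exactly the pairing $\psi$ of $\bigoplus_m\cX_m$ evaluated on $q\otimes p$, because the summands are orthogonal. Composing with a bijection $\Nz\times I\cong I$ (which induces a strong isomorphism $R^{(\Nz\times I)}\cong R^{(I)}$) gives $BL(I)$.

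\emph{Tensor product.} We argue by induction on $n$, so it suffices to treat two systems $\cX$ and $\cY$ with weak homomorphisms $f$ and $g$ to $R^{(I)}$. The difficulty is that $f_P$ is only right linear, so $f_P\otimes g_P$ is not defined on $P_\cX\otimes_R P_\cY$. Instead, write $f_P(p)=\sum_i\epsilon_i f_i(p)$ with $f_i:P_\cX\to R$ right $R$-linear, so that $f_i(p)\in R$. We then define
\[
F_P:P_\cX\otimes P_\cY\to R^{(I\times I)},\quad p\otimes p'\mapsto\sum_{i}\epsilon_{i}\,\iota_i\big(g_P(f_i(p)p')\big),
\]
where $\iota_i:R^{(I)}\to R^{(I\times I)}$ places the $j$-th coordinate in position $(i,j)$. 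This map is balanced: $pr\otimes p'$ and $p\otimes rp'$ have the same image because $f_i(pr)=f_i(p)r$. It is also right $R$-linear, since $g_P$ is. Dually, write $g_Q(q')=\sum_j g'_j(q')\epsilon_j$ and set $F_Q(q'\otimes q)=\sum_j\iota'_j\big(f_Q(q\,g'_j(q'))\big)$, placing the coordinate in position $(i,j)$ in the same indexing convention. We must then check that
\[
\langle F_Q(q'\otimes q),F_P(p\otimes p')\rangle=\psi_\cY\big(q'\otimes\psi_\cX(q\otimes p)p'\big).
\]
Expanding the left side via the weak-homomorphism identities for $g$ and then for $f$ should give $\sum_i\psi_\cY(q'\otimes\, \psi_\cX$-component$)$. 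Here one uses $\psi_\cX(q\otimes p)=\sum_i f'_i(q)f_i(p)$ together with left/right linearity. Finally, $|I\times I|=|I|$ gives $BL(I)$.

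The main obstacle is this bookkeeping in the tensor case: making $F_P$ and $F_Q$ well defined on the balanced tensor products even though the weak homomorphisms are only one-sidedly linear. The coordinate-functional trick above is how I would handle it. Along the way I must make sure the index placement in $F_Q$ matches that in $F_P$ so that the pairing reproduces $\psi_{[1,2]}$; if it does not, I would swap the roles of the indices.
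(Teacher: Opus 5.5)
Your overall plan matches the paper's proof: use coordinate functionals, handle two factors and induct, and reindex via $|I\times I|=|I|$ and $|\Nz\times I|=|I|$. Your direct-sum argument is correct. Your $F_P$, whose $(i,j)$-coordinate is $g_j(f_i(p)p')$ with $g_P=\sum_j\epsilon_jg_j$, is exactly the paper's formula.

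The gap is $F_Q$. Write $f_Q(q)=\sum_if'_i(q)\epsilon_i$ with $f'_i\colon Q_{\cX}\to R$ left $R$-linear. Your formula has $(i,j)$-coordinate $f'_i\bigl(q\,g'_j(q')\bigr)$, and this fails in three ways.
\begin{itemize}
\item It is not defined on $Q_{\cY}\otimes_RQ_{\cX}$. Balancing would require $f'_i\bigl(q\,g'_j(q'r)\bigr)=f'_i\bigl(rq\,g'_j(q')\bigr)$, which fails because $g'_j$ and $f'_i$ are only left linear.
\item It is not left $R$-linear, since $f'_i\bigl(q\,r\,g'_j(q')\bigr)\neq r\,f'_i\bigl(q\,g'_j(q')\bigr)$ in general.
\item The pairing $\sum_{i,j}f'_i\bigl(q\,g'_j(q')\bigr)\,g_j\bigl(f_i(p)p'\bigr)$ does not collapse to $\psi_{\cY}\bigl(q'\otimes\psi_{\cX}(q\otimes p)p'\bigr)$.
\end{itemize}
Even if both weak homomorphisms were strong, you would get $\sum_{i,j}f'_i(q)g'_j(q')f_i(p)g_j(p')$, with the $\cX$- and $\cY$-factors in the wrong order. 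So the formula breaks as soon as $R$ is noncommutative, as it is in the application to $R_i$. Swapping indices does not repair this. What must be reversed is the order in which the two functionals are applied.

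The correct dual collapses the right-hand factor $q\in Q_{\cX}$ first:
\[
F_Q(q'\otimes q)=\sum_{i,j}g'_j\bigl(q'\,f'_i(q)\bigr)\,\epsilon_{(i,j)}.
\]
This is balanced, since $q'r\,f'_i(q)=q'\,f'_i(rq)$, and left $R$-linear because $g'_j$ is. It is precisely the paper's $f_{i_{n+1}}(q_{n+1}f_{i_n,\dots,i_1}(q_n\otimes\cdots\otimes q_1))$. With it the verification goes through:
\begin{gather*}
\sum_{i,j}g'_j\bigl(q'f'_i(q)\bigr)\,g_j\bigl(f_i(p)p'\bigr)=\sum_i\psi_{\cY}\bigl(q'f'_i(q)\otimes f_i(p)p'\bigr)\\
=\psi_{\cY}\Bigl(q'\otimes\sum_if'_i(q)f_i(p)\,p'\Bigr)=\psi_{\cY}\bigl(q'\otimes\psi_{\cX}(q\otimes p)p'\bigr).
\end{gather*}
The three steps are the weak-homomorphism identity for $g$, then balancedness of $\psi_{\cY}$, then the identity for $f$. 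All sums are finite because $f_P(p)\in R^{(I)}$.
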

\begin{proof}
The assertion about the direct sum is clear. Fix $n\ge 2$; for each $1\le \ell\le n$, let $f^\ell:\cX_\ell\to R^{(I)}$ be a weak homomorphism of $R$-systems. Also let $\pi_i:R^{(I)}\to R$ be the projection onto the $i$-th coordinate. Write $f^\ell_i$ for both $\pi_i\circ f_{P_\ell}^{\ell}$ and $\pi_i\circ f_{Q_\ell}^{\ell}$. Put $P=P_1\otimes\cdots\otimes P_n$ and $Q=Q_n\otimes\cdots\otimes Q_1$. We shall define a weak homomorphism $f:\cX_1\otimes\cdots\otimes\cX_n\to R^{(I^n)}$; abusing notation, we write $f$ for both $f_P$ and $f_Q$. We proceed recursively; for $n\ge 1$, we set
\begin{gather*}
f(p_1\otimes\cdots\otimes p_{n+1})=\sum_{(i_1,\dots,i_{n+1})}\epsilon_{(i_1,\dots,i_{n+1})}f_{i_{n+1}}(f_{i_n,\dots,i_1}(p_1\otimes\cdots\otimes p_n)p_{n+1})\\
f(q_{n+1}\otimes\cdots\otimes q_1)=\sum_{(i_1,\dots,i_{n+1})}\epsilon_{(i_1,\dots,i_{n+1})}f_{i_{n+1}}(q_{n+1}f_{i_n,\dots,i_1}(q_n\otimes\cdots\otimes q_1))\\
\end{gather*}
We have
\begin{gather*}
\langle f(q_{n+1}\otimes\cdots\otimes q_1),f(p_1\otimes\cdots\otimes p_{n+1})\rangle\\
=\sum_{(i_1,\dots,i_{n+1})}f_{i_{n+1}}(q_{n+1}f_{i_n,\dots,i_1}(q_n\otimes \cdots\otimes q_1))f_{i_{n+1}}(f_{i_n,\dots,i_1}(p_1\otimes \cdots\otimes p_n)p_{n+1})\\
=\sum_{(i_1,\dots,i_n)}\langle q_{n+1}f_{i_n,\dots,i_1}(q_n\otimes \cdots\otimes q_1),f_{i_n,\dots,i_1}(p_1\otimes \cdots\otimes p_n)p_{n+1} \rangle \\
=\langle q_{n+1}\sum_{(i_1,\dots,i_n)}f_{i_n,\dots,i_1}(q_n\otimes \cdots\otimes q_1)f_{i_n,\dots,i_1}(p_1\otimes \cdots\otimes p_n),p_{n+1}\rangle\\
=\langle q_{n+1}\langle q_n\otimes\cdots\otimes q_1, p_1\otimes\cdots\otimes p_n\rangle,p_{n+1}\rangle\\
=\langle q_{n+1}\otimes\cdots\otimes q_1,p_1\otimes\cdots\otimes q_{n+1}\rangle
\end{gather*}
\end{proof}
\begin{lem}\label{lem:bimo}
Let $f:R\to S$ be a ring homomorphism, $\tilde{S}=S\oplus\Z$ its unitalisation and $M$ an $R$-bimodule. Assume that the canonical bimodule homomorphisms $M\otimes_R R\to M$ and $R\otimes_R S\to S$ are isomorphisms. Then the $R$-bimodule homomorphism induced by the inclusion $\inc:
S\subset \tilde{S}$
\[
\widebar{\inc}:M\otimes_R S\to M\otimes_R\tilde{S}
\]
is an isomorphism.
\end{lem}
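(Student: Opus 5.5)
The plan is to reduce the statement to the special case $M=R$ and then tensor. The reduction uses the hypothesis $M\otimes_RR\cong M$ together with associativity of the tensor product.

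First I will make the left $R$-module structure on $\tilde S=S\oplus\Z$ explicit: $r\cdot(s,n)=(f(r)s+nf(r),0)$. So $R\cdot\tilde S\subset S$, and the inclusion $S\subset\tilde S$ is a map of $R$-bimodules. The right $R$-action on $S$ and $\tilde S$ is the analogous one, and the relevant right action is irrelevant for the argument.

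Next I will prove the case $M=R$, namely that $\widebar{\inc}_R\colon R\otimes_RS\to R\otimes_R\tilde S$ is bijective.

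\emph{Surjectivity.} We have a short exact sequence of left $R$-modules $0\to S\to\tilde S\to\Z\to0$, where $\Z$ carries the zero action. Since $\otimes$ is right exact, the cokernel of $\widebar{\inc}_R$ is $R\otimes_R\Z$. This module vanishes: since $R$ has local units, $R=R\cdot R$, and $rr'\otimes1=r\otimes r'\cdot1=r\otimes0=0$.

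\emph{Injectivity.} Left multiplication gives a well-defined map $\mu\colon R\otimes_R\tilde S\to S$, $r\otimes x\mapsto r\cdot x$, because $r\cdot x\in S$ and the formula is $R$-balanced. The composite $\mu\circ\widebar{\inc}_R$ is $r\otimes s\mapsto f(r)s$, which by hypothesis is the isomorphism $R\otimes_RS\cong S$. Hence $\widebar{\inc}_R$ is injective, and therefore an isomorphism of $R$-bimodules.

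Finally, for general $M$ I will use the natural isomorphisms
\[
M\otimes_RS\cong (M\otimes_RR)\otimes_RS\cong M\otimes_R(R\otimes_RS),\qquad M\otimes_R\tilde S\cong M\otimes_R(R\otimes_R\tilde S).
\]
Here the first isomorphism in each chain comes from the hypothesis $M\otimes_RR\cong M$. Under these identifications, $\widebar{\inc}$ corresponds to $M\otimes_R\widebar{\inc}_R$, which is an isomorphism by the case $M=R$. The only point requiring care is checking that these identifications are natural in the second variable, so that the square relating $\widebar{\inc}$ and $M\otimes_R\widebar{\inc}_R$ commutes. This is routine, but it is the step I would write out most carefully. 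Alternatively, surjectivity for general $M$ also follows directly from $M\otimes_R\Z=0$, since $M=MR$.
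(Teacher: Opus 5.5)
Your proof is correct and follows the paper's argument. The paper likewise uses the multiplication map $R\otimes_R\tilde S\to S$ to show that $R\otimes_R S\to R\otimes_R\tilde S$ is (split) injective, then tensors with $M$ and uses $M\otimes_R R\cong M$ to get injectivity. The only difference is that the paper gets surjectivity directly from $M=MR$, as in your alternative remark, rather than from $R\otimes_R\Z=0$; this avoids appealing to local units of $R$, which are a standing assumption of the appendix but not a hypothesis of the lemma.
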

\begin{proof}
Let $b:R\times \tilde{S}\to S$, $b(a,s)=f(a)s$; since $b$ is $R$-bilinear and $R$-balanced, it induces a bimodule homomorphism $\bar{b}:R\otimes_R\tilde{S}\to S$. By construction, the following diagram commutes
\[
\xymatrix{R\otimes_R\tilde{S}\ar[r]& S\\
          R\otimes_RS\ar[u]\ar[ur]^{\cong}&}
\]
It follows that the vertical map is split-injective. Hence it remains so upon left-tensoring with $M$, which because we are assuming that $M\otimes_RR\to M$ is an isomorphism, implies that $\widebar{\inc}$ is injective. Moreover, 
$\widebar{\inc}$ is surjective because $M\otimes_RR\to M$ is. 
\end{proof}

Let $\cX$ be an $R$-system. The \emph{Toeplitz ring} of $\cX$ is 
$$
\cT(\cX)=T_R(P\oplus Q)/\langle q\otimes p-\psi(q\otimes p)\colon p\in P,\, q\in Q\rangle
$$
Here $T_R$ is the tensor algebra. By \cite{toke}*{Theorem 1.7}, $\cT(\cX)=\bigoplus_{i,j\ge 0}P^{\otimes i}\otimes Q^{\otimes j}$ with the product induced by $\psi$, as described in \emph{loc. cit.}

Fix a commutative unital ring $\ell$. In what follows by a \emph{homology theory} of $\ell$-algebras we understand a functor $H:\ahal\to \cT$ taking values in a triangulated category 
$\cT$. Let $X$ be an infinte set and $x\in X$. A homology theory $H$ is \emph{excisive} if it sends $\ell$-linearly split algebra extensions to distinguished triangles, 
\emph{homotopy invariant} if it inverts the inclusion $A\subset A[t]$, and \emph{$M_X$-stable} if it inverts the corner inclusion $\iota_x:A\to M_XA$, $\iota_x(a)=\epsilon_{x,x}a$. 
For every infinite set $X$, there is a homology theory $j_X:\ahal\to kk_X$, called \emph{bivariant algebraic $K$-theory}, that is homotopy invariant, excisive and $M_X$-stable and is 
initial among homology theories satisfying those three properties (\cites{ct,ematesis}). We say that a homomorphism of $\ell$-algebras $f$ is a \emph{$kk_X$-equivalence} if $j_X(f)$ is an isomorphism. 
Like $M_X$, $j_X$ and $kk_X$ depend only on the cardinal of $X$; however $kk_X(\ell,A)=KH_0(A)$ for all choices of $\ell$ and $X$ (\cites{ct,ematesis}). Since for $|Y|\ge |X|$ $M_Y$-stability implies 
$M_X$-stability, by universal property of $j_X$ there is a canonical triangulated functor $\bar{j}_Y:kk_X\to kk_Y$ such that $\bar{j}_Y\circ j_X=j_Y$. Hence if an $\ell$-algebra 
homomorphism is a $kk_X$-equivalence then it is a $kk_Y$-equivalence whenever $|Y|\ge |X|$. In what follows, every time we state that a certain algebra map is a $kk$-equivalence, we 
will indicate a set $X$ of cardinal large enough for this to hold, and write $j=j_X$ and $kk=kk_X$.  

\begin{thm}[\cite{lescure}*{Theorem 3.2}]\label{thm:toeplitz}
Let $R$ be a (nonzero) ring with local units and let $\cX$ be an $R$-system. Assume that $\cX$ satisfies FS and $BL(I)$, and that the canonical $R$-bimodule maps $R\otimes P\to P$ and $Q\otimes R\to Q$ are isomorphisms. Consider the natural inclusion $\inc:R\subset \cT(\cX)$. For $X$ as in Lemma \ref{lem:kmorita}, let $j:\rings\to kk$ be the $M_X$-stable bivariant algebraic $K$-theory functor. Then $j(\inc)$ is an isomorphism.
\end{thm}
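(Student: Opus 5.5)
The plan is to follow Pimsner's original strategy, adapted algebraically. The backbone is a split exact sequence relating $\cT(\cX)$ to a Toeplitz algebra for a ``stabilised'' system, combined with a homotopy argument showing that algebra is $kk$-contractible. Concretely, put $\cF=\bigoplus_{n\ge0}\cX^{\otimes n}$ (with $\cX^{\otimes 0}=R$); by \cite{toke}*{Theorem 1.7} this is the Fock module $P^{\otimes *}\oplus Q^{\otimes *}$, and $\cT(\cX)$ acts on it by creation and annihilation operators. By Lemma~\ref{lem:bltenso}, $\cF$ again satisfies $BL(I)$, and since $\cX$ satisfies FS and the unitality conditions $R\otimes P\cong P$, $Q\otimes R\cong Q$ (Remark~\ref{rem:unitality}), so do all the $\cX^{\otimes n}$, hence $\cF$ by Lemma~\ref{lem:sumfs}. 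The key ideal is $\cK(\cF)\subset\cT(\cX)$, the ``compact operators'' spanned by $P^{\otimes i}\otimes Q^{\otimes j}$ with the appropriate product. FS is exactly what lets us identify the ideal generated by the Fock projection $1-\sum\Theta$ with $\cK(\cF)$, using the decomposition of $\cT(\cX)$ from \cite{toke}.

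Step 1 is to show that the composite $R\to\cK(\cF)\to\cT(\cX)$, whose first map is the corner inclusion $\iota_{\cF}$ into the degree-zero summand, behaves well. Since $\cF=R\oplus\cX'$ with $\cX'=\bigoplus_{n\ge1}\cX^{\otimes n}$ satisfying $BL(I)$, Lemma~\ref{lem:kmorita} gives that $j(\iota_{\cF})$ is an isomorphism, so $\cK(\cF)$ is $kk$-equivalent to $R$.

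Step 2 uses the Pimsner ``two copies'' trick. I would form the algebra $\cT_2$ generated by two commuting-up-to-compacts representations of $\cX$: the defining one, and the one that is the identity on the degree-zero summand and shifts the rest. The difference of the two induced maps $\cT(\cX)\to\cT_2$ lands in $\cK(\cF)$, which gives a quasi-homomorphism $\cT(\cX)\rightrightarrows\cT_2\triangleright\cK(\cF)$ and hence a class $\alpha\in kk(\cT(\cX),\cK(\cF))\cong kk(\cT(\cX),R)$. Lemma~\ref{lem:bimo} is needed here to compute tensor products against unitalisations, so that the shifted representation is well defined on $\cF\otimes\tilde S$.

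Step 3 checks that $\alpha$ is inverse to $j(\inc)$. For $\alpha\circ j(\inc)$: restricted to $R$, the two representations differ exactly by the corner projection onto $\cX^{\otimes0}$, so the composite is $j(\iota_\cF)^{-1}j(\iota_\cF)=1$ by Step 1. For $j(\inc)\circ\alpha=1_{\cT(\cX)}$ I would use a polynomial rotation homotopy in $M_2$, built from $\cT(\cX)[t]$, that connects the shifted representation to the defining one on $\cF\oplus\cF$. Homotopy invariance and $M_X$-stability of $j$, together with Lemma~\ref{lem:yira} for comparing weak homomorphisms, then reduce this to the identity.

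I expect the main obstacle to be Step 3. In the $C^*$ setting one rotates by a continuous unitary, but algebraically only polynomial homotopies are available, and those require idempotent or nilpotent tricks. Ensuring that everything stays in genuine algebra maps, rather than only bimodule maps (BL gives only weak homomorphisms), is delicate. My first attempt would be to realise the rotation as an elementary-matrix homotopy $\left(\begin{smallmatrix}1&t\\0&1\end{smallmatrix}\right)$-conjugations in $M_2\tilde{\cT}$, and to use that $j$ inverts such homotopies.
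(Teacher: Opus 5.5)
\textbf{There is a genuine gap: the second half of your Step 3}, $j(\inc)\circ\alpha=1_{\cT(\cX)}$, which is where all the work lies.

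Your first half is essentially the paper's. The Fock representation $\rho_0\colon\cT(\cX)\to\cL(T_RP)$ and its compression $\rho_1$ to $T_R^+(P)$ form a quasi-homomorphism into $\cK(T_RP)\triangleleft\cL(T_RP)$. Here $\rho_1$ is given on generators by $\pi_+\rho_0(x)\pi_+$, so it is zero, not the identity, on the degree-zero summand. One has $\rho_0\circ\inc-\rho_1\circ\inc=\iota_{\cF}$, and Lemmas~\ref{lem:bltenso} and~\ref{lem:kmorita} make $j(\iota_{\cF})$ invertible. Hence $\alpha\circ j(\inc)=1_R$.

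For the other composite your plan has no mechanism. A homotopy on $\cF\oplus\cF$ relates quasi-homomorphisms into $\cK(\cF\oplus\cF)$, which is $kk$-equivalent to $R$. So it can only produce identities in $kk(\cT(\cX),R)$, never $1_{\cT(\cX)}$. Moreover, taken literally, a homotopy modulo compacts from the defining representation to the shifted one would give $\alpha=0$, contradicting $\alpha\circ j(\inc)=1_R$.

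The missing idea is to base-change the Fock system along $R\to\cT:=\cT(\cX)$. Form the $\cT$-system $\cY=(T_RP\otimes_R\cT,\ \cT\otimes_R T_RQ,\ \mu\circ(\id\otimes\sum_n\psi_n\otimes\id))$, where $\mu$ is the multiplication of $\cT$. Take $\nu\colon\cL(T_RP)\to\cL(T_RP\otimes_R\cT)$, $A\mapsto A\otimes\id_{\cT}$; it sends $\cK(\cF)$ into $\cK(\cY)$ and satisfies $\nu\circ\iota_{\cF}=\iota_{\cY}\circ\inc$.

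This, and not the definition of the shifted representation, is where Lemma~\ref{lem:bimo} and the unitality hypotheses are needed. They give $\cK(\cY)=T_RP\otimes\cT\otimes T_RQ\cong T_RP\otimes\tilde{\cT}\otimes T_RQ$. So FS for $\cY$ can be checked with elements $\sum_i\Theta^1_i\otimes 1\otimes\Theta^2_i$ built from FS-witnesses in $\cK(\cF)$. Together with BL for $\cY$, Lemma~\ref{lem:kmorita} makes $j(\iota_{\cY})$ invertible, and then
$j(\inc)\circ\alpha=j(\iota_{\cY})^{-1}\circ j(\nu\rho_0,\nu\rho_1)$.
So what must be proved is $j(\phi_0,\phi_1)=j(\iota_{\cY})$ for $\phi_i=\nu\rho_i$. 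The paper phrases the conclusion as $j(\rho_0,\rho_1)$ being a split monomorphism.

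The paper sets $\psi_{1/2}=\iota_{\cY}+\phi_1$, an orthogonal sum, so that $j(\phi_0,\phi_1)=j(\phi_0,\psi_{1/2})+j(\iota_{\cY})$. It then kills $j(\phi_0,\psi_{1/2})$ with an explicit polynomial homotopy, where $\pi_n$ denotes the projection onto $P^{\otimes n}$:
\begin{itemize}
\item $H(a)=\phi_0(a)$ for $a\in R$;
\item $H(p)=\phi_1(p)+(1-t^2)\rho_0(p)\pi_0\otimes 1+(2t-t^3)\iota_{\cY}(p)$;
\item $H(q)=\phi_1(q)+(1-t^2)\rho_0(q)\pi_1\otimes 1+t\,\iota_{\cY}(q)$.
\end{itemize}
$H$ is multiplicative on $\cT$ because $(1-t^2)^2+t(2t-t^3)=1$ yields $H(q)H(p)=H(\langle q,p\rangle)$. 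It differs from $\psi_{1/2}$ by elements of $\cK(\cY)[t]$. It rotates the degree-zero creation and annihilation pieces into left multiplication on $R\otimes_R\cT\cong\cT$. It is written down directly rather than obtained by elementary-matrix conjugation, and Lemma~\ref{lem:yira} plays no role in it.

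\textbf{Separately, drop the claims that $\cK(\cF)$ is an ideal of $\cT(\cX)$ and that $\inc$ factors through the corner embedding $R\to\cK(\cF)$.}
\begin{itemize}
\item For the rose with infinitely many petals, which satisfies FS and BL, every element of $\cT(\cX)$ acting nontrivially on the Fock module does so with infinite rank. There is no ``Fock projection'' $1-\sum\Theta$, because $R$ need not act on $P$ through $\cK(\cX)$.
\item For the classical Toeplitz algebra $\ell\langle x,y\mid yx=1\rangle$, $\cK(\cF)=M_\infty\ell$ is the ideal generated by $1-xy$. The composite $\ell\to M_\infty\ell\subset\cT$ sends $[1]$ to $[1-xy]=[1]-[xy]=0$ in $K_0$, since $xy\sim yx=1$. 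But $\inc$ sends $[1]$ to $[1]\neq 0$.
\end{itemize}
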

\begin{proof}
As in \cite{toke}*{Definition 2.4}, we consider the $R$-system 
$T\cX=(T_R(P),T_R(Q),\sum_{n\ge 0}\psi_n)$ and the corresponding ring of 
adjointable operators $\cL(T_R(P))$. By \cite{toke}*{Lemma 3.8}, the $R$-system $(P^{\otimes n},Q^{\otimes n},\psi_n)$ satisfies condition FS for all $n$; hence $T\cX$ also does, by Lemma \ref{lem:sumfs}. In particular $\cK(\cX)\subset \cL(T_R(P))$ by 
\cite{toke}*{Proposition 3.11}. Remark that the projection
$\pi_+:T_R(P)\to T_R(P)$ onto $T^+_R(P)=\bigoplus_{n\ge 1}P^{\otimes n}$ is 
an element of $\cL(T_R(P))$, whose adjoint is the projection of $T_R(Q)$ 
onto $T_R^+(Q)$. Write $\cT=\cT(\cX)$. Let $\rho_0:\cT\to \cL(T_R(P))$ be the Fock 
representation of \cite{toke}*{Proposition 2.5} and let $\rho_1:
\cT\to \cL(T_R(P))$ be the unique ring homomorphism such that 
$\rho_1(x)=\pi_+\circ\rho_0(x)\circ\pi_+$ for all $x\in R\sqcup P\sqcup Q$. 
Remark that $(\rho_0,\rho_1)$ is a quasi-homomorphism taking values in 
$\cK(T_R(P))=\cK(R\oplus T_R^+(P))\triqui\cL(T_R(P))$. By Lemma \ref{lem:bltenso}, $T_{R}^+(P)$ satisfies BL. Hence by Lemma 
\ref{lem:kmorita}, for the map  $\iota=\iota_{T_R^+(P)}$ of said Lemma, $j(\iota)$ is an isomorphism. One checks that 
$\rho_0\circ\inc-\rho_1\circ\inc$ is the composite of $\iota$ followed by the inclusion $\cK(T_R(P))\subset \cL(T_R(P))$. Hence 
$j(\rho_0,\rho_1)\circ j(\inc)=j(\iota)$, so $j(\inc)$ is a split mono and $j(\rho_0,\rho_1)$ is a split epi. It remains to show that 
$j(\rho_0,\rho_1)$ is a split mono. Let $\mu:\cT\otimes R\otimes\cT\to \cT$ be the multiplication map. Consider the $\cT$-system
$\cY=(TP\otimes\cT,\cT\otimes TQ,\mu\circ (\id_{\cT}\otimes\sum_n\psi_n\otimes\id_{\cT}))$. Our hypothesis that $R$ has local units 
and that $R\otimes P\to P$ and $Q\otimes R\to Q$ are isomorphisms imply that also $R\otimes T_RP\to T_RP$ and $T_RQ\otimes R\to T_RQ$ 
are isomorphisms. This together with Remark \ref{rem:unitality} implies that $R\otimes\cT\to\cT$ and $\cT\otimes R\to \cT$ are 
isomorphisms. It follows that any system of local units of $R$ is also a system of local units for $\cT$. In particular $\cT$ has 
local units, and therefore the canonical map
$\cT\otimes_\cT\cT\to \cT$ is an isomorphism. Hence $\cK(\cY)=T_RP\otimes \cT\otimes T_RQ$. Further, because as mentioned above, 
$T\cX$ satisfies FS, the canonical maps $T_RP\otimes R\to T_RP$ and 
$R\otimes T_RQ\to T_RQ$ are isomorphisms by Remark \ref{rem:unitality}. Thus by Lemma \ref{lem:bimo} the inclusion 
$\cT\subset\tilde{\cT}$ induces an 
isomorphism  
\[
T_RP\otimes\cT\otimes T_RQ\isom T_RP\otimes\tilde{\cT}\otimes T_RQ.
\]
Using this, one checks that $\cY$ satisfies FS. (If, for example, $x_1,\dots,x_n\in TP$ and $\Theta=\sum_i\Theta_i^1\otimes \Theta^2_i\in\cK(T\cX)$ satisfies $\Theta\cdot x_i=x_i$ for all $i$, then $\Theta'=\sum_i\Theta^1_i\otimes 1\otimes \Theta^2_i\in\cK(\cY)$ satisfies $\Theta'\cdot x_i\otimes a_i=x_i\otimes a_i$ for all $i$ and all $a_i\in\cT$.) Hence $\cK(\cY)\subset\cL(T_RP\otimes\cT)$. Moreover, the fact, mentioned above, that $T_RP\otimes R\to T_RP$ and 
$R\otimes T_RQ\to T_RQ$ are isomorphisms implies that $R\otimes \cT\to \cT$ and $\cT\otimes R\to \cT$ are isomorphisms. It follows 
from this and the fact that $\cT\cX$ satisfies BL that also $\cY$ satisfies it; thus $j(\iota_{\cY})$ is an isomorphism by Lemma 
\ref{lem:kmorita}. One checks that the assignment
$A\mapsto A\otimes\id_{\cT}$ defines a ring homomorphism $\nu:\cL(T_R(P))\to\cL(T_R(P)\otimes\cT)$ that sends the 
operator of left multiplication by $p\otimes q\in\cK(\cX)$ into that of left multiplication by $p\otimes 1\otimes q\in\cK(\cY)$. Hence 
$\nu(\cK(\cX))\subset\cK(\cY)$. Let $\phi_0=\nu\circ\rho_0$ and $\phi_1=\nu\circ\rho_1$; to finish the proof, it suffices to show that 
$j(\phi_0,\phi_1)=j(\iota_{\cY})$.  Because the images of $\iota_{\cY}$ and $\phi_1$ are orthogonal, the map 
$\psi_{1/2}=\iota_{\cY}+\phi_1$ is a ring homomorphism. It follows from the basic properties of quasi-homomorphisms (listed in 
\cite{ct}*{Theorem 5.2.3}, e.g.) that $j(\phi_0,\phi_1)=j(\phi_0,\phi_{1/2})+j(\phi_{1/2},\phi_1)$ and 
$j(\phi_{1/2},\phi_1)=j(\iota_{\cY})$. Hence we will be done if show that $j(\phi_0,\phi_{1/2})=0$. For $n\ge 0$, let 
$\pi_n:T_RP\to T_RP$ be the projection onto $P^{\otimes n}$. Remark that $\iota(y)=\pi_0\otimes y$ for all $y\in \cT$. Let 
$x\in R\sqcup P\sqcup Q$; consider the following elements of the polynomial ring $\cL(T_RP\otimes\cT)[t]$
\begin{equation}\label{map:prescript}
H(x)=\left\{\begin{matrix} \phi_0(x) & \text{ if }x\in R\\
                            \phi_1(x)+(1-t^2)\rho_0(x)\pi_0\otimes 1+(2t-t^3)\iota(x) & \text{ if } x\in P\\
                            \phi_1(x)+(1-t^2)\rho_1(x)\pi_1\otimes 1+t\iota(x) & \text{ if } x\in Q\end{matrix}\right.
\end{equation}
Let $p\in P$ and $q\in Q$. Then, writing $1=\id_{\cT}$ and identifying an element $x\in\cT$ with left multiplication by the same element, we have
\begin{gather*}
H(q)H(p)=((1-t^2)\rho_0(q)\pi_1\otimes 1+t\pi_0\otimes q+\pi_+\rho_0(q)\pi_+\otimes 1)\cdot\\
((1-t^2)\rho_0(p)\pi_0\otimes 1+(2t-t^3)\pi_0\otimes p+\rho_0(p)\pi_+\otimes 1)\\
=(1-t^2)^2\langle q,p\rangle\pi_0\otimes 1+t(2t-t^3)\pi_0\otimes\langle q,p\rangle+\langle q,p\rangle\pi_+\otimes 1\\
=\langle q,p\rangle \pi_0\otimes 1+\langle q,p\rangle\pi_+\otimes 1=\langle q,p\rangle \otimes 1=H(\langle q,p\rangle).
\end{gather*}
Thus the prescriptions \eqref{map:prescript} determine a ring homomorphism $H:\cT\to \cL(T_RP\otimes\cT)[t]$, and the latter is a homotopy between the ring homomorphisms $\psi_0$ and $\psi_{1/2}$. Furthermore, one checks that $H(x)-\psi_{1/2}(x)\in \cK(\cY)$, and thus $(H,\psi_{1/2})$ is a homotopy between $(\psi_0,\psi_{1/2})$ and the trivial quasi-homomorphism $(\psi_{1/2},\psi_{1/2})$. It follows that $j(\psi_0,\psi_{1/2})=0$. This finishes the proof. 
\end{proof}

\end{document}